\theoremstyle{plain}
\newtheorem{theorem}{Theorem}
\newtheorem{lemma}{Lemma}
\newtheorem{corollary}{Corollary}
\newtheorem{condition}{Condition}
\newtheorem*{condition_a}{Condition~\ref{condK1}$'$}
\theoremstyle{definition}
\newtheorem{definition}{Definition}
\newtheorem{remark}{Remark}
\renewcommand{\Im}{{\rm Im\,}}
\renewcommand{\Re}{{\rm Re\,}}
\newcommand{\supp}{{\rm supp\,}}
\newcommand{\Orb}{{\rm Orb}}
\newcommand{\const}{{\rm const}}
\newcommand{\dist}{{\rm dist}}
\begin{document}
\title{The consistency conditions and the smoothness of
generalized solutions of nonlocal elliptic problems }
\thanks{Accepted for publication: September 2005.}
\thanks{AMS Subject Classifications:  35J25, 35D10, 35B65.}
\date{}
\maketitle
\vspace{ -1\baselineskip}

{\small
\begin{center}
{\sc Pavel Gurevich}
\end{center}
}

\numberwithin{equation}{section}
\allowdisplaybreaks

\begin{quote}
\footnotesize
{\bf Abstract.}
We study smoothness of generalized solutions of nonlocal
elliptic problems in plane bounded
domains with piecewise smooth boundary. The case where the
support of nonlocal terms can
intersect the boundary is considered. We find conditions that
are necessary and sufficient for
any generalized solution to possess an appropriate smoothness
(in terms of Sobolev spaces).
Both homogeneous and nonhomogeneous nonlocal
boundary-value conditions are studied.

\end{quote}

\section{Introduction}

Nonlocal elliptic problems arise in various areas such as plasma
theory~\cite{Sam}, biophysics, theory of diffusion
processes~\cite{Feller,Ventsel,SatoUeno,Taira,GalSkub}, control
theory~\cite{BensLions,Amann}, and so on.

In the one-dimensional case, nonlocal problems were studied since
the beginning of the 20th century by Sommerfeld~\cite{Sommerfeld},
Picone~\cite{Picone}, Tamarkin~\cite{Tamarkin}, etc. In the
two-dimensional case, one of the first works was due to
Carleman~\cite{Carleman}, who treated the problem of finding a
harmonic function, in a plane bounded domain, satisfying a
nonlocal condition which connects the values of the unknown
function at different points of the boundary. Further
investigation of elliptic problems with transformations mapping a
boundary onto itself has been carried out by Vishik~\cite{Vishik},
Browder~\cite{Browder}, Beals~\cite{Beals},
Antonevich~\cite{Antonevich}, and others.

In 1969 Bitsadze and Samarskii~\cite{BitzSam} considered the
following nonlocal problem arising in plasma theory: to find a
function $u(y_1, y_2)$ harmonic on the rectangular
$G=\{y\in\mathbb R^2: -1<y_1<1,\ 0<y_2<1\}$, continuous on
$\overline{G}$, and satisfying the relations
 \begin{align*}
  u(y_1, 0)&=f_1(y_1),\quad u(y_1, 1)=f_2(y_1),\quad -1<y_1<1,\\
  u(-1, y_2)&=f_3(y_2),\quad u(1, y_2)=u(0, y_2),\quad 0<y_2<1,
 \end{align*}
where $f_1, f_2, f_3$ are given continuous functions. This problem
was solved in~\cite{BitzSam} by reducing it to a Fredholm integral
equation and by using the maximum principle. For arbitrary domains
and for general nonlocal conditions, such a problem was formulated
as an unsolved one (see also~\cite{krall}). Different
generalizations of nonlocal problems with transformations mapping
the boundary inside the closure of a domain were studied by many
authors~\cite{ZhEid,RSh,Kishk,GM}.

The most complete theory for elliptic equations of order $2m$ with
general nonlocal conditions was developed by Skubachevskii and his
pupils~\cite{SkMs83,SkMs86,SkDu90,SkDu91,SkJMAA,KovSk,SkBook,GurRJMP03,GurRJMP04}:
a classification with respect to types of nonlocal conditions was
suggested, the Fredholm solvability in the corresponding spaces
was investigated, and asymptotics of solutions near special
conjugation points was obtained. One can find other relevant
references and description of applications in~\cite{SkBook}.

In the present paper, we consider a little-studied question
concerning the smoothness of solutions for nonlocal elliptic
problems. For simplicity, we study nonlocal perturbations of the
Dirichlet problem for elliptic second-order equations. However,
the approach we are developing is also applicable to elliptic
equations of order $2m$ with general nonlocal conditions.

It appears that the most difficult situation is that where the
support of nonlocal terms can intersect the boundary of a
domain~\cite{SkMs86, SkRJMP}. In this case, solutions of nonlocal
problems can have power-law singularities near some points of the
boundary even if the right-hand side is infinitely differentiable
and the boundary is infinitely smooth. It follows from our results
that solutions of nonlocal problems can have power-law
singularities even if the support of nonlocal terms lies strictly
inside a domain. For this reason, we use special weighted spaces
to study nonlocal problems. These spaces were originally proposed
by Kondrat'ev~\cite{KondrTMMO67} to study elliptic boundary-value
problems in nonsmooth domains.

Note that smoothness of solutions for ``local'' elliptic problems
in nonsmooth domains is studied rather thoroughly
(see~\cite{KondrTMMO67,MP,Schulze,CD} and others); here principal
difficulties are related to the presence of special singular
points on the boundary of a domain. In the theory of nonlocal
problems, there appear principally different difficulties:
violation of smoothness of solutions is connected not only with
the fact that the boundary may be nonsmooth but also with the
presence of nonlocal terms in the boundary-value conditions.

Consider the following example. Let $\partial
G=\Gamma_1\cup\Gamma_2\cup\{g,h\}$, where $\Gamma_i$ are open (in
the topology of $\partial G$) $C^\infty$-curves; $g,h$ are the end
points of the curves $\overline{\Gamma_1}$ and
$\overline{\Gamma_2}$. Suppose that the domain $G$ is the plane
angle of opening $\pi$ in some neighborhood of each of the points
$g$ and $h$. We deliberately take a smooth domain in this example
to illustrate how the nonlocal terms can affect the smoothness of
solutions. Consider the following nonlocal problem in the domain
$G$:
\begin{equation}\label{eqIntroPinG}
 \Delta u=f_0(y)\quad (y\in G),
\end{equation}
\begin{equation}\label{eqIntroBinG}
\begin{aligned}
&u|_{\Gamma_1}+b_1(y)
u\bigl(\Omega_{1}(y)\bigr)\big|_{\Gamma_1}+a(y)
u\bigl(\Omega(y)\bigr)\big|_{\Gamma_1}=f_1(y)\quad
(y\in\Gamma_1),\\
&u|_{\Gamma_2}+b_2(y)
u\bigl(\Omega_{2}(y)\bigr)\big|_{\Gamma_2}=f_2(y)\quad
 (y\in\Gamma_2).
\end{aligned}
\end{equation}
Here $b_1$, $b_2$, and $a$ are real-valued $C^\infty$-functions;
$\Omega_i$ ($\Omega$) are $C^\infty$-diffeo\-mor\-phisms taking
some neighborhood ${\mathcal O}_i$ (${\mathcal O}_1$) of the curve
$\Gamma_i$ ($\Gamma_1$) onto the set $\Omega_i({\mathcal O}_i)$
($\Omega({\mathcal O}_1)$) in such a way that
$\Omega_i(\Gamma_i)\subset G$, $\Omega_i(g)=g$, $\Omega_i(h)=h$,
and the transformation $\Omega_i$, near the points $g, h$, is the
rotation of the boundary $\Gamma_i$ through the angle $\pi/2$
inwards the domain $G$ (respectively, $\Omega(\Gamma_1)\subset G$,
$\overline{\Omega(\Gamma_1)}\cap\{g,h\}=\varnothing$, and the
approach of the curve $\Omega(\overline{\Gamma_1})$ to the
boundary $\partial G$ can be arbitrary, cf.~\cite{SkMs86,
SkDu91}), see Fig.~\ref{figEx1}.
\begin{figure}[ht]
{ \hfill\epsfbox{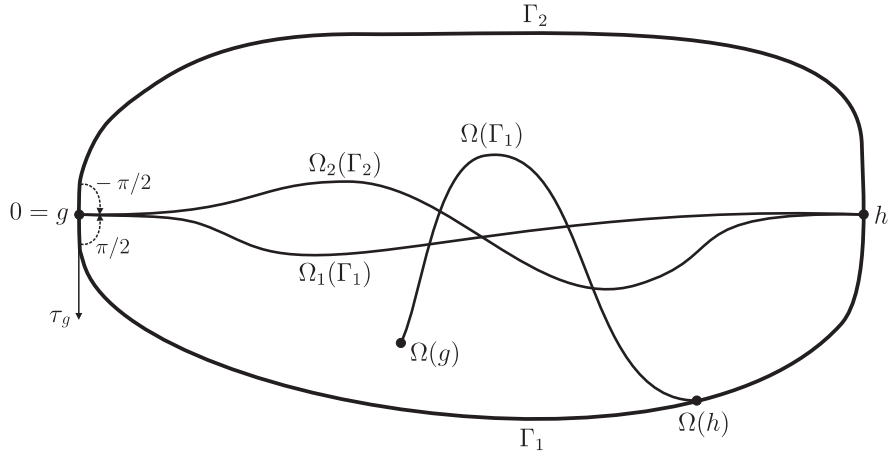}\hfill\ } \caption{Domain $G$ with
boundary $\partial G=\Gamma_1\cup\Gamma_2\cup\{g,h\}$.}
   \label{figEx1}
\end{figure}

We say that $g$ and $h$ are the {\em points of conjugation of
nonlocal conditions} because they divide the curves on which
different nonlocal conditions are set. The closure of the set
$$
\bigcup_{i=1,2}\{y\in\Omega_i(\Gamma_i):\
b_i(\Omega_i^{-1}(y))\ne0\}\cup\{y\in\Omega(\Gamma_1):\
a(\Omega^{-1}(y))\ne0\}
$$
is referred to as the {\em support of nonlocal terms}. It is clear
that, if $b_1(y)=a(y)=0$ for $y\in\Gamma_1$ and $b_2(y)=0$ for
$y\in\Gamma_2$, then the support of nonlocal terms is the empty
set. If, say, $b_1(y),a(y)\ne0$ for $y\in\Gamma_1$ and
$b_2(y)\ne0$ for $y\in\Gamma_2$, then the support of nonlocal
terms is the set
$\overline{\Omega_1(\Gamma_1)}\cup\overline{\Omega_2(\Gamma_2)}\cup\overline{\Omega(\Gamma_1)}$.

Denote by $W^k(G)=W^k_2(G)$ the Sobolev space. We say that a
function $u\in W^1(G)$ is a {\em generalized solution} of
problem~\eqref{eqIntroPinG}, \eqref{eqIntroBinG} with right-hand
side $f_0\in L_2(G)$, $f_i\in W^{1/2}(\Gamma_i)$ if $u$ satisfies
Eq.~\eqref{eqIntroPinG} in the sense of distributions and nonlocal
conditions~\eqref{eqIntroBinG} in the sense of traces. Using the
notation of problem~\eqref{eqIntroPinG}, \eqref{eqIntroBinG}, we
can formulate the main questions of our paper.
\begin{enumerate}
\item Find a condition on the
right-hand sides $f_0\in L_2(G)$, $f_i\in W^{3/2}(\Gamma_i)$ and
on the coefficients $b_1$, $b_2$, and $a$ which is necessary and
sufficient for {\em any} generalized solution of
problem~\eqref{eqIntroPinG}, \eqref{eqIntroBinG} to belong to the
space $W^2(G)$.
\item The same question for homogeneous nonlocal conditions, $\{f_i\}=0$.
\end{enumerate}

It is relatively easy to prove that any generalized solution of
problem~\eqref{eqIntroPinG}, \eqref{eqIntroBinG} belongs to the
space $W^2$ outside an arbitrarily small neighborhood of the
points $g$ and $h$ (see Sec.~\ref{sectNoEigen}). Clearly, the
behavior of solutions near the points $g$ and $h$ is affected by
the behavior of the coefficients $b_1$, $b_2$, and $a$ near these
points. However, the influence of the coefficients $b_i$ is
principally different from the influence of the coefficient $a$.
This phenomenon is explained by the fact that the coefficients
$b_i$ (for $y$ being in a small neighborhood of the points $g$ and
$h$) correspond to nonlocal terms supported {\em near} the set
$\{g,h\}$ (in the general case, such terms correspond to operators
$\mathbf B_i^1$), whereas the coefficient $a$ corresponds to a
nonlocal term supported {\em outside} some neighborhood of the set
$\{g,h\}$ (in the general case, such terms correspond to abstract
operators $\mathbf B_i^2$). What we give below is a scheme for the
investigation of smoothness of generalized solutions near the
point $g$ (this scheme is realized in
Secs.~\ref{sectStatement}--\ref{sectGener} for the general case
and in Sec.~\ref{sectExample} for the particular case of
problem~\eqref{eqIntroPinG}, \eqref{eqIntroBinG}).
\begin{description}
\item[Step~1] We construct a model nonlocal problem,
with a parameter, for ordinary differential equation corresponding
to the point $g$. The structure of nonlocal conditions in the
model problem depends only on the values of the coefficients
$b_i(g)$, $i=1,2$. (Section~\ref{sectStatement}.)
\item[Step~2] We consider the values $b_i(g)$ for which the band
$-1\le\Im\lambda<0$ contains no eigenvalues of the model problem.
In this case, any generalized solution belongs to  $W^2$ near the
point $g$. Note that, in this case, we impose no additional
restrictions on the right-hand side or the coefficients $b_1$,
$b_2$, and $a$. (Section~\ref{sectNoEigen} and
Theorem~\ref{thExuinW^2NoEigen}.)
\item[Step~3] We consider the values $b_i(g)$ for which the band
$-1\le\Im\lambda<0$ contains only the proper eigenvalue
$\lambda=-i$ of the model problem (see
Definition~\ref{defRegEigVal}). This is the most complicated
situation, which we call a ``border case.'' In this case, any
generalized solution belongs to $W^2$ near the point $g$ if and
only if the coefficients $b_1$, $b_2$, and $a$ satisfy a certain
consistency condition near the point $g$. The type of the
consistency condition depends on whether we consider homogeneous
or nonhomogeneous nonlocal conditions. In the latter case, the
consistency conditions must also be imposed on the right-hand side
$\{f_i\}$. (Section~\ref{sectProperEigen} and
Theorems~\ref{thExSmoothfne0}, \ref{thExSmoothf0}, and
Corollary~\ref{corExSmoothf0Homog}.)
\item[Step~4] We consider the values $b_i(g)$ for which the band
$-1\le\Im\lambda<0$ contains an improper eigenvalue of the model
problem (see Definition~\ref{defRegEigVal}). In this case, for any
coefficient $a$, one can find right-hand sides $f_0\in L_2(G)$,
$\{f_i\}=0$ ($f_0$ depends on the behavior of the coefficients
$b_i$ near the point $g$ and does not depend on the coefficient
$a$) and construct the corresponding generalized solution $u\in
W^1(G)$ such that $u$ does not belong to $W^2$ near the point $g$.
(Section~\ref{sectImproperEigen} and
Theorem~\ref{thExNonSmoothfne0}.)
\end{description}

It turns out that the smoothness of generalized solutions
preserves if $b_1(g)+b_2(g)\le-2$ or $b_1(g)+b_2(g)>0$ and can be
violated if $-2<b_1(g)+b_2(g)<0$. If $b_1(g)+b_2(g)=0$, we have
the border case. The necessary condition that the smoothness
preserve is the validity of a consistency condition imposed on the
right-hand side $\{f_i\}$ (see~\eqref{eqExConsistencyf}). Let us
show that the {\em presence of variable coefficients in nonlocal
conditions} may affect the smoothness of generalized solutions.
For simplicity, we assume that $a(y)\equiv0$. Let
condition~\eqref{eqExB2vB1CConsistency2} hold; in particular, let
$b_i(y)$ be constant near the point $g$. Then the smoothness of
generalized solutions preserves near the point $g$ whenever the
right-hand side $\{f_i\}$ satisfies the consistency
condition~\eqref{eqExConsistencyf}. However, if
condition~\eqref{eqExB2vB1CConsistency2} fails (e.g., if
$b_1(y)\equiv \beta_1y_2$, $b_2(y)\equiv \beta_2y_2$,
$\beta_1\ne\beta_2$, near the point $g=0$, axis $Oy_2$ being
tangent to $\partial G$ at $g=0$), then the smoothness of
generalized solutions can be violated even if the right-hand side
$\{f_i\}$ satisfies the consistency
condition~\eqref{eqExConsistencyf}. This follows from
Theorem~\ref{thExSmoothfne0}.

Now we illustrate another phenomenon arising in the border case.
Assume that $b_1(y)\equiv b_2(y)\equiv 0$. Let $a(y)=0$ in some
neighborhood of the point $h$ and $\Omega(g)\in G$. Then the {\em
support of nonlocal terms lies strictly inside the domain $G$.}
However, if $a(g)\ne0$ or $(\partial
a/\partial\tau_g)|_{y=g}\ne0$, where $\tau_g$ denotes the unit
vector tangent to $\partial G$ at the point $g$, then the
smoothness of generalized solutions of
problem~\eqref{eqIntroPinG}, \eqref{eqIntroBinG} (even with
homogeneous nonlocal conditions, $\{f_i\}=0$) can be violated.
This follows from Corollary~\ref{corSmoothf0B^2vInsideG} (see also
Sec.~\ref{subsecEx2}).

Note that the smoothness of generalized solutions for some
particular nonlocal elliptic problems was earlier studied by
Skubachevskii~\cite{SkMs86,SkRJMP}. In these papers, a nonlocal
perturbation of the Dirichlet problem for the Laplace operator is
treated; a condition which is necessary and sufficient for any
generalized solution of a problem with homogeneous nonlocal
conditions to belong to the space $W^2(G)$ has been found.
However, it was fundamental that the ``local'' Dirichlet
conditions are set on a part of the boundary and the coefficients
of nonlocal terms are constant.

In this paper, we suggest an approach for the study of smoothness,
based on the results concerning the solvability of model nonlocal
problems in plane angles in Sobolev spaces~\cite{GurRJMP03} and on
the asymptotic behavior of solutions of such problems in weighted
spaces~\cite{SkMs86,GurPetr03}. Our approach enables one to
investigate the smoothness of generalized solutions when different
nonlocal conditions are set on different parts of the boundary,
coefficients of nonlocal terms supported near the conjugation
points are variable, and nonlocal operators corresponding to
nonlocal terms supported outside the conjugation points are
abstract. Moreover, nonlocal boundary-value conditions can be both
homogeneous and nonhomogeneous.

\section{Setting of Nonlocal Problems in Bounded Domains}\label{sectStatement}

\subsection{Setting of the Problem}\label{subsectStatement}
Let $G\subset{\mathbb R}^2$ be a bounded domain with boundary
$\partial G$. Consider a set ${\mathcal K}\subset\partial G$
consisting of finitely many points. Let $\partial
G\setminus{\mathcal K}=\bigcup\limits_{i=1}^{N}\Gamma_i$, where
$\Gamma_i$ are open (in the topology of $\partial G$)
$C^\infty$-curves. Assume that the domain $G$ is a plane angle in
some neighborhood of each point $g\in{\mathcal K}$.

For an integer $k\ge0$, denote by $W^k(G)=W_2^k(G)$ the Sobolev
space with the norm
$$
\|u\|_{W^k(G)}=\left(\sum\limits_{|\alpha|\le k}\int\limits_G
|D^\alpha u(y)|^2\,dy\right)^{1/2}
$$
(set $W^0(G)=L_2(G)$ for $k=0$). For an integer $k\ge1$, we
introduce the space $W^{k-1/2}(\Gamma)$ of traces on a smooth
curve $\Gamma\subset\overline{ G}$ with the norm
\begin{equation}\label{eqTraceNormW}
\|\psi\|_{W^{k-1/2}(\Gamma)}=\inf\|u\|_{W^k(G)}\quad (u\in
W^k(G):\ u|_\Gamma=\psi).
\end{equation}

Along with Sobolev spaces, we will use weighted spaces (the
Kondrar'ev spaces). Let us introduce these spaces. Let
$Q=\{y\in{\mathbb R}^2:\ r>0,\ |\omega|<\omega_0\}$,
$Q=\{y\in{\mathbb R}^2:\ 0<r<d,\ |\omega|<\omega_0\}$,
$0<\omega_0<\pi$, $d>0$, or $Q=G$. We denote by $\mathcal M$ the
set $\{0\}$ in the first and second cases and the set $\mathcal K$
in the third case. Introduce the space $H_a^k(Q)=H_a^k(Q,\mathcal
M)$ as a completion of the set $C_0^\infty(\overline{ Q}\setminus
\mathcal M)$ with respect to the norm
$$
 \|u\|_{H_a^k(Q)}=\left(
    \sum_{|\alpha|\le k}\int\limits_Q \rho^{2(a-k+|\alpha|)} |D^\alpha u(y)|^2 dy
                                       \right)^{1/2},
$$
where $a\in \mathbb R$, $k\ge 0$ is an integer, and
$\rho=\rho(y)=\dist(y,\mathcal M)$. For an integer $k\ge1$, denote
by $H_a^{k-1/2}(\Gamma)$  the set of traces on a smooth curve
$\Gamma\subset\overline{ Q}$ with the norm
\begin{equation}\label{eqTraceNormH}
\|\psi\|_{H_a^{k-1/2}(\Gamma)}=\inf\|u\|_{H_a^k(Q)} \quad (u\in
H_a^k(Q):\ u|_\Gamma = \psi).
\end{equation}

For an integer $k\ge1$, we also set
$$
\mathcal W^{k-1/2}(\partial G)=\prod\limits_{i=1}^{N}
W^{k-1/2}(\Gamma_i),\qquad \mathcal H_a^{k-1/2}(\partial
G)=\prod\limits_{i=1}^{N} H_a^{k-1/2}(\Gamma_i).
$$

Consider the operator
$$
{\bf
P}u=\sum\limits_{i,k=1}^2p_{ik}(y)u_{y_iy_k}+\sum\limits_{k=1}^2p_k(y)u_{y_k}+p_0(y)u,
$$
where $p_{ik}$, $i,k=1,2$, and $p_k$, $k=0,1,2$, are
complex-valued $C^\infty$-coefficients. We assume throughout the
paper that the operator ${\bf P}$ is {\em properly elliptic} on
$\overline{G}$ (see, e.g.,~\cite[Chap.~2, \S~1]{LM}).

\smallskip

For any closed set $\mathcal M$, we denote its
$\varepsilon$-neighborhood by $\mathcal O_{\varepsilon}(\mathcal
M)$, i.e.,
$$
\mathcal O_{\varepsilon}(\mathcal M)=\{y\in \mathbb R^2:\ \dist(y,
\mathcal M)<\varepsilon\},\qquad {\varepsilon}>0.
$$

Now we introduce operators corresponding to nonlocal terms
supported near the set $\mathcal K$. Let $\Omega_{is}$ ($i=1,
\dots, N;$ $s=1, \dots, S_i$) be $C^\infty$-diffeomorphisms taking
some neighborhood ${\mathcal O}_i$ of the curve
$\overline{\Gamma_i\cap\mathcal O_{{\varepsilon}}(\mathcal K)}$ to
the set $\Omega_{is}({\mathcal O}_i)$ in such a way that
$\Omega_{is}(\Gamma_i\cap\mathcal O_{{\varepsilon}}(\mathcal
K))\subset G$ and
\begin{equation}\label{eqOmega}
\Omega_{is}(g)\in\mathcal K\quad\text{for}\quad
g\in\overline{\Gamma_i}\cap\mathcal K.
\end{equation}
Thus, the transformations $\Omega_{is}$ take the curves
$\Gamma_i\cap\mathcal O_{{\varepsilon}}(\mathcal K)$ strictly
inside the domain $G$ and the set of their end points
$\overline{\Gamma_i}\cap\mathcal K$ to itself.

Let us specify the structure of the transformations $\Omega_{is}$
near the set $\mathcal K$. Denote by $\Omega_{is}^{+1}$ the
transformation $\Omega_{is}:{\mathcal O}_i\to\Omega_{is}({\mathcal
O}_i)$ and by $\Omega_{is}^{-1}:\Omega_{is}({\mathcal
O}_i)\to{\mathcal O}_i$ the inverse transformation. The set of
points
$\Omega_{i_qs_q}^{\pm1}(\dots\Omega_{i_1s_1}^{\pm1}(g))\in{\mathcal
K}$ ($1\le s_j\le S_{i_j},\ j=1, \dots, q$) is said to be an {\em
orbit} of the point $g\in{\mathcal K}$ and denoted by $\Orb(g)$.
In other words, the orbit $\Orb(g)$ is formed by the points (of
the set $\mathcal K$) that can be obtained by consecutively
applying the transformations $\Omega_{i_js_j}^{\pm1}$ to the point
$g$.

It is clear that either $\Orb(g)=\Orb(g')$ or
$\Orb(g)\cap\Orb(g')=\varnothing$ for any $g, g'\in{\mathcal K}$.
In what follows, we assume that the set $\mathcal K$ consists of
one orbit only (the results we will obtain are easy to generalize
for the case in which $\mathcal K$ consists of finitely many
disjoint orbits, see Sec.~\ref{sectGener}). The set (orbit)
$\mathcal K$ consists of $N$ points. We denote these points by
$g_j$, $j=1, \dots, N$.

Take a sufficiently small number $\varepsilon$ (see
Remark~\ref{remSmallEps} below) such that there exist
neighborhoods $\mathcal O_{\varepsilon_1}(g_j)$, $ \mathcal
O_{\varepsilon_1}(g_j)\supset\mathcal O_{\varepsilon}(g_j) $,
satisfying the following conditions:
\begin{enumerate}
\item The domain $G$ is a plane angle in the neighborhood $\mathcal O_{\varepsilon_1}(g_j)$;
\item
$\overline{\mathcal O_{\varepsilon_1}(g_j)}\cap\overline{\mathcal
O_{\varepsilon_1}(g_k)}=\varnothing$ for any $g_j,g_k\in\mathcal
K$, $k\ne j$;
\item If $g_j\in\overline{\Gamma_i}$ and
$\Omega_{is}(g_j)=g_k,$ then ${\mathcal
O}_{\varepsilon}(g_j)\subset\mathcal
 O_i$ and
 $\Omega_{is}\big({\mathcal
O}_{\varepsilon}(g_j)\big)\subset{\mathcal
O}_{\varepsilon_1}(g_k).$
\end{enumerate}

For each point $g_j\in\overline{\Gamma_i}\cap\mathcal K$, we fix a
transformation $Y_j: y\mapsto y'(g_j)$ which is a composition of
the shift by the vector $-\overrightarrow{Og_j}$ and the rotation
through some angle so that
$$
Y_j({\mathcal O}_{\varepsilon_1}(g_j))={\mathcal
O}_{\varepsilon_1}(0),\qquad Y_j(G\cap{\mathcal
O}_{\varepsilon_1}(g_j))=K_j\cap{\mathcal O}_{\varepsilon_1}(0),
$$
$$
Y_j(\Gamma_i\cap{\mathcal
O}_{\varepsilon_1}(g_j))=\gamma_{j\sigma}\cap{\mathcal
O}_{\varepsilon_1}(0)\quad (\sigma=1\ \text{or}\ \sigma=2),
$$
where
$$
K_j=\{y\in{\mathbb R}^2:\ r>0,\ |\omega|<\omega_j\},\qquad
\gamma_{j\sigma}=\{y\in\mathbb R^2:\ r>0,\ \omega=(-1)^\sigma
\omega_j\}.
$$
Here $(\omega,r)$ are the polar coordinates and $0<\omega_j<\pi$.

Consider the following condition (see Fig.~\ref{figTransform}).
\begin{condition}\label{condK1}
Let $g_j\in\overline{\Gamma_i}\cap\mathcal K$ and
$\Omega_{is}(g_j)=g_k\in\mathcal K;$ then the transformation
$$
Y_k\circ\Omega_{is}\circ Y_j^{-1}:{\mathcal
O}_{\varepsilon}(0)\to{\mathcal O}_{\varepsilon_1}(0)
$$
is the composition of rotation and homothety.
\end{condition}
\begin{figure}[ht]
{ \hfill\epsfbox{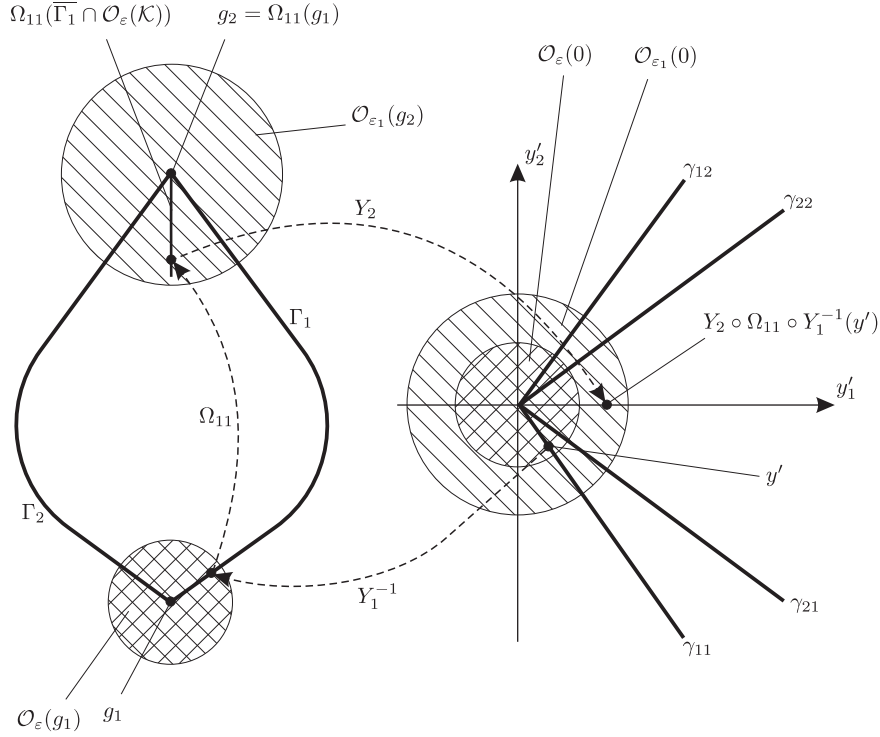}\hfill\ } \caption{The
transformation $ Y_2\circ\Omega_{11}\circ Y_1^{-1}:{\mathcal
O}_{\varepsilon}(0)\to{\mathcal O}_{\varepsilon_1}(0) $ is a
composition of rotation and homothety}
   \label{figTransform}
\end{figure}

\begin{remark}\label{remK1}
Condition~\ref{condK1}, together with the fact that
$\Omega_{is}(\Gamma_i)\subset G$, implies that, if
$g\in\Omega_{is}(\overline{\Gamma_i}\cap\mathcal
K)\cap\overline{\Gamma_j}\cap{\mathcal K}\ne\varnothing$, then the
curves $\Omega_{is}(\overline{\Gamma_i}\cap\mathcal
O_\varepsilon(\mathcal K))$ and $\overline{\Gamma_j}$ intersect at
nonzero angle at the point $g$.
\end{remark}

Introduce the nonlocal operators $\mathbf B_{i}^1$ by the formulas
$$
 \mathbf B_{i}^1u=\sum\limits_{s=1}^{S_i}
b_{is}(y) u(\Omega_{is}(y)),\quad
   y\in\Gamma_i\cap\mathcal O_{\varepsilon}(\mathcal K),\qquad
\mathbf B_{i}^1u=0,\quad y\in\Gamma_i\setminus\mathcal
O_{\varepsilon}(\mathcal K),
$$
where $b_{is}\in C^\infty(\mathbb R^2)$ and $\supp
b_{is}\subset\mathcal O_{{\varepsilon}}(\mathcal K)$. Since
$\mathbf B_{i}^1u=0$ whenever $\supp u\subset\overline{
G}\setminus\overline{\mathcal O_{\varepsilon_1}(\mathcal K)}$, we
say that the operators $\mathbf B_{i}^1$ \textit{correspond to
nonlocal terms supported near the set} $\mathcal K$.

\smallskip

Set $G_\rho=\{y\in G:\ \dist(y,
\partial G)>\rho\}$ for $\rho>0$. Consider operators $\mathbf
B_{i}^2$ satisfying the following condition
(cf.~\cite{SkMs86,SkJMAA,GurRJMP03}).
\begin{condition}\label{condSeparK23}
There exist numbers $\varkappa_1>\varkappa_2>0$ and $\rho>0$ such
that
\begin{equation}\label{eqSeparK23'}
  \|\mathbf B^2_{i}u\|_{W^{3/2}(\Gamma_i)}\le c_1
  \|u\|_{W^{2}(G\setminus\overline{\mathcal O_{\varkappa_1}(\mathcal
  K)})}\qquad\forall u\in W^{2}(G\setminus\overline{\mathcal
O_{\varkappa_1}(\mathcal
  K)}),
\end{equation}
\begin{equation}\label{eqSeparK23''}
  \|\mathbf B^2_{i}u\|_{W^{3/2}
   (\Gamma_i\setminus\overline{\mathcal O_{\varkappa_2}(\mathcal K)})}\le
  c_2 \|u\|_{W^{2}(G_\rho)}\qquad\forall u\in
  W^{2}(G_\rho),
\end{equation}
where $i=1, \dots, N$, whereas $c_1,c_2>0$ do not depend on $u$.
\end{condition}

In particular, inequality~\eqref{eqSeparK23'} implies that
$\mathbf B_{i}^2u=0$ whenever $\supp u\subset \mathcal
O_{\varkappa_1}(\mathcal K)$. For this reason, we say that the
operators $\mathbf B_{i}^2$ \textit{correspond to nonlocal terms
supported outside the set} $\mathcal K$.

\smallskip

We assume that Conditions~\ref{condK1} and~\ref{condSeparK23} are
fulfilled throughout
Secs.~\ref{sectStatement}--\ref{sectImproperEigen}.

\smallskip

We study the following nonlocal elliptic boundary-value problem:
\begin{align}
 {\bf P} u=f_0(y) \quad &(y\in G),\label{eqPinG}\\
     u|_{\Gamma_i}+\mathbf B_{i}^1 u+\mathbf B_{i}^2 u=
   f_{i}(y)\quad
    &(y\in \Gamma_i;\ i=1, \dots, N).\label{eqBinG}
\end{align}
Note that the points $g_j$ divide the curves on which different
nonlocal conditions are set; therefore, it is natural to say that
$g_j$, $j=1,\dots,N$, are the {\em points of conjugation of
nonlocal conditions}. Problem~\eqref{eqIntroPinG},
\eqref{eqIntroBinG} is an example of an elliptic problem with
nonlocal conditions~\eqref{eqBinG} (see also
Sec.~\ref{sectExample}).

 For any set $X\in \mathbb R^2$ having a nonempty interior, we
denote by $C_0^\infty(X)$ the set of functions infinitely
differentiable on $\overline{ X}$ and compactly supported on $X$.

\begin{definition}\label{defGenSol2}
A function $u\in W^{1}(G)$ is called a {\em generalized solution}
of problem~\eqref{eqPinG}, \eqref{eqBinG} with right-hand side
$\{f_0,f_i\}\in L_2(G)\times \mathcal W^{1/2}(\partial G)$ if $u$
satisfies nonlocal conditions~\eqref{eqBinG} in the sense of
traces and Eq.~\eqref{eqPinG} in the sense of distributions. The
latter is equivalent to the validity of the integral identity
$$
-\int\limits_G\sum\limits_{i,k=1}^2
u_{y_i}\overline{(p_{ik}w)_{y_k}}\,dy+\int\limits_G\Bigg(\sum\limits_{k=1}^2
p_ku_{y_k}+p_0u\Bigg)\overline{w}\,dy=\int\limits_G
    f_0\overline{w}\,dy
$$
for all $w\in C_0^\infty(G)$.
\end{definition}

\begin{remark}
Generalized solutions a priori belong to $W^1(G)$, whereas
Condition~\ref{condSeparK23} is formulated for functions from the
space $W^{2}$ inside the domain and near a smooth part of the
boundary. This formulation can be justified by the fact that any
generalized solution belongs to the space $W^{2}$ outside an
arbitrarily small neighborhood of the set $\mathcal K$ (see
Sec.~\ref{sectNoEigen}).
\end{remark}

\begin{remark}\label{remSmallEps}
We can assume that the number $\varepsilon$ occurring in the
definition of the operators $\mathbf B_i^1$ is sufficiently small
(while $\varkappa_1,\varkappa_2,\rho$ occurring in the definition
of the operators $\mathbf B_i^2$ can be arbitrary). Let us show
that this assumption leads to no loss of generality.

Take a number $\hat\varepsilon$, where
$0<\hat\varepsilon<\varepsilon$. Set $\hat{\mathbf
B}_{i}^1u=\sum\limits_{s=1}^{S_i}
   \big(\hat b_{i s}(y) u(\Omega_{is}(y))$ for
$y\in\Gamma_i\cap\mathcal O_{\hat\varepsilon}(\mathcal K)$ and $
\hat{\mathbf B}_{i}^1u=0$ for $y\in\Gamma_i\setminus\mathcal
O_{\hat\varepsilon}(\mathcal K)$, where ${\hat b}_{is}\in
C^\infty(\mathbb R^2)$,  $\supp b_{is}\subset\mathcal
O_{{\hat\varepsilon}}(\mathcal K)$, and ${\hat
b}_{is}(y)=b_{is}(y)$ for $y\in\Gamma_i\cap\mathcal
O_{\hat\varepsilon/2}(\mathcal K)$. It is clear that
$$
\mathbf B_{i}^1+\mathbf B_{i}^2=\hat{\mathbf B}_{i}^1+\hat{\mathbf
B}_{i}^2,
$$
where $\hat{\mathbf B}_{i}^2=\mathbf B_{i}^1-\hat{\mathbf
B}_{i}^1+\mathbf B_{i}^2$. Since $\mathbf B_{i}^1u-\hat{\mathbf
B}_{i}^1u=0$ near the set $\mathcal K$, it follows that the
operator $\mathbf B_{i}^1-\hat{\mathbf B}_{i}^1$ satisfy
Condition~\ref{condSeparK23} for appropriate
$\varkappa_1,\varkappa_2,\rho$ (see~\cite[\S~1]{GurRJMP03} for
details). Thus, we see that $\varepsilon$ can be taken as small as
needed. However, one must remember that the operator $\mathbf
B_{i}^2$ and the values of $\varkappa_1,\varkappa_2,\rho$ may
change if we change the value of $\varepsilon$.
\end{remark}

\subsection{Model Problems}\label{subsectStatementNearK}

When studying problem~\eqref{eqPinG}, \eqref{eqBinG}, particular
attention must be paid to the behavior of solutions near the set
${\mathcal K}$ of conjugation points. In this subsection, we
consider corresponding model problems.

Denote by $u_j(y)$ the function $u(y)$ for $y\in{\mathcal
O}_{\varepsilon_1}(g_j)$. If $g_j\in\overline{\Gamma_i},$
$y\in{\mathcal O}_{\varepsilon}(g_j),$ $\Omega_{is}(y)\in{\mathcal
O}_{\varepsilon_1}(g_k),$ then denote by $u_k(\Omega_{is}(y))$ the
function $u(\Omega_{is}(y))$. In this case, nonlocal
problem~\eqref{eqPinG}, \eqref{eqBinG} acquires the following form
in the neighborhood of the set (orbit) $\mathcal K$:
\begin{gather*}
 {\bf P} u_j=f_0(y) \quad (y\in\mathcal O_\varepsilon(g_j)\cap
 G),\\
\begin{aligned}
u_j(y)|_{\mathcal O_\varepsilon(g_j)\cap\Gamma_i}+
\sum\limits_{s=1}^{S_i} b_{is}(y) u_k(\Omega_{is}(y))|_{\mathcal
O_\varepsilon(g_j)\cap\Gamma_i}
=\psi_i(y) \\
\big(y\in \mathcal O_\varepsilon(g_j)\cap\Gamma_i;\ i\in\{1\le
i\le N:\ g_j\in\overline{\Gamma_i}\};\ j=1, \dots, N\big),
\end{aligned}
\end{gather*}
where
$$
\psi_i=f_i-\mathbf B_{i}^2u.
$$

Let $y\mapsto y'(g_j)$ be the change of variables described in
Sec.~\ref{subsectStatement}. Set
$$
K_j^\varepsilon=K_j\cap\mathcal O_\varepsilon(0),\qquad
\gamma_{j\sigma}^\varepsilon=\gamma_{j\sigma}\cap\mathcal
O_\varepsilon(0)
$$
and introduce the functions
\begin{equation}\label{eqytoy'}
\begin{gathered}
U_{j}(y')=u(y(y')),\quad F_{j}(y')=f_0(y(y')),\quad y'\in
K_{j}^\varepsilon,\\
F_{j\sigma}(y')=f_i(y(y')),\quad B_{j\sigma}^u(y')=(\mathbf
B_i^2u)(y(y')),\\
\Psi_{j\sigma}(y')=F_{j\sigma}(y')-B_{j\sigma}^u(y'),\quad
y'\in\gamma_{j\sigma}^\varepsilon,
\end{gathered}
\end{equation}
where $\sigma=1$ $(\sigma=2)$ if the transformation $y\mapsto
y'(g_j)$ takes $\Gamma_i$ to the side $\gamma_{j1}$
($\gamma_{j2}$) of the angle $K_j$. Denote $y'$ by $y$ again.
Then, by virtue of Condition~\ref{condK1}, problem~\eqref{eqPinG},
\eqref{eqBinG} acquires the form
\begin{gather}
  {\bf P}_{j}U_j=F_{j}(y) \quad (y\in
  K_j^\varepsilon),\label{eqPinK}\\
  {\mathbf B}_{j\sigma}U\equiv\sum\limits_{k,s}
      b_{j\sigma ks}(y)U_k({\mathcal G}_{j\sigma ks}y)
    =\Psi_{j\sigma}(y) \quad (y\in\gamma_{j\sigma}^\varepsilon).\label{eqBinK}
\end{gather}
Here (and below unless otherwise stated) $j, k=1, \dots, N;$
$\sigma=1, 2;$ $s=0, \dots, S_{j\sigma k}$; ${\mathbf P}_j$ are
properly elliptic second-order differential operators with
variable complex-valued $C^\infty$-coefficients,
$$
\mathbf P_jv=\sum\limits_{i,k=1}^2p_{jik}(y)v_{y_iy_k}+
\sum\limits_{k=1}^2p_{jk}(y)v_{y_k}+p_{j0}(y)v;
$$
$U=(U_1,\dots,U_N)$; $b_{j\sigma ks}(y)$ are smooth functions,
$b_{j\sigma j0}(y)\equiv 1$; ${\mathcal G}_{j\sigma ks}$ is an
operator of rotation through an angle~$\omega_{j\sigma ks}$ and
homothety with a coefficient~$\chi_{j\sigma ks}>0$ in the
$y$-plane. Moreover,
$$
|(-1)^\sigma \omega_{j}+\omega_{j\sigma
ks}|<\omega_{k}\qquad\text{for}\qquad (k,s)\ne(j,0)
$$
(see Remark~\ref{remK1}) and
$$
\omega_{j\sigma j0}=0,\qquad \chi_{j\sigma j0}=1
$$
(i.e., $\mathcal G_{j\sigma j0}y\equiv y$).

Let the principal homogeneous parts of the operators $\mathbf P_j$
at the point $y=0$ have the following form in the polar
coordinates:
$$
\sum\limits_{i,k=1}^2p_{jik}(0)v_{y_iy_k}=r^{-2}\tilde{\mathcal
P_j}(\omega,\partial/\partial\omega,r\partial/\partial r)v.
$$
Consider the analytic operator-valued function $\tilde{\mathcal
L}(\lambda):\prod\limits_j
W^{2}(-\omega_j,\omega_j)\to\prod\limits_j (L_2(-\omega_j,
\omega_j)\times\mathbb C^2)$ given by
$$
\tilde{\mathcal L}(\lambda)\varphi=\big\{\tilde{\mathcal
P}_j(\omega, \partial/\partial\omega, i\lambda)\varphi_j,\
  \sum\limits_{k,s} (\chi_{j\sigma ks})^{i\lambda}
              b_{j\sigma ks}(0)\varphi_k((-1)^\sigma \omega_j+\omega_{j\sigma ks})\big\}.
$$
Main definitions and facts concerning analytic operator-valued
functions can be found in~\cite{GS}. The following assertion is of
particular importance (see~\cite[Lemmas~2.1 and 2.2]{SkDu90}).
\begin{lemma}\label{lSpectrum}
The spectrum of the operator $\tilde{\mathcal L}(\lambda)$ is
discrete. For any numbers $c_1<c_2$, the band $c_1<\Im\lambda<c_2$
contains at most finitely many eigenvalues of the operator
$\tilde{\mathcal L}(\lambda)$.
\end{lemma}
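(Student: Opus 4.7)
My plan is to apply the Gokhberg--Sigal theory of analytic Fredholm operator-valued functions (see~\cite{GS}). The three ingredients to verify are: (i) $\tilde{\mathcal L}(\lambda)$ is Fredholm of index zero for every $\lambda$, (ii) the dependence on $\lambda$ is holomorphic, and (iii) there exists at least one $\lambda_0$ at which $\tilde{\mathcal L}(\lambda_0)$ is invertible. Once these are established, Gokhberg--Sigal gives that the noninvertibility set (the spectrum) is discrete with finite algebraic multiplicities; combining this with a uniform-in-$\lambda$ a~priori estimate on horizontal strips will upgrade ``discrete'' to ``finite in every strip''.

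The first step is to split
$$
\tilde{\mathcal L}(\lambda)=\tilde{\mathcal L}_0(\lambda)+\tilde{\mathcal M}(\lambda),
$$
where $\tilde{\mathcal L}_0(\lambda)$ retains only the ``diagonal'' boundary summand corresponding to $(k,s)=(j,0)$, which because $\omega_{j\sigma j0}=0$, $\chi_{j\sigma j0}=1$, $b_{j\sigma j0}\equiv 1$ reduces to the pure endpoint trace $\varphi_j((-1)^\sigma\omega_j)$, while $\tilde{\mathcal M}(\lambda)$ collects all the remaining genuinely nonlocal summands. Then $\tilde{\mathcal L}_0(\lambda)$ decouples into $N$ independent two-point Dirichlet problems on the intervals $(-\omega_j,\omega_j)$ for the parameter-dependent second-order ODE $\tilde{\mathcal P}_j(\omega,\partial/\partial\omega,i\lambda)\varphi_j$. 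Proper ellipticity of $\mathbf P_j$ translates into the Lopatinskii condition for these one-dimensional Dirichlet problems, making $\tilde{\mathcal L}_0(\lambda)$ a holomorphic Fredholm pencil of index zero. For every $(k,s)\ne(j,0)$, the inequality $|(-1)^\sigma\omega_j+\omega_{j\sigma ks}|<\omega_k$ recorded after~\eqref{eqBinK} places the evaluation point strictly inside $(-\omega_k,\omega_k)$, so the functional $\varphi_k\mapsto\varphi_k((-1)^\sigma\omega_j+\omega_{j\sigma ks})$ factors through the one-dimensional compact embedding $W^2(-\omega_k,\omega_k)\hookrightarrow C[-\omega_k,\omega_k]$; multiplied by the entire scalars $(\chi_{j\sigma ks})^{i\lambda}b_{j\sigma ks}(0)$, the sum $\tilde{\mathcal M}(\lambda)$ is therefore a holomorphic family of compact operators. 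Hence $\tilde{\mathcal L}(\lambda)=\tilde{\mathcal L}_0(\lambda)+\tilde{\mathcal M}(\lambda)$ is holomorphic Fredholm of index zero.

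For (iii) and the band-finiteness I would invoke a parameter-dependent elliptic estimate of Agranovich--Vishik type for $\tilde{\mathcal L}_0(\lambda)$, schematically
$$
|\lambda|^2\|\varphi_j\|_{L_2}+\|\varphi_j\|_{W^2}\le C\Bigl(\|\tilde{\mathcal P}_j\varphi_j\|_{L_2}+|\lambda|^{3/2}\sum_{\sigma}|\varphi_j((-1)^\sigma\omega_j)|\Bigr),
$$
with $C$ uniform on any fixed strip $c_1\le\Im\lambda\le c_2$. On such a strip the exponentials $(\chi_{j\sigma ks})^{i\lambda}$ remain bounded, and the interior trace functionals appearing in $\tilde{\mathcal M}(\lambda)$ are controlled by $\|\varphi_k\|_{W^2}$ independently of $\lambda$; hence for $|\Re\lambda|$ sufficiently large the nonlocal contribution is absorbed by the left-hand side, making $\tilde{\mathcal L}(\lambda)$ invertible outside a bounded rectangle of the strip. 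This simultaneously supplies a regular point $\lambda_0$ and the desired band-finiteness once Gokhberg--Sigal is applied. The main technical obstacle I anticipate lies precisely in this uniform-in-$\lambda$ absorption: the compact nonlocal traces couple different components $\varphi_k$ and must be dominated without spoiling the parameter-ellipticity of the diagonal blocks, which requires carefully exploiting both the strict interiority $|(-1)^\sigma\omega_j+\omega_{j\sigma ks}|<\omega_k$ and the boundedness of $\Im\lambda$ on the strip.
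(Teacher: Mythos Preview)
The paper does not prove this lemma at all; it simply cites \cite[Lemmas~2.1 and~2.2]{SkDu90}. Your overall strategy---split off the local Dirichlet pencil $\tilde{\mathcal L}_0(\lambda)$, treat the nonlocal boundary terms as a finite-rank (hence compact) holomorphic perturbation, and invoke the analytic Fredholm alternative of \cite{GS} together with a parameter-elliptic estimate on horizontal strips---is exactly the approach carried out in that reference, so in broad outline you are on the right track.

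There is, however, a genuine gap in your absorption step. You write that the interior trace functionals are ``controlled by $\|\varphi_k\|_{W^2}$ independently of $\lambda$; hence for $|\Re\lambda|$ sufficiently large the nonlocal contribution is absorbed by the left-hand side.'' The ``hence'' is false. In your Agranovich--Vishik estimate the boundary data enter with weight $|\lambda|^{3/2}$, so a $\lambda$-independent bound $|\varphi_k(\omega_0)|\le C\|\varphi_k\|_{W^2}$ produces a right-hand side contribution of order $|\lambda|^{3/2}\|\varphi_k\|_{W^2}$, which \emph{grows} with $|\lambda|$ and cannot be absorbed into the left-hand side term $\|\varphi_j\|_{W^2}$ (coefficient~$1$) or $|\lambda|^2\|\varphi_j\|_{L_2}$. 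Interpolation inequalities of Gagliardo--Nirenberg type do not rescue this: they yield at best a bound of the form $|\lambda|^{3/2}|\varphi_k(\omega_0)|\le C\bigl(|\lambda|^2\|\varphi_k\|_{L_2}+\|\varphi_k\|_{W^2}\bigr)$ with a \emph{fixed} constant $C$, not a small one.

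What is actually needed---and this is precisely where the strict interiority $|(-1)^\sigma\omega_j+\omega_{j\sigma ks}|<\omega_k$ enters quantitatively rather than qualitatively---is that for solutions of the parameter-dependent ODE $\tilde{\mathcal P}_k(\omega,\partial_\omega,i\lambda)\varphi_k=f_k$, the value of $\varphi_k$ at a fixed interior point decays \emph{exponentially} in $|\Re\lambda|$ relative to the parameter-dependent norm of the data. Concretely, for the model equation $\varphi''-\lambda^2\varphi=0$ on $(-\omega_k,\omega_k)$ with Dirichlet data $g_\pm$, the explicit solution via $\sinh$ shows that $|\varphi(\omega_0)|\le Ce^{-\delta|\Re\lambda|}(|g_+|+|g_-|)$ whenever $|\omega_0|<\omega_k$, with $\delta=\omega_k-|\omega_0|>0$. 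This exponential smallness (uniform on strips of bounded $\Im\lambda$) is what makes $\|\tilde{\mathcal L}_0(\lambda)^{-1}\tilde{\mathcal M}(\lambda)\|\to 0$ as $|\Re\lambda|\to\infty$, giving both a regular point for Gokhberg--Sigal and the finiteness in each strip. You correctly flag this absorption as the main obstacle and name the right ingredients, but the mechanism you describe is not the one that works.

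A minor remark: your invocation of the compact embedding $W^2\hookrightarrow C$ to justify compactness of $\tilde{\mathcal M}(\lambda)$ is a red herring. The boundary-evaluation functionals in $\tilde{\mathcal L}_0(\lambda)$ factor through the same compact embedding; what makes $\tilde{\mathcal M}(\lambda)$ compact is simply that its range lies in the finite-dimensional factor $\{0\}\times\mathbb C^{2N}$, so it has finite rank regardless of where the evaluation points sit.
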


Spectral properties of the operator $\tilde{\mathcal L}(\lambda)$
play a crucial role in the study of smoothness of generalized
solutions.

\section{Preservation of Smoothness of Generalized Solutions}\label{sectNoEigen}

First, we study the case in which the following condition holds.
\begin{condition}\label{condNoEigen}
The band $-1\le\Im\lambda<0$ contains no eigenvalues of the
operator $\tilde{\mathcal L}(\lambda)$.
\end{condition}

The main result of this section is as follows.

\begin{theorem}\label{thuinW^2NoEigen}
Let Condition~$\ref{condNoEigen}$ hold, and let $u\in W^1(G)$ be a
generalized solution of problem~\eqref{eqPinG}, \eqref{eqBinG}
with right-hand side $\{f_0,f_i\}\in L_2(G)\times \mathcal
W^{3/2}(\partial G)$. Then $u\in W^2(G)$.
\end{theorem}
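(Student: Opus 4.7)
The plan is to combine three ingredients: a bootstrap argument giving $W^2$-regularity away from the set $\mathcal K$ of conjugation points; the change of variables $Y_j$ that localizes the problem near each $g_j\in\mathcal K$ to a model nonlocal problem in a plane angle; and the Kondrat'ev-style regularity theorem for that model problem, whose crucial hypothesis is exactly Condition~\ref{condNoEigen}.

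First I would establish that $u\in W^2(G\setminus\overline{\mathcal O_\delta(\mathcal K)})$ for every $\delta>0$. Since $\mathbf P u=f_0\in L_2(G)$ in the sense of distributions, interior elliptic regularity yields $u\in W^2_\loc(G)$. To reach the smooth part of the boundary I would rewrite~\eqref{eqBinG} as $u|_{\Gamma_i}=f_i-\mathbf B_i^1u-\mathbf B_i^2u$: by the support property of $\mathbf B_i^1$ and by~\eqref{eqSeparK23'} of Condition~\ref{condSeparK23}, both nonlocal terms on the right only involve values of $u$ away from $\mathcal K$, hence the right-hand side of this rewritten boundary condition lies in $W^{3/2}$ on $\Gamma_i\setminus\overline{\mathcal O_\delta(\mathcal K)}$ after an iteration that shrinks the bad neighborhood at each step and feeds in the interior regularity already obtained. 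A standard a priori estimate for the Dirichlet problem for the properly elliptic operator $\mathbf P$ on subdomains avoiding $\mathcal K$ then delivers the claimed $W^2$-regularity.

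Next, near a conjugation point $g_j$ I would pass to the model problem~\eqref{eqPinK}, \eqref{eqBinK} via the change of variables from Sec.~\ref{subsectStatementNearK}. The right-hand side $\Psi_{j\sigma}=F_{j\sigma}-B_{j\sigma}^u$ is in $W^{3/2}$ up to the vertex: $F_{j\sigma}$ inherits this from $f_i\in W^{3/2}(\Gamma_i)$, while $B_{j\sigma}^u$ is controlled via~\eqref{eqSeparK23'} by $\|u\|_{W^2(G\setminus\overline{\mathcal O_{\varkappa_1}(\mathcal K)})}$, which is finite by the previous step. Since $u\in W^1(G)$ and the weight of $H_1^1$ is bounded on $K_j^\varepsilon$, the push-forward $U_j$ automatically lies in $H_1^1(K_j^\varepsilon)$.

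The final step is to invoke the regularity theorem for model nonlocal problems in plane angles from~\cite{GurRJMP03,SkMs86,GurPetr03}; the associated operator pencil is $\tilde{\mathcal L}(\lambda)$. By Condition~\ref{condNoEigen} together with Lemma~\ref{lSpectrum}, the strip $-1\le\Im\lambda<0$ is free of eigenvalues of $\tilde{\mathcal L}(\lambda)$. A Mellin-transform/shift-of-the-contour argument (standard in Kondrat'ev theory) then upgrades $U$ from $H_1^1$ to $H_0^2$ in a smaller neighborhood of the vertex, the terms from the residues at eigenvalues in the critical strip being absent by hypothesis. Since $H_0^2\hookrightarrow W^2$ locally near the vertex (the Kondrat'ev weights dominate the unweighted integrand on a bounded domain), this yields $u\in W^2$ near each $g_j$, and combining with Step~1 gives $u\in W^2(G)$. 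The main obstacle I expect is executing the bootstrap of Step~1 cleanly: because $\mathbf B_i^1$ and $\mathbf B_i^2$ couple different parts of the boundary and the interior through the nonlocal transformations $\Omega_{is}$, a single application of local elliptic estimates does not suffice, and one must iterate along the orbit structure, matching the $\varkappa_i$ from~\eqref{eqSeparK23'}--\eqref{eqSeparK23''} to nested cutoffs around $\mathcal K$; the Mellin step, where Condition~\ref{condNoEigen} is actually used, then proceeds along the standard Kondrat'ev pattern.
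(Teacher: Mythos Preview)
Your outline matches the paper's for the first two steps, but the final Mellin step has a real gap. Condition~\ref{condNoEigen} clears only the half-open strip $-1\le\Im\lambda<0$; it says nothing about the line $\Im\lambda=0$. To shift $U$ from $\mathcal H_{1+a}^2$ (where one can place it after a standard bootstrap) down to weight $0$ you must cross that line, and $\tilde{\mathcal L}(\lambda)$ may well have eigenvalues there---the paper's Lemma~\ref{lU=C+} is written precisely to accommodate them. More fundamentally, the target $U\in\mathcal H_0^2$ is simply wrong: functions in $H_0^2(K_j^\varepsilon)$ vanish at the vertex, whereas $U_j(0)$ need not. Equivalently, the boundary data $\Psi_{j\sigma}\in W^{3/2}$ need not lie in $H_0^{3/2}$ because $\Psi_{j\sigma}(0)$ need not be zero, so the right-hand side is not in the weighted class required for the contour shift you propose. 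Thus ``the terms from the residues \dots\ being absent by hypothesis'' is not justified, and the shift to $H_0^2$ is blocked both by possible residues on $\Im\lambda=0$ and by the data.

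The paper's remedy is the decomposition $U=C+U'$ of Lemma~\ref{lU=C+}. The constant vector $C$ absorbs the residue contributions from eigenvalues on $\Im\lambda=0$ (Lemma~\ref{lAppL4.20Kondr} forces any such power solution lying in $W^1$ to be constant), and the relation $\mathcal B_{j\sigma}C=\Psi_{j\sigma}(0)$ makes the new boundary data $\Psi_{j\sigma}-\mathbf B_{j\sigma}C$ for $U'$ vanish at the origin. Only then can Lemma~\ref{lAppL2.4GurRJMP03} (which explicitly requires $\Phi_{j\sigma}(0)=0$ together with $\Im\lambda=-1$ being eigenvalue-free) produce a comparison function $V\in\mathcal W^2\cap\mathcal H_a^2$; Theorem~\ref{lAppTh2.2GurPetr03} applied to $U'-V$ then gives $U'-V\in\mathcal H_0^2\subset\mathcal W^2$, and one reassembles $U=C+V+(U'-V)\in\mathcal W^2$. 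Your sketch collapses this subtraction step, and without it the ``standard Kondrat'ev pattern'' does not go through. Incidentally, the step you flag as the main obstacle---the bootstrap away from $\mathcal K$---is in fact routine here and is simply quoted from~\cite{GurMatZam05}; the genuine difficulty lies in the vertex analysis.
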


\begin{remark} By Theorem~\ref{thuinW^2NoEigen}, any generalized
solution of problem~\eqref{eqPinG}, \eqref{eqBinG} belongs to
$W^2(G)$ whenever Condition~\ref{condNoEigen} holds. The
right-hand sides $f_i$ in nonlocal conditions are naturally
supposed to belong to the space $W^{3/2}(\Gamma_i)$. However, no
additional assumptions (e.g., consistency conditions) are imposed
on the behavior of the functions $f_i$ and on the behavior of the
coefficients of nonlocal terms near the set $\mathcal K$. In fact,
the functions $f_i\in W^{3/2}(\Gamma_i)$ are not quite arbitrary.
For instance, if $\mathbf B_i^1=0$, $\mathbf B_i^2=0$ (i.e., we
have a ``local'' problem), and a solution $u$ belongs to $W^2(G)$,
then, by Sobolev's embedding theorem,
\begin{equation}\label{eqfi=fj}
f_i(g)=f_j(g)\qquad\text{for}\qquad
g\in\overline{\Gamma_i}\cap\overline{\Gamma_j}\ne\varnothing.
\end{equation}
Theorem~\ref{thuinW^2NoEigen} implies that, if
Condition~\ref{condNoEigen} holds, then the existence of a {\em
generalized} solution itself ensures the validity of relations of
the kind~\eqref{eqfi=fj}. In Sec.~\ref{sectProperEigen}, we will
see that, if Condition~\ref{condNoEigen} fails, then we must
impose some consistency condition on the right-hand sides $f_i$ in
order that any generalized solution be smooth.
\end{remark}

Since $\{f_0,f_i\}\in L_2(G)\times \mathcal W^{3/2}(\partial G)$
and the operators $\mathbf B^2_{i}$ satisfy
Condition~\ref{condSeparK23}, it follows
from~\cite[Lemma~2.1]{GurMatZam05} that\footnote{See
also~\cite{SkDu82}.}
\begin{equation}\label{eqSmoothOutsideK}
u\in W^{2}\bigl(G\setminus\overline{\mathcal O_\delta(\mathcal
K)}\bigr)\qquad\forall \delta>0.
\end{equation}

Let $U_j(y')=u_j(y(y'))$, $j=1,\dots,N$, be the functions
corresponding to the set (orbit) $\mathcal K$ and satisfying
problem~\eqref{eqPinK}, \eqref{eqBinK} with right-hand side
$\{F_j, \Psi_{j\sigma}\}$ (see Sec.~\ref{subsectStatementNearK}).

Set
\begin{equation}\label{eqd1d2}
D_\chi=2\max\{\chi_{j\sigma ks}\},\qquad
d_\chi=\min\{\chi_{j\sigma ks}\}/2.
\end{equation}
Let $\varepsilon>0$ be so small that $D_\chi
\varepsilon<\varepsilon_1$.

Introduce the spaces of vector-valued functions
\begin{equation}\label{eqSpacesCal1}
\mathcal W^k(K^\varepsilon)=\prod\limits_j
W^k(K_j^\varepsilon),\quad \mathcal \mathcal \mathcal
H_a^k(K^\varepsilon)=\prod\limits_j H_a^k(K_j^\varepsilon), \quad
k\ge0;
\end{equation}
\begin{equation}\label{eqSpacesCal2}
\mathcal W^{k-1/2}(\gamma^\varepsilon)=
\prod\limits_{j,\sigma}W^{k-1/2}(\gamma_{j\sigma}^\varepsilon),\quad
\mathcal H_a^{k-1/2}(\gamma^\varepsilon)=
\prod\limits_{j,\sigma}H_a^{k-1/2}(\gamma_{j\sigma}^\varepsilon),\quad
k\ge1.
\end{equation}
Similarly, one can introduce the spaces $\mathcal W^k(K)$,
$\mathcal H_a^k(K)$, $\mathcal W^{k-1/2}(\gamma)$, and $\mathcal
H_a^{k-1/2}(\gamma)$.

By virtue of relation~\eqref{eqSmoothOutsideK},
\begin{equation}\label{eqU_jW2loc}
U_j\in W^{2}(K_{j}^{\varepsilon_1}\cap\{|y|>\delta\})\quad\forall
\delta>0.
\end{equation}
Furthermore, it follows from the belonging $U\in \mathcal
W^1(K^{\varepsilon_1})$ and Lemma~\ref{lAppL4.9Kondr_5.2KovSk}
that
\begin{equation}\label{eqUa-10}
U\in \mathcal H_{a}^1(K^{\varepsilon_1})\subset \mathcal
H_{a-1}^0(K^{\varepsilon_1}),\quad a>0.
\end{equation}
Finally, we have (see~\eqref{eqPinK}, \eqref{eqBinK})
$\{F_j\}\in\mathcal W^0(K^{\varepsilon})$ and, by the belonging
$f_i\in W^{3/2}(\Gamma_i)$, by relation~\eqref{eqSmoothOutsideK},
and by estimate~\eqref{eqSeparK23'}, we have
$\{\Psi_{j\sigma}\}\in \mathcal W^{3/2}(\gamma^\varepsilon)$.
Therefore, using Lemma~\ref{lAppL4.9Kondr_5.2KovSk}, we obtain
\begin{equation}\label{eqf1+a3/2}
\{F_j\}\in \mathcal H_{1+a}^{0}(K^{\varepsilon}),\quad
\{\Psi_{j\sigma}\}\in \mathcal
H_{1+a}^{3/2}(\gamma^\varepsilon),\quad a>0.
\end{equation}

It follows from relations~\eqref{eqU_jW2loc}--\eqref{eqf1+a3/2}
and from Lemma~\ref{lAppL2.3GurMatZam05} that
\begin{equation}\label{eqU_jH1+a}
U\in\mathcal  H_{1+a}^{2}(K^{\varepsilon}),\quad a>0.
\end{equation}
To prove Theorem~\ref{thuinW^2NoEigen}, it suffices to show that
$U\in \mathcal W^{2}(K^{\varepsilon})$.

Fix a sufficiently small number $a$, $0<a<1$, such that the band
$a-1\le\Im\lambda\le a$ contains no nonreal eigenvalues of the
operator $\tilde{\mathcal L}(\lambda)$. The existence of such an
$a$ follows from Lemma~\ref{lSpectrum} and
Condition~\ref{condNoEigen}.

Denote
$$
\mathcal P_jv=\sum\limits_{i,k=1}^2p_{jik}(0)v_{y_iy_k},\quad
{\mathcal B}_{j\sigma}U=\sum\limits_{k,s}
      b_{j\sigma ks}(0)U_k({\mathcal G}_{j\sigma ks}y).
$$

\begin{lemma}\label{lU=C+}
Let $U\in\mathcal W^1(K^\varepsilon)$ be a generalized
solution\footnote{That is $U$ satisfies Eq.~\eqref{eqPinK} in the
sense of distributions and nonlocal conditions~\eqref{eqBinK} in
the sense of traces.} of problem~\eqref{eqPinK}, \eqref{eqBinK}
with right-hand side $\{F_j, \Psi_{j\sigma}\}\in \mathcal
W^0(K^{\varepsilon})\times\mathcal W^{3/2}(\gamma^\varepsilon)$.
Then
\begin{equation}\label{eqU=C+}
 U=C+U',
\end{equation}
where $U'\in\mathcal  H_{a}^{2}(K^\varepsilon)$, $a$ is the above
number, and $C=(C_1, \dots, C_N)$ is a constant vector. The
function $U'$ and the vector $C$ are uniquely defined, and the
vector $C$ satisfies the relation
\begin{equation}\label{eqBC}
\mathcal B_{j\sigma}C=\Psi_{j\sigma}(0).
\end{equation}
\end{lemma}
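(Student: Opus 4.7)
The plan is to combine the a priori regularity $U \in \mathcal H_{1+a}^2(K^\varepsilon)$ from~\eqref{eqU_jH1+a} with a Mellin/residue asymptotic expansion for the model nonlocal problem with coefficients frozen at the vertex. The key observation is that lowering the weight index from $1+a$ down to $a$ crosses at worst the single real eigenvalue $\lambda = 0$ of the pencil $\tilde{\mathcal L}(\lambda)$, whose (generalized) eigenvectors are constants in the angular variable; the corresponding residue contribution is precisely the vector $C$ in the statement.

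\emph{Freezing the coefficients.} Split $\mathbf P_j = \mathcal P_j + (\mathbf P_j - \mathcal P_j)$ and $\mathbf B_{j\sigma} = \mathcal B_{j\sigma} + (\mathbf B_{j\sigma} - \mathcal B_{j\sigma})$, and rewrite~\eqref{eqPinK},~\eqref{eqBinK} as
\[
\mathcal P_j U_j = F_j - (\mathbf P_j - \mathcal P_j) U_j, \qquad \mathcal B_{j\sigma} U = \Psi_{j\sigma} - (\mathbf B_{j\sigma} - \mathcal B_{j\sigma}) U.
\]
The coefficients of the correction operators are smooth and vanish at $y = 0$, so they gain a factor of $\rho$ under multiplication in Kondrat'ev spaces. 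Using $U \in \mathcal H_{1+a}^2$ and the fact that the rotation-homotheties $\mathcal G_{j\sigma ks}$ fix the origin and act boundedly between the weighted spaces, one checks that
\[
(\mathbf P_j - \mathcal P_j) U_j \in \mathcal H_a^0(K^\varepsilon), \qquad (\mathbf B_{j\sigma} - \mathcal B_{j\sigma}) U \in \mathcal H_a^{3/2}(\gamma^\varepsilon).
\]
Moreover, $\Psi_{j\sigma} \in W^{3/2}$ has a pointwise trace $\Psi_{j\sigma}(0)$ at the vertex by Sobolev embedding, and $\Psi_{j\sigma} - \Psi_{j\sigma}(0) \in H_a^{3/2}$ for $0 < a < 1$. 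Thus $U$ solves the constant-coefficient model problem with right-hand side in $\mathcal H_a^0 \times \mathcal H_a^{3/2}$, modulo the constants $\Psi_{j\sigma}(0)$ in the boundary data.

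\emph{Asymptotic expansion.} Apply the decomposition theorem for model nonlocal problems in a plane angle, i.e. the nonlocal analogue of Kondrat'ev's asymptotic formula (Lemma~\ref{lAppL2.3GurMatZam05} and~\cite{SkMs86, GurPetr03}). By our choice of $a$, by Condition~\ref{condNoEigen}, and by Lemma~\ref{lSpectrum}, the only eigenvalue of $\tilde{\mathcal L}(\lambda)$ that can lie in the strip $a - 1 \le \Im \lambda \le a$ is $\lambda = 0$, whose eigenfunctions are constant in $\omega$. Hence the expansion yields a single residue term, a constant vector $C = (C_1, \dots, C_N)$, and a remainder $U' = U - C \in \mathcal H_a^2(K^\varepsilon)$.

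\emph{Boundary relation and uniqueness.} Since $U' \in \mathcal H_a^2$ with $0 < a < 1$, the trace of $U'$ on $\gamma_{j\sigma}$ lies in $H_a^{3/2}$ and vanishes at the vertex in the appropriate weighted sense. Passing to the limit $y \to 0$ in $\mathcal B_{j\sigma}(C + U') + (\mathbf B_{j\sigma} - \mathcal B_{j\sigma}) U = \Psi_{j\sigma}$, and using that the coefficients of $\mathbf B_{j\sigma} - \mathcal B_{j\sigma}$ vanish at $0$, yields~\eqref{eqBC}. For uniqueness, any two decompositions $U = C + U' = \tilde C + \tilde U'$ with $U', \tilde U' \in \mathcal H_a^2$ give $C - \tilde C \in \mathcal H_a^2(K^\varepsilon)$; but the divergence of $\int_{K_j^\varepsilon} \rho^{2(a-2)}\,dy$ for $a < 1$ forces $C = \tilde C$ and hence $U' = \tilde U'$. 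The main technical hurdle is the asymptotic expansion step: in the nonlocal setting the symbol $\tilde{\mathcal L}(\lambda)$ involves the dilation factors $\chi_{j\sigma ks}^{i\lambda}$ and couples all $N$ components of $U$, but this coupling is exactly what the cited lemma and results are designed to accommodate.
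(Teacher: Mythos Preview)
Your overall strategy---freeze the coefficients at the vertex, separate the boundary data into a constant part and an $H_a^{3/2}$ part, then apply the Kondrat'ev-type asymptotic expansion---is exactly the paper's route. However, two steps in your argument do not go through as written.

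First, the asymptotic decomposition theorem (Theorem~\ref{lAppTh2.2GurPetr03}) requires the right-hand side to lie in the \emph{intersection} of the two weighted spaces; it does not accept data ``modulo a constant.'' Since $\Psi_{j\sigma}(0)\notin H_a^{3/2}(\gamma_{j\sigma}^\varepsilon)$ for $0<a<1$, you cannot apply the theorem directly to $U$. The paper first invokes Lemma~\ref{lAppL4.3GurPetr03} to produce an explicit special solution $W$ of the frozen-coefficient problem with constant boundary data $\Psi_{j\sigma}(0)$; only then does the asymptotic theorem apply to $U-W$.

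Second, and more seriously, your claim that ``the only eigenvalue in the strip $a-1\le\Im\lambda\le a$ is $\lambda=0$, whose eigenfunctions are constant in $\omega$'' is not justified and in general false. The choice of $a$ ensures only that the strip contains no eigenvalues \emph{off} the line $\Im\lambda=0$; there may be several real eigenvalues $\lambda_n\neq 0$, and even at $\lambda=0$ the eigenvectors of $\tilde{\mathcal L}(0)$ solve $\varphi_j''=0$ with the nonlocal boundary conditions and are of the form $A_j+B_j\omega$, not necessarily constants. Moreover, the special solution $W$ may itself carry $(\ln r)^l$ factors when $\lambda=0$ is an eigenvalue. The paper does not try to identify these terms individually. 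Instead it sets
\[
C=W+\sum_{n,q,m} c_n^{(m,q)}W_n^{(m,q)},
\]
observes that $C=U-U'$ with $U\in\mathcal W^1(K^\varepsilon)$ and $U'\in\mathcal H_a^2(K^\varepsilon)\subset\mathcal W^1(K^\varepsilon)$, hence $C\in\mathcal W^1(K^\varepsilon)$, and then applies Lemma~\ref{lAppL4.20Kondr}: a finite sum of terms $r^{i\lambda_n}(\ln r)^l\varphi(\omega)$ with $\Im\lambda_n=0$ belongs to $W^1$ only if it is a homogeneous polynomial of degree~$0$, i.e.\ a constant vector. That is the missing link in your argument; once you insert it (and the auxiliary $W$), your proof coincides with the paper's.
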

\begin{proof}
1. Write problem~\eqref{eqPinK}, \eqref{eqBinK} as follows:
\begin{equation}\label{eqPBinK0}
{\mathbf P}_{j}U_j=F_j(y) \quad (y\in K_j^\varepsilon),\qquad
{\mathbf B}_{j\sigma}U =\Psi_{j\sigma}(0)+\Psi_{j\sigma}^0(y)
\quad (y\in\gamma_{j\sigma}^\varepsilon),
\end{equation}
where $\Psi_{j\sigma}^0(y)=\Psi_{j\sigma}(y)-\Psi_{j\sigma}(0)$.
We claim that
\begin{equation}\label{eqFHa}
\{F_j\}\in\mathcal  H_{a}^0(K^\varepsilon),\quad
\{\Psi_{j\sigma}^0\}\in\mathcal H_{a}^{3/2}(\gamma^\varepsilon).
\end{equation}
Indeed, the first belonging follows from the relaton
$\{F_j\}\in\mathcal W^0(K^\varepsilon)$, whereas the second one
from the relations $\{\Psi_{j\sigma}^0\}\in\mathcal
W^{3/2}(\gamma^\varepsilon)$ and $\Psi_{j\sigma}^0(0)=0$ and from
Lemma~\ref{lAppL2.1GurRJMP03}.

2. By Lemma~\ref{lAppL4.3GurPetr03}, there exists a function
\begin{equation}\label{eqWH1+a}
W=\sum\limits_{l=0}^{\varkappa}\frac{\displaystyle
1}{\displaystyle l!}(i\ln r)^l w^{(l)}(\omega)\in\mathcal
H_{1+a}^2(K^\varepsilon)
\end{equation}
such that
\begin{equation}\label{eqBW}
\mathcal P_jW_j=0\quad (y\in K_j),\qquad {\mathcal
B}_{j\sigma}W=\Psi_{j\sigma}(0)\quad (y\in\gamma_{j\sigma}),
\end{equation}
Here $\varkappa=0$ if $\lambda=0$ is not an eigenvalue of
$\tilde{\mathcal L}(\lambda)$; otherwise, $\varkappa$ equals the
multiplicity of the eigenvalue $\lambda=0$; $w^{(l)}\in
\prod\limits_j W^2(-\omega_j,\omega_j)$.

As we have proved before this lemma, the function $U$
satisfies~\eqref{eqU_jH1+a}. Combining this fact with
relation~\eqref{eqWH1+a} yields
\begin{equation}\label{eqUWVH1+a}
U-W\in\mathcal  H_{1+a}^2(K^\varepsilon).
\end{equation}
On the other hand, Lemma~\ref{lAppL3.3'Kondr} implies that
\begin{equation}\label{eqUWVH1+a'}
\{{\mathbf P}_{j}U_j-{\mathcal P}_{j}U_j\}\in \mathcal
H_{a}^0(K^\varepsilon),\quad \{{\mathbf
B}_{j\sigma}U|_{\gamma_{j\sigma}^\varepsilon}-{\mathcal
B}_{j\sigma}U|_{\gamma_{j\sigma}^\varepsilon}\}\in \mathcal
H_{a}^{3/2}(\gamma^\varepsilon).
\end{equation}

It follows from~\eqref{eqPBinK0}, \eqref{eqFHa}, and
\eqref{eqUWVH1+a'} that
\begin{equation}\label{eqPBUWV}
\{{\mathcal P}_{j}(U_j-W_j)\}\in\mathcal
H_{a}^0(K^\varepsilon),\quad \{{\mathcal
B}_{j\sigma}(U-W)|_{\gamma_{j\sigma}^\varepsilon}\}\in\mathcal
H_{a}^{3/2}(\gamma^\varepsilon).
\end{equation}

3. Applying Theorem~\ref{lAppTh2.2GurPetr03} concerning the
asymptotic behavior of the function $U-W$ and using
relations~\eqref{eqUWVH1+a} and \eqref{eqPBUWV}, we obtain
\begin{equation}
U-W=\sum\limits_{\Im\lambda_n=0}
  \sum\limits_{q=1}^{J_n}\sum\limits_{m=0}^{\varkappa_{qn}-1}
  c_n^{(m,q)}W_n^{(m,q)}+U'.
\end{equation}
Here $\{\lambda_n\}$ is a finite set of eigenvalues of the
operator $\tilde{\mathcal L}(\lambda)$ lying on the line
$\Im\lambda=0$;
$$
W_n^{(m,q)}(\omega,r)=r^{i\lambda_n}\sum\limits_{l=0}^m\frac{1}{l!}(i\ln
r)^l\varphi_n^{(m-l,q)}(\omega),
$$
\begin{equation}\label{eqPBW=0}
{\mathcal B}_{j\sigma}W_n^{(m,q)}|_{\gamma_{j\sigma}}=0;
\end{equation}
$\varphi_n^{(0,q)}, \dots,
\varphi_n^{(\varkappa_{qn}-1,q)}\in\prod\limits_j
W^2(-\omega_j,\omega_j)$ are an eigenvector and associated vectors
(a  Jordan chain of length $\varkappa_{qn}\ge 1$) corresponding to
the eigenvalue $\lambda_n$; $c_n^{(m,q)}$ are constants; finally,
$U'\in\mathcal H_{a}^{2}(K^\varepsilon)$.

Set
$$
C=W+\sum\limits_{n,q,m} c_n^{(m,q)}W_n^{(m,q)}.
$$
It is clear that
$$
U=C+U'.
$$
Since $U,U'\in\mathcal W^1(K^\varepsilon)$, it follows that
$C\in\mathcal W^1(K^\varepsilon)$. This relation and
Lemma~\ref{lAppL4.20Kondr} imply that $C$ is a constant vector. By
virtue of~\eqref{eqPBW=0} and~\eqref{eqBW},
$$
{\mathcal B}_{j\sigma}C|_{\gamma_{j\sigma}^\varepsilon}={\mathcal
B}_{j\sigma}W|_{\gamma_{j\sigma}^\varepsilon}=\Psi_{j\sigma}(0).
$$
Therefore, using the relation ${\mathcal B}_{j\sigma}C=\const$ for
$C=\const$, we obtain~\eqref{eqBC}.

4. Now suppose that the equality $U=D+V'$ holds together
with~\eqref{eqU=C+}, where $V'\in\mathcal
H_{a}^{2}(K^\varepsilon)$ and $D=(D_1, \dots, D_N)$ is a constant
vector. Then we have $C-D=V'-U'\in\mathcal
H_{a}^{2}(K^\varepsilon)$, hence $C-D=0$ and $V'-U'=0$.
\end{proof}

\begin{lemma}\label{lUinW^2}
Let the conditions of Lemma~$\ref{lU=C+}$ be fulfilled, and let
Condition~$\ref{condNoEigen}$ hold. Then $U\in\mathcal
W^2(K^\varepsilon)$.
\end{lemma}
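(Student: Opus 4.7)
The plan is to invoke the decomposition from Lemma~\ref{lU=C+} and then re-apply the asymptotic machinery to the remainder with a smaller weight exponent. By Lemma~\ref{lU=C+}, $U = C + U'$ with $C$ a constant vector and $U' \in \mathcal H_a^2(K^\varepsilon)$ for the chosen $a \in (0,1)$. Since constants lie trivially in $\mathcal W^2(K^\varepsilon)$, it suffices to prove $U' \in \mathcal W^2(K^\varepsilon)$.

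First I would derive the model problem satisfied by $U'$. Subtracting $C$ from \eqref{eqPinK}, \eqref{eqBinK} yields
$$
\mathbf P_j U'_j = F_j - p_{j0}(y) C_j, \qquad \mathbf B_{j\sigma} U' = \Psi_{j\sigma}(y) - \sum_{k,s} b_{j\sigma ks}(y) C_k,
$$
with right-hand sides that again belong to $\mathcal W^0(K^\varepsilon)$ and $\mathcal W^{3/2}(\gamma^\varepsilon)$. The crucial observation is that the boundary right-hand side vanishes at $y=0$ by relation \eqref{eqBC}. Exploiting this vanishing via Lemma~\ref{lAppL2.1GurRJMP03}, together with Lemma~\ref{lAppL3.3'Kondr} applied to the variable-coefficient remainders $(\mathbf P_j - \mathcal P_j)U'_j$ and $(\mathbf B_{j\sigma} - \mathcal B_{j\sigma})U'$, one obtains $\mathcal P_j U'_j \in \mathcal H_b^0(K^\varepsilon)$ and $\mathcal B_{j\sigma}U'|_{\gamma_{j\sigma}^\varepsilon} \in \mathcal H_b^{3/2}(\gamma^\varepsilon)$ for every $b \in (-1, a]$.

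Next I would invoke Theorem~\ref{lAppTh2.2GurPetr03} on $U'$ with target weight $b = -1+\delta$ for some small $\delta>0$. By the choice of $a$ the strip $-1+\delta \le \Im\lambda \le a$ contains no nonreal eigenvalues of $\tilde{\mathcal L}(\lambda)$, and by Condition~\ref{condNoEigen} it contains no eigenvalues with $-1+\delta \le \Im\lambda < 0$ either. The only eigenvalues of $\tilde{\mathcal L}(\lambda)$ that could possibly lie in this strip sit on the real axis $\Im\lambda = 0$, but the corresponding power-logarithmic modes have already been absorbed into $C$ during the proof of Lemma~\ref{lU=C+}; by uniqueness of that decomposition the function $U'$ carries no component along them. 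Hence the asymptotic expansion contributes no additional terms and yields $U' \in \mathcal H_{-1+\delta}^2(K^\varepsilon)$. Since $-1+\delta < 0$, the weights $\rho^{2(-1+\delta-2+|\alpha|)}$ are uniformly bounded below by positive constants on the bounded set $K^\varepsilon$, so $\mathcal H_{-1+\delta}^2(K^\varepsilon) \hookrightarrow \mathcal W^2(K^\varepsilon)$, giving $U' \in \mathcal W^2(K^\varepsilon)$ and thus $U \in \mathcal W^2(K^\varepsilon)$.

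The main obstacle lies in the second step: one must carefully trade the vanishing of the right-hand sides and of the lower-order perturbation coefficients at $y=0$ against the weight shift from $a$ down to $-1+\delta$, so as to place $\mathcal P_j U'_j$ and $\mathcal B_{j\sigma} U'$ in $\mathcal H_b^0$ and $\mathcal H_b^{3/2}$ at the lower weight. Once this bookkeeping is in place, Condition~\ref{condNoEigen} does the analytic work through Theorem~\ref{lAppTh2.2GurPetr03}, and the embedding into $\mathcal W^2$ is essentially automatic.
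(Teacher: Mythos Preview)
Your argument has a genuine gap at the step where you claim the right-hand sides lie in $\mathcal H_b^0(K^\varepsilon)$ and $\mathcal H_b^{3/2}(\gamma^\varepsilon)$ for all $b\in(-1,a]$. This is false. The equation datum $F_j - p_{j0}C_j$ lies only in $L_2 = H_0^0$, not in $H_b^0$ for any $b<0$, since $F_j$ is merely an $L_2$ function with no vanishing imposed at the origin. More seriously, the boundary datum $\Psi_{j\sigma} - \mathbf B_{j\sigma}C$ lies in $W^{3/2}$ and vanishes at $y=0$, but Lemma~\ref{lAppL2.1GurRJMP03} only places such functions in $H_b^2$ (hence their traces in $H_b^{3/2}$) for $b>0$; it does \emph{not} yield $H_0^{3/2}$, let alone $H_b^{3/2}$ for $b<0$. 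Since Theorem~\ref{lAppTh2.2GurPetr03} needs the right-hand side in $\mathcal H_{b'}^0\times\mathcal H_{b'}^{3/2}$ in order to reach $\mathcal H_{b'}^2$, you cannot take $b'\le 0$ directly, and your target $b'=-1+\delta$ is unreachable.

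The paper closes exactly this gap with an auxiliary construction you have omitted: using Condition~\ref{condNoEigen} (specifically, that the line $\Im\lambda=-1$ carries no eigenvalue of $\tilde{\mathcal L}(\lambda)$), Lemma~\ref{lAppL2.4GurRJMP03} produces a function $V\in\mathcal W^2(K)\cap\mathcal H_a^2(K)$ for which $\mathbf B_{j\sigma}V$ agrees with the boundary datum modulo $\mathcal H_0^{3/2}$ and $\mathbf P_jV_j\in\mathcal H_0^0$. One then applies Theorem~\ref{lAppTh2.2GurPetr03} to $U'-V$, going from weight $a$ only down to weight $0$: the relevant eigenvalue strip is $-1<\Im\lambda<a-1$ (note the shift by $-1$ relative to the weights, which your statement of the strip also gets wrong), and Condition~\ref{condNoEigen} makes it empty, giving $U'-V\in\mathcal H_0^2(K^\varepsilon)\subset\mathcal W^2(K^\varepsilon)$. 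Your worry about eigenvalues on $\Im\lambda=0$ being ``already absorbed into $C$'' is therefore a red herring: the strip $-1<\Im\lambda<a-1$ lies entirely below the real axis, so those eigenvalues never enter the expansion in the first place.
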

\begin{proof}
1. By Lemma~\ref{lU=C+}, it suffices to show that $U'\in\mathcal
W^2(K^\varepsilon)$. The function $U'$ belongs to $\mathcal
H_{a}^{2}(K^\varepsilon)$, and, by virtue of
relations~\eqref{eqU=C+} and~\eqref{eqPBinK0}, it is a solution of
the problem
\begin{equation}\label{eqPBU'}
{\mathbf P}_{j}U'_j=F_j-{\mathbf P}_{j}C_j \ (y\in
K_j^\varepsilon),\quad {\mathbf B}_{j\sigma}U'
=\Psi_{j\sigma}(0)+\Psi_{j\sigma}^0(y)-{\mathbf B}_{j\sigma}C \
(y\in\gamma_{j\sigma}^\varepsilon).
\end{equation}
Since $\{F_j\}\in\mathcal W^0(K^\varepsilon)$ and $C=\const$, it
follows that
\begin{equation}\label{eqF-PC}
\{F_j-{\mathbf P}_{j}C_j\}\in\mathcal H_0^0(K^\varepsilon).
\end{equation}
Further,
\begin{equation}\label{eqF-BC}
\begin{gathered}
\{\Psi_{j\sigma}(0)+\Psi_{j\sigma}^0(y)|_{\gamma_j^\varepsilon}-{\mathbf
B}_{j\sigma}C|_{\gamma_j^\varepsilon}\}\in\mathcal
W^{3/2}(\gamma^\varepsilon),\\
\big(\Psi_{j\sigma}(0)+\Psi_{j\sigma}^0(y)-{\mathbf
B}_{j\sigma}C\big)\big|_{y=0}=0.
\end{gathered}
\end{equation}
The latter relation follows from the fact that
$\Psi_{j\sigma}^0(0)=0$ and ${\mathbf
B}_{j\sigma}C|_{y=0}={\mathcal B}_{j\sigma}C=\Psi_{j\sigma}(0)$
(see Lemma~\ref{lU=C+}).

2. Since the line $\Im\lambda=-1$ has no eigenvalues of
$\tilde{\mathcal L}(\lambda)$ and relations~\eqref{eqF-BC} hold,
it follows from Lemma~\ref{lAppL2.4GurRJMP03} that there exists a
function
\begin{equation}\label{eqVHa}
V\in\mathcal  W^2(K)\cap\mathcal H_{a}^2(K)
\end{equation}
such that
\begin{equation}\label{eqPBV}
\{{\mathbf P}_{j}V_j\}\in\mathcal H_0^0(K^\varepsilon),\quad
\{{\mathbf
B}_{j\sigma}V|_{\gamma_{j\sigma}^\varepsilon}-\big(\Psi_{j\sigma}(0)+\Psi_{j\sigma}^0(y)-{\mathbf
B}_{j\sigma}C\big)|_{\gamma_{j\sigma}^\varepsilon}\}\in \mathcal
H_0^{3/2}(\gamma^\varepsilon).
\end{equation}

Therefore, $U'-V\in\mathcal H_{a}^2(K^\varepsilon)$ and, due
to~\eqref{eqPBU'}--\eqref{eqF-BC} and \eqref{eqPBV}, we have
$$
\{{\mathbf P}_{j}(U'_j-V_j)\}\in\mathcal
H_0^0(K^\varepsilon),\qquad \{{\mathbf
B}_{j\sigma}(U'-V)|_{\gamma_{j\sigma}^\varepsilon}\}\in \mathcal
H_0^{3/2}(\gamma^\varepsilon).
$$
Further, Lemma~\ref{lAppL3.3'Kondr} implies that
$$
\{{\mathcal P}_{j}(U'_j-V_j)\}\in\mathcal
H_0^0(K^\varepsilon),\qquad \{{\mathcal
B}_{j\sigma}(U'-V)|_{\gamma_{j\sigma}^\varepsilon}\}\in \mathcal
H_0^{3/2}(\gamma^\varepsilon).
$$

Since Condition~\ref{condNoEigen} holds, we can apply
Theorem~\ref{lAppTh2.2GurPetr03} concerning the asymptotic
behavior of the function $U'-V$, which yields
$$
U'-V\in\mathcal H_{0}^2(K^\varepsilon)\subset \mathcal
W^2(K^\varepsilon).
$$
Now the conclusion of the lemma follows from the latter relation,
from~\eqref{eqVHa}, and from~\eqref{eqU=C+}.
\end{proof}
Theorem~\ref{thuinW^2NoEigen} results
from~\eqref{eqSmoothOutsideK} and from Lemma~\ref{lUinW^2}.

\section{Border Case: Consistency Conditions}\label{sectProperEigen}

\subsection{Behavior of Solutions near the Conjugation Points}\label{subsectuFixed}

Let $\lambda=\lambda_0$ be an eigenvalue of the operator
$\tilde{\mathcal L}(\lambda)$.

\begin{definition}\label{defRegEigVal}
We say that $\lambda_0$ is a {\em proper eigenvalue} if none of
the corresponding eigenvectors
$\varphi(\omega)=(\varphi_{1}(\omega),\dots, \varphi_{N}(\omega))$
has an associated vector, while the functions
$r^{i\lambda_0}\varphi_{j}(\omega)$, $j=1, \dots, N$, are
polynomials in $y_1, y_2$. An eigenvalue which is not proper is
said to be {\em improper}.
\end{definition}

The notion of proper eigenvalue was originally proposed by
Kondrat'ev~\cite{KondrTMMO67} for ``local'' boundary-value
problems in angular or conical domains.

Clearly, if $\lambda_0$ is a proper eigenvalue, then
$\Im\lambda_0\le0$ and $\Re\lambda_0=0$. Therefore, the line
$\Im\lambda=\const$ can have at most one proper eigenvalue.

In this section, we suppose that the following condition holds.

\begin{condition}\label{condProperEigen}
The band $-1\le\Im\lambda<0$ contains only the eigenvalue
$\lambda=-i$ of the operator $\tilde{\mathcal L}(\lambda)$. This
eigenvalue is a proper one.
\end{condition}

The principal difference between the results of this section and
those of Sec.~\ref{sectNoEigen} is related to the behavior of
generalized solutions near the set $\mathcal K$. If
Condition~\ref{condProperEigen} holds, then Lemma~\ref{lU=C+}
remains valid. However, the conclusion of Lemma~\ref{lUinW^2} is
no longer true because Lemma~\ref{lAppL2.4GurRJMP03} (proved
in~\cite{GurRJMP03}) is inapplicable when the line $\Im\lambda=-1$
contains a proper eigenvalue of $\tilde{\mathcal L}(\lambda)$. In
this section, we make use of other results from~\cite{GurRJMP03}.
To do this, we impose certain consistency conditions on the
behavior of the functions $f_i$ and on the behavior of the
coefficients of nonlocal terms near the set (orbit) $\mathcal K$.

\smallskip

Let $\tau_{j\sigma}$ be the unit vector co-directed with the
ray~$\gamma_{j\sigma}$. Consider the operators
$$
 \frac{\partial} {\partial
  \tau_{j\sigma}}{\mathcal B}_{j\sigma}U\equiv
  \frac{\partial} {\partial
  \tau_{j\sigma}} \Big(\sum\limits_{k,s}b_{j\sigma
  ks}(0)U_k({\mathcal G}_{j\sigma ks}y)\Big).
$$
Using the chain rule, we obtain
\begin{equation}\label{eqDiffB}
 \frac{\partial}
 {\partial \tau_{j\sigma}}
 {\mathcal B}_{j\sigma}U\equiv
 \sum\limits_{k,s}(\hat B_{j\sigma ks}(D_y)U_k)({\mathcal G}_{j\sigma ks}y),
\end{equation}
where $\hat B_{j\sigma ks}(D_y)$ are first-order differential
operators with constant coefficients. In particular, $\hat
B_{j\sigma j0}(D_y)= {\partial}/
 {\partial \tau_{j\sigma}}$ because ${\mathcal G}_{j\sigma j0}y\equiv y$. Formally
replacing the nonlocal operators by the corresponding local
operators in~\eqref{eqDiffB}, we introduce the operators
\begin{equation}\label{eqSystemB}
 \hat{\mathcal B}_{j\sigma}(D_y)U\equiv
 \sum\limits_{k,s}\hat B_{j\sigma ks}(D_y)U_k(y).
\end{equation}

Let us prove that the system of operators~\eqref{eqSystemB} is
linearly dependent if Condition~\ref{condProperEigen} holds. Let
\begin{equation}\label{eqhatB12}
\hat B_{j\sigma ks}(D_y)=b_{j\sigma ks1}\frac{\partial}{\partial
y_1}+b_{j\sigma ks2}\frac{\partial}{\partial y_2},
\end{equation}
where $b_{j\sigma ks1}$ and $b_{j\sigma ks2}$ are complex
constants. It suffices to show that the following system of $2N$
equations for the $2N$ indeterminates $q_{k1},q_{k2}$,
$k=1,\dots,N$, admits a nontrivial solution:
\begin{equation}\label{eqSystemqj12}
\sum\limits_{k,s}b_{j\sigma ks1}q_{k1}+b_{j\sigma
ks2}q_{k2}=0,\quad j=1,\dots,N,\ \sigma=1,2.
\end{equation}

Let $\varphi(\omega)=(\varphi_{1}(\omega),\dots,
\varphi_{N}(\omega))$ be an eigenvector corresponding to the
eigenvalue $\lambda=-i$. By Condition~\ref{condProperEigen}, the
functions $Q_k(y)=r\varphi_k(\omega)$ are homogeneous polynomials
of order one. Set $q_{k1}=\partial Q_k/\partial y_1$,
$q_{k2}=\partial Q_k/\partial y_2$. Then, using
equalities~\eqref{eqhatB12}, the fact that the first derivative of
a polynomial of order one is a constant, and
relation~\eqref{eqDiffB}, we obtain
\begin{multline*}
\sum\limits_{k,s}b_{j\sigma ks1}q_{k1}+b_{j\sigma ks2}q_{k2}=
\sum\limits_{k,s}\hat B_{j\sigma ks}(D_y)Q_k(y)\\=
\sum\limits_{k,s}(\hat B_{j\sigma ks}(D_y)Q_k)({\mathcal
G}_{j\sigma ks}y)=\frac{\partial}
 {\partial \tau_{j\sigma}}
 {\mathcal B}_{j\sigma}Q,
\end{multline*}
where $Q=(Q_1,\dots,Q_N)$. Since $\lambda=-i$ is an eigenvalue of
$\tilde{\mathcal L}(\lambda)$ and $\varphi$ is the corresponding
eigenvector, it follows that ${\mathcal
B}_{j\sigma}Q|_{\gamma_{j\sigma}}=0$; hence,
$$
\big(\partial ({\mathcal B}_{j\sigma}Q)/\partial
\tau_{j\sigma}\big)|_{\gamma_{j\sigma}}=0.
$$
 It follows from the
latter relation and from the relation $\partial ({\mathcal
B}_{j\sigma}Q)/\partial \tau_{j\sigma}=\const$ that $\partial
({\mathcal B}_{j\sigma}Q)/\partial \tau_{j\sigma}=0$. Thus, we
have constructed a nontrivial solution of
system~\eqref{eqSystemqj12} and, therefore, proved that
system~\eqref{eqSystemB} is linearly dependent.

Let
\begin{equation}\label{eqSystemB'}
\{\hat{\mathcal B}_{j'\sigma'}(D_y)\}
\end{equation}
be a maximal linearly independent subsystem of
system~\eqref{eqSystemB}. In this case, any operator
$\hat{\mathcal B}_{j\sigma}(D_y)$ which does not enter
system~\eqref{eqSystemB'} can be represented as follows:
\begin{equation}\label{eqBviaB'}
\hat{\mathcal
B}_{j\sigma}(D_y)=\sum\limits_{j',\sigma'}\beta_{j\sigma}^{j'\sigma'}\hat{\mathcal
B}_{j'\sigma'}(D_y),
\end{equation}
where $\beta_{j\sigma}^{j'\sigma'}$ are some constants.

Let us introduce the notion of the consistency condition. Let
$\{Z_{j\sigma}\}\in\mathcal W^{3/2}(\gamma^\varepsilon)$ be
arbitrary functions, each of which is defined on its own interval
$\gamma_{j\sigma}^\varepsilon$. Consider the functions
$$
Z^0_{j\sigma}(r)=Z_{j\sigma}(y)|_{y=(r\cos\omega_j,\,
r(-1)^\sigma\sin\omega_j)}.
$$
Each of the functions $Z^0_{j\sigma}$ belongs to
$W^{3/2}(0,\varepsilon)$.

\begin{definition}
Let $\beta_{j\sigma}^{j'\sigma'}$ be the constants occurring
in~\eqref{eqBviaB'}. If the relations
\begin{equation}\label{eqConsistencyZ}
\int\limits_{0}^\varepsilon
r^{-1}\Bigg|\frac{d}{dr}\bigg(Z^0_{j\sigma}-\sum\limits_{j',\sigma'}\beta_{j\sigma}^{j'\sigma'}Z^0_{j'\sigma'}\bigg)\Bigg|^2dr<\infty
\end{equation}
hold for all indices $j,\sigma$ corresponding to the operators of
system~\eqref{eqSystemB} which do not enter
system~\eqref{eqSystemB'}, then we say that the {\em functions
$Z_{j\sigma}$ satisfy the consistency
condition~\eqref{eqConsistencyZ}}.
\end{definition}

\begin{remark}\label{remSufficCons}
The relation $\{Z_{j\sigma}\}\in\mathcal
H_0^{3/2}(\gamma^\varepsilon)$ is sufficient (but not necessary)
for the functions $Z_{j\sigma}$ to satisfy the consistency
condition~\eqref{eqConsistencyZ}. This follows from
Lemma~\ref{lAppL4.18Kondr}.
\end{remark}

\begin{remark}
In the paper~\cite{GurRJMP03}, of which results we use in the
present paper, the consistency condition has the form
\begin{equation}\label{eqConsistencyZEquiv}
\frac{\partial\mathbf
Z_{j\sigma}}{\partial\tau_{j\sigma}}-\sum\limits_{j',\sigma'}\beta_{j\sigma}^{j'\sigma'}
\frac{\partial\mathbf Z_{j'\sigma'}}{\partial\tau_{j'\sigma'}}\in
H_0^1 (\mathbb R^2),
\end{equation}
where $\mathbf Z_{j\sigma}\in W^2(\mathbb R^2)$ is a compactly
supported extension of $Z_{j\sigma}$ to $\mathbb R^2$ (appropriate
theorems concerning extensions of functions in angular domains can
be found in~\cite{Stein}). Let us show that
relations~\eqref{eqConsistencyZ} are equivalent
to~\eqref{eqConsistencyZEquiv}. Denote by $\mathcal G_{j\sigma}$
the operator of rotation through the angle $(-1)^\sigma\omega_j$;
in particular, the operator $\mathcal G_{j\sigma}$ takes the
positive half-line $Oy_1$ onto the ray $\gamma_{j\sigma}$.
Consider the functions $\mathbf Z_{j\sigma}^0(y)=\mathbf
Z_{j\sigma}(\mathcal G_{j\sigma}y)$. It is clear that $\mathbf
Z_{j\sigma}^0\in W^2(\mathbb R^2)$ and $\mathbf
Z_{j\sigma}^0(y_1,0)=Z_{j\sigma}^0(y_1)$. Suppose that
relations~\eqref{eqConsistencyZ} hold. Then, by
Lemma~\ref{lAppL4.8Kondr}, we have
\begin{equation}\label{eqConsistencyZ1}
\frac{\partial\mathbf Z^0_{j\sigma}}{\partial y_1
}-\sum\limits_{j',\sigma'}\beta_{j\sigma}^{j'\sigma'}
\frac{\partial\mathbf Z^0_{j'\sigma'}}{\partial y_1}\in
H_0^1(\mathbb R^2),
\end{equation}
which is equivalent to
\begin{equation}\label{eqConsistencyZ2}
\frac{\partial\mathbf
Z_{j\sigma}}{\partial\tau_{j\sigma}}\big(\mathcal
G_{j\sigma}y\big)-\sum\limits_{j',\sigma'}\beta_{j\sigma}^{j'\sigma'}
\frac{\partial\mathbf
Z_{j'\sigma'}}{\partial\tau_{j'\sigma'}}\big(\mathcal
G_{j'\sigma'}y\big)\in H_0^1 (\mathbb R^2)
\end{equation}
by the chain rule. However, by Lemma~\ref{lAppL2.2GurRJMP03}, we
have
\begin{equation}\label{eqConsistencyZ3}
\frac{\partial\mathbf
Z_{j\sigma}}{\partial\tau_{j\sigma}}\big(\mathcal
G_{j\sigma}y\big)-\frac{\partial\mathbf
Z_{j\sigma}}{\partial\tau_{j\sigma}}\big(y\big)\in H_0^1 (\mathbb
R^2)
\end{equation}
for all $\mathbf Z_{j\sigma}\in W^2(\mathbb R^2)$ because
$\partial\mathbf Z_{j\sigma}/\partial\tau_{j\sigma}\in W^1(\mathbb
R^2)$. It follows from~\eqref{eqConsistencyZ2}
and~\eqref{eqConsistencyZ3} that
relations~\eqref{eqConsistencyZEquiv} hold.

Conversely, suppose that relations~\eqref{eqConsistencyZEquiv}
hold. Using~\eqref{eqConsistencyZ3} again, we
obtain~\eqref{eqConsistencyZ2}, hence~\eqref{eqConsistencyZ1}. It
follows from~\eqref{eqConsistencyZ1} and from the boundedness of
the trace operator in appropriate weighted spaces that
$$
\frac{d}{dr}\bigg(Z^0_{j\sigma}-\sum\limits_{j',\sigma'}\beta_{j\sigma}^{j'\sigma'}Z^0_{j'\sigma'}\bigg)\in
H_0^{1/2}(0,\varepsilon).
$$
This relation and Lemma~\ref{lAppL4.18Kondr}
imply~\eqref{eqConsistencyZ}.
\end{remark}

Now we will show that the following condition is necessary and
sufficient for a given generalized solution $u$ to belong to
$W^2(G)$.

\begin{condition}\label{condConsistencyPsi-BC}
Let $u\in W^1(G)$ be a generalized solution of
problem~\eqref{eqPinG}, \eqref{eqBinG}, $\Psi_{j\sigma}$ the
right-hand sides in nonlocal conditions~\eqref{eqBinK}, and $C$
the constant vector appearing in Lemma~$\ref{lU=C+}$. Then the
functions $\Psi_{j\sigma}-{\mathbf B}_{j\sigma}C$ satisfy the
consistency condition~\eqref{eqConsistencyZ}.
\end{condition}

\begin{remark}
1. The validity of Condition~$\ref{condConsistencyPsi-BC}$
depends, in particular, on the behavior of the function $\mathbf
B_{i}^2u$ near the set (orbit) $\mathcal K$. Due
to~\eqref{eqSeparK23'}, the values of the function $\mathbf
B_{i}^2u$ near the set $\mathcal K$ depend on the values of the
function $u$ in $G\setminus\overline{\mathcal
O_{\varkappa_1}(\mathcal K)}$. Therefore, the smoothness of the
solution $u$ near the set $\mathcal K$ depends on the behavior of
$u$ outside $\mathcal K$.

2. Let us explain how the validity of
Condition~\ref{condConsistencyPsi-BC} depends on the behavior of
the functions $u(y), f_i(y),b_{is}(y),(\mathbf B_{i}^2u)(y)$ near
the set $\mathcal K$. On one hand, the vector $C$ appearing in
Lemma~\ref{lU=C+} is defined by the behavior of $u(y)$ near the
set $\mathcal K$. On the other hand, the values of $b_{is}(y)$,
$y\in\mathcal K$, together with the operators $\mathcal G_{j\sigma
ks}$, define the constants $\beta_{j\sigma}$ occurring
in~\eqref{eqBviaB'} and hence in~\eqref{eqConsistencyZ}. Finally,
the derivatives of $f_i(y),(\mathbf B_{i}^2u)(y)$, and $b_{is}(y)$
near the set $\mathcal K$ must be consistent with each other in
such a way that the absolute values of the corresponding linear
combinations of the first derivatives of $\Psi_{j\sigma}-{\mathbf
B}_{j\sigma}C$ be quadratically integrable, with the weight
$r^{-1}$, near the origin.
\end{remark}

Throughout this section, we suppose that the number $a$ is the
same as in Sec.~\ref{sectNoEigen}. The existence of such an $a$
follows from Lemma~\ref{lSpectrum} and
Condition~\ref{condProperEigen}.

\begin{theorem}\label{thuinW^2ProperEigen}
Let Condition~$\ref{condProperEigen}$ hold, and let $u\in W^1(G)$
be a generalized solution of problem~\eqref{eqPinG},
\eqref{eqBinG} with right-hand side $\{f_0,f_i\}\in L_2(G)\times
\mathcal W^{3/2}(\partial G)$. Then $u\in W^2(G)$ if and only if
Condition~$\ref{condConsistencyPsi-BC}$ holds.
\end{theorem}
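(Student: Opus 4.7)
The plan is to adapt the argument of Lemma~\ref{lUinW^2}. By~\eqref{eqSmoothOutsideK} it suffices to prove $U\in\mathcal W^2(K^\varepsilon)$, and since Lemma~\ref{lU=C+} remains in force under Condition~\ref{condProperEigen}, writing $U=C+U'$ reduces the question to $U'\in\mathcal W^2(K^\varepsilon)$. The function $U'$ satisfies problem~\eqref{eqPBU'}, and its nonlocal data $\Psi_{j\sigma}(0)+\Psi_{j\sigma}^0(y)-\mathbf B_{j\sigma}C$ vanish at the origin by the identity $\mathbf B_{j\sigma}C|_{y=0}=\mathcal B_{j\sigma}C=\Psi_{j\sigma}(0)$ from Lemma~\ref{lU=C+}. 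What changes relative to Section~\ref{sectNoEigen} is that Lemma~\ref{lAppL2.4GurRJMP03} no longer applies, since the line $\Im\lambda=-1$ now carries the proper eigenvalue $\lambda=-i$.

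\textbf{Sufficiency.} Assuming Condition~\ref{condConsistencyPsi-BC}, I would invoke the proper-eigenvalue counterpart of Lemma~\ref{lAppL2.4GurRJMP03} from~\cite{GurRJMP03}: the consistency condition~\eqref{eqConsistencyZ} on $\Psi_{j\sigma}-\mathbf B_{j\sigma}C$ is precisely what is needed to construct $V\in\mathcal W^2(K)\cap\mathcal H_a^2(K)$ matching the data of $U'$ modulo lower-order weighted spaces, i.e.\ with $\{\mathbf P_jV_j\}\in\mathcal H_0^0(K^\varepsilon)$ and $\{\mathbf B_{j\sigma}V-(\Psi_{j\sigma}(0)+\Psi_{j\sigma}^0-\mathbf B_{j\sigma}C)\}\in\mathcal H_0^{3/2}(\gamma^\varepsilon)$. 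Then $U'-V\in\mathcal H_a^2(K^\varepsilon)$ solves a nonlocal problem whose frozen-coefficient version (via Lemma~\ref{lAppL3.3'Kondr}) has right-hand sides in $\mathcal H_0^0$ and $\mathcal H_0^{3/2}$. Applying the asymptotic expansion Theorem~\ref{lAppTh2.2GurPetr03} on the band $-1\le\Im\lambda<a$, the only eigenvalue picked up is $\lambda=-i$; the associated asymptotic terms have the form $r\varphi_j(\omega)$ with no $\ln r$ factor, and by Definition~\ref{defRegEigVal} these are homogeneous polynomials of degree one, hence belong to $\mathcal W^2(K^\varepsilon)$. Therefore $U'-V\in\mathcal W^2(K^\varepsilon)$ and $U'\in\mathcal W^2(K^\varepsilon)$, yielding $u\in W^2(G)$.

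\textbf{Necessity.} Conversely, if $u\in W^2(G)$ then $U'=U-C\in\mathcal W^2(K^\varepsilon)$, and the boundary identity $\mathbf B_{j\sigma}U'=\Psi_{j\sigma}-\mathbf B_{j\sigma}C$ holds on $\gamma_{j\sigma}^\varepsilon$. Differentiating along $\tau_{j\sigma}$ and using~\eqref{eqDiffB}, each term $(\hat B_{j\sigma ks}(D_y)U'_k)(\mathcal G_{j\sigma ks}y)$ differs from its non-rotated counterpart $\hat B_{j\sigma ks}(D_y)U'_k(y)$ by a function in $H_0^1$ (by Lemma~\ref{lAppL2.2GurRJMP03}, applicable since $U'_k\in W^2$). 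The linear dependence~\eqref{eqBviaB'} then shows that $\partial(\mathbf B_{j\sigma}U')/\partial\tau_{j\sigma}-\sum_{j',\sigma'}\beta_{j\sigma}^{j'\sigma'}\partial(\mathbf B_{j'\sigma'}U')/\partial\tau_{j'\sigma'}$ lies in $H_0^1$; restricting to the rays and applying Lemma~\ref{lAppL4.18Kondr} yields exactly the integrability condition~\eqref{eqConsistencyZ} with $Z_{j\sigma}=\Psi_{j\sigma}-\mathbf B_{j\sigma}C$, which is Condition~\ref{condConsistencyPsi-BC}.

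\textbf{Main obstacle.} The delicate step is the sufficiency direction: one must identify the correct proper-eigenvalue solvability statement in~\cite{GurRJMP03} and verify that its hypothesis coincides with the consistency condition~\eqref{eqConsistencyZ} applied to the data $\Psi_{j\sigma}-\mathbf B_{j\sigma}C$, remembering that this data vanishes at the origin. The conceptual content is that the propriety of $\lambda=-i$ ensures its asymptotic contribution is a polynomial of degree one, which is automatically in $\mathcal W^2$; without propriety (cf.\ Section~\ref{sectImproperEigen}), no consistency condition on the data can rescue smoothness, which is why the border case is singled out here.
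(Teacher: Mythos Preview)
Your overall strategy is right, and the necessity direction is essentially correct (though the paper streamlines it by noting $U'(0)=0$ from Sobolev's embedding and $U'\in\mathcal H_a^2(K^\varepsilon)\cap\mathcal W^2(K^\varepsilon)$, then invoking Lemma~\ref{lAppL3.1GurRJMP03} directly rather than re-deriving it via Lemma~\ref{lAppL2.2GurRJMP03}). The sufficiency direction, however, has a genuine gap.

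The problem is your appeal to Theorem~\ref{lAppTh2.2GurPetr03}. That theorem requires the destination line $\Im\lambda=b'-1$ to be free of eigenvalues of $\tilde{\mathcal L}(\lambda)$. You want the remainder in $\mathcal H_0^2(K^\varepsilon)$, i.e.\ $b'=0$, but then the relevant line is $\Im\lambda=-1$, which under Condition~\ref{condProperEigen} carries precisely the eigenvalue $\lambda=-i$. So the theorem, as stated, simply does not apply; you cannot cross that line and read off the polynomial term from the asymptotics. Your intuition that ``propriety makes the asymptotic contribution a degree-one polynomial'' is morally correct, but that is exactly the content of a separate result, not a corollary of the general asymptotic expansion.

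The paper's fix is to replace the asymptotic theorem by Lemma~\ref{lAppL3.4GurRJMP03}, which is designed for this situation: assuming the closed band $-1\le\Im\lambda\le a-1$ contains only the proper eigenvalue $\lambda=-i$, and given $U'-V\in\mathcal H_a^2(K^\varepsilon)$ with $\{\mathcal P_j(U'_j-V_j)\}\in\mathcal H_0^0(K^\varepsilon)$ and $\{\mathcal B_{j\sigma}(U'-V)\}\in\mathcal H_0^{3/2}(\gamma^\varepsilon)$, it concludes directly that $D^\alpha(U'-V)\in\mathcal W^0(K^\varepsilon)$ for $|\alpha|=2$. Combining this with $U'-V\in\mathcal H_a^2(K^\varepsilon)\subset\mathcal H_{a-1}^1(K^\varepsilon)\subset\mathcal W^1(K^\varepsilon)$ yields $U'-V\in\mathcal W^2(K^\varepsilon)$. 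The construction of $V$ via Lemma~\ref{lAppL3.3GurRJMP03} (your ``proper-eigenvalue counterpart of Lemma~\ref{lAppL2.4GurRJMP03}'') is exactly what the paper does, and that step is fine.
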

\begin{proof}
1. {\em Necessity.} Let $u\in W^2(G)$, and let $U=(U_1,\dots,U_N)$
be a function corresponding to the set (orbit) $\mathcal K$.
Clearly, $U\in\mathcal W^2(K^\varepsilon)$. It follows from
Lemma~\ref{lU=C+} that $U=C+U'$, where $U'\in\mathcal
H_a^2(K^\varepsilon)$. Since we additionally have
$U'=U-C\in\mathcal W^2(K^\varepsilon)$, it follows from Sobolev's
embedding theorem that $U'(0)=0$. This relation and
Lemma~\ref{lAppL3.1GurRJMP03} imply that the functions
$\Psi_{j\sigma}-{\mathbf B}_{j\sigma}C={\mathbf B}_{j\sigma}U'$
satisfy the consistency condition~\eqref{eqConsistencyZ}

2. {\em Sufficiency.} Suppose that
Condition~$\ref{condConsistencyPsi-BC}$ holds. Similarly to the
proof of Lemma~\ref{lUinW^2}, we infer that the function
$U'\in\mathcal H_a^2(K^\varepsilon)$ is a solution of
problem~\eqref{eqPBU'}.

Using Condition~$\ref{condConsistencyPsi-BC}$ and
relations~\eqref{eqF-BC}, we can apply
Lemma~\ref{lAppL3.3GurRJMP03}, which ensures the existence of a
function $V$ satisfying relations~\eqref{eqVHa} and~\eqref{eqPBV}.

Further, similarly to the proof of Lemma~\ref{lUinW^2}, we obtain
$U'-V\in\mathcal H_{a}^2(K^\varepsilon)$, $\{{\mathcal
P}_{j}(U'_j-V_j)\}\in\mathcal H_0^0(K^\varepsilon)$, $\{{\mathcal
B}_{j\sigma}(U'-V)|_{\gamma_{j\sigma}^\varepsilon}\}\in \mathcal
H_0^{3/2}(\gamma^\varepsilon)$. It follows from these relations
and from Lemma~\ref{lAppL3.4GurRJMP03} that all the second
derivatives of the function $U'-V$ belong to $\mathcal
W^0(K^\varepsilon)$. Combining this fact with the relations
$$
U'-V\in\mathcal H_{a}^2(K^\varepsilon)\subset\mathcal
H_{a-1}^1(K^\varepsilon)\subset\mathcal W^1(K^\varepsilon)
$$
yields $U'-V\in\mathcal W^2(K^\varepsilon)$. Now the conclusion of
the theorem results from \eqref{eqVHa} and~\eqref{eqU=C+}.
\end{proof}

Note that Theorem~\ref{thuinW^2ProperEigen} enables us to conclude
whether or not a given solution $u$ is smooth near the set
$\mathcal K$, provided that we know the asymptotics for $u$ of the
kind~\eqref{eqU=C+} near the set $\mathcal K$ (i.e., if we know
the value of the constant\footnote{As for the calculation of the
constant $C$, see~\cite{GurPetr03, GurFDE03}.} $C$).
Theorem~\ref{thuinW^2ProperEigen} shows what affects the
smoothness of solutions in principle. Below, this will enable us
to obtain a constructive condition which is necessary and
sufficient for {\em any} generalized solution to belong to
$W^2(G)$.

\subsection{Problem with Nonhomogeneous Nonlocal Conditions}\label{subsectNonHomogProb}
If any generalized solution of problem~\eqref{eqPinG},
\eqref{eqBinG} belongs to $W^2(G)$, then we say that {\em
smoothness} of generalized solutions {\em preserves}. If there
exists a generalized solution of problem~\eqref{eqPinG},
\eqref{eqBinG} which does not belong to $W^2(G)$, then we say that
{\em smoothness} of generalized solutions {\em can be violated}.

In this subsection, we formulate necessary and sufficient
conditions for the smoothness of solutions to preserve. First of
all, we show that right-hand sides $f_i$ in nonlocal
conditions~\eqref{eqBinG} cannot be arbitrary functions from
$W^{3/2}(\Gamma_i)$, they must satisfy the consistency
condition~\eqref{eqConsistencyZ}.

Denote by $\mathcal S^{3/2}(\partial G)$ the set of functions
$\{f_i\}\in\mathcal W^{3/2}(\partial G)$ such that the functions
$F_{j\sigma}$ (see~\eqref{eqytoy'}) satisfy the consistency
condition~\eqref{eqConsistencyZ}.

It follows from~\cite[Lemma~3.2]{GurRJMP03} that the set $\mathcal
S^{3/2}(\partial G)$ is not closed in the space $\mathcal
W^{3/2}(\partial G)$.

Smoothness of generalized solutions of problem~\eqref{eqPinG},
\eqref{eqBinG} can be violated if right-hand sides in nonlocal
conditions~\eqref{eqBinG} do not satisfy the consistency
condition. The following result is valid.
\begin{theorem}\label{thUNonSmFNonConsist}
Let Condition~$\ref{condProperEigen}$ hold. Then there exist a
function $\{f_0,f_i\}\in L_2(G)\times \mathcal W^{3/2}(\partial
G)$, $\{f_i\}\notin\mathcal S^{3/2}(\partial G)$, and a function
$u\in W^1(G)$ such that $u$ is a generalized solution of
problem~\eqref{eqPinG}, \eqref{eqBinG} with the right-hand side
$\{f_0,f_i\}$ and $u\notin W^2(G)$.
\end{theorem}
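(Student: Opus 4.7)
The plan is to produce an explicit non-smooth generalized solution by first solving the model problem with deliberately non-consistent boundary data, and then transferring it back to $G$.

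First, exploit Condition~\ref{condProperEigen} as in the derivation preceding~\eqref{eqBviaB'}: the system $\{\hat{\mathcal B}_{j\sigma}\}$ is linearly dependent, yielding a ``redundant'' index $(j_0,\sigma_0)$ with constants $\beta_{j_0\sigma_0}^{j'\sigma'}$. Choose boundary right-hand sides $\{\Psi_{j\sigma}\}\in\mathcal W^{3/2}(\gamma^\varepsilon)$ in the pulled-back coordinates near $\mathcal K$ satisfying $\Psi_{j\sigma}(0)=0$ but failing the consistency condition~\eqref{eqConsistencyZ}. A concrete choice is $\Psi_{j_0\sigma_0}^0(r)=r|\ln r|^{1/4}\chi(r)$ (cutoff $\chi$) and $\Psi_{j\sigma}^0\equiv 0$ for the remaining indices. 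A direct calculation shows $\Psi_{j_0\sigma_0}^0\in W^{3/2}(0,\varepsilon)$, while its derivative is of order $|\ln r|^{1/4}$, so $\int_0^\varepsilon r^{-1}|\ln r|^{1/2}\,dr=\infty$; thus the combination on the left of~\eqref{eqConsistencyZ} (with index $(j_0,\sigma_0)$) diverges.

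Second, using the solvability of the model problem~\eqref{eqPinK}, \eqref{eqBinK} in the weighted scale from~\cite{GurRJMP03} (cf.\ Lemma~\ref{lAppL2.4GurRJMP03}) applied with right-hand side $F_j\equiv 0$ and the chosen $\{\Psi_{j\sigma}\}$, produce a function $U\in\mathcal H_a^2(K^\varepsilon)$ with $a\in(0,1)$ small enough that the band $a-1\le\Im\lambda\le a$ contains no eigenvalues other than $\lambda=-i$, satisfying $\mathcal P_j U_j=0$ and $\mathcal B_{j\sigma}U=\Psi_{j\sigma}$. Since $U$ belongs to the $\mathcal H_a^2$-scale, the constant vector in the Lemma~\ref{lU=C+} decomposition of $U$ is zero; the necessity direction of Theorem~\ref{thuinW^2ProperEigen} (with $C=0$) then forces $U\notin\mathcal W^2(K^\varepsilon)$, because $\{\Psi_{j\sigma}\}$ fails the consistency condition~\eqref{eqConsistencyZ}.

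Third, transfer $U$ back to $G$ via the inverse diffeomorphisms $Y_j^{-1}$, multiply by a smooth cutoff $\zeta$ equal to $1$ near $\mathcal K$ and supported in $\mathcal O_\varepsilon(\mathcal K)$, and extend by zero to get $u\in W^1(G)\setminus W^2(G)$. Set
\[
f_0:=\mathbf P u\in L_2(G),\qquad f_i:=u|_{\Gamma_i}+\mathbf B_i^1 u+\mathbf B_i^2 u\in W^{3/2}(\Gamma_i),
\]
where membership in $L_2(G)$ and $W^{3/2}(\Gamma_i)$ uses that $\mathbf P U\in\mathcal H_0^0$ near $\mathcal K$ together with Condition~\ref{condSeparK23}. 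In the coordinates near $\mathcal K$, the trace $\{f_i\}$ pulls back to $F_{j\sigma}=\Psi_{j\sigma}+(\mathbf B^2 u)_{j\sigma}$. Because $u$ is smooth outside $\mathcal O_{\varkappa_1}(\mathcal K)$ and $\mathbf B_i^2$ separates regions, the contribution $(\mathbf B^2 u)_{j\sigma}$ belongs to a sufficiently smooth class (modulo its value at $0$, a constant that drops out of the derivative in~\eqref{eqConsistencyZ}) and does not affect the divergence of the consistency integral. Hence $\{f_i\}\notin\mathcal S^{3/2}(\partial G)$, and $u$ is the desired non-smooth generalized solution.

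The main obstacle is Step~2: obtaining solvability of the model problem in $\mathcal H_a^2$ with the specific $\Psi$ that fails consistency (one cannot invoke Lemma~\ref{lAppL2.4GurRJMP03} as stated for $W^2$, so the weighted version from \cite{GurRJMP03} must be applied carefully), together with the bookkeeping in Step~4 showing that the $\mathbf B_i^2 u$ term does not inadvertently restore consistency.
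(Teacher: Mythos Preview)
Your overall strategy matches the paper's, but two steps have genuine gaps that the paper's proof shows how to close.

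In Step~2, Lemma~\ref{lAppL2.4GurRJMP03} is inapplicable not because it is ``stated for $W^2$'' but because its hypothesis---that the line $\Im\lambda=-1$ contain no eigenvalues of $\tilde{\mathcal L}(\lambda)$---is precisely what Condition~\ref{condProperEigen} violates by placing the proper eigenvalue $-i$ there. The companion Lemma~\ref{lAppL3.3GurRJMP03}, which does allow a proper eigenvalue at $-i$, requires the boundary data to satisfy~\eqref{eqConsistencyZ}, which you are deliberately violating. So neither appendix lemma produces your $U$, and ``the weighted version from~\cite{GurRJMP03}'' is not a substitute without saying which result and why it applies. The paper instead proves a dedicated Lemma~\ref{lUnonSmoothZnonConsist}: approximate the non-consistent data $\{Z_{j\sigma}\}$ by functions $Z_{j\sigma}^n$ vanishing near the origin (hence trivially consistent, via Lemma~\ref{lDense}), solve for each approximant using~\cite[Lemma~3.5]{GurRJMP03}, pass to the limit in $\mathcal H_0^1$, and then upgrade the limit to $\mathcal H_a^2$ via Lemma~\ref{lU=C+} (the constant $C$ is forced to vanish since the limit already lies in $\mathcal H_0^1$).

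In Steps~3--4, your concern that $\mathbf B_i^2 u$ might ``inadvertently restore consistency'' is legitimate, and your dismissal of it is not justified: a generic $W^{3/2}$ trace coming from~\eqref{eqSeparK23'} has no reason to satisfy~\eqref{eqConsistencyZ}, nor to fail it in a way that avoids cancelling your chosen divergence. The paper sidesteps this entirely by a sharper support choice: take the cutoff supported in $\mathcal O_{\varepsilon'}(0)$ with $\varepsilon'=d_\chi\min(\varepsilon,\varkappa_2)<\varkappa_1$ as in~\eqref{eqEpsilon'}. Then $\supp u\subset\mathcal O_{\varkappa_1}(\mathcal K)$, so~\eqref{eqSeparK23'} forces $\mathbf B_i^2 u=0$ outright. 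Hence $F_{j\sigma}=\mathbf B_{j\sigma}U$ near the origin, and non-consistency of $\{f_i\}$ follows directly from that of $\mathbf B_{j\sigma}U$ with no bookkeeping.
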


To prove Theorem~\ref{thUNonSmFNonConsist}, we preliminarily
establish two auxiliary results.
\begin{lemma}\label{lDense}
Let $f\in W^2(\mathbb R^2)$ and $f(0)=0$. Then there exists a
sequence $f^n\in C_0^\infty(\mathbb R^2)$, $n=1,2,\dots$, such
that $f^n(y)=0$ in some neighborhood of the origin $($depending on
$n)$ and $f^n\to f$ in $W^{2}(\mathbb R^2)$.
\end{lemma}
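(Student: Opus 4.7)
The plan is to combine mollification (to smooth $f$) with a logarithmic cutoff near the origin (to force vanishing there). The subtlety is that $W^2(\mathbb R^2)$ embeds continuously into $C^0(\mathbb R^2)$ and point evaluation at the origin is a nontrivial continuous functional on $W^2$; consequently a linearly-scaled cutoff $\chi(ny)$ is doomed to fail, since the terms $n^2\nabla^2\chi(ny)\,f$ need not tend to $0$ in $L^2$ when $\nabla f(0)\ne 0$. The correct scaling is logarithmic: with a fixed $\eta\in C^\infty(\mathbb R)$, $\eta\equiv 0$ on $(-\infty,0]$, $\eta\equiv 1$ on $[1,\infty)$, set
\[
\chi_n(y) = \eta\!\left(\frac{\log(n^2|y|)}{\log n}\right),\qquad A_n = \{y\in\mathbb R^2:\ n^{-2}\le|y|\le n^{-1}\}.
\]
Then $\chi_n\in C^\infty(\mathbb R^2\setminus\{0\})$, $\chi_n\equiv 0$ on $B_{n^{-2}}$, $\chi_n\equiv 1$ outside $B_{n^{-1}}$, and the chain rule gives the pointwise bounds $|\nabla\chi_n(y)|\le C/(|y|\log n)$ and $|\nabla^2\chi_n(y)|\le C/(|y|^2\log n)$ on $A_n$.

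First I would reduce to the smooth, compactly supported case. Take a standard Friedrichs mollifier $J_\epsilon$, a large-scale cutoff $\chi_R\in C_0^\infty(\mathbb R^2)$ with $\chi_R\equiv 1$ on $B_R$ and $\chi_R(0)=1$, and a fixed bump $\psi\in C_0^\infty(\mathbb R^2)$ with $\psi(0)=1$. Set
\[
\tilde f_{\epsilon,R}(y) = \chi_R(y)(J_\epsilon f)(y) - (J_\epsilon f)(0)\,\psi(y).
\]
Then $\tilde f_{\epsilon,R}\in C_0^\infty(\mathbb R^2)$ with $\tilde f_{\epsilon,R}(0)=0$, and $(J_\epsilon f)(0)\to f(0)=0$ by continuity of $f$ (Sobolev embedding in dimension two), so $\tilde f_{\epsilon,R}\to f$ in $W^2(\mathbb R^2)$ as $\epsilon\to 0$, $R\to\infty$. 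It therefore suffices to approximate $f\in C_0^\infty(\mathbb R^2)$ with $f(0)=0$; for such $f$, the estimates $|f(y)|\le C|y|$ and $|\nabla f(y)|\le C$ hold in a neighbourhood of the origin.

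For such $f$ I would set $f^n:=\chi_n f\in C_0^\infty(\mathbb R^2)$; then $f^n$ vanishes on $B_{n^{-2}}$ and it remains to show $(1-\chi_n)f\to 0$ in $W^2(\mathbb R^2)$. Expanding by Leibniz, the terms in which every derivative falls on $f$ are supported in $B_{n^{-1}}$ and tend to $0$ in $L^2$ by absolute continuity of the Lebesgue integral. Each remaining term involves at least one derivative of $\chi_n$, and inserting $|f|^2\le C|y|^2$ and $|\nabla f|^2\le C$ controls it by
\[
\frac{C}{(\log n)^2}\int_{A_n}\frac{dy}{|y|^2} = \frac{C}{\log n}\longrightarrow 0,
\]
the logarithmic width of $A_n$ cancelling exactly one factor $\log n$ in the denominator. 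The main obstacle is this final $W^2$-estimate: the worst term $|\nabla^2\chi_n|^2|f|^2\sim |y|^{-4}/(\log n)^2$ would, without the vanishing $f(0)=0$, produce $\int_{A_n}|y|^{-4}\,dy\sim n^2$, overwhelming the denominator; it is precisely the linear bound $|f(y)|\le C|y|$ coming from $f(0)=0$ that lowers the exponent to the integrable $|y|^{-2}$, so the hypothesis is used in an essential way.
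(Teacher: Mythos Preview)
Your proof is correct and follows the same strategy as the paper: reduce to $f\in C_0^\infty(\mathbb R^2)$ with $f(0)=0$, then multiply by a logarithmically-scaled cutoff and use the Taylor bound $|f(y)|\le C|y|$ to control the worst Leibniz term $|\nabla^2\chi_n|^2|f|^2$ in $L^2$. The only cosmetic differences are that the paper handles the reduction step via an abstract density lemma for finite-codimension subspaces (citing Krein) rather than your explicit mollifier-plus-bump-subtraction, and parametrizes its cutoff as $\xi(-\ln r/n)$ with transition annulus $[e^{-2n},e^{-n}]$ rather than your $\eta(\log(n^2|y|)/\log n)$ with annulus $[n^{-2},n^{-1}]$; the mechanism and estimates are otherwise identical.
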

\begin{proof}
As is well known, the set $C_0^\infty(\mathbb R^2)$ is dense in
$W^2(\mathbb R^2)$. On the other hand, it follows from Sobolev's
embedding theorem and Riesz' theorem on the general form of a
linear continuous functional in a Hilbert space that the set
$\{u\in W^2(\mathbb R^2): u(0)=0\}$ is a closed subspace in
$W^2(\mathbb R^2)$ of codimension one. Therefore,
by~\cite[Lemma~8.1]{Kr}, the set $C_0^\infty(\mathbb
R^2)\cap\{u\in W^2(\mathbb R^2):\ u(0)=0\}$ is dense in $\{u\in
W^2(\mathbb R^2): u(0)=0\}$. Hence, it suffices to prove the lemma
for a function $f\in C_0^\infty(\mathbb R^2)$ such that $f(0)=0$.
Introduce a function $\xi\in C_0^\infty[0,\infty)$ such that
$0\le\xi(t)\le 1$, $\xi(t)=1$ for $t<1$, and $\xi(t)=0$ for $t>2$.
Consider the sequence
$$
\xi^n(y)=\xi\Big(-\frac{\ln r}{n}\Big),
$$
where $r=|y|$. Clearly, $0\le\xi^n(y)\le1$, $\xi^n(y)=0$ for
$|y|<e^{-2n}$, $\xi^n(y)=1$ for $|y|>e^{-n}$, $|\xi^n_{y_k}|\le
c_1/(rn)$, $|\xi^n_{y_iy_k}|\le c_2/(r^2n)$, where $c_1,c_2>0$ do
not depend on $n$ and $y$.

Let us show that the sequence $\xi^n f$ converges to $f$ in
$W^2(\mathbb R^2)$ as $n\to\infty$. Clearly,
\begin{equation}\label{eqIntf}
\int\limits_{\mathbb R^2} |f-\xi^n f|^2
dy\le\int\limits_{|y|<e^{-n}}|f|^2dy\to0.
\end{equation}
Further,
\begin{equation}\label{eqIntfy}
\int\limits_{\mathbb R^2} |(f-\xi^n f)_{y_k}|^2 dy\le2\left[
\int\limits_{|y|<e^{-n}} |f_{y_k}|^2
dy+\frac{c_1^2}{n^2}\int\limits_{e^{-2n}<|y|<e^{-n}}
|f|^2\frac{1}{r^2}dy\right]\to0.
\end{equation}
Indeed, the first bracketed term tends to zero because
$e^{-n}\to0$, whereas the second term can be estimated from above
by the following expression:
$$
2\pi\max\limits_{y\in\mathbb
R^2}|f|^2\,\frac{c_1^2}{n^2}\int\limits_{e^{-2n}}^{e^{-n}}\frac{dr}{r}=
2\pi\max\limits_{y\in\mathbb R^2}|f|^2\,\frac{c_1^2}{n}\to0.
$$
Finally,
\begin{multline}\label{eqIntfyy}
\int\limits_{\mathbb R^2} |(f-\xi^n f)_{y_iy_k}|^2 dy \le4\left[
\int\limits_{|y|<e^{-n}} |f_{y_iy_k}|^2
dy\right.\\
+\left.\frac{c_1^2}{n^2}\int\limits_{e^{-2n}<|y|<e^{-n}}
(|f_{y_i}|^2+|f_{y_k}|^2)\frac{1}{r^2}dy+\frac{c_2^2}{n^2}\int\limits_{e^{-2n}<|y|<e^{-n}}
|f|^2\frac{1}{r^4}dy\right]\to0.
\end{multline}
Indeed, the first and the second bracketed terms tend to zero
because of the reasons similar to the above. To prove that the
third term tends to zero, we recall that $f\in C_0^\infty(\mathbb
R^2)$ and $f(0)=0$. Therefore, by the Taylor formula, $f(y)=O(r)$
as $r\to0$, and hence the third term can be estimated from above
similarly to the second one.
\end{proof}

Set
\begin{equation}\label{eqEpsilon'}
\varepsilon'=d_\chi\min(\varepsilon,\varkappa_2),
\end{equation}
where $d_\chi$ is defined in~\eqref{eqd1d2}.

\begin{lemma}\label{lUnonSmoothZnonConsist}
Let Condition~$\ref{condProperEigen}$ hold. Let a function
$\{Z_{j\sigma}\}\in\mathcal W^{3/2}(\gamma^\varepsilon)$ be such
that $\supp \{Z_{j\sigma}\}\subset\mathcal O_{\varepsilon/2}(0)$,
$Z_{j\sigma}(0)=0$, and the functions $Z_{j\sigma}$ do not satisfy
the consistency condition~\eqref{eqConsistencyZ}. Then there
exists a function $U\in\mathcal H_a^2(K)\cap\mathcal W^1(K)$ such
that $\supp U\subset\mathcal O_{\varepsilon'}(0)$,
$U\notin\mathcal W^2(K^\varepsilon)$, and $U$ satisfies the
relations
\begin{equation}\label{eqUnonSmoothZnonConsist}
\{{\mathbf P}_{j}U_j\}\in\mathcal W^0(K^\varepsilon),\qquad
\{{\mathbf
B}_{j\sigma}U|_{\gamma_{j\sigma}^\varepsilon}-Z_{j\sigma}\}\in\mathcal
H_0^{3/2}( \gamma^\varepsilon).
\end{equation}
\end{lemma}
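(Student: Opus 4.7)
\emph{Plan.} The idea is to approximate $Z_{j\sigma}$ by smooth boundary data vanishing in a neighborhood of the origin (hence trivially satisfying \eqref{eqConsistencyZ}), to solve the resulting model problem in $\mathcal W^2(K)\cap\mathcal H_a^2(K)$ via the solvability results of~\cite{GurRJMP03}, and to pass to a limit in $\mathcal H_a^2(K)$. The failure of consistency for $Z_{j\sigma}$ will then force the limit to lie outside $\mathcal W^2(K^\varepsilon)$.

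\emph{Construction.} Extend each $Z_{j\sigma}$ to a compactly supported $\mathbf Z_{j\sigma}\in W^2(\mathbb R^2)$ with $\mathbf Z_{j\sigma}(0)=0$ (standard angular-domain extensions; cf.~\cite{Stein}). Lemma~\ref{lDense} yields approximants $\mathbf Z^n_{j\sigma}\in C_0^\infty(\mathbb R^2)$ vanishing in a neighborhood of $0$ with $\mathbf Z^n_{j\sigma}\to \mathbf Z_{j\sigma}$ in $W^2(\mathbb R^2)$; their traces $Z^n_{j\sigma}$ converge to $Z_{j\sigma}$ in $\mathcal W^{3/2}(\gamma^\varepsilon)$ and, since $Z_{j\sigma}(0)=0$, also in $\mathcal H_{1+a}^{3/2}(\gamma^\varepsilon)$ by Lemma~\ref{lAppL2.1GurRJMP03}. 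Each $Z^n_{j\sigma}$ vanishes near the origin and so satisfies \eqref{eqConsistencyZ} trivially, so Lemma~\ref{lAppL3.3GurRJMP03} produces $V^n\in\mathcal W^2(K)\cap\mathcal H_a^2(K)$ with
\[
 \{\mathbf P_jV^n_j\}\in\mathcal H_0^0(K^\varepsilon),\qquad \{\mathbf B_{j\sigma}V^n|_{\gamma^\varepsilon_{j\sigma}}-Z^n_{j\sigma}\}\in\mathcal H_0^{3/2}(\gamma^\varepsilon),
\]
and with a uniform bound $\|V^n\|_{\mathcal H_a^2(K)}\le c\,\|Z^n\|_{\mathcal H_{1+a}^{3/2}(\gamma^\varepsilon)}$. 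Applied to $V^n-V^m$, the same estimate makes $\{V^n\}$ Cauchy in $\mathcal H_a^2(K)$. Fix $\zeta\in C_0^\infty(\mathcal O_{\varepsilon'}(0))$ with $\zeta\equiv 1$ near $0$ and set $U=\zeta\lim_n V^n$. Then $\supp U\subset\mathcal O_{\varepsilon'}(0)$, $U\in\mathcal H_a^2(K)\cap\mathcal W^1(K)$ (the $\mathcal W^1$-membership from the embedding $\mathcal H_a^2\hookrightarrow\mathcal W^1$ valid for $0<a<1$), and passage to the limit—after absorbing the commutators $[\zeta,\mathbf P_j]$ and $[\zeta,\mathbf B_{j\sigma}]$ into the $\mathcal W^0$ and $\mathcal H_0^{3/2}$ remainders—yields \eqref{eqUnonSmoothZnonConsist}.

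\emph{Non-smoothness.} Suppose, for contradiction, $U\in\mathcal W^2(K^\varepsilon)$. By Lemma~\ref{lU=C+}, $U=C+U'$ with $C$ a constant vector and $U'\in\mathcal H_a^2(K^\varepsilon)$. Since $U\in\mathcal H_a^2(K^\varepsilon)$ as well, $C=U-U'\in\mathcal H_a^2(K^\varepsilon)$; but the weight $\rho^{2(a-2)}$ is not integrable near $0$ for $a<1$, so no nonzero constant vector belongs to $\mathcal H_a^2(K^\varepsilon)$, forcing $C=0$. Hence $U=U'\in\mathcal H_a^2\cap\mathcal W^2$, and Sobolev embedding combined with the $\mathcal H_a^2$-membership forces $U(0)=0$. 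Lemma~\ref{lAppL3.1GurRJMP03} then yields that $\mathbf B_{j\sigma}U$ satisfies \eqref{eqConsistencyZ}. Since $\mathbf B_{j\sigma}U|_{\gamma^\varepsilon_{j\sigma}}-Z_{j\sigma}\in\mathcal H_0^{3/2}$ satisfies \eqref{eqConsistencyZ} automatically by Remark~\ref{remSufficCons}, so does $Z_{j\sigma}$—contradicting the hypothesis.

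\emph{Principal obstacle.} The chief technical point is the uniform-in-$n$ a~priori estimate $\|V^n\|_{\mathcal H_a^2}\le c\|Z^n\|_{\mathcal H_{1+a}^{3/2}}$ from Lemma~\ref{lAppL3.3GurRJMP03}; convergence in the weighted norm $\mathcal H_{1+a}^{3/2}$ (rather than merely in $\mathcal W^{3/2}$), which hinges on the assumption $Z_{j\sigma}(0)=0$, is what transfers $W^{3/2}$-approximation into Cauchy-ness in $\mathcal H_a^2$. A secondary subtlety is controlling the cutoff commutators so that the target relations \eqref{eqUnonSmoothZnonConsist} survive the localization step.
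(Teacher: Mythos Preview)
Your overall architecture---approximate, solve, pass to the limit, cut off, argue by contradiction---matches the paper's, and your non-smoothness step is essentially the same (your detour through Lemma~\ref{lU=C+} to force $C=0$ is harmless but unnecessary: $U\in\mathcal W^2\cap\mathcal H_a^2$ with $a<1$ already gives $U(0)=0$ directly).

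The genuine gap is in the convergence step. You invoke Lemma~\ref{lAppL3.3GurRJMP03} and then assert a bound $\|V^n\|_{\mathcal H_a^2(K)}\le c\,\|Z^n\|_{\mathcal H_{1+a}^{3/2}(\gamma^\varepsilon)}$, applying it to $V^n-V^m$ to conclude Cauchy-ness. But Lemma~\ref{lAppL3.3GurRJMP03} is a pure existence statement: it carries no such estimate, and the $V$ it produces is not uniquely determined and not linear in the data, so there is no reason $V^n-V^m$ should be the object the lemma would associate to $Z^n-Z^m$. Without this, you have no mechanism to pass to the limit in $\mathcal H_a^2$.

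The paper circumvents this by using a different result, Lemma~3.5 of~\cite{GurRJMP03} (not reproduced in the appendix), which produces \emph{exact} solutions $V^n$ of the principal-part model problem $\mathcal P_jV_j^n=0$, $\mathcal B_{j\sigma}V^n=Z_{j\sigma}^n$, lying in $\mathcal W^2(K^d)\cap\mathcal H_0^1(K^d)$, and---crucially---already delivers the convergence $V^n\to V$ in $\mathcal H_0^1(K^d)$ as part of its content. The paper then passes to the limit in $\mathcal H_0^1$ (getting $\mathcal P_jV_j=0$, $\mathcal B_{j\sigma}V=Z_{j\sigma}$), and only \emph{afterwards} upgrades to $V\in\mathcal H_a^2(K^\varepsilon)$ by applying Lemma~\ref{lU=C+} to $V$ and observing that the constant $C$ must vanish since $C=V-V'\in\mathcal H_0^1(K^\varepsilon)$. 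So the $\mathcal H_a^2$ regularity of the limit comes not from a Cauchy argument in $\mathcal H_a^2$ but from the asymptotic structure of solutions of the model problem. If you want to repair your argument along its own lines, you would need either a quantitative, linear version of Lemma~\ref{lAppL3.3GurRJMP03} or to switch to the exact-solution route the paper takes.
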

\begin{proof}
By Lemma~\ref{lDense}, there exists a sequence of vector-valued
functions $\{Z_{j\sigma}^n\}\in\mathcal W^{3/2}(\gamma)$,
$n=1,2,\dots$, such that $\supp Z_{j\sigma}^n\subset\mathcal
O_{\varepsilon}(0)$, $Z_{j\sigma}^n(0)=0$, $Z_{j\sigma}^n$ satisfy
the consistency condition~\eqref{eqConsistencyZ} (because the
functions $Z_{j\sigma}^n$ vanish near the origin), and
$Z_{j\sigma}^n\to Z_{j\sigma}$ in $W^{3/2}(\gamma_j)$. Now we
apply Lemma~3.5 in~\cite{GurRJMP03}, which ensures the existence
of a sequence $V^n=(V_1^n,\dots,V_N^n)$ satisfying the following
conditions: $V^n\in\mathcal W^2(K^d)\cap \mathcal H_0^1(K^d)$ for
any $d>0$,
\begin{equation}\label{eqPBVdeltainK}
{\mathcal P}_{j}V_j^n=0 \quad (y\in K_j),\qquad {\mathcal
B}_{j\sigma}V^n =Z_{j\sigma}^n(y) \quad (y\in\gamma_{j\sigma}),
\end{equation}
and the sequence $V^n$ converges to some function $V\in\mathcal
H_0^{1}(K^d)$ in $\mathcal H_0^{1}(K^d)$ for any $d>0$. Passing to
the limit in the first equality in~\eqref{eqPBVdeltainK} in the
sense of distributions and in the second equality in
$W^{1/2}(\gamma_{j\sigma}^d)$ for any $d>0$, we obtain
\begin{equation}\label{eqPBVinK}
{\mathcal P}_{j}V_j=0 \quad (y\in K_j),\qquad {\mathcal
B}_{j\sigma}V =Z_{j\sigma}(y) \quad (y\in\gamma_{j\sigma}).
\end{equation}
In particular, it follows from these relations and from
Lemma~\ref{lU=C+} that $V=C+V'$, where $V'\in\mathcal
H_{a}^{2}(K^\varepsilon)$ and $C=(C_1, \dots, C_N)$ is a constant
vector. Therefore, $C=V-V'\in\mathcal H_0^{1}(K^\varepsilon)$, and
hence $C=0$. Thus, we have proved that
\begin{equation}\label{eqVHa2W1}
 V\in\mathcal
H_{a}^{2}(K^d)\cap\mathcal W^1(K^d)\quad\forall d>0.
\end{equation}

Consider a cut-off function $\xi\in C_0^\infty(|y|<\varepsilon')$
equal to one near the origin. Set $U=\xi V$. Clearly, $\supp
U\subset\mathcal O_{\varepsilon'}(0)$ and, by virtue
of~\eqref{eqVHa2W1},
\begin{equation}\label{eqUHa2W1}
U\in \mathcal H_a^2(K)\cap\mathcal W^1(K).
\end{equation}

2. We claim that $U$ is the desired function. Indeed, using
Leibniz' formula, relations~\eqref{eqPBVinK} and
Lemma~\ref{lAppL3.3'Kondr}, we
infer~\eqref{eqUnonSmoothZnonConsist}.

It remains to prove that $U\notin \mathcal W^2(K^\varepsilon)$.
Assume the contrary. Let $U\in \mathcal W^2(K^\varepsilon)$. In
this case, it follows from Sobolev's embedding theorem and from
the belonging $U\in\mathcal H_a^2(K^\varepsilon)$ that $U(0)=0$.
Combining this fact with Lemma~\ref{lAppL3.1GurRJMP03} implies
that the functions ${\mathbf
B}_{j\sigma}U|_{\gamma_{j\sigma}^\varepsilon}$ satisfy the
consistency condition~\eqref{eqConsistencyZ}. However, the
functions ${\mathbf
B}_{j\sigma}U|_{\gamma_{j\sigma}^\varepsilon}-Z_{j\sigma}$ do not
satisfy the consistency condition~\eqref{eqConsistencyZ} in that
case. This contradicts~\eqref{eqUnonSmoothZnonConsist} (see
Remark~\ref{remSufficCons}).
\end{proof}

\begin{proof}[Proof of Theorem~$\ref{thUNonSmFNonConsist}$] 1. We will
construct a generalized solution $u$ supported near the set
$\mathcal K$ (so that $\mathbf B^2_{i}u=0$ due
to~\eqref{eqSeparK23'}) and such that $u\notin W^2(G)$.

It was shown in the course of the proof of Lemma~3.2
in~\cite{GurRJMP03} that there exists a function
$\{Z_{j\sigma}\}\in\mathcal W^{3/2}(\gamma)$ such that $\supp
Z_{j\sigma}\subset\mathcal O_{\varepsilon/2}(0)$,
$Z_{j\sigma}(0)=0$, and the functions $Z_{j\sigma}$ do not satisfy
the consistency condition~\eqref{eqConsistencyZ}. By
Lemma~\ref{lUnonSmoothZnonConsist}, there exists a function $U\in
\mathcal H_a^2(K)\cap\mathcal W^1(K)$ such that $\supp
U\subset\mathcal O_{\varepsilon'}(0)$, $U\notin\mathcal W^2(K)$,
and $U$ satisfies relations~\eqref{eqUnonSmoothZnonConsist}.
Therefore, $\{{\mathbf P}_{j}U_j\}\in \mathcal
W^0(K^\varepsilon)$, $\{{\mathbf
B}_{j\sigma}U|_{\gamma_{j\sigma}^\varepsilon}\}\in\mathcal
W^{3/2}(\gamma^\varepsilon)$, and the functions ${\mathbf
B}_{j\sigma}U|_{\gamma_{j\sigma}^\varepsilon}$ do not satisfy the
consistency condition~\eqref{eqConsistencyZ}.

2. Introduce a function $u(y)$ such that $u(y)=U_j(y'(y))$ for
$y\in\mathcal O_{\varepsilon'}(g_j)$ and $u(y)=0$ for
$y\notin\mathcal O_{\varepsilon'}(\mathcal K)$, where $y'\mapsto
y(g_j)$ is the change of variables inverse to the change of
variables $y\mapsto y'(g_j)$ from Sec.~\ref{subsectStatement}.
Since $\supp u\subset \mathcal O_{\varkappa_1}(\mathcal K)$, it
follows that $\mathbf B_{i}^2u=0$. Therefore, $u(y)$ is the
desired generalized solution of problem~\eqref{eqPinG},
\eqref{eqBinG}.
\end{proof}

\bigskip

Theorem~\ref{thUNonSmFNonConsist} shows that, if we want that {\em
any} generalized solution of problem~\eqref{eqPinG},
\eqref{eqBinG} be smooth, then we must take right-hand sides
$\{f_0,f_i\}$ from the space $L_2(G)\times \mathcal
S^{3/2}(\partial G)$.

\medskip

Let $v$ be an arbitrary function from the space
$W^{2}(G\setminus\overline{\mathcal O_{\varkappa_1}(\mathcal
K)})$. Consider the change of variables $y\mapsto y'(g_j)$ from
Sec.~\ref{subsectStatement} again and introduce the functions
$$
B^v_{j\sigma}(y')=(\mathbf B_{i}^2v)(y(y')),\quad
y'\in\gamma_{j\sigma}^\varepsilon
$$
(cf. functions~\eqref{eqytoy'}). We prove that the following
condition is necessary and sufficient for any generalized solution
to be smooth.
\begin{condition}\label{condB2vB1CConsistency}
\begin{enumerate}
\item For any $v\in
W^{2}(G\setminus\overline{\mathcal O_{\varkappa_1}(\mathcal K)})$,
the functions $B^v_{j\sigma}$ satisfy the consistency
condition~\eqref{eqConsistencyZ}.
\item For any constant vector $C=(C_1,\dots,C_N)$,
the functions $\mathbf
B_{j\sigma}C|_{\gamma_{j\sigma}^\varepsilon}$ satisfy the
consistency condition~\eqref{eqConsistencyZ}.
\end{enumerate}
\end{condition}

Note that the validity of Condition~\ref{condB2vB1CConsistency},
unlike Condition~\ref{condConsistencyPsi-BC}, does not depend on a
generalized solution. It depends only on the operators $\mathbf
B_i^1$ and $\mathbf B_i^2$ and on the geometry of the domain $G$
near the set (orbit) $\mathcal K$. This is quite natural because
we study the smoothness of {\em all} generalized solutions in this
section (while in Sec.~\ref{subsectuFixed}, we have investigated
the smoothness of a fixed solution).

\begin{theorem}\label{thSmoothfne0}
Let Condition~$\ref{condProperEigen}$ hold. Then the following
assertions are true.
\begin{enumerate}
\item
If Condition~$\ref{condB2vB1CConsistency}$ is fulfilled and $u\in
W^1(G)$ is a generalized solution of problem~\eqref{eqPinG},
\eqref{eqBinG} with right-hand side $\{f_0,f_i\}\in L_2(G)\times
\mathcal S^{3/2}(\partial G)$, then $u\in W^2(G)$.
\item
If Condition~$\ref{condB2vB1CConsistency}$ fails, then there
exists a right-hand side $\{f_0,f_i\}\in L_2(G)\times \mathcal
S^{3/2}(\partial G)$ and a generalized solution $u\in W^1(G)$ of
problem~\eqref{eqPinG}, \eqref{eqBinG}  such that $u\notin
W^2(G)$.
\end{enumerate}
\end{theorem}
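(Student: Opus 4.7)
Given $u$ with $\{f_0,f_i\}\in L_2(G)\times\mathcal S^{3/2}(\partial G)$, the plan is to invoke Lemma~\ref{lU=C+} to decompose $U=C+U'$ near $\mathcal K$; by Theorem~\ref{thuinW^2ProperEigen}, the claim reduces to verifying that the functions $\Psi_{j\sigma}-\mathbf B_{j\sigma}C$ satisfy the consistency condition~\eqref{eqConsistencyZ}. Writing $\Psi_{j\sigma}-\mathbf B_{j\sigma}C=F_{j\sigma}-B^u_{j\sigma}-\mathbf B_{j\sigma}C$ and using the $\mathbb C$-linearity of~\eqref{eqConsistencyZ}, it suffices to treat the three summands separately: $F_{j\sigma}$ is consistent by the hypothesis $\{f_i\}\in\mathcal S^{3/2}(\partial G)$; $\mathbf B_{j\sigma}C$ is consistent by Condition~\ref{condB2vB1CConsistency}(2); and $B^u_{j\sigma}$ is consistent by Condition~\ref{condB2vB1CConsistency}(1) applied to $v=u$, a legitimate substitution because every generalized solution of~\eqref{eqPinG},~\eqref{eqBinG} belongs to $W^2(G\setminus\overline{\mathcal O_\delta(\mathcal K)})$ for every $\delta>0$ (as noted in Sec.~\ref{sectNoEigen}). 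Then Theorem~\ref{thuinW^2ProperEigen} delivers $u\in W^2(G)$.

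\textbf{Necessity, Case A: Condition~\ref{condB2vB1CConsistency}(2) fails.} Pick a constant vector $C$ for which $\{\mathbf B_{j\sigma}C\}$ violates~\eqref{eqConsistencyZ}. Set $Z_{j\sigma}(y):=\mathcal B_{j\sigma}C-\mathbf B_{j\sigma}C(y)$ and cut off so $\supp Z_{j\sigma}\subset\mathcal O_{\varepsilon/2}(0)$; it vanishes at $y=0$ and still fails~\eqref{eqConsistencyZ} since adding a constant does not affect consistency. Apply Lemma~\ref{lUnonSmoothZnonConsist} to this $Z_{j\sigma}$ to obtain $\tilde U\in\mathcal H_a^2(K)\cap\mathcal W^1(K)$ with $\supp\tilde U\subset\mathcal O_{\varepsilon'}(0)$, $\tilde U\notin\mathcal W^2(K^\varepsilon)$, and $\mathbf B_{j\sigma}\tilde U-Z_{j\sigma}\in\mathcal H_0^{3/2}(\gamma^\varepsilon)$. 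Choose $\chi\in C_0^\infty(|y|<\varepsilon')$ equal to $1$ on a neighborhood of $0$ small enough that $\chi(\mathcal G_{j\sigma ks}y)=1$ whenever $\chi(y)=1$, and put $U:=\chi C+\tilde U$. Then $U\notin\mathcal W^2$ (since $\chi C\in\mathcal W^2$), and near $0$ one finds $\mathbf B_{j\sigma}U=\mathbf B_{j\sigma}C+\mathbf B_{j\sigma}\tilde U=\mathcal B_{j\sigma}C+(\text{term in }\mathcal H_0^{3/2})$, which is consistent by Remark~\ref{remSufficCons}. Pulling $U$ back to a function $u\in W^1(G)$ supported in $\mathcal O_{\varkappa_1}(\mathcal K)$ forces $\mathbf B_i^2 u=0$, so setting $f_0=\mathbf P u\in L_2(G)$ and $f_i=u|_{\Gamma_i}+\mathbf B_i^1 u$ produces the desired pair with $\{f_i\}\in\mathcal S^{3/2}(\partial G)$ and $u\notin W^2(G)$.

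\textbf{Necessity, Case B: Condition~\ref{condB2vB1CConsistency}(1) fails; main obstacle.} Take $v\in W^2(G\setminus\overline{\mathcal O_{\varkappa_1}(\mathcal K)})$ with $\{B^v_{j\sigma}\}$ violating~\eqref{eqConsistencyZ}. The main technical step, and the principal obstacle, is to construct an extension $\tilde v\in W^2(G)$ that simultaneously coincides with $v$ on $G\setminus\overline{\mathcal O_{\varkappa_1}(\mathcal K)}$ (so $\mathbf B_i^2\tilde v=\mathbf B_i^2 v$ by~\eqref{eqSeparK23'}, preserving the non-consistent boundary contribution) and vanishes on some $\mathcal O_\delta(\mathcal K)$ with $\varepsilon'<\delta<\varkappa_1$ (so that the local pullback $\tilde V$ of $\tilde v$ satisfies $\mathbf B_{j\sigma}\tilde V\equiv 0$ on the support of $\tilde U$ constructed below); this is accomplished by a standard $W^2$-extension through the Lipschitz annulus $\mathcal O_{\varkappa_1}(\mathcal K)\setminus\overline{\mathcal O_\delta(\mathcal K)}$, after shrinking $\varepsilon$ a posteriori if needed (cf.~Remark~\ref{remSmallEps}). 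With $\tilde v$ in hand, set $Z_{j\sigma}(y):=B^v_{j\sigma}(0)-B^v_{j\sigma}(y)$ (cut off to small support, still non-consistent), obtain the corresponding $\tilde U$ from Lemma~\ref{lUnonSmoothZnonConsist}, and define $u:=\tilde v+w$ with $w$ the local pullback of $\tilde U$ extended by zero. Then $\mathbf B_i^2 u=\mathbf B_i^2 v$, and near $\mathcal K$ one has $U=\tilde U$ on $\supp\tilde U$, so $F_{j\sigma}=\mathbf B_{j\sigma}\tilde U+B^v_{j\sigma}=B^v_{j\sigma}(0)+(\text{term in }\mathcal H_0^{3/2})$, which is consistent; yet $u\notin W^2(G)$ because $\tilde v\in W^2(G)$ while $\tilde U\notin\mathcal W^2(K^\varepsilon)$. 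The entire argument thus rests on the algebraic cancellation that converts an unavoidable local non-consistency ($\mathbf B_{j\sigma}C$ in Case~A, $B^v_{j\sigma}$ in Case~B) into a consistent $F_{j\sigma}$, while the non-smoothness of $u$ is carried entirely by $\tilde U$.
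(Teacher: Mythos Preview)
Your argument is correct and follows the paper's approach; the sufficiency is identical, and for necessity the paper handles your Cases~A and~B in one stroke by working with $B^v_{j\sigma}+\mathbf B_{j\sigma}C$ (assuming one of $v,C$ vanishes), introducing a locally constant function $F'_{j\sigma}$ equal to $B^v_{j\sigma}(0)+(\mathbf B_{j\sigma}C)(0)$ near the origin, and setting $u=u'+v$ with $u'$ the pullback of $U'_j+\xi_jC_j$---your two cases are precisely the specializations $v=0$ and $C=0$ of this construction.

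Two small remarks. First, the $W^2(G)$-extension of $v$ vanishing near $\mathcal K$ is routine (Stein extension across $\partial\mathcal O_{\varkappa_1}(\mathcal K)$ followed by a cutoff) and the paper treats it in one line rather than as the ``principal obstacle''; invoking Remark~\ref{remSmallEps} to shrink $\varepsilon$ is unnecessary. Second, your cutoff requirement ``$\chi(\mathcal G_{j\sigma ks}y)=1$ whenever $\chi(y)=1$'' is not achievable when some homothety coefficient $\chi_{j\sigma ks}>1$; what you actually need---and what the paper uses via the cutoff $\xi_j$---is only that $\chi\circ\mathcal G_{j\sigma ks}\equiv1$ on a possibly smaller neighborhood of the origin, which is automatic since $\mathcal G_{j\sigma ks}0=0$. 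This suffices because the difference $\mathbf B_{j\sigma}(\chi C)-\mathbf B_{j\sigma}C$ is then smooth and supported away from~$0$, hence lies in $\mathcal H_0^{3/2}(\gamma^\varepsilon)$ and does not affect the consistency check.
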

\begin{proof}
1. {\em Sufficiency.} Let Condition~\ref{condB2vB1CConsistency}
hold, and let $u\in W^1(G)$ be an arbitrary generalized solution
of problem~\eqref{eqPinG}, \eqref{eqBinG} with right-hand side
$\{f_0,f_i\}\in L_2(G)\times \mathcal S^{3/2}(\partial G)$.
By~\eqref{eqSmoothOutsideK}, we have $u\in
W^{2}(G\setminus\overline{\mathcal O_{\varkappa_1}(\mathcal K)})$.
Therefore, by Condition~\ref{condB2vB1CConsistency}, the functions
$B^u_{j\sigma}$ satisfy the consistency
condition~\eqref{eqConsistencyZ}. Let $C$ be a constant vector
defined by Lemma~\ref{lU=C+}. Using
Condition~\ref{condB2vB1CConsistency} again, we see that the
functions $\mathbf B_{j\sigma}C$ satisfy the consistency
condition~\eqref{eqConsistencyZ}. Since $\{f_i\}\in\mathcal
S^{3/2}(\partial G)$, it follows that the functions $F_{j\sigma}$
satisfy the consistency condition~\eqref{eqConsistencyZ}.
Therefore, the functions
$\Psi_{j\sigma}=F_{j\sigma}-B^u_{j\sigma}$ and $\mathbf
B_{j\sigma}C$ satisfy Condition~\ref{condConsistencyPsi-BC}.
Applying Theorem~\ref{thuinW^2ProperEigen}, we obtain $u\in
W^2(G)$.

2. {\em Necessity.} Let Condition~\ref{condB2vB1CConsistency}
fail. In this case, there exist a function $v\in
W^{2}(G\setminus\overline{\mathcal O_{\varkappa_1}(\mathcal K)})$
and a constant vector $C=(C_1,\dots,C_N)$ such that the functions
$B^v_{j\sigma}+\mathbf B_{j\sigma}C$ do not satisfy the
consistency condition~\eqref{eqConsistencyZ} (one can assume that
either $v=0,\,C\ne0$ or $v\ne0,\,C=0$). Extend the function $v$ to
the domain $G$ in such a way that $v(y)=0$ for $y\in\mathcal
O_{\varkappa_1/2}(\mathcal K)$ and $v\in W^2(G)$.

Consider functions $F_{j\sigma}'\in
C^\infty(\overline{\gamma_{j\sigma}})$ such that
$$
F_{j\sigma}'(y)=B^v_{j\sigma}(0)+(\mathbf B_{j\sigma}C)(0),\quad
|y|<\varepsilon/2,\qquad F_{j\sigma}'(y)=0,\quad |y|>\varepsilon.
$$
Since $\partial F_{j\sigma}'/\partial\tau_{j\sigma}=0$ near the
origin, it follows that the functions $F_{j\sigma}'$ satisfy the
consistency condition~\eqref{eqConsistencyZ}. By construction,
$$
\{F_{j\sigma}'-B^v_{j\sigma}-\mathbf
B_{j\sigma}C|_{\gamma_{j\sigma}^\varepsilon}\}\in\mathcal
W^{3/2}(\gamma^\varepsilon),\qquad
(F_{j\sigma}'-B^v_{j\sigma}-\mathbf B_{j\sigma}C)|_{y=0}=0,
$$
and the functions $F_{j\sigma}'-B^v_{j\sigma}-\mathbf
B_{j\sigma}C$ do not satisfy the consistency
condition~\eqref{eqConsistencyZ}. By
Lemma~\ref{lUnonSmoothZnonConsist}, there exists a function
$U'\in\mathcal H_a^2(K)\cap\mathcal W^1(K)$ such that $\supp
U'\subset\mathcal O_{\varepsilon'}(0)$, $U'\notin\mathcal
W^2(K^\varepsilon)$, and
\begin{equation}\label{eqPU'-PsiinH0}
\{\mathbf P_jU'_j\}\in\mathcal W^0(K^\varepsilon),
\end{equation}
$$
 \big\{\big(\mathbf
B_{j\sigma}U'-(F_{j\sigma}'-B^v_{j\sigma}-\mathbf
B_{j\sigma}C)\big)|_{\gamma_{j\sigma}^\varepsilon}\big\}\in\mathcal
H_0^{3/2}(\gamma^\varepsilon).
$$
One can also write the latter relation as follows:
\begin{equation}\label{eqBU'-PsiinH0}
\{\mathbf
B_{j\sigma}(U'+C)|_{\gamma_{j\sigma}^\varepsilon}+B^v_{j\sigma}-F_{j\sigma}'\}\in\mathcal
H_0^{3/2}(\gamma^\varepsilon).
\end{equation}
Introduce a function $u'(y)$ such that
$u'(y)=U'_j(y'(y))+\xi_j(y)C_j$ for $y\in\mathcal
O_{\varepsilon'}(g_j)$ and $u'(y)=0$ for $y\notin\mathcal
O_{\varepsilon'}(\mathcal K)$, where $y'\mapsto y(g_j)$ is the
change of variables inverse to the change of variables $y\mapsto
y'(g_j)$ from Sec.~\ref{subsectStatement}, while $\xi_j\in
C_0^\infty(O_{\varepsilon'}(g_j))$, $\xi_j(y)=1$ for $y\in\mathcal
O_{\varepsilon'/2}(g_j)$, and $\varepsilon'$ is given
by~\eqref{eqEpsilon'}. Let us prove that the function $u=u'+v$ is
the desired one. Clearly, $u\in W^1(G)$, $u\notin W^2(G)$, and $u$
satisfies relations~\eqref{eqSmoothOutsideK}. It follows from the
belonging $v\in W^2(G)$ and from relations~\eqref{eqPU'-PsiinH0}
that
$$
\mathbf Pu\in L_2(G).
$$
Consider the functions $f_i=u|_{\Gamma_i}+\mathbf B_{i}^1
u+\mathbf B_{i}^2 u$. It follows from the belonging $v\in W^2(G)$,
from relations~\eqref{eqSmoothOutsideK}, and from
inequality~\eqref{eqSeparK23'} that $f_i\in
W^{3/2}\bigl(\Gamma_i\setminus\overline{\mathcal O_\delta(\mathcal
K)}\bigr)$ for any $\delta>0$. Consider the behavior of $f_i$ near
the set $\mathcal K$. Note that $\mathbf B_{i}^2 u'=0$
by~\eqref{eqSeparK23'}. Furthermore, $v|_{\Gamma_i}+\mathbf
B_{i}^1 v=0$ for $y\in\mathcal O_{\varkappa_1/D_\chi }(\mathcal
K)$. Therefore,
\begin{equation}\label{eqfi}
 f_i=u'|_{\Gamma_i}+\mathbf
B_{i}^1 u'+\mathbf B_{i}^2 v\quad (y\in\mathcal
O_{\varkappa_1/D_\chi }(\mathcal K)).
\end{equation}
Introduce the functions $F_{j\sigma}(y')=f_i(y(y'))$, where
$y\mapsto y'(g_j)$ is the change of variables from
Sec.~\ref{subsectStatement}. It follows from~\eqref{eqfi} and
from~\eqref{eqBU'-PsiinH0} that
$\{F_{j\sigma}-F_{j\sigma}'\}\in\mathcal
H_0^{3/2}(\gamma^\varepsilon)$. Therefore,
$\{F_{j\sigma}\}\in\mathcal W^{3/2}(\gamma^\varepsilon)$ and the
functions $F_{j\sigma}$, together with $F_{j\sigma}'$, satisfy the
consistency condition~\eqref{eqConsistencyZ}. Hence
$\{f_i\}\in\mathcal S^{3/2}(\partial G)$, which completes the
proof.
\end{proof}

\subsection{Problem with Regular and Homogeneous Nonlocal Conditions}\label{subsectHomogProb}

\begin{definition}\label{defAdmit}
We say that a function $v\in W^{2}(G\setminus\overline{\mathcal
O_{\varkappa_1}(\mathcal K)})$ is {\em admissible} if there exists
a constant vector $C=(C_1,\dots,C_N)$ such that
\begin{equation}\label{eqvadmissible}
B_{j\sigma}^v(0)+(\mathbf B_{j\sigma}C)(0)=0,\quad j=1,\dots,N,\
\sigma=1,2.
\end{equation}
Any vector $C$ satisfying relations~\eqref{eqvadmissible} is said
to be an {\em admissible vector corresponding to the
function~$v$.}
\end{definition}

\begin{remark} The set of admissible functions is linear. It is
clear that the function $v=0$ is admissible, while the victor
$C=0$ is an admissible vector corresponding to the function $v=0$.
In fact, the set of admissible functions is much wider. In
particular, it contains all generalized solutions of
problem~\eqref{eqPinG}, \eqref{eqBinG} with homogeneous nonlocal
conditions for all $f\in L_2(G)$ (see the proof of
Theorem~\ref{thSmoothf0} below). Therefore, this set consists of
infinitely many elements due to Theorem~2.1
in~\cite{GurMatZam05}.\footnote{Theorem~2.1 in~\cite{GurMatZam05}
asserts that problem~\eqref{eqPinG}, \eqref{eqBinG} has the
Fredholm property.}

As for the set of admissible vectors corresponding to an
admissible function $v$, it is an affine space of the form
\begin{equation}\label{eqAdmissibleSet}
\{C+\tilde C:\ \tilde C=\const,\,(\mathbf B_{j\sigma}\tilde
C)(0)=0\},
\end{equation}
where $C$ is a fixed admissible vector corresponding to $v$ (if
the relations $(\mathbf B_{j\sigma}\tilde C)(0)=0$, $j=1,\dots,N$,
$\sigma=1,2$, hold for $\tilde C=0$ only, then the set of
admissible vectors corresponding to $v$ consists of a unique
vector). Indeed, if a constant vector $D$ belongs to the
set~\eqref{eqAdmissibleSet}, then $(\mathbf
B_{j\sigma}(D-C))(0)=0$ and, therefore,
$$
B_{j\sigma}^v(0)+(\mathbf
B_{j\sigma}D)(0)=B_{j\sigma}^v(0)+(\mathbf B_{j\sigma}C)(0)=0
$$
due to~\eqref{eqvadmissible}, i.e., the vector $D$ is admissible.
Conversely, if $D$ is an admissible vector corresponding to $v$,
then
$$
B_{j\sigma}^v(0)+(\mathbf B_{j\sigma}D)(0)=0.
$$
Subtracting~\eqref{eqvadmissible} from this equality yields
$(\mathbf B_{j\sigma}(D-C))(0)=0$.
\end{remark}

\medskip

\begin{definition}
Right-hand sides $f_i$ in nonlocal conditions~\eqref{eqBinG} are
said to be \textit{regular} if $\{f_i\}\in\mathcal
S^{3/2}(\partial G)$ and $f_i|_{\overline{\Gamma_i}\cap\mathcal
K}=0$, $i=1,\dots,N$.
\end{definition}
In particular, right-hand sides $\{f_i\}\in\mathcal
H_0^{3/2}(\partial G)$ are regular due to the Sobolev embedding
theorem and Remark~\ref{remSufficCons}. In this subsection, we
prove that the following condition is necessary and sufficient for
any generalized solution of problem~\eqref{eqPinG}, \eqref{eqBinG}
with regular $f_i$ to be smooth.

\begin{condition}\label{condBv+BCConsist}
For each admissible function $v$ and for each admissible vector
$C$ corresponding to $v$, the functions $B_{j\sigma}^v+\mathbf
B_{j\sigma}C$ satisfy the consistency
condition~\eqref{eqConsistencyZ}.
\end{condition}

Note that Condition~\ref{condBv+BCConsist} is, in general, weaker
than Condition~\ref{condB2vB1CConsistency}.

\begin{theorem}\label{thSmoothf0}
Let Condition~$\ref{condProperEigen}$ hold. Then the following
assertions are true.
\begin{enumerate}
\item
If Condition~$\ref{condBv+BCConsist}$ is fulfilled and $u\in
W^1(G)$ is a generalized solution of problem~\eqref{eqPinG},
\eqref{eqBinG} with right-hand side $\{f_0,f_i\}\in
L_2(G)\times\mathcal S^{3/2}(\partial G)$, where $f_i$ are
regular, then $u\in W^2(G)$.
\item
If Condition~$\ref{condBv+BCConsist}$ fails, then there exists a
right-hand side $\{f_0,f_i\}\in L_2(G)\times\mathcal
H_0^{3/2}(\partial G)$ and a generalized solution $u\in W^1(G)$ of
problem~\eqref{eqPinG}, \eqref{eqBinG} such that $u\notin W^2(G)$.
\end{enumerate}
\end{theorem}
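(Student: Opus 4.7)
The strategy is to mirror the proof of Theorem~\ref{thSmoothfne0}, but restricted to the regular class of boundary data, where ``for all $v$ and $C$'' in Condition~\ref{condB2vB1CConsistency} is replaced by ``admissible $v$ and admissible $C$'' in Condition~\ref{condBv+BCConsist}. The bridge between the two sides of the theorem is the observation that regularity of $f_i$ forces every generalized solution $u$ to be an admissible function, with the constant vector $C$ of Lemma~\ref{lU=C+} being an admissible vector for $u$; Theorem~\ref{thuinW^2ProperEigen} then converts the resulting consistency into $W^2$-smoothness.

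For sufficiency, let $u$ be a generalized solution with regular $\{f_i\}\in\mathcal S^{3/2}(\partial G)$, so $F_{j\sigma}(0)=0$. By~\eqref{eqSmoothOutsideK}, $u\in W^{2}(G\setminus\overline{\mathcal O_{\varkappa_1}(\mathcal K)})$. Lemma~\ref{lU=C+} applied to the local representation $U$ yields $U=C+U'$ with $U'\in\mathcal H_a^2(K^\varepsilon)$ and $\mathcal B_{j\sigma}C=\Psi_{j\sigma}(0)$. Using $\Psi_{j\sigma}(0)=F_{j\sigma}(0)-B^u_{j\sigma}(0)=-B^u_{j\sigma}(0)$ and the identity $(\mathbf B_{j\sigma}C)(0)=\mathcal B_{j\sigma}C$ for constant $C$, one obtains $B^u_{j\sigma}(0)+(\mathbf B_{j\sigma}C)(0)=0$, i.e., $u$ is admissible and $C$ is an admissible vector for $u$. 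Condition~\ref{condBv+BCConsist} then ensures that $\{B^u_{j\sigma}+\mathbf B_{j\sigma}C\}$ satisfies~\eqref{eqConsistencyZ}, while $\{f_i\}\in\mathcal S^{3/2}(\partial G)$ gives the same for $\{F_{j\sigma}\}$; by linearity $\{\Psi_{j\sigma}-\mathbf B_{j\sigma}C\}$ satisfies~\eqref{eqConsistencyZ}, Condition~\ref{condConsistencyPsi-BC} holds, and Theorem~\ref{thuinW^2ProperEigen} gives $u\in W^2(G)$.

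For necessity, suppose Condition~\ref{condBv+BCConsist} fails: fix an admissible $v$ (extended by zero on $\mathcal O_{\varkappa_1/2}(\mathcal K)$ so that $v\in W^2(G)$) and an admissible vector $C$ such that $\{B^v_{j\sigma}+\mathbf B_{j\sigma}C\}$ violates~\eqref{eqConsistencyZ}. Since admissibility already gives $B^v_{j\sigma}(0)+(\mathbf B_{j\sigma}C)(0)=0$, the auxiliary function $F'_{j\sigma}$ appearing in the proof of Theorem~\ref{thSmoothfne0} may now be taken identically zero. Apply Lemma~\ref{lUnonSmoothZnonConsist} to $Z_{j\sigma}=-\xi_0\bigl(B^v_{j\sigma}+\mathbf B_{j\sigma}C\bigr)$, where $\xi_0$ is a cutoff equal to one near the origin and supported in $\mathcal O_{\varepsilon/2}(0)$; this produces $U'\in\mathcal H_a^2(K)\cap\mathcal W^1(K)$ with $\supp U'\subset\mathcal O_{\varepsilon'}(0)$, $U'\notin\mathcal W^2(K^\varepsilon)$, $\{\mathbf P_jU'_j\}\in\mathcal W^0(K^\varepsilon)$, and $\{\mathbf B_{j\sigma}(U'+C)|_{\gamma_{j\sigma}^\varepsilon}+B^v_{j\sigma}\}\in\mathcal H_0^{3/2}(\gamma^\varepsilon)$. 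Define $u'(y)=U'_j(y'(y))+\xi_j(y)C_j$ on $\mathcal O_{\varepsilon'}(g_j)$ and zero elsewhere, and set $u=u'+v$. The same computation as in the proof of Theorem~\ref{thSmoothfne0} shows $u\in W^1(G)\setminus W^2(G)$, $\mathbf Pu\in L_2(G)$, and, because $F'_{j\sigma}\equiv0$ here, $\{f_i\}\in\mathcal H_0^{3/2}(\partial G)$, which is in particular regular.

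The principal obstacle is arranging that $\{f_i\}$ lies in $\mathcal H_0^{3/2}(\partial G)$ rather than merely in $\mathcal S^{3/2}(\partial G)$: it is precisely the admissibility cancellation at $y=0$, together with $v\equiv 0$ near $\mathcal K$, that lets us drop the nonzero constant near the origin which $F'_{j\sigma}$ carried in the proof of Theorem~\ref{thSmoothfne0}. Without this cancellation, the counterexample would escape the regular class and leave the theorem vacuous. A secondary subtlety is that since admissible vectors form an affine space~\eqref{eqAdmissibleSet}, one must select the specific pair $(v,C)$ for which consistency is violated, which is exactly the content of the negation of Condition~\ref{condBv+BCConsist}.
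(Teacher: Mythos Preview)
Your proposal is correct and follows essentially the same approach as the paper's own proof: reduce sufficiency to Theorem~\ref{thuinW^2ProperEigen} by showing that regularity of $f_i$ forces $u$ to be admissible with admissible vector $C$, and obtain necessity by rerunning the construction in assertion~2 of Theorem~\ref{thSmoothfne0} with $F'_{j\sigma}\equiv 0$, which the admissibility cancellation $B^v_{j\sigma}(0)+(\mathbf B_{j\sigma}C)(0)=0$ makes legitimate. Your explicit introduction of the cutoff $\xi_0$ to meet the support hypothesis of Lemma~\ref{lUnonSmoothZnonConsist} is a detail the paper leaves implicit, and your derivation of admissibility directly from the relation $\mathcal B_{j\sigma}C=\Psi_{j\sigma}(0)$ in Lemma~\ref{lU=C+} is a slight streamlining of the paper's argument via the trace of $\mathbf B_{j\sigma}U'\in\mathcal W^{3/2}\cap\mathcal H_a^{3/2}$, but the two routes are equivalent.
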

\begin{proof}
1. {\em Sufficiency.} Let Condition~\ref{condBv+BCConsist} hold,
and let $u\in W^1(G)$ be an arbitrary generalized solution of
problem~\eqref{eqPinG}, \eqref{eqBinG} with right-hand side
$\{f_0,f_i\}\in L_2(G)\times\mathcal S^{3/2}(\partial G)$,
$f_i|_{\overline{\Gamma_i}\cap\mathcal K}=0$.
By~\eqref{eqSmoothOutsideK}, we have $u\in
W^{2}(G\setminus\overline{\mathcal O_{\varkappa_1}(\mathcal K)})$.

It follows from the properties of $f_i$ that the right-hand sides
in nonlocal conditions~\eqref{eqBinK} have the form
\begin{equation}\label{eqPsi=B^u}
\Psi_{j\sigma}=F_{j\sigma}-B_{j\sigma}^u,
\end{equation}
where $F_{j\sigma}\in\mathcal W^{3/2}(\gamma^\varepsilon)$,
$F_{j\sigma}(0)=0$, and $F_{j\sigma}$ satisfy the consistency
condition~\eqref{eqConsistencyZ}.

 Further, let $U=C+U'$, where
$U'\in\mathcal H_{a}^{2}(K^\varepsilon)$ and $C$ are the function
and the constant vector defined in Lemma~\ref{lU=C+}. It follows
from~\eqref{eqBinK} and~\eqref{eqPsi=B^u} that
$$
{\mathbf B}_{j\sigma}U'=F_{j\sigma}-(B_{j\sigma}^u+{\mathbf
B}_{j\sigma}C).
$$
Since $\{B_{j\sigma}^u+{\mathbf
B}_{j\sigma}C|_{\gamma_{j\sigma}^\varepsilon}-F_{j\sigma}\}\in\mathcal
W^{3/2}(\gamma^\varepsilon)$ and $U'\in \mathcal
H_{a}^{2}(K^\varepsilon)$, it follows that
$$
\{B_{j\sigma}^u+{\mathbf
B}_{j\sigma}C|_{\gamma_{j\sigma}^\varepsilon}-F_{j\sigma}\}=\{-{\mathbf
B}_{j\sigma}U'\}\in\mathcal
W^{3/2}(\gamma^\varepsilon)\cap\mathcal
H_a^{3/2}(\gamma^\varepsilon).
$$
Therefore, $B_{j\sigma}^u(0)+({\mathbf B}_{j\sigma}C)(0)=0$ (since
$F_{j\sigma}(0)=0$ due to the above), i.e., $u$ is an admissible
function and $C$ is an admissible vector corresponding to $u$.
Hence, by virtue of~\eqref{eqPsi=B^u} and by
Condition~\ref{condBv+BCConsist},
Condition~\ref{condConsistencyPsi-BC} holds. Combining this fact
with Theorem~\ref{thuinW^2ProperEigen} implies $u\in W^2(G)$.

2. {\em Necessity.} Let Condition~\ref{condBv+BCConsist} fail. In
this case, there exists a function $v\in
W^{2}(G\setminus\overline{\mathcal O_{\varkappa_1}(\mathcal K)})$
and a constant vector $C=(C_1,\dots,C_N)$ such that
$B^v_{j\sigma}(0)+(\mathbf B_{j\sigma}C)(0)=0$ and the functions
$B^v_{j\sigma}+\mathbf B_{j\sigma}C$ do not satisfy the
consistency condition~\eqref{eqConsistencyZ}.

We must find a function  $u\in W^1(G)$ such that $u\notin W^2(G)$
and
$$
\mathbf Pu\in L_2(G),\qquad u|_{\Gamma_i}+\mathbf B_{i}^1
u+\mathbf B_{i}^2 u\in H_0^{3/2}(\Gamma_i).
$$
To do this, one can repeat the proof of assertion~2 of
Theorem~\ref{thSmoothfne0}, assuming that $v$ is the above
function, $C$ is the above constant vector, and
$F_{j\sigma}'(y)\equiv0$ (which is possible due to the relation
$B^v_{j\sigma}(0)+(\mathbf B_{j\sigma}C)(0)=0$).
\end{proof}

\begin{corollary}\label{corSmoothf0NeighbK}
Let Condition~$\ref{condProperEigen}$ hold. If
Condition~$\ref{condBv+BCConsist}$ fails, then there exist a
right-hand side $\{f_0,f_i\}\in L_2(G)\times\mathcal
H_0^{3/2}(\partial G)$, where $f_i(y)=0$ for
$y\in\Gamma_i\cap\mathcal O_{\varkappa_2}(\mathcal K)$, and a
generalized solution $u\in W^1(G)$ of problem~\eqref{eqPinG},
\eqref{eqBinG} such that $u\notin W^2(G)$.
\end{corollary}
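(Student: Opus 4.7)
The plan is to adapt the proof of Theorem~\ref{thSmoothf0}, assertion~2, and add one further step that promotes the conclusion $\{f_i\}\in\mathcal H_0^{3/2}(\partial G)$ to the pointwise vanishing $f_i\equiv 0$ on $\Gamma_i\cap\mathcal O_{\varkappa_2}(\mathcal K)$. Specifically, by the failure of Condition~\ref{condBv+BCConsist}, I would first fix an admissible function $v\in W^2(G\setminus\overline{\mathcal O_{\varkappa_1}(\mathcal K)})$ together with an admissible vector $C=(C_1,\dots,C_N)$ corresponding to $v$ for which $B_{j\sigma}^v+\mathbf B_{j\sigma}C$ violates~\eqref{eqConsistencyZ}, extend $v$ to $G$ so that $v=0$ in $\mathcal O_{\varkappa_1/2}(\mathcal K)$, and run the construction from the proof of Theorem~\ref{thSmoothf0} (with $F'_{j\sigma}\equiv 0$, which is permissible by admissibility). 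This yields a generalized solution $u_0\in W^1(G)\setminus W^2(G)$ of problem~\eqref{eqPinG}, \eqref{eqBinG} with $\mathbf P u_0\in L_2(G)$ and boundary residuals $\{f_i^{(0)}\}:=\{u_0|_{\Gamma_i}+\mathbf B_i^1 u_0+\mathbf B_i^2 u_0\}\in\mathcal H_0^{3/2}(\partial G)$.

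Next I would choose a smooth cutoff $\chi$ on $\partial G$ equal to $1$ outside $\mathcal O_{\varkappa_2}(\mathcal K)$ and vanishing in $\mathcal O_{\varkappa_2/2}(\mathcal K)$, and split $f_i^{(0)}=\chi f_i^{(0)}+g_i$ with $g_i:=(1-\chi)f_i^{(0)}\in\mathcal H_0^{3/2}(\Gamma_i)$ supported in $\mathcal O_{\varkappa_2}(\mathcal K)\cap\Gamma_i$. If I can produce a smooth correction $h\in W^2(G)$ solving the nonlocal boundary equation $h|_{\Gamma_i}+\mathbf B_i^1 h+\mathbf B_i^2 h=g_i$ on each $\Gamma_i$, then $u:=u_0-h\in W^1(G)\setminus W^2(G)$ is the desired solution: $\mathbf P u\in L_2(G)$ automatically, the boundary residuals of $u$ equal $\chi f_i^{(0)}$, which vanish in $\mathcal O_{\varkappa_2/2}(\mathcal K)\cap\Gamma_i$, and a slight rescaling of the cutoff neighborhood recovers vanishing on the whole of $\mathcal O_{\varkappa_2}(\mathcal K)\cap\Gamma_i$. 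The property $u\notin W^2(G)$ is preserved because $h$ lies in $W^2(G)$.

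The main obstacle is the existence of the correction $h$. Since $\mathbf B_i^2$ is abstract, the boundary equation for $h$ cannot be inverted in general; however, $g_i$ is concentrated near $\mathcal K$ and lies in $\mathcal H_0^{3/2}$, exactly the setting in which the model-problem analysis of Section~\ref{sectProperEigen} applies. I would construct $h$ locally near each conjugation point $g_j$ using Lemma~\ref{lU=C+} and the asymptotic results for the model problem, whose solution in fact lies in $W^2$ because $g_i$ vanishes at $\mathcal K$ in the weighted sense (so the proper-eigenvalue obstruction drops out), then paste these local pieces via a partition of unity and extend by zero outside $\mathcal O_{\varkappa_2}(\mathcal K)$. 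The global $W^2(G)$-membership of $h$ follows from the regular structure of $g_i$ together with the boundedness estimates~\eqref{eqSeparK23'}, \eqref{eqSeparK23''} of Condition~\ref{condSeparK23}. This local-to-global assembly of $h$ is the essential new ingredient compared with the proof of Theorem~\ref{thSmoothf0}.
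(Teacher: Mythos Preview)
Your overall strategy---take the non-smooth solution $u_0$ produced by Theorem~\ref{thSmoothf0}, assertion~2, and subtract a $W^2(G)$-correction that kills the boundary residuals near $\mathcal K$---is exactly what the paper does. The proof in the paper is a single line: it invokes assertion~1 of Lemma~\ref{lHomogeneous} (stated immediately after the corollary) and the embedding $H_0^2(G)\subset W^2(G)$.

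The difference is in how the correction is built, and here you overcomplicate matters and misidentify the tool. You propose to construct $h$ by solving the full nonlocal boundary equation $h|_{\Gamma_i}+\mathbf B_i^1 h+\mathbf B_i^2 h=g_i$ via Lemma~\ref{lU=C+} and the model-problem asymptotics. But Lemma~\ref{lU=C+} and Theorem~\ref{lAppTh2.2GurPetr03} are \emph{analytic} tools: they decompose or describe an already-given solution; they do not manufacture one with prescribed boundary data. What is actually needed is the constructive Lemma~\ref{lAppL8.1GurRJMP04}, which (together with a partition of unity) produces a function $u_0\in H_0^2(G)$ supported in $\mathcal O_{\varkappa_1}(\mathcal K)$ with $u_0|_{\Gamma_i}=f_i^{(0)}$ on $\Gamma_i\cap\mathcal O_{\varkappa_2}(\mathcal K)$ and $\mathbf B_i^1 u_0=0$. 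The support condition then forces $\mathbf B_i^2 u_0=0$ by~\eqref{eqSeparK23'}. This is precisely the content of Lemma~\ref{lHomogeneous}. Note that one does \emph{not} need the correction to satisfy the full nonlocal equation globally: since $\mathbf B_i^1 u_0=\mathbf B_i^2 u_0=0$, the new boundary data are simply $f_i^{(0)}-u_0|_{\Gamma_i}$, which vanish on $\Gamma_i\cap\mathcal O_{\varkappa_2}(\mathcal K)$ by construction and remain in $H_0^{3/2}(\Gamma_i)$ because $u_0\in H_0^2(G)$. Your cutoff $\chi$ and the ``rescaling'' step are then unnecessary.
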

The proof of this corollary results from assertion~2 in
Theorem~\ref{thSmoothf0}, from the embedding $H_0^2(G)\subset
W^2(G)$, and from assertion~1 of the following lemma.

\begin{lemma}\label{lHomogeneous}
\begin{enumerate}
\item
Let $f_{i}\in H_0^{3/2}(\Gamma_i)$, $i=1, \dots, N$. Then there
exists a function $u_0\in H_0^{2}(G)$ such that
$$
\supp u_0\subset\mathcal O_{\varkappa_1}(\mathcal K),
$$
$$
u_0|_{\Gamma_i}= f_{i}(y),\quad
    y\in \Gamma_i\cap\mathcal O_{\varkappa_2}(\mathcal K),\ i=1, \dots, N,
$$
\begin{equation}\label{eqHomogeneous0}
\mathbf B_{i}^1 u_0=\mathbf B_{i}^2 u_0=0,\quad i=1, \dots, N.
\end{equation}
\item
Let $f_{i}\in H_0^{3/2}(\Gamma_i)$ and $\supp f_i\subset\mathcal
O_{\varkappa_2}(\mathcal K)$, $i=1, \dots, N$. Then there exists a
function $u_0\in H_0^{2}(G)$ such that
$$
\supp u_0\subset\mathcal O_{\varkappa_2}(\mathcal K),
$$
$$
u_0|_{\Gamma_i}= f_{i}(y),\quad
    y\in \Gamma_i,\ i=1, \dots, N,
$$
and relations~\eqref{eqHomogeneous0} are valid.
\end{enumerate}
\end{lemma}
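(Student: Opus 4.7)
The plan is to construct $u_0$ by working locally at each $g_k \in \mathcal K$ in the model angle $K_k$, extending the boundary data in the weighted Sobolev space $H_0^2$, and then using an angular cutoff carefully tailored so that the support of $u_0$ avoids every image ray of the nonlocal transformations. This will automatically force $\mathbf B_{i}^1 u_0 = 0$, while the support condition near $\mathcal K$ will give $\mathbf B_{i}^2 u_0 = 0$ via~\eqref{eqSeparK23'}.

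For part~(1), I would first replace $f_i$ by $\zeta f_i$, where $\zeta$ is a smooth function equal to one on $\overline{\mathcal O_{\varkappa_2}(\mathcal K)}$ and supported in $\mathcal O_{\varkappa_1/2}(\mathcal K)$, reducing to the case when $\supp f_i$ is close to $\mathcal K$. After localizing via the charts $Y_k$, the task becomes: for each $k$, given $f_{k\sigma} \in H_0^{3/2}(\gamma_{k\sigma})$ supported near the origin, produce $U_k \in H_0^{2}(K_k)$ supported near the origin, with trace $f_{k\sigma}$ on $\gamma_{k\sigma}$ and with $U_k$ vanishing on every image ray $\mathcal G_{j\sigma ks}(\gamma_{j\sigma}) \subset K_k$ (for all triples $(j,\sigma,s)$ with $(k,s) \ne (j,0)$ and $\Omega_{is}(g_j) = g_k$). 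Definition~\eqref{eqTraceNormH} provides an extension $\tilde U_{k\sigma} \in H_0^{2}(K_k)$ of $f_{k\sigma}$.

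The decisive step is the construction of the angular cutoff. By Condition~\ref{condK1} combined with Remark~\ref{remK1}, every image ray in $K_k$ meets the origin at an angle strictly interior to $(-\omega_k,\omega_k)$, hence the finitely many image rays landing in $K_k$ are bounded away from both sides $\gamma_{k1}$ and $\gamma_{k2}$. I then choose angular cutoffs $\eta_{k\sigma}(\omega) \in C^\infty$ depending only on $\omega$, equal to one in a small neighborhood of $(-1)^\sigma \omega_k$, vanishing on the opposite side $\gamma_{k,3-\sigma}$, and vanishing in a small neighborhood of each image ray in $K_k$. Since $|D^\alpha \eta_{k\sigma}| \le c\,r^{-|\alpha|}$ in Cartesian coordinates, multiplication by $\eta_{k\sigma}$ is bounded on $H_0^{2}(K_k)$ (the extra factor $r^{-|\alpha-\beta|}$ from differentiating the cutoff is absorbed by the weight $\rho^{2(-k+|\alpha|)}$ in the definition of $H_0^{k}$). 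Setting $U_k = \eta_{k1}\tilde U_{k1} + \eta_{k2}\tilde U_{k2}$, multiplying by a radial cutoff supported in $\mathcal O_{\varkappa_1/2}(0)$ and equal to one near $0$, pulling back by $Y_k^{-1}$ and pasting together yields $u_0 \in H_0^{2}(G)$ with $\supp u_0 \subset \mathcal O_{\varkappa_1}(\mathcal K)$.

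Verification is then immediate: the support condition gives $\mathbf B_{i}^2 u_0 = 0$ via~\eqref{eqSeparK23'}; the trace on $\Gamma_i \cap \mathcal O_{\varkappa_2}(\mathcal K)$ coincides with $f_i$ because $\eta_{k\sigma} \equiv 1$ on $\gamma_{k\sigma}$ near the origin and $\zeta \equiv 1$ on $\mathcal O_{\varkappa_2}(\mathcal K)$; and for $y \in \Gamma_i \cap \mathcal O_\varepsilon(\mathcal K)$ the point $\Omega_{is}(y)$ lies, in local coordinates at $g_k = \Omega_{is}(g_j)$, on an image ray in $K_k$ where $U_k \equiv 0$, so $\mathbf B_{i}^1 u_0 = 0$. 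Part~(2) is handled identically, replacing the radial cutoff by one supported in $\mathcal O_{\varkappa_2}(g_k)$, which is admissible because $\supp f_i \subset \mathcal O_{\varkappa_2}(\mathcal K)$. The main technical obstacle is the simultaneous construction of the angular cutoffs so that a single $\eta_{k\sigma}$ avoids every image ray in $K_k$ at once; the enabling ingredient is the strict interior location of these rays guaranteed by Remark~\ref{remK1}.
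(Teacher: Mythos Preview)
Your proposal is correct and follows the same strategy as the paper: the paper simply invokes Lemma~\ref{lAppL8.1GurRJMP04} (from~\cite{GurRJMP04}) together with a partition of unity to obtain $u_0\in H_0^2(G)$ with the required traces, with $\mathbf B_i^1 u_0=0$, and supported in $\mathcal O_{\varkappa_1}(\mathcal K)$, then reads off $\mathbf B_i^2 u_0=0$ from~\eqref{eqSeparK23'}. Your explicit angular-cutoff construction (using that the image rays lie strictly inside each $K_k$, Remark~\ref{remK1}) is exactly how that cited lemma is proved, so you have simply unpacked what the paper imports as a black box; one cosmetic adjustment is to take the radial cutoff equal to one on $\mathcal O_{\varkappa_2}(0)$ and supported in $\mathcal O_{\varkappa_1}(0)$, since only $\varkappa_2<\varkappa_1$ is assumed.
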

\begin{proof}
1. Using Lemma~\ref{lAppL8.1GurRJMP04} and a partition of unity,
one can construct a function $u_0\in H_0^{2}(G)$ such that
\begin{equation}\label{eqHomogeneous1}
\supp u_0\subset \mathcal O_{\varkappa_1}(\mathcal K),
\end{equation}
\begin{equation}\label{eqHomogeneous1'}
u_0|_{\Gamma_i}=f_{i}(y),\quad
  y\in \Gamma_i\cap\mathcal O_{\varkappa_2}(\mathcal K),\ i=1, \dots,
  N,
\end{equation}
$$
\mathbf B_{i}^1 u_0=0.
$$
By~\eqref{eqHomogeneous1} and~\eqref{eqSeparK23'}, we have
$\mathbf B_i^2u_0=0$. Therefore, $u_0$ is the desired function.

2. If $\supp f_i\subset\mathcal O_{\varkappa_2}(\mathcal K)$, we
can assume that $\supp u_0\subset\mathcal O_{\varkappa_2}(\mathcal
K)$. In this case, the equality in~\eqref{eqHomogeneous1'} holds
for $y\in\Gamma_i$.
\end{proof}

Now we find sufficient conditions for the violation of smoothness
of generalized solutions in the case of \textit{homogeneous}
nonlocal conditions. The following corollary results from
assertion~2 of Theorem~\ref{thSmoothf0}.

\begin{corollary}\label{corSmoothf0Homog}
Suppose that Condition~$\ref{condProperEigen}$ holds and
Condition~$\ref{condBv+BCConsist}$ fails. Let $\{f_0,f_i\}\in
L_2(G)\times\mathcal H_0^{3/2}(\partial G)$ be a function
constructed in assertion~$2$ of Theorem~$\ref{thSmoothf0}$, and
let there exist a function $u_0\in W^2(G)$ such that
\begin{equation}\label{eqSmoothf0Homog}
u_0|_{\Gamma_i}+\mathbf B_{i}^1 u_0+\mathbf B_{i}^2 u_0=
   f_{i}(y),\quad
    y\in \Gamma_i,\quad i=1, \dots, N.
\end{equation}
Then there is a right-hand side $\{f_0,0\}$, where $f_0\in
L_2(G)$, and a generalized solution $u\in W^1(G)$ of
problem~\eqref{eqPinG}, \eqref{eqBinG} such that $u\notin W^2(G)$.
\end{corollary}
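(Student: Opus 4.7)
The plan is a direct subtraction argument that reduces the corollary to assertion~2 of Theorem~\ref{thSmoothf0}. By that theorem, the hypothesised right-hand side $\{f_0,f_i\}\in L_2(G)\times\mathcal H_0^{3/2}(\partial G)$ admits a generalized solution $u\in W^1(G)$ of problem~\eqref{eqPinG},~\eqref{eqBinG} with $u\notin W^2(G)$. The idea is to form $\tilde u:=u-u_0$ and $\tilde f_0:=f_0-\mathbf P u_0$ and verify that the pair $(\tilde u,\{\tilde f_0,0\})$ exhibits the required violation of smoothness.

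First I would check the regularity of the new data. Since $u_0\in W^2(G)$, the operator $\mathbf P$ being of order two with $C^\infty$-coefficients yields $\mathbf P u_0\in L_2(G)$, hence $\tilde f_0\in L_2(G)$. Since $u\in W^1(G)$ and $u_0\in W^2(G)\subset W^1(G)$, we have $\tilde u\in W^1(G)$. Next I would confirm that $\tilde u$ is a generalized solution of~\eqref{eqPinG},~\eqref{eqBinG} with right-hand side $\{\tilde f_0,0\}$: the identity $\mathbf P\tilde u=\tilde f_0$ holds in the distributional sense by linearity (subtract the distributional identity $\mathbf P u=f_0$ from the classical identity $\mathbf P u_0=\mathbf P u_0$), while the homogeneous boundary condition
$$
\tilde u|_{\Gamma_i}+\mathbf B_{i}^1\tilde u+\mathbf B_{i}^2\tilde u=0\quad (y\in\Gamma_i,\ i=1,\dots,N)
$$
follows in the sense of traces by subtracting~\eqref{eqSmoothf0Homog} for $u_0$ from~\eqref{eqBinG} for $u$, using the linearity of $\mathbf B_i^1$ and $\mathbf B_i^2$.

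Finally, the non-smoothness assertion: were $\tilde u\in W^2(G)$, then $u=\tilde u+u_0\in W^2(G)$, contradicting the conclusion of assertion~2 of Theorem~\ref{thSmoothf0}. Thus $\tilde u\notin W^2(G)$, so renaming $\tilde f_0$ as $f_0$ and $\tilde u$ as $u$ produces the required right-hand side and solution.

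I do not foresee a genuine obstacle here; the corollary is a one-line linear manipulation, and the only reason the hypothesis ``$u_0\in W^2(G)$ satisfying~\eqref{eqSmoothf0Homog}'' is imposed is precisely to guarantee $\tilde f_0\in L_2(G)$ and the admissibility of the subtraction in the trace identity. The substantive analytic content was already absorbed into Theorem~\ref{thSmoothf0}.
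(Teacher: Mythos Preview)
Your argument is correct and is precisely the subtraction that the paper has in mind: the paper does not give a detailed proof but simply states that the corollary ``results from assertion~2 of Theorem~\ref{thSmoothf0},'' and your write-up is the natural way to unpack that claim.
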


We do not have an algorithm allowing one to construct a function
$u_0$ satisfying relations~\eqref{eqSmoothf0Homog} in the general
case of abstract operators $\mathbf B_i^2$. However, one can
guarantee the existence of $u_0$ in some particular cases which
are described in Corollaries~\ref{corSmoothf0B^2vInsideG}
and~\ref{corSmoothf0B^2vComp} below (see also
Sec.~\ref{subsecEx2}).

\begin{corollary}\label{corSmoothf0B^2vInsideG}
Suppose that the operators $\mathbf B_i^2$ satisfy the following
condition for some $\rho>0${\rm :}
\begin{equation}\label{eqSeparK23'''}
\|\mathbf B^2_{i}v\|_{W^{3/2}(\Gamma_i)}\le c
  \|v\|_{W^{2}(G_\rho)}\qquad\text{for all}\qquad v\in
  W^{2}(G_\rho).
\end{equation}
Let Condition~$\ref{condProperEigen}$ hold, and let
Condition~$\ref{condBv+BCConsist}$ fail. Then the conclusion of
Corollary~{\rm \ref{corSmoothf0Homog}} is true.
\end{corollary}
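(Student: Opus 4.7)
The strategy is to apply Corollary~\ref{corSmoothf0Homog}: we must verify that, for the right-hand side $\{f_0,f_i\}\in L_2(G)\times\mathcal H_0^{3/2}(\partial G)$ produced by Corollary~\ref{corSmoothf0NeighbK} (so that in particular $f_i\equiv 0$ on $\Gamma_i\cap\mathcal O_{\varkappa_2}(\mathcal K)$), there exists a function $u_0\in W^2(G)$ satisfying~\eqref{eqSmoothf0Homog}. The enhanced mapping property~\eqref{eqSeparK23'''} is precisely what allows us to build such a $u_0$, since it forces $\mathbf B_i^2 v$ to vanish as soon as $v$ vanishes on $G_\rho$.

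First, I invoke Remark~\ref{remSmallEps} and shrink $\varepsilon$ (and hence the associated $\varepsilon_1$) so that $\varepsilon_1<\varkappa_2$; this is geometrically legitimate since $\varepsilon_1$ may be taken proportional to $\varepsilon$. One must verify that the redefined operator $\hat{\mathbf B}_i^2$ continues to satisfy~\eqref{eqSeparK23'''}: the correction $\mathbf B_i^1-\hat{\mathbf B}_i^1$ evaluates $v$ only at points $\Omega_{is}(y)$ for $y$ ranging over an arc of $\Gamma_i$ separated from $\mathcal K$, and since $\Omega_{is}(\Gamma_i)\subset G$ these points form a compact subset of $G$ separated from $\partial G$; decreasing $\rho$ if necessary places this set inside $G_\rho$, so the correction is dominated by $\|v\|_{W^2(G_\rho)}$.

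Next, I pick a standard trace extension $\tilde u\in W^2(G)$ of $\{f_i\}$ (compatibility at the conjugation points is automatic since $f_i\equiv 0$ there) and choose smooth cutoffs $\eta_1,\eta_2\in C^\infty(\overline G)$ with $\eta_1\equiv 1$ in a tubular neighborhood of $\partial G$ and $\eta_1\equiv 0$ on $G_\rho$, and with $\eta_2\equiv 0$ on $\mathcal O_{\varepsilon_1}(\mathcal K)$ and $\eta_2\equiv 1$ outside $\mathcal O_{\varkappa_2}(\mathcal K)$. Define $u_0=\eta_1\eta_2\tilde u$. A direct check yields the four required properties: (i)~$u_0\in W^2(G)$ since $\eta_1,\eta_2$ are smooth; (ii)~$u_0|_{\Gamma_i}=f_i$ on $\Gamma_i$, because $\eta_1\equiv 1$ on $\partial G$ while $f_i\equiv 0$ wherever $\eta_2\not\equiv 1$; (iii)~$\mathbf B_i^1 u_0=0$, because $u_0\equiv 0$ on $\mathcal O_{\varepsilon_1}(\mathcal K)$ which, by property~3 preceding Condition~\ref{condK1}, contains $\Omega_{is}(\Gamma_i\cap\mathcal O_\varepsilon(\mathcal K))$; (iv)~$\mathbf B_i^2 u_0=0$, because $u_0\equiv 0$ on $G_\rho$ and~\eqref{eqSeparK23'''} gives $\|\mathbf B_i^2 u_0\|_{W^{3/2}(\Gamma_i)}\le c\|u_0\|_{W^2(G_\rho)}=0$. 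Hence~\eqref{eqSmoothf0Homog} holds and Corollary~\ref{corSmoothf0Homog} supplies the desired non-smooth generalized solution of the homogeneous problem.

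The main subtlety lies in the first step: I must rule out that shrinking $\varepsilon$ in order to achieve the geometric inequality $\varepsilon_1<\varkappa_2$ destroys the enhanced bound~\eqref{eqSeparK23'''} for the redefined $\hat{\mathbf B}_i^2$. The resolution rests on the strict inclusion $\Omega_{is}(\Gamma_i)\subset G$, which forces the images of boundary arcs separated from $\mathcal K$ into the open interior of $G$ at positive distance from $\partial G$. Once this geometric point is in hand, the cutoff construction and verifications (i)--(iv) are routine.
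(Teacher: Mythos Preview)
Your approach is correct but takes a different route from the paper's. The paper dispatches this in one line by combining Corollary~\ref{corSmoothf0Homog}, the embedding $H_0^2(G)\subset W^2(G)$, and Lemma~\ref{lHomogeneousPartialG}; the latter builds $u_0\in H_0^2(G)$ directly in the weighted space via Lemma~\ref{lAppL8.1GurRJMP04} and a partition of unity, with support in $\overline G\setminus\overline{G_\rho}$ and the $\mathbf B_i^1$-terms already killed by the weighted construction---no shrinking of $\varepsilon$ and no preliminary appeal to Corollary~\ref{corSmoothf0NeighbK} are needed, since the weighted extension handles the data $f_i\in H_0^{3/2}(\Gamma_i)$ even when they do not vanish near~$\mathcal K$. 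You instead work entirely in Sobolev spaces: you first pass through Corollary~\ref{corSmoothf0NeighbK} to make the $f_i$ vanish near~$\mathcal K$, then use an ordinary $W^2$ trace lift plus two cutoffs. This is more elementary in that it avoids Lemma~\ref{lAppL8.1GurRJMP04}, but you pay for it with the bookkeeping of Step~1. That step is essentially right (the correction $\mathbf B_i^1-\hat{\mathbf B}_i^1$ reads $v$ only on a compact subset of~$G$, so~\eqref{eqSeparK23'''} survives with a possibly smaller~$\rho$, and since~\eqref{eqSmoothf0Homog} depends only on the sum $\mathbf B_i^1+\mathbf B_i^2=\hat{\mathbf B}_i^1+\hat{\mathbf B}_i^2$, your checks (iii)--(iv) for the new decomposition suffice). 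One minor citation point: Corollary~\ref{corSmoothf0Homog} is literally phrased for the $\{f_0,f_i\}$ produced in Theorem~\ref{thSmoothf0}, not the modified data from Corollary~\ref{corSmoothf0NeighbK}; but its content is simply ``subtract $u_0$'', which applies verbatim to your data.
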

The proof of this corollary results from Corollary~{\rm
\ref{corSmoothf0Homog}}, from the embedding $H_0^2(G)\subset
W^2(G)$, and from the following lemma.
\begin{lemma}\label{lHomogeneousPartialG}
Let $f_{i}\in H_0^{3/2}(\Gamma_i)$, and let the operators $\mathbf
B_i^2$ satisfy condition~\eqref{eqSeparK23'''}. Then there exists
a function $u_0\in H_0^{2}(G)$ satisfying~\eqref{eqSmoothf0Homog}.
\end{lemma}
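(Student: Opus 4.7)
The plan is to write $u_0=u_1+u_2$. First I would apply assertion~1 of Lemma~\ref{lHomogeneous} to absorb the near-$\mathcal K$ part of the boundary data: this produces $u_1\in H_0^2(G)$ with $\supp u_1\subset\mathcal O_{\varkappa_1}(\mathcal K)$, $u_1|_{\Gamma_i}=f_i(y)$ for $y\in\Gamma_i\cap\mathcal O_{\varkappa_2}(\mathcal K)$, and $\mathbf B_i^1 u_1=\mathbf B_i^2 u_1=0$. Setting $f_i''=f_i-u_1|_{\Gamma_i}\in H_0^{3/2}(\Gamma_i)$, which vanishes on $\Gamma_i\cap\mathcal O_{\varkappa_2}(\mathcal K)$, the task reduces to constructing $u_2\in H_0^2(G)$ satisfying $u_2|_{\Gamma_i}=f_i''$ together with $\mathbf B_i^1 u_2=\mathbf B_i^2 u_2=0$, for then $u_0=u_1+u_2$ solves~\eqref{eqSmoothf0Homog}.

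I would construct $u_2$ as a cutoff of a standard $W^2$-extension of $f_i''$, localized in a thin boundary strip that avoids $\mathcal K$ together with the orbit curves of $\mathcal K$ into $G$. The key observation is geometric: by Condition~\ref{condK1} and the strict inequality $|(-1)^\sigma\omega_j+\omega_{j\sigma ks}|<\omega_k$, every orbit curve $\Omega_{is}(\Gamma_i\cap\mathcal O_\varepsilon(\mathcal K))$ is locally a straight ray segment from $g_k=\Omega_{is}(g_j)$ forming a positive angle at least
$$\alpha:=\min_{j,\sigma,k,s}\bigl(\omega_k-|(-1)^\sigma\omega_j+\omega_{j\sigma ks}|\bigr)>0$$
with $\partial G$; hence a point of such a ray at distance $r$ from $g_k$ lies at distance at least $r\sin\alpha$ from $\partial G$.

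Concretely, I would fix $\eta>0$ with $\eta<\rho$ and $\delta:=\eta/\sin\alpha<\varkappa_2/4$, choose a cutoff $\zeta\in C^\infty(\overline G)$ equal to $1$ in a thin neighborhood of $\partial G\setminus\mathcal O_{\varkappa_2/2}(\mathcal K)$ and to $0$ on $G_\eta\cup\mathcal O_\delta(\mathcal K)$ (these two sets are at positive distance by the size choices), and define $u_2=\zeta w$, where $w\in W^2(G)$ is any bounded extension with $w|_{\Gamma_i}=f_i''$. Since $\supp u_2\subset(\overline G\setminus G_\eta)\setminus\mathcal O_\delta(\mathcal K)$ is bounded away from $\mathcal K$, the Kondrat'ev weights are bounded on this support and $u_2\in H_0^2(G)$; because $\supp u_2\cap G_\rho=\varnothing$, hypothesis~\eqref{eqSeparK23'''} gives $\mathbf B_i^2 u_2=0$; and by the geometric estimate the portion of any orbit curve in the strip $\overline G\setminus G_\eta$ satisfies $r\sin\alpha\le\eta$, hence lies in $\mathcal O_\delta(\mathcal K)$ where $u_2=0$, while the rest lies in $G_\eta$ where $u_2=0$, so $\mathbf B_i^1 u_2=0$. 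The boundary condition $u_2|_{\Gamma_i}=f_i''$ holds on $\Gamma_i\setminus\mathcal O_{\varkappa_2/2}(\mathcal K)$ because $\zeta\equiv1$ there, and on $\Gamma_i\cap\mathcal O_{\varkappa_2/2}(\mathcal K)$ because both sides vanish.

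The main obstacle is that one must not invoke Remark~\ref{remSmallEps} to shrink $\varepsilon$ and make the orbit-curve segments arbitrarily short, since doing so would replace $\mathbf B_i^2$ by a new operator that in general fails the strong hypothesis~\eqref{eqSeparK23'''} (the correction term transports values from near $\mathcal K$, not from the interior region $G_\rho$). The uniform lower bound $\alpha>0$ on the angle between orbit curves and $\partial G$ is what lets one select the strip width $\eta$ intrinsically from $\rho$, $\varkappa_2$, and $\alpha$, and bypass any reduction of $\varepsilon$.
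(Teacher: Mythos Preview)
Your argument is correct. The paper's proof is much terser: it invokes Lemma~\ref{lAppL8.1GurRJMP04} together with a partition of unity to construct $u_0\in H_0^2(G)$ in one stroke, with $\supp u_0\subset\overline G\setminus\overline{G_\rho}$, $u_0|_{\Gamma_i}=f_i$, and $\mathbf B_i^1u_0=0$; then condition~\eqref{eqSeparK23'''} gives $\mathbf B_i^2u_0=0$. The angle separation you isolate is exactly what underlies the construction behind Lemma~\ref{lAppL8.1GurRJMP04} (one extends the boundary data into narrow sectors hugging the sides of each angle, away from the rotated rays), so the paper can claim all three properties simultaneously without spelling out the geometry.

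Your route differs mainly in bookkeeping: you first peel off the near-$\mathcal K$ contribution via Lemma~\ref{lHomogeneous}, and then for the remaining data $f_i''$ (vanishing near $\mathcal K$) you use a bare $W^2$ extension plus cutoff, making the positive-angle estimate $\dist(z,\partial G)\ge r'\sin\alpha$ explicit to force $\mathbf B_i^1u_2=0$. This is slightly more elementary in the far-from-$\mathcal K$ zone (you avoid a second appeal to Lemma~\ref{lAppL8.1GurRJMP04}) and has the virtue of displaying precisely why the support of $u_2$ misses the orbit curves; the paper's version is shorter because those details are already packaged inside the cited lemma. Your final remark about not being allowed to shrink $\varepsilon$ via Remark~\ref{remSmallEps} is a valid caution, and your intrinsic choice of $\eta$ and $\delta$ handles it cleanly.
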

\begin{proof}
Using Lemma~\ref{lAppL8.1GurRJMP04} and a partition of unity, one
can construct a function $u_0\in H_0^{2}(G)$ such that
\begin{equation}\label{eqHomogeneousPartialG1}
\supp u_0\subset \overline{G}\setminus \overline{G_\rho},
\end{equation}
$$
u_0|_{\Gamma_i}=
   f_{i}(y),\quad
  y\in \Gamma_i;\ i=1, \dots, N.
$$
$$
\mathbf B_{i}^1 u_0=0.
$$
By~\eqref{eqHomogeneousPartialG1} and~\eqref{eqSeparK23'''}, we
have $\mathbf B_i^2u_0=0$. Therefore, $u_0$
satisfies~\eqref{eqSmoothf0Homog}.
\end{proof}

\begin{remark}
Condition~\eqref{eqSeparK23'''}, which is stronger than
Condition~\ref{condSeparK23}, means that the operators $\mathbf
B_{i}^2$ \textit{correspond to nonlocal terms supported inside the
domain $G$}.
\end{remark}

\begin{corollary}\label{corSmoothf0B^2vComp}
Let Condition~$\ref{condProperEigen}$ hold. Suppose that
Condition~$\ref{condBv+BCConsist}$ fails for an admissible
function $v$ such that
\begin{equation}\label{eqSmoothf0B^2vComp}
\supp(v|_{\Gamma_i}+\mathbf B_i^1v+\mathbf
B_i^2v)\subset\Gamma_i\cap\mathcal O_{\varkappa_2}(\mathcal K).
\end{equation}
Then the conclusion of Corollary~$\ref{corSmoothf0Homog}$ is true.
\end{corollary}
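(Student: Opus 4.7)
The plan is to reduce to Corollary~\ref{corSmoothf0Homog} by producing, in the specific situation considered here, an explicit function $u_0 \in W^2(G)$ satisfying \eqref{eqSmoothf0Homog} for the right-hand side $\{f_0,f_i\}$ delivered by assertion~2 of Theorem~\ref{thSmoothf0}. Once such $u_0$ is available, Corollary~\ref{corSmoothf0Homog} directly gives a generalized solution $u\notin W^2(G)$ of problem~\eqref{eqPinG}, \eqref{eqBinG} with $f_i\equiv 0$ and $f_0\in L_2(G)$ (namely, $u$ from Theorem~\ref{thSmoothf0} minus $u_0$, with $f_0$ replaced by $f_0-\mathbf P u_0$, which still lies in $L_2(G)$ because $u_0\in W^2(G)$, and with $u-u_0\notin W^2(G)$ since $u\notin W^2(G)$ and $u_0\in W^2(G)$).

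First, I would revisit the construction of $\{f_0,f_i\}$ in the proof of assertion~2 of Theorem~\ref{thSmoothf0}, now specialized to the admissible function $v$ of the present hypothesis. Recall that $u=u'+v$, where $\supp u'\subset \mathcal O_{\varepsilon'}(\mathcal K)$ and $v$ is extended to vanish in $\mathcal O_{\varkappa_1/2}(\mathcal K)$, and that
\[
f_i = u|_{\Gamma_i}+\mathbf B_i^1u+\mathbf B_i^2u.
\]
Inside $\mathcal O_{\varkappa_1/D_\chi}(\mathcal K)$ one has $v|_{\Gamma_i}+\mathbf B_i^1v=0$ and $\mathbf B_i^2u'=0$, so there $f_i = u'|_{\Gamma_i}+\mathbf B_i^1u'+\mathbf B_i^2v$. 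Outside a neighborhood of $\mathcal K$, where $u'=0$, we have $f_i = v|_{\Gamma_i}+\mathbf B_i^1v+\mathbf B_i^2v$, which by the standing hypothesis~\eqref{eqSmoothf0B^2vComp} is supported in $\Gamma_i\cap\mathcal O_{\varkappa_2}(\mathcal K)$.

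Next, I would invoke Remark~\ref{remSmallEps} to fix $\varepsilon$ so small that all contributions to $f_i$ coming from $u'|_{\Gamma_i}$ and $\mathbf B_i^1u'$ are supported in $\mathcal O_{\varkappa_2}(\mathcal K)$. Concretely, $u'|_{\Gamma_i}$ is supported in $\Gamma_i\cap\mathcal O_{\varepsilon'}(\mathcal K)$ with $\varepsilon'=d_\chi\min(\varepsilon,\varkappa_2)$, while $\mathbf B_i^1u'$, in view of Condition~\ref{condK1} and the relation $\chi_{j\sigma ks}\le D_\chi/2$, is supported in $\Gamma_i\cap\mathcal O_{\varepsilon'/d_\chi}(\mathcal K)\subset\Gamma_i\cap\mathcal O_{2\varepsilon}(\mathcal K)$. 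Taking $\varepsilon\le \varkappa_2/2$ from the outset therefore gives $\supp f_i\subset \Gamma_i\cap\mathcal O_{\varkappa_2}(\mathcal K)$. Combined with $\{f_i\}\in \mathcal H_0^{3/2}(\partial G)$ (which is part of the conclusion of Theorem~\ref{thSmoothf0}, assertion~2), this puts us into the precise hypothesis of assertion~2 of Lemma~\ref{lHomogeneous}.

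Finally, assertion~2 of Lemma~\ref{lHomogeneous} produces $u_0\in H_0^2(G)\subset W^2(G)$ with $\supp u_0\subset\mathcal O_{\varkappa_2}(\mathcal K)$, $u_0|_{\Gamma_i}=f_i$ on all of $\Gamma_i$, and $\mathbf B_i^1u_0=\mathbf B_i^2u_0=0$; hence $u_0$ satisfies~\eqref{eqSmoothf0Homog}. Applying Corollary~\ref{corSmoothf0Homog} completes the argument. The only delicate point is the support-tracking in the second step, i.e.\ checking that the construction of Theorem~\ref{thSmoothf0} can be arranged so that $\supp f_i\subset \mathcal O_{\varkappa_2}(\mathcal K)$; everything else is routine bookkeeping on top of results already established in the paper.
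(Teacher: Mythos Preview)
Your argument is correct and follows exactly the paper's route: show that the $\{f_i\}$ produced in the proof of assertion~2 of Theorem~\ref{thSmoothf0} are supported in $\Gamma_i\cap\mathcal O_{\varkappa_2}(\mathcal K)$, then apply assertion~2 of Lemma~\ref{lHomogeneous} and Corollary~\ref{corSmoothf0Homog}. The only superfluous step is the appeal to Remark~\ref{remSmallEps}: since $\varepsilon'=d_\chi\min(\varepsilon,\varkappa_2)$, one has $\varepsilon'/d_\chi=\min(\varepsilon,\varkappa_2)\le\varkappa_2$ directly, so the supports of $u'|_{\Gamma_i}$ and $\mathbf B_i^1u'$ already lie in $\mathcal O_{\varkappa_2}(\mathcal K)$ without shrinking $\varepsilon$ (which is just as well, because shrinking $\varepsilon$ via Remark~\ref{remSmallEps} may alter $\varkappa_1,\varkappa_2$).
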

\begin{proof}
If $\supp(v|_{\Gamma_i}+\mathbf B_i^1v+\mathbf
B_i^2v)\subset\Gamma_i\cap\mathcal O_{\varkappa_2}(\mathcal K)$,
then the function $\{f_i\}\in\mathcal H_0^{3/2}(\partial G)$
constructed in the proof of assertion~2 of
Theorem~\ref{thSmoothf0} is also supported in $\mathcal
O_{\varkappa_2}(\mathcal K)$. Therefore, applying assertion~2 of
Lemma~\ref{lHomogeneous}, we obtain a function $u_0$
satisfying~\eqref{eqSmoothf0Homog}. Using
Corollary~\ref{corSmoothf0Homog}, we complete the proof.
\end{proof}

\section{Violation of Smoothness of Generalized Solutions}\label{sectImproperEigen}

It remains to study the case in which the following condition
holds.
\begin{condition}\label{condImproperEigen}
The band $-1\le\Im\lambda<0$ contains an improper eigenvalue of
the operator $\tilde{\mathcal L}(\lambda)$.
\end{condition}
In this section, we show that the smoothness of generalized
solutions can be violated for any operators $\mathbf B_i^2$ even
if nonlocal conditions~\eqref{eqBinG} are homogeneous.

\begin{theorem}\label{thNonSmoothu}
Let Condition~$\ref{condImproperEigen}$ hold. Then there exists a
right-hand side $\{f_0,0\}$, where $f_0\in L_2(G)$, and a
generalized solution $u\in W^1(G)$ of problem~\eqref{eqPinG},
\eqref{eqBinG} such that $u\notin W^2(G)$.
\end{theorem}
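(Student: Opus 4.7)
The plan is to exploit the improper eigenvalue $\lambda_0$ to manufacture a singular solution of the model homogeneous problem, transplant a cut-off version of it to $G$, and then subtract a smooth $W^2$ lift of the resulting boundary residual so as to render the nonlocal conditions exactly homogeneous. This parallels the strategy used in Lemma~\ref{lUnonSmoothZnonConsist} and in the necessity portions of Theorem~\ref{thSmoothf0}, with the singular function now supplied by the spectrum of $\tilde{\mathcal L}(\lambda)$ rather than by a consistency-failure on the prescribed boundary data.

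\emph{Construction of the singular model solution.} By Condition~\ref{condImproperEigen} and Lemma~\ref{lSpectrum} I fix an improper eigenvalue $\lambda_0$ of $\tilde{\mathcal L}(\lambda)$ in the strip $-1\le\Im\lambda<0$ together with a Jordan chain $\varphi^{(0)},\dots,\varphi^{(\varkappa-1)}$ of maximal length $\varkappa\ge1$ attached to it. I then take
\[
W(\omega,r)=r^{i\lambda_0}\sum_{l=0}^{\varkappa-1}\frac{(i\ln r)^l}{l!}\varphi^{(\varkappa-1-l)}(\omega),
\]
which by construction (cf.~\eqref{eqPBW=0}) satisfies $\mathcal P_jW_j=0$ in $K_j$ and $\mathcal B_{j\sigma}W|_{\gamma_{j\sigma}}=0$. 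Elementary polar-coordinate computations using $-1\le\Im\lambda_0<0$ give $W\in\mathcal W^1(K)$. The improperness of $\lambda_0$ (Definition~\ref{defRegEigVal}) forces $W\notin\mathcal W^2(K^\varepsilon)$: if $\varkappa\ge2$, the nonzero $(\ln r)^{\varkappa-1}$ factor yields second derivatives whose squared modulus fails to be integrable even at the critical exponent $\Im\lambda_0=-1$; if $\varkappa=1$, then some component $r^{i\lambda_0}\varphi_j^{(0)}$ is not a polynomial in $y_1,y_2$, and hence its second $y$-derivatives decay no faster than $r^{-\Im\lambda_0-2}$, which is not square integrable in two dimensions for $\Im\lambda_0\ge-1$.

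\emph{Cutoff, transplantation, and absorption of the residual.} I choose $\varepsilon$ so small that the number $\varepsilon'$ from~\eqref{eqEpsilon'} satisfies $\varepsilon'<\min(\varkappa_1,\varkappa_2)$, pick $\xi\in C_0^\infty(\mathcal O_{\varepsilon'}(0))$ equal to $1$ near the origin, and set $\widetilde W:=\xi W\in\mathcal W^1(K)\setminus\mathcal W^2(K^\varepsilon)$. Transplanting to $G$ by $w(y):=\widetilde W_j(y'(y))$ on $\mathcal O_{\varepsilon'}(g_j)$ and $w\equiv0$ elsewhere produces $w\in W^1(G)\setminus W^2(G)$ with $\supp w\subset\mathcal O_{\varkappa_1}(\mathcal K)$, so that $\mathbf B_i^2w=0$ by~\eqref{eqSeparK23'}. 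Using $\mathcal P_jW_j=0$, $\mathcal B_{j\sigma}W=0$, and the fact that the coefficients of $\mathbf P_j-\mathcal P_j$ and $\mathbf B_{j\sigma}-\mathcal B_{j\sigma}$ vanish at $y=0$, Lemma~\ref{lAppL3.3'Kondr} combined with the polar-coordinate estimates of the previous paragraph yields
\[
f_0':=\mathbf Pw\in L_2(G),\qquad h_i:=w|_{\Gamma_i}+\mathbf B_i^1w+\mathbf B_i^2w\in H_0^{3/2}(\Gamma_i),
\]
with $\supp h_i\subset\Gamma_i\cap\mathcal O_{\varkappa_2}(\mathcal K)$. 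Assertion~2 of Lemma~\ref{lHomogeneous} then provides $u_0\in H_0^2(G)\subset W^2(G)$ with $u_0|_{\Gamma_i}=h_i$ and $\mathbf B_i^1u_0=\mathbf B_i^2u_0=0$. Setting $u:=w-u_0$ and $f_0:=f_0'-\mathbf Pu_0$, one has $u\in W^1(G)$, $u\notin W^2(G)$ (else $w=u+u_0\in W^2(G)$), $\mathbf Pu=f_0\in L_2(G)$, and $u|_{\Gamma_i}+\mathbf B_i^1u+\mathbf B_i^2u=h_i-h_i=0$; this is the required generalized solution with right-hand side $\{f_0,0\}$.

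The main obstacle lies in the first construction step: extracting from the improperness of $\lambda_0$ in $-1\le\Im\lambda<0$ a homogeneous model solution that is in $\mathcal W^1$ but not in $\mathcal W^2$. The two alternatives in Definition~\ref{defRegEigVal} (presence of an associated vector versus non-polynomial eigenfunction) require slightly different polar-coordinate computations, both of them sharp at the critical boundary $\Im\lambda_0=-1$ where the usual power-law scaling just barely fails to be square integrable at second-derivative level. Once this singular function is in hand, the remaining manipulations are essentially the perturbation-and-lifting technique already employed in the necessity portions of Theorem~\ref{thSmoothf0} and its supporting Lemma~\ref{lUnonSmoothZnonConsist}.
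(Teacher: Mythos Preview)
Your argument is correct and follows essentially the same route as the paper's proof: build a power/log solution $W$ of the homogeneous model problem from the improper eigenvalue $\lambda_0$, cut it off, transplant it to $G$ (so that $\mathbf B_i^2$ vanishes by~\eqref{eqSeparK23'}), and then invoke assertion~2 of Lemma~\ref{lHomogeneous} to absorb the residual $H_0^{3/2}$ boundary data supported in $\mathcal O_{\varkappa_2}(\mathcal K)$.

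Two minor points where the paper is slightly more careful. First, the paper does not insist on the full chain $m=\varkappa-1$: it fixes any $m\in\{0,\dots,\varkappa-1\}$ for which $W$ fails to be a polynomial vector, noting that for $\Im\lambda_0\ne-1$ or $\Re\lambda_0\ne0$ already $m=0$ works. Your prescription ``take a chain of maximal length'' is fine when that maximal length is $\ge2$; when the maximal length is $1$ you must in addition choose the eigenvector whose $r^{i\lambda_0}\varphi_j^{(0)}$ is non-polynomial (guaranteed by improperness, but not automatic for every eigenvector). Second, for the delicate endpoint $\Im\lambda_0=-1$ the paper appeals directly to Lemma~\ref{lAppL4.20Kondr} rather than to ad hoc polar estimates; this sidesteps the bookkeeping of how many powers of $\ln r$ survive after two differentiations when $r\varphi^{(0)}$ happens to be linear. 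With these cosmetic adjustments your argument coincides with the paper's.
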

\begin{proof}
1. By assertion~2 of Lemma~\ref{lHomogeneous}, it suffices to find
a function $u\in W^1(G)$ such that $u\notin W^2(G)$ and
\begin{equation}\label{eqNonSmoothu1}
\begin{aligned}
\mathbf Pu\in L_2(G),\qquad u|_{\Gamma_i}+\mathbf B_{i}^1
u+\mathbf B_{i}^2 u\in H_0^{3/2}(\Gamma_i),\\
\supp(u|_{\Gamma_i}+\mathbf B_i^1u+\mathbf
B_i^2u)\subset\Gamma_i\cap\mathcal O_{\varkappa_2}(\mathcal K).
\end{aligned}
\end{equation}

Let $\lambda=\lambda_0$ be an improper eigenvalue of the operator
$\tilde{\mathcal L}(\lambda)$, $-1\le\Im\lambda_0<0$. Consider the
function
$$
 W=r^{i\lambda_0}\sum\limits_{l=0}^m\frac{1}{l!}(i\ln
 r)^l\varphi^{(m-l)}(\omega),
$$
where $\varphi^{(0)}, \dots, \varphi^{(\varkappa-1)}$ are an
eigenvector and associated vectors (a Jordan chain of length
$\varkappa\ge 1$) of the operator $\tilde{\mathcal L}(\lambda)$
corresponding to the eigenvalue $\lambda_0$. The number $m$ ($0\le
m\le\varkappa-1$) occurring in the definition of $W$ is such that
the function $W$ is not a polynomial vector in $y_1, y_2$. Such an
$m$ does exist because $\lambda_0$ is not a proper eigenvalue (if
$\Im\lambda\ne -1$ or $\Im\lambda=-1,\ \Re\lambda\ne0$, then we
can take $m=0$). It follows from Lemma~\ref{lAppL2.1GurPetr03}
that
\begin{equation}\label{eqNonSmoothu2}
 {\mathcal P}_jW_j=0,\qquad {\mathcal B}_{j\sigma}W|_{\gamma_{j\sigma}}=0.
\end{equation}

Consider a cut-off function $\xi\in C_0^\infty(\mathcal
O_{\varepsilon'}(0))$ equal to one near the origin, where
$\varepsilon'$ is given by~\eqref{eqEpsilon'}. Set $U=\xi W$.
Clearly, $\supp U\subset\mathcal O_{\varepsilon'}(0)$ and
$$
U\in \mathcal H_1^2(K)\cap\mathcal W^1(K).
$$
It follows from this relation, from~\eqref{eqNonSmoothu2}, from
Leibniz' formula, and from Lemma~\ref{lAppL3.3'Kondr} that
\begin{equation}\label{eqNonSmoothu3}
\{{\mathbf P}_jU_j\}\in\mathcal W^0(K^\varepsilon),\qquad
\{{\mathbf
B}_{j\sigma}U|_{\gamma_{j\sigma}^\varepsilon}\}\in\mathcal
H_0^{3/2}(\gamma^\varepsilon),
\end{equation}
while the relation $\supp U\subset\mathcal O_{\varepsilon'}(0)$
implies
\begin{equation}\label{eqNonSmoothu3'}
\supp{\mathbf
B}_{j\sigma}U|_{\gamma_{j\sigma}}\subset\gamma_{j\sigma}\cap\mathcal
O_{\varkappa_2}(0).
\end{equation}

 Moreover, we claim that
\begin{equation}\label{eqNonSmoothu4}
U\notin \mathcal W^2(K).
\end{equation}
Indeed, if $-1<\Im\lambda_0<0$, then one can directly verify the
validity of~\eqref{eqNonSmoothu4}; if $\Im\lambda_0=-1$,
then~\eqref{eqNonSmoothu4} follows from Lemma~\ref{lAppL4.20Kondr}
and from the fact that $W$ is not a polynomial vector.

2. Consider the function $u(y)$ given by $u(y)=U_j(y'(y))$ for
$y\in\mathcal O_{\varepsilon'}(g_j)$ and $u(y)=0$ for
$y\notin\mathcal O_{\varepsilon'}(\mathcal K)$, where $y'\mapsto
y(g_j)$ is the change of variables inverse to the change of
variables $y\mapsto y'(g_j)$ from Sec.~\ref{subsectStatement}. The
function $u$ is the desired one. Indeed, $u\notin W^2(G)$ due
to~\eqref{eqNonSmoothu4}. Furthermore, $\mathbf B_{i}^2u=0$ due to
inequality~\eqref{eqSeparK23'} because $\supp u\subset \mathcal
O_{\varkappa_1}(\mathcal K)$. It follows from the equality
$\mathbf B_{i}^2u=0$ and from relations~\eqref{eqNonSmoothu3}
and~\eqref{eqNonSmoothu3'} that the function $u$
satisfies~\eqref{eqNonSmoothu1}.
\end{proof}

\section{The Case of Several Orbits}\label{sectGener}

\subsection{Model Problems and Preservation of Smoothness}
In this section, we generalize the results of
Secs.~\ref{sectStatement}--\ref{sectImproperEigen} to the case
where the set $\mathcal K$ consists of more than one orbit. Let
$$
\mathcal K=\bigcup\limits_{t=1}^T \mathcal K_t,
$$
where $\mathcal K_1,\dots,\mathcal K_T$ are disjoint orbits
forming the set $\mathcal K$ of conjugation points. Let the orbit
$\mathcal K_t$ consists of points $g_{t,1},\dots,g_{t,N_t}$.

Take a sufficiently small number $\varepsilon$ such that there
exist neighborhoods $\mathcal O_{\varepsilon_1}(g_{t,j})$, $
\mathcal O_{\varepsilon_1}(g_{t,j})\supset\mathcal
O_{\varepsilon}(g_{t,j}) $, satisfying the following conditions:
\begin{enumerate}
\item The domain $G$ is a plane angle in the neighborhood $\mathcal O_{\varepsilon_1}(g_{t,j})$;
\item
$\overline{\mathcal
O_{\varepsilon_1}(g_{t,j})}\cap\overline{\mathcal
O_{\varepsilon_1}(g_{\tau,k})}=\varnothing$ for any
$g_{t,j},g_{\tau,k}\in\mathcal K$, $(t,j)\ne (\tau,k)$;
\item If $g_{t,j}\in\overline{\Gamma_i}$ and
$\Omega_{is}(g_{t,j})=g_{t,k},$ then ${\mathcal
O}_{\varepsilon}(g_{t,j})\subset\mathcal
 O_i$,
 $\Omega_{is}\big({\mathcal
O}_{\varepsilon}(g_{t,j})\big)\subset{\mathcal
O}_{\varepsilon_1}(g_{t,k}).$
\end{enumerate}

For each point $g_{t,j}\in\overline{\Gamma_i}\cap\mathcal K$, we
fix a transformation $Y_{t,j}: y\mapsto y'(g_{t,j})$ which is a
composition of the shift by the vector
$-\overrightarrow{Og_{t,j}}$ and the rotation through some angle
so that
$$
Y_{t,j}({\mathcal O}_{\varepsilon_1}(g_{t,j}))={\mathcal
O}_{\varepsilon_1}(0),\qquad Y_{t,j}(G\cap{\mathcal
O}_{\varepsilon_1}(g_{t,j}))=K_{t,j}\cap{\mathcal
O}_{\varepsilon_1}(0),
$$
$$
Y_{t,j}(\Gamma_i\cap{\mathcal
O}_{\varepsilon_1}(g_{t,j}))=\gamma_{t,j\sigma}\cap{\mathcal
O}_{\varepsilon_1}(0)\quad (\sigma=1\ \text{or}\ \sigma=2),
$$
where
\begin{gather*}
K_{t,j}=\{y\in{\mathbb R}^2:\ r>0,\ |\omega|<\omega_{t,j}\},\\
\gamma_{t,j\sigma}=\{y\in\mathbb R^2:\ r>0,\ \omega=(-1)^\sigma
\omega_{t,j}\}.
\end{gather*}
Here $(\omega,r)$ are the polar coordinates and
$0<\omega_{t,j}<\pi$.

Consider the following condition instead of
Condition~\ref{condK1}.
\begin{condition_a}
Let $g_{t,j}\in\overline{\Gamma_i}\cap\mathcal K$ and
$\Omega_{is}(g_{t,j})=g_{t,k}\in\mathcal K_t;$ then the
transformation
$$
Y_{t,k}\circ\Omega_{is}\circ Y_{t,j}^{-1}:{\mathcal
O}_{\varepsilon}(0)\to{\mathcal O}_{\varepsilon_1}(0)
$$
is the composition of rotation and homothety.
\end{condition_a}

We assume throughout this section that Conidtions~\ref{condK1}$'$
and~\ref{condSeparK23} are fulfilled.

Let $y\mapsto y'(g_{t,j})$ be the above change of variables. Set
$$
K_{t,j}^\varepsilon=K_{t,j}\cap\mathcal O_\varepsilon(0),\qquad
\gamma_{t,j\sigma}^\varepsilon=\gamma_{t,j\sigma}\cap\mathcal
O_\varepsilon(0)
$$
and introduce the functions
\begin{equation}\label{eqytoy'Gener}
\begin{gathered}
U_{t,j}(y')=u(y(y')),\quad F_{t,j}(y')=f_0(y(y')),\quad y'\in
K_{t,j}^\varepsilon,\\
F_{t,j\sigma}(y')=f_i(y(y')),\quad B_{t,j\sigma}^u(y')=(\mathbf
B_i^2u)(y(y')),\\
\Psi_{j\sigma}(y')=F_{t,j\sigma}(y')-B_{t,j\sigma}^u(y'),\quad
y'\in\gamma_{t,j\sigma}^\varepsilon,
\end{gathered}
\end{equation}
where $\sigma=1$ $(\sigma=2)$ if the transformation $y\mapsto
y'(g_{t,j})$ takes $\Gamma_i$ to the side $\gamma_{t,j1}$
($\gamma_{t,j2}$) of the angle $K_{t,j}$. Similarly
to~\eqref{eqPinK}, \eqref{eqBinK}, using
Condition~\ref{condK1}$'$, we obtain the following model nonlocal
problem for each $t=1,\dots,T$:
\begin{equation}\label{eqPinKGener}
  {\bf P}_{t,j}U_{t,j}=F_{t,j}(y) \quad (y\in
  K_{t,j}^\varepsilon,\ j=1,\dots N_t),
\end{equation}
\begin{equation}\label{eqBinKGener}
\begin{gathered}
  {\mathbf
  B}_{t,j\sigma}U_t\equiv\sum\limits_{k=1}^{N_t}\sum\limits_{s=1}^{S_{t,j\sigma
  k}}  b_{t,j\sigma ks}(y)U_k({\mathcal G}_{t,j\sigma ks}y)
=\Psi_{t,j\sigma}(y)\\ (y\in\gamma_{t,j\sigma}^\varepsilon,\
j=1,\dots N_t,\ \sigma=1,2).
\end{gathered}
\end{equation}
Here ${\mathbf P}_{t,j}$ are properly elliptic second-order
differential operators with variable complex-valued
$C^\infty$-coefficients, $U_t=(U_{t,1},\dots,U_{t,N_t})$,
$b_{t,j\sigma ks}(y)$ are smooth functions, $b_{t,j\sigma
j0}(y)\equiv 1$; ${\mathcal G}_{t,j\sigma ks}$ is an operator of
rotation through an angle~$\omega_{t,j\sigma ks}$ and homothety
with a coefficient~$\chi_{t,j\sigma ks}>0$ in the $y$-plane.
Moreover,
$$
|(-1)^\sigma \omega_{t,j}+\omega_{t,j\sigma
ks}|<\omega_{t,k}\qquad\text{for}\qquad (k,s)\ne(j,0)
$$
and
$$
\omega_{t,j\sigma j0}=0,\qquad \chi_{t,j\sigma j0}=1
$$
(i.e., $\mathcal G_{t,j\sigma j0}y\equiv y$).

Let the principal homogeneous parts of the operators $\mathbf
P_{t,j}$ at the point $y=0$ have the following form in the polar
coordinates:
$$
r^{-2}\tilde{\mathcal
P}_{t,j}(\omega,\partial/\partial\omega,r\partial/\partial r)v.
$$
Consider the analytic operator-valued function $\tilde{\mathcal
L}_t(\lambda):\prod\limits_{j=1}^{N_t}
W^{2}(-\omega_{t,j},\omega_{t,j})\to\prod\limits_{j=1}^{N_t}
(L_2(-\omega_{t,j}, \omega_{t,j})\times\mathbb C^2)$ given by
\begin{multline*}
\tilde{\mathcal L}_t(\lambda)\varphi=\big\{\tilde{\mathcal
P}_{t,j}(\omega, \partial/\partial\omega, i\lambda)\varphi_j,\\
  \sum\limits_{k,s} (\chi_{t,j\sigma ks})^{i\lambda}
              b_{t,j\sigma ks}(0)\varphi_k((-1)^\sigma \omega_{t,j}+\omega_{t,j\sigma ks})\big\}.
\end{multline*}

First, we study the case in which the following condition holds.
\begin{condition}\label{condNoEigenGener}
The band $-1\le\Im\lambda<0$ contains no eigenvalues of the
operators $\tilde{\mathcal L}_t(\lambda)$, $t=1,\dots,T$.
\end{condition}

The following result can be proved similarly to
Theorem~\ref{thuinW^2NoEigen}.

\begin{theorem}\label{thuinW^2NoEigenGener}
Let Condition~$\ref{condNoEigenGener}$ hold, and let $u\in W^1(G)$
be a generalized solution of problem~\eqref{eqPinG},
\eqref{eqBinG} with right-hand side $\{f_0,f_i\}\in L_2(G)\times
\mathcal W^{3/2}(\partial G)$. Then $u\in W^2(G)$.
\end{theorem}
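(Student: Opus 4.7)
The plan is to reduce the multi-orbit case to the single-orbit analysis of Section~\ref{sectNoEigen}, exploiting the crucial fact that the orbits $\mathcal K_1,\dots,\mathcal K_T$ are disjoint and that, for sufficiently small $\varepsilon$, the neighborhoods $\mathcal O_{\varepsilon_1}(\mathcal K_t)$ are pairwise disjoint as well. Thus the model problems~\eqref{eqPinKGener}, \eqref{eqBinKGener} decouple across $t$: the operator $\mathbf B_{t,j\sigma}$ involves only the components $U_{t,k}$ of functions defined near points of the same orbit $\mathcal K_t$.

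First, I would establish the smoothness away from $\mathcal K$. Since $\{f_0,f_i\}\in L_2(G)\times\mathcal W^{3/2}(\partial G)$ and the operators $\mathbf B_i^2$ satisfy Condition~\ref{condSeparK23}, Lemma~2.1 of~\cite{GurMatZam05} gives, exactly as in Section~\ref{sectNoEigen},
$$
u\in W^2\bigl(G\setminus\overline{\mathcal O_\delta(\mathcal K)}\bigr)\quad\forall\delta>0.
$$
This step does not see the orbit decomposition of $\mathcal K$ at all; it only uses that $\mathcal K$ is a finite set.

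Next, for each fixed $t\in\{1,\dots,T\}$, I would form the vector-valued function $U_t=(U_{t,1},\dots,U_{t,N_t})$ via the changes of variables~\eqref{eqytoy'Gener}. By Condition~\ref{condK1}$'$, this vector satisfies model problem~\eqref{eqPinKGener}, \eqref{eqBinKGener}, and the right-hand sides $\{F_{t,j}\}$, $\{\Psi_{t,j\sigma}\}$ have exactly the regularity established in~\eqref{eqf1+a3/2} (with $\mathbf B_i^2u$ bounded in $W^{3/2}(\Gamma_i)$ by~\eqref{eqSeparK23'} combined with the already-proved smoothness outside $\mathcal K$). Condition~\ref{condNoEigenGener} applied to the particular operator $\tilde{\mathcal L}_t(\lambda)$ is exactly Condition~\ref{condNoEigen} for the $t$-th model problem. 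Hence the single-orbit Lemmas~\ref{lU=C+} and~\ref{lUinW^2} apply verbatim with all quantities carrying the extra subscript $t$, yielding
$$
U_t\in\mathcal W^2(K_t^\varepsilon),\qquad t=1,\dots,T,
$$
where $\mathcal W^2(K_t^\varepsilon)=\prod_{j=1}^{N_t}W^2(K_{t,j}^\varepsilon)$.

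Finally, I would combine these local estimates with the global interior/boundary regularity. Undoing the changes of variables $Y_{t,j}^{-1}$, the belongings $U_t\in\mathcal W^2(K_t^\varepsilon)$ translate into $u\in W^2(G\cap\mathcal O_\varepsilon(g_{t,j}))$ for every $g_{t,j}\in\mathcal K$. Together with $u\in W^2(G\setminus\overline{\mathcal O_\delta(\mathcal K)})$ for sufficiently small $\delta<\varepsilon$ and a finite partition of unity subordinate to the covering $\{\mathcal O_\varepsilon(g_{t,j})\}\cup\{G\setminus\overline{\mathcal O_\delta(\mathcal K)}\}$, this yields $u\in W^2(G)$. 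No step here is a serious obstacle: the only thing to verify carefully is that the disjointness of $\mathcal O_{\varepsilon_1}(\mathcal K_t)$ and $\mathcal O_{\varepsilon_1}(\mathcal K_\tau)$ for $t\ne\tau$ genuinely decouples the model problems, so that the nonlocal operators $\mathbf B_{t,j\sigma}$ acting on $U_t$ do not produce contributions from components belonging to a different orbit $\mathcal K_\tau$; this is guaranteed by property~(3) in the choice of neighborhoods and by~\eqref{eqOmega}.
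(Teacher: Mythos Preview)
Your proposal is correct and matches the paper's approach exactly: the paper simply states that this theorem ``can be proved similarly to Theorem~\ref{thuinW^2NoEigen},'' and what you have written is precisely the expected unpacking of that remark---decouple the orbits, apply Lemmas~\ref{lU=C+} and~\ref{lUinW^2} to each $U_t$ separately using Condition~\ref{condNoEigenGener} for $\tilde{\mathcal L}_t(\lambda)$, and patch together with the smoothness away from $\mathcal K$.
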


\subsection{Border Case and Violation of Smoothness}

Now we assume that the border case occurs for some of the orbits.
Let the following condition hold.

\begin{condition}\label{condProperEigenGener}
The band $-1\le\Im\lambda<0$ contains only the eigenvalue
$\lambda=-i$ of the operators $\tilde{\mathcal L}_t(\lambda)$,
$t=1,\dots,t_1$, $t_1\le T$, and this eigenvalue is a proper one.
If $t_1<T$, then the operators $\tilde{\mathcal L}_t(\lambda)$,
$t=t_1+1,\dots,T$, have no eigenvalues in the band
$-1\le\Im\lambda<0$.
\end{condition}

Analogously to Sec.~\ref{subsectuFixed}, we will introduce the
notion of the consistency condition for each orbit $\mathcal K_t$,
$t=1,\dots,t_1$. For each $t=1,\dots,t_1$, we denote by
\begin{equation}\label{eqSystemBGener}
 \{\hat{\mathcal B}_{t,j\sigma}(D_y)\},\ j=1,\dots,N_t,\
 \sigma=1,2,
\end{equation}
the system of operators~\eqref{eqSystemB} corresponding to the
orbit $\mathcal K_t$. It has been proved in
Sec.~\ref{subsectuFixed} that this system is linearly dependent.
Let
\begin{equation}\label{eqSystemB'Gener}
\{\hat{\mathcal B}_{t,j'\sigma'}(D_y)\}
\end{equation}
be a maximal linearly independent subsystem of
system~\eqref{eqSystemBGener}. In this case, any operator
$\hat{\mathcal B}_{t,j\sigma}(D_y)$ which does not enter
system~\eqref{eqSystemB'Gener} can be represented as follows:
\begin{equation}\label{eqBviaB'Gener}
\hat{\mathcal
B}_{t,j\sigma}(D_y)=\sum\limits_{j',\sigma'}\beta_{t,j\sigma}^{j'\sigma'}\hat{\mathcal
B}_{t,j'\sigma'}(D_y),
\end{equation}
where $\beta_{t,j\sigma}^{j'\sigma'}$ are some constants.

To introduce the notion of the consistency condition, we consider
arbitrary functions $\{Z_{j\sigma}\}\in\mathcal
W^{3/2}(\gamma_t^\varepsilon)$, each of which is defined on its
own interval $\gamma_{t,j\sigma}^\varepsilon$. Consider the
functions
$$
Z^0_{j\sigma}(r)=Z_{j\sigma}(y)|_{y=(r\cos\omega_{t,j},\,
r(-1)^\sigma\sin\omega_{t,j})}.
$$
Each of the functions $Z^0_{j\sigma}$ belongs to
$W^{3/2}(0,\varepsilon)$.

\begin{definition}
Let $\beta_{t,j\sigma}^{j'\sigma'}$ be the constants occurring
in~\eqref{eqBviaB'Gener}. If the relations
\begin{equation}\label{eqConsistencyZGener}
\int\limits_{0}^\varepsilon
r^{-1}\Bigg|\frac{d}{dr}\bigg(Z^0_{j\sigma}-\sum\limits_{j',\sigma'}\beta_{t,j\sigma}^{j'\sigma'}Z^0_{j'\sigma'}\bigg)\Bigg|^2dr<\infty
\end{equation}
hold for all indices $j,\sigma$ corresponding to the operators of
system~\eqref{eqSystemBGener} which do not enter
system~\eqref{eqSystemB'Gener}, then we say that the {\em
functions $Z_{j\sigma}$ satisfy the consistency
condition~\eqref{eqConsistencyZGener}}.
\end{definition}

Denote by $\mathcal S^{3/2}(\partial G)$ the set of functions
$\{f_i\}\in\mathcal W^{3/2}(\partial G)$ such that the functions
$F_{t,j\sigma}$ (see~\eqref{eqytoy'Gener}) satisfy the consistency
condition~\eqref{eqConsistencyZGener} for each $t=1,\dots,t_1$.

The following result can be proved similarly to
Theorem~\ref{thUNonSmFNonConsist}

\begin{theorem}\label{thUNonSmFNonConsistGener}
Let Condition~$\ref{condProperEigenGener}$ hold. Then there exist
a function $\{f_0,f_i\}\in L_2(G)\times \mathcal W^{3/2}(\partial
G)$, $\{f_i\}\notin\mathcal S^{3/2}(\partial G)$, and a function
$u\in W^1(G)$ such that $u$ is a generalized solution of
problem~\eqref{eqPinG}, \eqref{eqBinG} with the right-hand side
$\{f_0,f_i\}$ and $u\notin W^2(G)$.
\end{theorem}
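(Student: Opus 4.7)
The plan is to mimic the proof of Theorem~\ref{thUNonSmFNonConsist} but localized to one orbit for which the border case actually occurs. Pick any $t_0\in\{1,\dots,t_1\}$; such a $t_0$ exists by Condition~\ref{condProperEigenGener}, and $\tilde{\mathcal L}_{t_0}(\lambda)$ has $\lambda=-i$ as a proper eigenvalue, with no other eigenvalues in the band $-1\le\Im\lambda<0$. The model problem~\eqref{eqPinKGener}, \eqref{eqBinKGener} for $t=t_0$ is structurally identical to the single-orbit model problem~\eqref{eqPinK}, \eqref{eqBinK} treated in Sec.~\ref{sectProperEigen}, so all the orbit-$t_0$ analogs of the auxiliary results in that section apply verbatim.

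First, I would repeat the argument from Sec.~\ref{subsectuFixed} to show that the orbit-$t_0$ system~\eqref{eqSystemBGener} is linearly dependent (using the proper eigenvector at $\lambda=-i$ for $\tilde{\mathcal L}_{t_0}(\lambda)$). Then, following the construction in the proof of Lemma~3.2 in~\cite{GurRJMP03}, I would produce $\{Z_{j\sigma}\}\in\mathcal W^{3/2}(\gamma_{t_0}^\varepsilon)$ with $\supp Z_{j\sigma}\subset\mathcal O_{\varepsilon/2}(0)$, $Z_{j\sigma}(0)=0$, which violates the orbit-$t_0$ consistency condition~\eqref{eqConsistencyZGener}. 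An orbit-$t_0$ analog of Lemma~\ref{lUnonSmoothZnonConsist} then yields a function $U\in\mathcal H_a^2(K_{t_0})\cap\mathcal W^1(K_{t_0})$, with $\supp U\subset\mathcal O_{\varepsilon'}(0)$, such that $U\notin\mathcal W^2(K_{t_0}^\varepsilon)$, $\{\mathbf P_{t_0,j}U_j\}\in\mathcal W^0(K_{t_0}^\varepsilon)$, and $\{\mathbf B_{t_0,j\sigma}U-Z_{j\sigma}\}\in\mathcal H_0^{3/2}(\gamma_{t_0}^\varepsilon)$. The proof of this lemma is word-for-word the one already given, since only the model problem attached to the single orbit $\mathcal K_{t_0}$ is involved.

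Next, I would lift $U$ to $G$ by setting $u(y)=U_j(y'(y))$ for $y\in\mathcal O_{\varepsilon'}(g_{t_0,j})$, $j=1,\dots,N_{t_0}$, and $u(y)=0$ elsewhere. The crucial observation is that $u$ is supported near the single orbit $\mathcal K_{t_0}$. Thus $\supp u\subset\mathcal O_{\varkappa_1}(\mathcal K)$, so $\mathbf B_i^2u=0$ by~\eqref{eqSeparK23'}. Moreover, for any $t\ne t_0$ and any $g_{t,k}\in\overline{\Gamma_i}\cap\mathcal K_t$, the disjointness of orbits together with $\Omega_{is}(\mathcal K_t)\subset\mathcal K_t$ implies that $u\bigl(\Omega_{is}(y)\bigr)=0$ for $y$ near $g_{t,k}$; hence $\mathbf B_i^1u$ vanishes near every orbit other than $\mathcal K_{t_0}$. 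Consequently, $\mathbf Pu\in L_2(G)$, $u\notin W^2(G)$, and the functions $f_i:=u|_{\Gamma_i}+\mathbf B_i^1u+\mathbf B_i^2u$ belong to $\mathcal W^{3/2}(\partial G)$ but fail to belong to $\mathcal S^{3/2}(\partial G)$: the contribution to $F_{t_0,j\sigma}$ differs from $Z_{j\sigma}$ only by an element of $\mathcal H_0^{3/2}(\gamma_{t_0}^\varepsilon)$ (which satisfies~\eqref{eqConsistencyZGener} by Remark~\ref{remSufficCons}), so the orbit-$t_0$ consistency condition is violated for $\{f_i\}$.

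The main obstacle is bookkeeping, not analysis: one must ensure that the extension-by-zero across the other orbits $\mathcal K_t$, $t\ne t_0$, genuinely produces $L_2$ right-hand side in the interior and $W^{3/2}$ right-hand sides on every $\Gamma_i$, with no spurious singularities introduced at points of $\mathcal K\setminus\mathcal K_{t_0}$. This is handled precisely by the orbit disjointness, the smoothness of $\Omega_{is}$, and the inclusion $\supp u\subset\mathcal O_{\varepsilon'}(\mathcal K_{t_0})$ with $\varepsilon'$ chosen according to~\eqref{eqEpsilon'}, so that none of the translates $\Omega_{is}(\supp u\cap\Gamma_i)$ reaches outside the neighborhoods of $\mathcal K_{t_0}$ already used in the single-orbit argument.
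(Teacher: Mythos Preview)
Your proposal is correct and is exactly the approach the paper intends: the paper's own proof consists solely of the sentence ``can be proved similarly to Theorem~\ref{thUNonSmFNonConsist},'' and you have faithfully carried out that similarity by localizing the single-orbit construction to one orbit $\mathcal K_{t_0}$ with $t_0\le t_1$, using orbit disjointness and $\Omega_{is}(\mathcal K_t)\subset\mathcal K_t$ to ensure the extension by zero introduces no contributions near the other orbits. The only cosmetic point is that in the multi-orbit setting the paper redefines $\varepsilon'=d'_\chi\min(\varepsilon,\varkappa_2)$ with $d'_\chi=\min\{\chi_{t,j\sigma ks}\}/2$ rather than the single-orbit~\eqref{eqEpsilon'}, but this does not affect your argument.
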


Now we assume that $\{f_i\}\in\mathcal S^{3/2}(\partial G)$ and
prove that the following condition is necessary and sufficient for
any generalized solution to be smooth.
\begin{condition}\label{condB2vB1CConsistencyGener}
\begin{enumerate}
\item For any $v\in
W^{2}(G\setminus\overline{\mathcal O_{\varkappa_1}(\mathcal K)})$,
the functions $B^v_{t,j\sigma}$ satisfy the consistency
condition~\eqref{eqConsistencyZGener}, where $t=1,\dots,t_1$.
\item For any vector $C_t=(C_{t,1},\dots,C_{t,N_t})$ with constant elements,
the functions $\mathbf
B_{t,j\sigma}C_t|_{\gamma_{t,j\sigma}^\varepsilon}$ satisfy the
consistency condition~\eqref{eqConsistencyZGener}, where
$t=1,\dots,t_1$.
\end{enumerate}
\end{condition}

Set $\varepsilon'=d'_\chi\min(\varepsilon,\varkappa_2),$ where
$d_\chi'=\min\{\chi_{t,j\sigma ks}\}/2$.

\begin{theorem}\label{thSmoothfne0Gener}
Let Condition~$\ref{condProperEigenGener}$ hold. Then the
following assertions are true.
\begin{enumerate}
\item
If Condition~$\ref{condB2vB1CConsistencyGener}$ is fulfilled and
$u\in W^1(G)$ is a generalized solution of problem~\eqref{eqPinG},
\eqref{eqBinG} with right-hand side $\{f_0,f_i\}\in L_2(G)\times
\mathcal S^{3/2}(\partial G)$, then $u\in W^2(G)$.
\item
If Condition~$\ref{condB2vB1CConsistencyGener}$ fails, then there
exists a right-hand side $\{f_0,f_i\}\in L_2(G)\times \mathcal
S^{3/2}(\partial G)$ and a generalized solution $u\in W^1(G)$ of
problem~\eqref{eqPinG}, \eqref{eqBinG}  such that $u\notin
W^2(G)$.
\end{enumerate}
\end{theorem}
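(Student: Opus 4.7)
The plan is to reduce Theorem~\ref{thSmoothfne0Gener} to the single-orbit results of Theorems~\ref{thuinW^2NoEigen} and~\ref{thSmoothfne0} by localizing all arguments to a neighborhood of each orbit $\mathcal K_t$. This reduction is possible because the neighborhoods $\mathcal O_{\varepsilon_1}(g_{t,j})$ of distinct points of $\mathcal K$ are disjoint and because Condition~\ref{condSeparK23} guarantees, via inequality~\eqref{eqSeparK23'}, that the nonlocal operators $\mathbf B_i^2$ take $W^2$-functions supported off a fixed neighborhood of $\mathcal K$ into $W^{3/2}(\Gamma_i)$; hence the local analysis near each orbit decouples from the local behavior near the remaining orbits, and Lemmas~\ref{lU=C+} and~\ref{lUinW^2} (and their border-case counterparts used in the proof of Theorem~\ref{thuinW^2ProperEigen}) apply orbit by orbit.

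For the sufficiency part, I would first reproduce relation~\eqref{eqSmoothOutsideK} to infer $u\in W^2(G\setminus\overline{\mathcal O_\delta(\mathcal K)})$ for every $\delta>0$, so that it suffices to prove $U_t=(U_{t,1},\dots,U_{t,N_t})\in\mathcal W^2(K_t^\varepsilon)$ for each $t$. For orbits with $t_1<t\le T$, no eigenvalue of $\tilde{\mathcal L}_t(\lambda)$ lies in the strip $-1\le\Im\lambda<0$, and one applies Lemmas~\ref{lU=C+} and~\ref{lUinW^2} verbatim to the model problem~\eqref{eqPinKGener}, \eqref{eqBinKGener} with operator $\tilde{\mathcal L}_t$. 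For orbits with $1\le t\le t_1$ (the border case), I would write $U_t=C_t+U'_t$ as given by the orbit-$t$ version of Lemma~\ref{lU=C+}, with $U'_t\in\mathcal H_a^2(K_t^\varepsilon)$ and a constant vector $C_t\in\mathbb C^{N_t}$. Exactly as in the sufficiency part of Theorem~\ref{thSmoothfne0}, the hypothesis $\{f_i\}\in\mathcal S^{3/2}(\partial G)$ makes each $F_{t,j\sigma}$ satisfy~\eqref{eqConsistencyZGener}, while part~(i) of Condition~\ref{condB2vB1CConsistencyGener} applied to $v=u\in W^2(G\setminus\overline{\mathcal O_{\varkappa_1}(\mathcal K)})$ and part~(ii) applied to the vector $C_t$ show that $B^u_{t,j\sigma}$ and $\mathbf B_{t,j\sigma}C_t|_{\gamma_{t,j\sigma}^\varepsilon}$ both satisfy~\eqref{eqConsistencyZGener}. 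Combining, the functions $\Psi_{t,j\sigma}-\mathbf B_{t,j\sigma}C_t=F_{t,j\sigma}-B^u_{t,j\sigma}-\mathbf B_{t,j\sigma}C_t$ satisfy the consistency condition for orbit $\mathcal K_t$, and the border-case argument from the proof of Theorem~\ref{thuinW^2ProperEigen} (the lemmas about existence of a corrector $V_t$ and the asymptotic argument) yields $U_t\in\mathcal W^2(K_t^\varepsilon)$.

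For the necessity part, if Condition~\ref{condB2vB1CConsistencyGener} fails then one of its two items must fail for some particular orbit $\mathcal K_{t_0}$ with $1\le t_0\le t_1$. I would then mimic the construction in part~2 of the proof of Theorem~\ref{thSmoothfne0}, performed entirely inside the neighborhood $\mathcal O_{\varepsilon'}(\mathcal K_{t_0})$: pick $v\in W^2(G\setminus\overline{\mathcal O_{\varkappa_1}(\mathcal K)})$ and a constant vector $C_{t_0}=(C_{t_0,1},\dots,C_{t_0,N_{t_0}})$ for which $B^v_{t_0,j\sigma}+\mathbf B_{t_0,j\sigma}C_{t_0}$ violates~\eqref{eqConsistencyZGener}, extend $v$ to $G$ so that it vanishes on $\mathcal O_{\varkappa_1/2}(\mathcal K)$, choose $F'_{t_0,j\sigma}$ matching values at the origin as in the single-orbit proof, and invoke the orbit-$t_0$ analogue of Lemma~\ref{lUnonSmoothZnonConsist} to produce $U'_{t_0}\in\mathcal H_a^2(K_{t_0})\cap\mathcal W^1(K_{t_0})$, supported in $\mathcal O_{\varepsilon'}(0)$, with $U'_{t_0}\notin\mathcal W^2(K_{t_0}^\varepsilon)$. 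Transplanting $U'_{t_0,j}+\xi_{t_0,j}C_{t_0,j}$ to a neighborhood of $\mathcal K_{t_0}$ via the inverses of the changes of variables $y\mapsto y'(g_{t_0,j})$ and setting it to zero outside $\mathcal O_{\varepsilon'}(\mathcal K_{t_0})$, then adding $v$, gives the required $u\in W^1(G)\setminus W^2(G)$ with $\mathbf Pu\in L_2(G)$ and right-hand side in $\mathcal S^{3/2}(\partial G)$.

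The main obstacle is to certify, in the necessity construction, that the counterexample localized at orbit $\mathcal K_{t_0}$ does not inadvertently break the consistency condition~\eqref{eqConsistencyZGener} at the other orbits $\mathcal K_t$, $t\ne t_0$, so that the global right-hand side still belongs to $\mathcal S^{3/2}(\partial G)$. This is resolved by the disjointness of the neighborhoods $\mathcal O_{\varepsilon_1}(g_{t,j})$ together with inequality~\eqref{eqSeparK23'}: the constructed $u'$ is supported in $\mathcal O_{\varkappa_1}(\mathcal K_{t_0})$, so $\mathbf B_i^2u'=0$, and on each $\Gamma_i$ the right-hand side $f_i=u|_{\Gamma_i}+\mathbf B_i^1u+\mathbf B_i^2u$ coincides near every $\mathcal K_t$ with $t\ne t_0$ with the (already smooth and consistent) analogue coming from $v$ alone, so~\eqref{eqConsistencyZGener} is automatic for those orbits. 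In this way the local failure at $\mathcal K_{t_0}$ is produced without disturbing any of the other orbits, and the proof mirrors the single-orbit case step for step.
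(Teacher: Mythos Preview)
Your sufficiency argument is fine and matches the paper. The gap is in the necessity construction, precisely at the point you flag as the ``main obstacle.'' You claim that near each orbit $\mathcal K_t$ with $t\ne t_0$ the right-hand side $f_i$ coincides with ``the (already smooth and consistent) analogue coming from $v$ alone,'' but this is unjustified. In local coordinates at such an orbit you get $F_{t,j\sigma}=B^v_{t,j\sigma}$, and nothing in your choice of $v$ forces $B^v_{t,j\sigma}$ to satisfy the consistency condition~\eqref{eqConsistencyZGener} at orbits other than $\mathcal K_{t_0}$. The function $v$ was selected only to violate consistency at $\mathcal K_{t_0}$ (or was taken to be zero if part~(ii) fails); when $v\ne0$ and there is a second border orbit $1\le t\le t_1$, $t\ne t_0$, the functions $B^v_{t,j\sigma}$ may well fail~\eqref{eqConsistencyZGener}, and then your $\{f_i\}$ does not lie in $\mathcal S^{3/2}(\partial G)$.

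The paper does not localize the counterexample to a single orbit. Instead it fixes arbitrary constant vectors $C_2,\dots,C_T$, defines $F'_{t,j\sigma}$ (constant near the origin, hence trivially consistent) at \emph{every} orbit, and then constructs correctors $U'_t$ at \emph{every} orbit so that $\mathbf B_{t,j\sigma}(U'_t+C_t)+B^v_{t,j\sigma}-F'_{t,j\sigma}\in\mathcal H_0^{3/2}(\gamma_t^\varepsilon)$: via Lemma~\ref{lUnonSmoothZnonConsist} at $t=1$ (producing the non-$W^2$ piece), via either Lemma~\ref{lUnonSmoothZnonConsist} or Lemma~\ref{lAppL3.3GurRJMP03} at the remaining border orbits $2\le t\le t_1$, and via Lemma~\ref{lAppL2.4GurRJMP03} at orbits $t_1<t\le T$. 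This forces $F_{t,j\sigma}-F'_{t,j\sigma}\in\mathcal H_0^{3/2}$ at every orbit, so the consistency of $F_{t,j\sigma}$ is inherited from that of $F'_{t,j\sigma}$. Your construction needs exactly this additional step at the orbits $t\ne t_0$; setting $u'$ to zero there does not suffice.
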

\begin{proof} The proof of this theorem is similar to the proof
of Theorem~\ref{thSmoothfne0}. For instance, we prove assertion~2.
Let Condition~\ref{condB2vB1CConsistencyGener} be violated, e.g.,
for the orbit $\mathcal K_1$. In this case, there exist a function
$v\in W^{2}(G\setminus\overline{\mathcal O_{\varkappa_1}(\mathcal
K)})$ and a constant vector $C_1=(C_{1,1},\dots,C_{1,N_1})$ such
that the functions $B^v_{1,j\sigma}+\mathbf B_{1,j\sigma}C_1$ do
not satisfy the consistency condition~\eqref{eqConsistencyZGener}
for $t=1$ (one can assume that either $v=0,\,C_1\ne0$ or
$v\ne0,\,C_1=0$). Extend the function $v$ to the domain $G$ in
such a way that $v(y)=0$ for $y\in\mathcal
O_{\varkappa_1/2}(\mathcal K)$ and $v\in W^2(G)$.

Consider functions $F_{t,j\sigma}'\in
C^\infty(\overline{\gamma_{t,j\sigma}})$ such that
$$
F_{t,j\sigma}'(y)=B^v_{t,j\sigma}(0)+(\mathbf
B_{t,j\sigma}C_t)(0),\quad |y|<\varepsilon/2,\qquad
F_{t,j\sigma}'(y)=0,\quad |y|>\varepsilon,
$$
where $t=1,\dots,T$, $C_1$ is the above vector, and
$C_2,\dots,C_T$ are arbitrary (but fixed) constant vectors.

Since $F_{t,j\sigma}$ are constant near the origin, it follows
that they satisfy the consistency
condition~\eqref{eqConsistencyZGener} for each $t=1,\dots,t_1$. By
construction,
$$
\{F_{t,j\sigma}'-B^v_{t,j\sigma}-\mathbf
B_{t,j\sigma}C_t|_{\gamma_{t,j\sigma}^\varepsilon}\}\in\mathcal
W^{3/2}(\gamma^\varepsilon),\quad
(F_{t,j\sigma}'-B^v_{t,j\sigma}-\mathbf
B_{t,j\sigma}C_t)|_{y=0}=0,
$$
where $t=1,\dots,T$. Moreover, the functions
$F_{1,j\sigma}'-B^v_{1,j\sigma}-\mathbf B_{1,j\sigma}C_1$ do not
satisfy the consistency condition~\eqref{eqConsistencyZGener} for
$t=1$.

By Lemma~\ref{lUnonSmoothZnonConsist}, there exists a function
$U_1'\in\mathcal H_a^2(K_1)\cap\mathcal W^1(K_1)$ such that $\supp
U_1'\subset\mathcal O_{\varepsilon'}(0)$, $U_1'\notin\mathcal
W^2(K_1^\varepsilon)$, and
\begin{equation}\label{eqPU'-PsiinH0Gener}
\{\mathbf P_{1,j}U'_{1,j}\}\in\mathcal W^0(K_1^\varepsilon),
\end{equation}
$$
 \big\{\big(\mathbf
B_{1,j\sigma}U_1'-(F_{1,j\sigma}'-B^v_{1,j\sigma}-\mathbf
B_{1,j\sigma}C_1)\big)|_{\gamma_{1,j\sigma}^\varepsilon}\big\}\in\mathcal
H_0^{3/2}(\gamma_1^\varepsilon).
$$
One can also write the latter relation as follows:
\begin{equation}\label{eqBU'-PsiinH0Gener}
\{\mathbf
B_{1,j\sigma}(U_1'+C_1)|_{\gamma_{1,j\sigma}^\varepsilon}+B^v_{1,j\sigma}-F_{1,j\sigma}'\}\in\mathcal
H_0^{3/2}(\gamma_1^\varepsilon).
\end{equation}

Let $t=2,\dots,t_1$. It follows from
Lemma~\ref{lUnonSmoothZnonConsist} (if the functions
\begin{equation}\label{eqBU'-PsiinH0Gener'}
F_{t,j\sigma}'-B^v_{t,j\sigma}-\mathbf B_{t,j\sigma}C_t
\end{equation}
do not satisfy the consistency
condition~\eqref{eqConsistencyZGener}) or from
Lemma~\ref{lAppL3.3GurRJMP03} (if the
functions~\eqref{eqBU'-PsiinH0Gener'} satisfy the consistency
condition~\eqref{eqConsistencyZGener}) that there exists a
function $U_t'\in\mathcal H_a^2(K_t)\cap\mathcal W^1(K_t)$ such
that $\supp U_t'\subset\mathcal O_{\varepsilon'}(0)$ and
\begin{equation}\label{eqPU'-PsiinH0Gener''}
\{\mathbf P_{t,j}U'_{t,j}\}\in\mathcal W^0(K_t^\varepsilon),
\end{equation}
\begin{equation}\label{eqBU'-PsiinH0Gener'''}
\{\mathbf
B_{t,j\sigma}(U_t'+C_t)|_{\gamma_{t,j\sigma}^\varepsilon}+B^v_{t,j\sigma}-F_{t,j\sigma}'\}\in\mathcal
H_0^{3/2}(\gamma_t^\varepsilon).
\end{equation}
(If Lemma~\ref{lAppL3.3GurRJMP03} has been applied, then
$U_t'\in\mathcal W^2(K_t^\varepsilon)$.)

Finally, let $t=t_1+1,\dots, T$. In this case, by
Lemma~\ref{lAppL2.4GurRJMP03}, there exists a function
$U_t'\in\mathcal H_a^2(K_t)\cap\mathcal W^2(K_t)$ such that $\supp
U_t'\subset\mathcal O_{\varepsilon'}(0)$ and
relations~\eqref{eqPU'-PsiinH0Gener''}
and~\eqref{eqBU'-PsiinH0Gener'''} hold.

Introduce a function $u'(y)$ such that
$u'(y)=U'_{t,j}(y'(y))+\xi_{t,j}(y)C_{t,j}$ for $y\in\mathcal
O_{\varepsilon'}(g_{t,j})$ and $u'(y)=0$ for $y\notin\mathcal
O_{\varepsilon'}(\mathcal K)$, where $y'\mapsto y(g_{t,j})$ is the
change of variables inverse to the change of variables $y\mapsto
y'(g_{t,j})$, while $\xi_{t,j}\in
C_0^\infty(O_{\varepsilon'}(g_{t,j}))$, $\xi_{t,j}(y)=1$ for
$y\in\mathcal O_{\varepsilon'/2}(g_{t,j})$. Similarly to the proof
of assertion~2 in Theorem~\ref{thSmoothfne0}, using
relations~\eqref{eqPU'-PsiinH0Gener}, \eqref{eqBU'-PsiinH0Gener},
\eqref{eqPU'-PsiinH0Gener''}, and \eqref{eqBU'-PsiinH0Gener'''},
one can verify that the function $u=u'+v$ is the desired one.
\end{proof}

Now we consider problem~\eqref{eqPinG}, \eqref{eqBinG} with
regular and homogeneous nonlocal conditions.

\begin{definition}\label{defAdmitGener}
We say that a function $v\in W^{2}(G\setminus\overline{\mathcal
O_{\varkappa_1}(\mathcal K)})$ is {\em admissible} if there exist
constant vectors $C_t=(C_{t,1},\dots,C_{t,N_t})$, $t=1,\dots,T$,
such that
\begin{equation}\label{eqvadmissibleGener}
B_{t,j\sigma}^v(0)+(\mathbf B_{t,j\sigma}C_t)(0)=0,\quad
j=1,\dots,N,\ \sigma=1,2,\ t=1,\dots T.
\end{equation}
Vectors $C_t$, $t=1,\dots T$, satisfying
relations~\eqref{eqvadmissibleGener} are said to be {\em
admissible vectors corresponding to the function~$v$.}
\end{definition}

\begin{definition}
Right-hand sides $f_i$ in nonlocal conditions~\eqref{eqBinG} are
said to be \textit{regular} if $\{f_i\}\in\mathcal
S^{3/2}(\partial G)$ and $f_i|_{\overline{\Gamma_i}\cap\mathcal
K_t}=0$, $t=1,\dots T$ (i.e.,
$f_i|_{\overline{\Gamma_i}\cap\mathcal K}=0$).
\end{definition}

We prove that the following condition is necessary and sufficient
for any generalized solution of problem~\eqref{eqPinG},
\eqref{eqBinG} with regular $f_i$ to be smooth.

\begin{condition}\label{condBv+BCConsistGener}
For each admissible function $v$ and for each admissible vector
$C_t$, $t=1,\dots,t_1$, corresponding to $v$, the functions
$B_{t,j\sigma}^v+\mathbf B_{t,j\sigma}C_t$ satisfy the consistency
condition~\eqref{eqConsistencyZGener}.
\end{condition}

\begin{theorem}\label{thSmoothf0Gener}
Let Condition~$\ref{condProperEigenGener}$ hold. Then the
following assertions are true.
\begin{enumerate}
\item
If Condition~$\ref{condBv+BCConsistGener}$ is fulfilled and $u\in
W^1(G)$ is a generalized solution of problem~\eqref{eqPinG},
\eqref{eqBinG} with right-hand side $\{f_0,f_i\}\in
L_2(G)\times\mathcal S^{3/2}(\partial G)$, where $f_i$ are
regular, then $u\in W^2(G)$.
\item
If Condition~$\ref{condBv+BCConsistGener}$ fails, then there
exists a right-hand side $\{f_0,f_i\}\in L_2(G)\times\mathcal
H_0^{3/2}(\partial G)$ and a generalized solution $u\in W^1(G)$ of
problem~\eqref{eqPinG}, \eqref{eqBinG} such that $u\notin W^2(G)$.
\end{enumerate}
\end{theorem}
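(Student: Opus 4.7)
The plan is to reduce Theorem~\ref{thSmoothf0Gener} to the single-orbit result (Theorem~\ref{thSmoothf0}) by treating each orbit $\mathcal{K}_t$ separately and then assembling the local information. The orbit decomposition $\mathcal{K}=\bigcup_{t=1}^T\mathcal{K}_t$ together with the disjointness of the neighborhoods $\mathcal{O}_{\varepsilon_1}(g_{t,j})$ means that the model problems \eqref{eqPinKGener}, \eqref{eqBinKGener} decouple across orbits; Lemmas~\ref{lU=C+}, \ref{lUinW^2} and the asymptotic results used in Sec.~\ref{subsectuFixed} apply to each orbit individually.

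For \emph{sufficiency}, suppose Condition~\ref{condBv+BCConsistGener} holds and $u\in W^1(G)$ is a generalized solution with $\{f_0,f_i\}\in L_2(G)\times\mathcal{S}^{3/2}(\partial G)$ and $f_i$ regular. By \eqref{eqSmoothOutsideK} we have $u\in W^2(G\setminus\overline{\mathcal O_{\varkappa_1}(\mathcal K)})$, so $u$ is an admissible function in the sense of Definition~\ref{defAdmitGener}; it remains to check smoothness near each $\mathcal{K}_t$. For $t=t_1+1,\dots,T$, Condition~\ref{condNoEigenGener} holds locally, and the argument of Theorem~\ref{thuinW^2NoEigenGener} gives $U_t\in\mathcal{W}^2(K_t^\varepsilon)$. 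For $t=1,\dots,t_1$, apply Lemma~\ref{lU=C+} orbit-wise to get $U_t=C_t+U'_t$ with $U'_t\in\mathcal{H}^2_a(K_t^\varepsilon)$ and $\mathcal{B}_{t,j\sigma}C_t=\Psi_{t,j\sigma}(0)$. Regularity of $f_i$ forces $F_{t,j\sigma}(0)=0$, so $\Psi_{t,j\sigma}(0)=-B^u_{t,j\sigma}(0)$; hence $B^u_{t,j\sigma}(0)+(\mathbf{B}_{t,j\sigma}C_t)(0)=0$, showing that $C_t$ is an admissible vector corresponding to $u$. Condition~\ref{condBv+BCConsistGener} then gives the consistency condition~\eqref{eqConsistencyZGener} for the functions $\Psi_{t,j\sigma}-\mathbf{B}_{t,j\sigma}C_t$, and the argument of Theorem~\ref{thuinW^2ProperEigen} (Lemma~\ref{lAppL3.3GurRJMP03} and Lemma~\ref{lAppL3.4GurRJMP03} applied orbit by orbit) yields $U'_t\in\mathcal{W}^2(K_t^\varepsilon)$. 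Combining the orbit pieces with the global smoothness away from $\mathcal K$ gives $u\in W^2(G)$.

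For \emph{necessity}, assume Condition~\ref{condBv+BCConsistGener} fails, say for the orbit $\mathcal{K}_1$: there exist $v\in W^2(G\setminus\overline{\mathcal O_{\varkappa_1}(\mathcal K)})$ and a constant vector $C_1$ with $B^v_{1,j\sigma}(0)+(\mathbf{B}_{1,j\sigma}C_1)(0)=0$ such that $B^v_{1,j\sigma}+\mathbf{B}_{1,j\sigma}C_1$ fails the consistency condition~\eqref{eqConsistencyZGener}. Extend $v$ to $G$ with $v\equiv0$ on $\mathcal{O}_{\varkappa_1/2}(\mathcal{K})$, and choose admissible vectors $C_t$ ($t\ge 2$) corresponding to $v$; for $t\ge 2$ such $C_t$ exists (indeed $C_t=0$ works, since $B^v_{t,j\sigma}(0)=0$ because $v$ vanishes near $\mathcal{K}$). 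Proceed as in the proof of Theorem~\ref{thSmoothfne0Gener} assertion~2 with all $F'_{t,j\sigma}\equiv 0$: use Lemma~\ref{lUnonSmoothZnonConsist} on $\mathcal{K}_1$ to get $U'_1\in\mathcal{H}^2_a(K_1)\cap\mathcal{W}^1(K_1)\setminus\mathcal{W}^2(K_1^\varepsilon)$ satisfying \eqref{eqPU'-PsiinH0Gener}, \eqref{eqBU'-PsiinH0Gener}; for $t=2,\dots,t_1$ use Lemma~\ref{lAppL3.3GurRJMP03} (the consistency condition is trivially satisfied by the zero function) to obtain smooth $U'_t$; for $t=t_1+1,\dots,T$ use Lemma~\ref{lAppL2.4GurRJMP03}. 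Paste the pieces via cutoffs $\xi_{t,j}$ supported in $\mathcal{O}_{\varepsilon'}(g_{t,j})$ to form $u'$, and set $u=u'+v$. Then $u\in W^1(G)\setminus W^2(G)$, $\mathbf{P}u\in L_2(G)$, and an inspection of the nonlocal boundary conditions (using $\mathbf{B}_i^2u'=0$ by \eqref{eqSeparK23'}, $v|_{\Gamma_i}+\mathbf{B}_i^1v=0$ near $\mathcal{K}$, and \eqref{eqBU'-PsiinH0Gener}, \eqref{eqBU'-PsiinH0Gener'''}) shows that $f_i:=u|_{\Gamma_i}+\mathbf{B}_i^1u+\mathbf{B}_i^2u\in H_0^{3/2}(\Gamma_i)\subset\mathcal{S}^{3/2}(\partial G)$.

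The main obstacle will be the necessity direction, where one must coordinate the constructions across all $T$ orbits: the orbit $\mathcal{K}_1$ requires the singular construction of Lemma~\ref{lUnonSmoothZnonConsist}, while the remaining orbits must produce smooth contributions consistent with the chosen admissible vectors $C_t$ and with the extension of $v$; verifying that the glued function $u=u'+v$ simultaneously belongs to $W^1(G)$, satisfies $\mathbf{P}u\in L_2(G)$, and has regular boundary data $\{f_i\}\in\mathcal{H}_0^{3/2}(\partial G)$ is the delicate bookkeeping step. The sufficiency direction, by contrast, is essentially a parallel application of the single-orbit Theorems~\ref{thuinW^2NoEigenGener} and~\ref{thuinW^2ProperEigen}, since the model problems and the asymptotic expansions decouple across the disjoint orbit neighborhoods.
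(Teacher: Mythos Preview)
Your overall strategy matches the paper's: reduce to the single-orbit Theorem~\ref{thSmoothf0} and, for necessity, reuse the construction from assertion~2 of Theorem~\ref{thSmoothfne0Gener} with $F'_{t,j\sigma}\equiv 0$. The sufficiency argument is correct and essentially the paper's.

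There is, however, a genuine error in your necessity argument. When Condition~\ref{condBv+BCConsistGener} fails, the function $v$ is by definition \emph{admissible} in the sense of Definition~\ref{defAdmitGener}, which already furnishes admissible vectors $C_t$ for \emph{all} $t=1,\dots,T$ with $B^v_{t,j\sigma}(0)+(\mathbf B_{t,j\sigma}C_t)(0)=0$. You instead take only $C_1$ and then assert that $C_t=0$ works for $t\ge 2$ ``since $B^v_{t,j\sigma}(0)=0$ because $v$ vanishes near $\mathcal K$.'' This is false: $B^v_{t,j\sigma}(0)=(\mathbf B_i^2 v)(g_{t,j})$, and by inequality~\eqref{eqSeparK23'} the operator $\mathbf B_i^2$ depends on $v|_{G\setminus\overline{\mathcal O_{\varkappa_1}(\mathcal K)}}$, not on the values of $v$ near $\mathcal K$. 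Extending $v$ by zero on $\mathcal O_{\varkappa_1/2}(\mathcal K)$ does nothing to $B^v_{t,j\sigma}$.

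This error propagates: for $t=2,\dots,t_1$ the boundary data you must handle is $-(B^v_{t,j\sigma}+\mathbf B_{t,j\sigma}C_t)$, which is generally nonzero, so your remark that ``the consistency condition is trivially satisfied by the zero function'' is unjustified. The paper (via the proof of Theorem~\ref{thSmoothfne0Gener}) treats both possibilities for these orbits, invoking Lemma~\ref{lUnonSmoothZnonConsist} if consistency fails and Lemma~\ref{lAppL3.3GurRJMP03} if it holds. The fix is simple: take the full family $C_1,\dots,C_T$ supplied by admissibility of $v$ from the outset (exactly as the paper does), and then follow the two-case construction of Theorem~\ref{thSmoothfne0Gener} for $t=2,\dots,t_1$.
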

\begin{proof}
The proof of the theorem is similar to the proof of
Theorem~\ref{thSmoothf0}. For instance, let us prove assertion~2.
If Condition~\ref{condBv+BCConsistGener} fails, there exists a
function $v\in W^{2}(G\setminus\overline{\mathcal
O_{\varkappa_1}(\mathcal K)})$ and constant vectors
$C_t=(C_{t,1},\dots,C_{t,N_t})$, $t=1,\dots,T$, such that
$B^v_{t,j\sigma}(0)+(\mathbf B_{t,j\sigma}C_t)(0)=0$ and, e.g.,
the functions $B^v_{1,j\sigma}+\mathbf B_{1,j\sigma}C_1$ do not
satisfy the consistency condition~\eqref{eqConsistencyZGener}.

We must find a function  $u\in W^1(G)$ such that $u\notin W^2(G)$
and
$$
\mathbf Pu\in L_2(G),\qquad u|_{\Gamma_i}+\mathbf B_{i}^1
u+\mathbf B_{i}^2 u\in H_0^{3/2}(\Gamma_i).
$$
To do this, one can repeat the proof of assertion~2 in
Theorem~\ref{thSmoothfne0Gener}, assuming that $v$ is the above
function, $C_t$, $t=1,\dots,T$, are the above vectors, and
$F_{t,j\sigma}'(y)\equiv0$, $t=1,\dots,T$ (which is possible due
to the relations $B^v_{t,j\sigma}(0)+(\mathbf
B_{t,j\sigma}C_t)(0)=0$).
\end{proof}

\begin{remark}\label{remCorGener}
It is easy to see that
Corollaries~\ref{corSmoothf0NeighbK}--\ref{corSmoothf0B^2vComp}
(in which Conditions~\ref{condProperEigen}
and~\ref{condBv+BCConsist} and Theorem~\ref{thSmoothf0} must be
replaced by Conditions~\ref{condProperEigenGener}
and~\ref{condBv+BCConsistGener} and Theorem~\ref{thSmoothf0Gener},
respectively) are true in the case of several orbits.
\end{remark}

It remains to study the case in which the following condition
holds.
\begin{condition}\label{condImproperEigenGener}
There is a number $t\in\{1,\dots,T\}$ such that the band
$-1\le\Im\lambda<0$ contains an improper eigenvalue of the
operator $\tilde{\mathcal L}_t(\lambda)$.
\end{condition}

The proof of the following result is similar to the proof of
Theorem~\ref{thNonSmoothu}.

\begin{theorem}\label{thNonSmoothuGener}
Let Condition~$\ref{condImproperEigenGener}$ hold. Then there
exists a right-hand side $\{f_0,0\}$, where $f_0\in L_2(G)$, and a
generalized solution $u\in W^1(G)$ of problem~\eqref{eqPinG},
\eqref{eqBinG} such that $u\notin W^2(G)$.
\end{theorem}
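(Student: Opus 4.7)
The plan is to adapt the proof of Theorem~\ref{thNonSmoothu} essentially verbatim, confining the singular construction to the single orbit on which the improper eigenvalue lives. By assertion~2 of Lemma~\ref{lHomogeneous} (applied orbit by orbit), it again suffices to produce $u\in W^1(G)\setminus W^2(G)$ such that $\mathbf Pu\in L_2(G)$, $u|_{\Gamma_i}+\mathbf B_i^1u+\mathbf B_i^2u\in H_0^{3/2}(\Gamma_i)$, and $\supp(u|_{\Gamma_i}+\mathbf B_i^1u+\mathbf B_i^2u)\subset\Gamma_i\cap\mathcal O_{\varkappa_2}(\mathcal K)$. Let $t_0\in\{1,\dots,T\}$ be an orbit for which $\tilde{\mathcal L}_{t_0}(\lambda)$ has an improper eigenvalue $\lambda_0$ with $-1\le\Im\lambda_0<0$.

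First I would take a Jordan chain $\varphi^{(0)},\dots,\varphi^{(\varkappa-1)}$ of $\tilde{\mathcal L}_{t_0}(\lambda)$ at $\lambda_0$ and form
\[
 W=r^{i\lambda_0}\sum_{l=0}^m\frac{1}{l!}(i\ln r)^l\varphi^{(m-l)}(\omega),
\]
choosing $m$ (which exists because $\lambda_0$ is improper) so that $W$ is not a polynomial vector in $y_1,y_2$. By Lemma~\ref{lAppL2.1GurPetr03}, $\mathcal P_{t_0,j}W_j=0$ in $K_{t_0,j}$ and $\mathcal B_{t_0,j\sigma}W|_{\gamma_{t_0,j\sigma}}=0$. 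Multiplying by a cut-off $\xi\in C_0^\infty(\mathcal O_{\varepsilon'}(0))$ equal to one near the origin gives $U_{t_0}=\xi W\in\mathcal H_1^2(K_{t_0})\cap\mathcal W^1(K_{t_0})$ supported in $\mathcal O_{\varepsilon'}(0)$, with $U_{t_0}\notin\mathcal W^2(K_{t_0})$ (directly for $-1<\Im\lambda_0<0$, and via Lemma~\ref{lAppL4.20Kondr} for $\Im\lambda_0=-1$, $\Re\lambda_0\ne 0$, since $W$ is not a polynomial vector). Leibniz' formula together with Lemma~\ref{lAppL3.3'Kondr} then yields
\[
 \{\mathbf P_{t_0,j}U_{t_0,j}\}\in\mathcal W^0(K_{t_0}^\varepsilon),\qquad
 \{\mathbf B_{t_0,j\sigma}U_{t_0}|_{\gamma_{t_0,j\sigma}^\varepsilon}\}\in\mathcal H_0^{3/2}(\gamma_{t_0}^\varepsilon),
\]
and the support of $\mathbf B_{t_0,j\sigma}U_{t_0}|_{\gamma_{t_0,j\sigma}}$ lies in $\gamma_{t_0,j\sigma}\cap\mathcal O_{\varkappa_2}(0)$.

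Next I would transfer this local picture back to $G$. Set $u(y)=U_{t_0,j}(y'(y))$ for $y\in\mathcal O_{\varepsilon'}(g_{t_0,j})$, $j=1,\dots,N_{t_0}$, and $u(y)=0$ elsewhere in $G$; on the other orbits $\mathcal K_t$, $t\ne t_0$, we simply take $u\equiv 0$ near $\mathcal K_t$. By construction $\supp u\subset\mathcal O_{\varkappa_1}(\mathcal K)$, so inequality~\eqref{eqSeparK23'} forces $\mathbf B_i^2 u=0$ identically; the required relations $\mathbf Pu\in L_2(G)$, $u|_{\Gamma_i}+\mathbf B_i^1u+\mathbf B_i^2u\in H_0^{3/2}(\Gamma_i)$ with support in $\Gamma_i\cap\mathcal O_{\varkappa_2}(\mathcal K)$ follow from the estimates of the previous paragraph on $\mathcal K_{t_0}$ and trivially on the other orbits. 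Finally, $u\notin W^2(G)$ because $U_{t_0}\notin\mathcal W^2(K_{t_0})$.

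The only step that requires any genuine attention is that $U_{t_0}\notin\mathcal W^2(K_{t_0})$, and more specifically that this conclusion is not vacuous when $\Im\lambda_0=-1$: here the improperness of $\lambda_0$ must be translated into the failure of $W$ to be a polynomial vector, so that Lemma~\ref{lAppL4.20Kondr} applies. Every remaining piece is a direct orbit-wise transcription of the argument for a single orbit, because on orbits other than $\mathcal K_{t_0}$ we just put $u=0$ and nothing needs to be checked.
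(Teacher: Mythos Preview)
Your proposal is correct and follows exactly the approach the paper intends: the paper itself merely says the proof is ``similar to the proof of Theorem~\ref{thNonSmoothu}'', and your construction localizes that argument to the single orbit $\mathcal K_{t_0}$ carrying the improper eigenvalue while setting $u\equiv 0$ near all other orbits. One small correction to your parenthetical case split for $U_{t_0}\notin\mathcal W^2(K_{t_0})$: Lemma~\ref{lAppL4.20Kondr} is needed for \emph{every} eigenvalue with $\Im\lambda_0=-1$ (in particular $\lambda_0=-i$), not only when $\Re\lambda_0\ne 0$---the restriction $\Re\lambda_0\ne 0$ in the original proof pertains to when one may take $m=0$, not to which lemma is invoked---though your final paragraph already gets this right.
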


\section{Example}\label{sectExample}

\subsection{Problem with Nonhomogeneous Nonlocal Conditions}

In this section, we apply the results of
Secs.~\ref{sectStatement}--\ref{sectGener} to the study of
smoothness of generalized solutions for
problem~\eqref{eqIntroPinG}, \eqref{eqIntroBinG}. We recall the
setting of this problem.

Let $\partial G\setminus{\mathcal K}=\Gamma_1\cup\Gamma_2$, where
$\Gamma_i$ are open (in the topology of $\partial G$)
$C^\infty$-curves and $\mathcal
K=\overline{\Gamma_1}\cap\overline{\Gamma_2}=\{g, h\}$, where
$g,h$ are the end points of the curves $\overline{\Gamma_1}$ and
$\overline{\Gamma_2}$. Suppose that the domain $G$ is the plane
angle of opening $\pi$ in a neighborhood of each of the points $g,
h$. Thus, the boundary of $G$ is infinitely smooth. We consider
the following nonlocal problem in $G$:
\begin{equation}\label{eqExPinG}
 \Delta u=f_0(y)\quad (y\in G),
\end{equation}
\begin{equation}\label{eqExBinG}
\begin{aligned}
&u|_{\Gamma_1}+b_1(y)
u\bigl(\Omega_{1}(y)\bigr)\big|_{\Gamma_1}+a(y)
u\bigl(\Omega(y)\bigr)\big|_{\Gamma_1}=f_1(y)\quad
(y\in\Gamma_1),\\
&u|_{\Gamma_2}+b_2(y)
u\bigl(\Omega_{2}(y)\bigr)\big|_{\Gamma_2}=f_2(y)\quad
 (y\in\Gamma_2).
\end{aligned}
\end{equation}
Here $b_1$, $b_2$, and $a$ are real-valued $C^\infty$-functions;
$\Omega_i$ and $\Omega$ are $C^\infty$-diffeo\-mor\-phisms
described in Introduction (see Fig.~\ref{figEx1}).

\smallskip

Let us show that nonlocal conditions~\eqref{eqExBinG} can be
represented in the form~\eqref{eqBinG}. To do this, we take a
small ${\varepsilon}$ such that the sets $\overline{\mathcal
O_{{\varepsilon}}(g)}$ and $\overline{\mathcal
O_{{\varepsilon}}(h)}$ do not intersect with the curve
$\overline{\Omega(\Gamma_1)}$.

Consider a function $\zeta\in C_0^\infty(\mathbb R^2)$ such that
$\zeta(y)=1$ for $y\in\mathcal O_{\varepsilon/2}(\mathcal K)$ and
$\supp\zeta\subset\mathcal O_{\varepsilon}(\mathcal K)$. Introduce
the operators
\begin{gather*}
\mathbf B_{i}^1u=\zeta(y)b_{i}(y)
u(\Omega_{i}(y))|_{\Gamma_i},\\
\mathbf B_{1}^2u=(1-\zeta(y))b_{1}(y)u(\Omega_{1}(y))|_{\Gamma_1}+
a(y)u(\Omega(y))|_{\Gamma_1},\\ \mathbf
B_{2}^2u=(1-\zeta(y))b_{2}(y)u(\Omega_{2}(y))|_{\Gamma_2}.
\end{gather*}
In this example, the set $\mathcal K$ is formed by two orbits; the
first orbit consists of the point $g$ and the second orbit of the
point $h$. Since the support of $\zeta$ is contained in a
neighborhood of the set $\mathcal K$, one can assume that the
transformations $\Omega_{i}$ occurring in the definition of the
operators $\mathbf B_{i}^1$ are also defined in a neighborhood of
the set $\mathcal K$ and satisfy Condition~\ref{condK1}$'$.
Furthermore, due to the arguments of~\cite[Sec.~1.2]{GurRJMP03},
the operators $\mathbf B_{i}^2$ satisfy
Condition~\ref{condSeparK23} with $\varkappa_1=\varepsilon/2$ and
some $\varkappa_2<\varkappa_1$ and $\rho$. Therefore, nonlocal
conditions~\eqref{eqExBinG} can be represented in the
form~\eqref{eqBinG}.

\smallskip

 Write a model problem corresponding to the point $g$
(one can similarly write a model problem corresponding to the
point $h$). To be definite, we assume that the point $g$ coincides
with the origin, $g=0$, while the axis $Oy_1$ is directed inside
the domain $G$, perpendicularly to the boundary. Consider the sets
\begin{gather*}
K^\varepsilon=\{y\in{\mathbb R}^2:\ 0<r<\varepsilon,\,
|\omega|<\pi/2\},\\ \gamma_\sigma^\varepsilon=\{y\in{\mathbb
R}^2:\ 0<r<\varepsilon,\, \omega=(-1)^\sigma\pi/2\}.
\end{gather*}
Take a small number $\varepsilon$ such that $\mathcal
O_{\varepsilon}(0)\cap G=K^\varepsilon$. The model problem
acquires the form
\begin{align}
 \Delta U&=F(y)\quad (y\in K^\varepsilon),\label{eqExPinK}\\
 U(y)+b_\sigma(y) U(\mathcal G_\sigma y)&=\Psi_\sigma(y)\quad
 (y\in\gamma_\sigma^\varepsilon,\ \sigma=1, 2).\label{eqExBinK}
\end{align}
Here $
 \mathcal G_\sigma=
 \begin{pmatrix}
  0 & (-1)^\sigma\\
  (-1)^{\sigma+1} & 0
 \end{pmatrix}
$ is the operator of rotation through the angle
$(-1)^{\sigma+1}\pi/2$,
$$
F(y)=f_0(y),\quad y\in K^\varepsilon,\qquad
\Psi_\sigma(y)=f_\sigma(y)-B_\sigma^u(y),\quad
y\in\gamma_\sigma^\varepsilon;
$$
moreover,
$$
B_1^u(y)=a(y)u\big(\Omega(y)\big),\quad
y\in\gamma_1^{\varepsilon/2},\qquad B_2^u(y)=0,\quad
y\in\gamma_2^{\varepsilon/2},
$$
because $(1-\zeta(y))b_{\sigma}(y)u(\Omega_{\sigma}(y))=0$ for
$y\in\gamma_\sigma^{\varepsilon/2}$, $\sigma=1,2$.

The eigenvalue problem has the form
\begin{gather}
\varphi''(\omega)-\lambda^2\varphi(\omega)=0\quad
(|\omega|<\pi/2),\label{eqExPEigen}\\
\varphi(-\pi/2)+b_1(0)\varphi(0)=0,\quad
\varphi(\pi/2)+b_2(0)\varphi(0)=0.\label{eqExBEigen}
\end{gather}

Set $I_1=(-\infty,-2]\cup(0,\infty)$ and $I_2=(-2,0)$. Simple
calculations~\cite[\S~9]{GurRJMP04} show that the eigenvalues of
problem~\eqref{eqExPEigen}, \eqref{eqExBEigen} are distributed in
the band $-1\le\Im\lambda<0$ as follows.
\begin{description}
\item[Case 1 ($b_1(0)+b_2(0)\in
I_1$)] the band $-1\le\Im\lambda<0$ contains no eigenvalues.
\item[Case 2 ($b_1(0)+b_2(0)=0$)] the band $-1\le\Im\lambda<0$
contains the unique eigenvalue $\lambda=-i$, and this eigenvalue
is proper.
\item[Case 3 ($b_1(0)+b_2(0)\in I_2$)] the band $-1\le\Im\lambda<0$
contains the improper eigenvalue
$\lambda=2\pi^{-1}i\arctan\big(\sqrt{4-(b_1(0)+b_2(0))^2}/(b_1(0)+b_2(0))\big)$.
\end{description}

Consider Case 1.

\begin{theorem}\label{thExuinW^2NoEigen}
Let $b_1(0)+b_2(0)\in I_1$ and $b_1(h)+b_2(h)\in I_1$. Let $u\in
W^1(G)$ be a generalized solution of problem~\eqref{eqExPinG},
\eqref{eqExBinG} with right-hand side $\{f_0,f_i\}\in L_2(G)\times
\mathcal W^{3/2}(\partial G)$. Then $u\in W^2(G)$.
\end{theorem}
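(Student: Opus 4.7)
The plan is to recognize that Theorem~\ref{thExuinW^2NoEigen} is a direct application of the multi-orbit preservation-of-smoothness result Theorem~\ref{thuinW^2NoEigenGener}, once the hypotheses are translated into the language of Sec.~\ref{sectGener}. First I would observe that in the setting of problem~\eqref{eqExPinG}, \eqref{eqExBinG}, the set $\mathcal K=\{g,h\}$ decomposes into two singleton orbits $\mathcal K_1=\{g\}$ and $\mathcal K_2=\{h\}$ (since $\Omega_1,\Omega_2$ each fix $g$ and $h$, and $\Omega$ sends $\Gamma_1$ strictly inside $G$ so that $\Omega$ contributes to $\mathbf B_1^2$ rather than to the orbit structure). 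The verification that the decomposition $\mathbf B_i^1+\mathbf B_i^2$ via the cut-off $\zeta$ satisfies Conditions~\ref{condK1}$'$ and~\ref{condSeparK23} is exactly the paragraph preceding the eigenvalue problem, so I would just reference it.

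Next I would write down the model problem at $g$: since $G$ is a half-plane angle at $g$ with opening $\pi$ and $\Omega_1,\Omega_2$ act at $g$ as rotation through $\pi/2$ inwards, the problem reduces to~\eqref{eqExPinK}, \eqref{eqExBinK}, whose eigenvalue problem is~\eqref{eqExPEigen}, \eqref{eqExBEigen}. The analogous construction yields an entirely parallel eigenvalue problem at $h$, with the coefficients $b_1(0),b_2(0)$ replaced by $b_1(h),b_2(h)$. The key input is the classification of eigenvalues in the strip $-1\le\Im\lambda<0$ already recalled from~\cite{GurRJMP04}: in Case~1, i.e.\ $b_1(g)+b_2(g)\in I_1$, the operator $\tilde{\mathcal L}_1(\lambda)$ has no eigenvalues in this strip, and similarly $\tilde{\mathcal L}_2(\lambda)$ has none whenever $b_1(h)+b_2(h)\in I_1$.

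Thus the hypotheses of the theorem amount exactly to saying that Condition~\ref{condNoEigenGener} is satisfied. I would then invoke Theorem~\ref{thuinW^2NoEigenGener} to conclude that any generalized solution $u\in W^1(G)$ with right-hand side $\{f_0,f_i\}\in L_2(G)\times\mathcal W^{3/2}(\partial G)$ belongs to $W^2(G)$.

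There is essentially no hard step here; the only thing that requires a bit of care is the bookkeeping needed to cast~\eqref{eqExBinG} into the framework of Sec.~\ref{sectStatement} (namely, checking that the cut-off decomposition puts the $a(y)u(\Omega(y))$-term and the far-from-$\mathcal K$ parts of the $b_i(y)u(\Omega_i(y))$-terms into $\mathbf B_i^2$ satisfying Condition~\ref{condSeparK23}, while the near-$\mathcal K$ parts give $\mathbf B_i^1$ with transformations obeying Condition~\ref{condK1}$'$). Once this is done, everything reduces to pointing at the eigenvalue tabulation and at Theorem~\ref{thuinW^2NoEigenGener}.
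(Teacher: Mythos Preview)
Your proposal is correct and matches the paper's own proof essentially verbatim: the paper simply observes that the hypotheses $b_1(0)+b_2(0)\in I_1$ and $b_1(h)+b_2(h)\in I_1$ put both orbits in Case~1 of the eigenvalue classification, so Condition~\ref{condNoEigenGener} holds, and then invokes Theorem~\ref{thuinW^2NoEigenGener}. Your write-up is just a more expanded version of this, with the bookkeeping about the cut-off decomposition and orbit structure spelled out (which the paper already handled in the paragraphs preceding the theorem).
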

\begin{proof}
In the case under consideration, the band $-1\le\Im\lambda<0$
contains no eigenvalues of problem~\eqref{eqExPEigen},
\eqref{eqExBEigen} (and no eigenvalues of the analogous problem
corresponding to the point $h$). Therefore, this theorem follows
from Theorem~\ref{thuinW^2NoEigenGener}.
\end{proof}
Note that we impose no consistency conditions on the coefficients
$b_i$ and $a$ and on the right-hand sides $f_i$ in Case~1.

\medskip

Consider Case~2. To be definite, we assume that
$$
b_1(h)+b_2(h)\in I_1.
$$
In this case, the consistency
condition~\eqref{eqConsistencyZGener} is considered only near the
origin. Let us find out the form of this condition in terms of
problem~\eqref{eqExPinG}, \eqref{eqExBinG}. Let $\tau_\sigma$
denote the vector with the coordinates $(0,(-1)^\sigma)$. Then
$\partial/\partial\tau_\sigma=(-1)^\sigma\partial/\partial y_2$
and
$$
\frac{\partial} {\partial
 \tau_1}\big(U(y)+b_{1}(0)U({\mathcal G}_{1}y)\big)=-U_{y_2}(y)+b_1(0)U_{y_1}({\mathcal
  G}_{1}y),
$$
$$
\frac{\partial} {\partial
  \tau_2}\big(U(y)+b_{2}(0)U({\mathcal G}_{2}y)\big)=U_{y_2}(y)+b_2(0)U_{y_1}({\mathcal G}_{2}y).
$$
Therefore,
$$
\hat{\mathcal B}_{\sigma}(D_y)U=(-1)^{\sigma}U_{y_2}+
b_\sigma(0)U_{y_1},\quad \sigma=1,2.
$$
Since $b_1(0)+b_2(0)=0$, it follows that the operators
$\hat{\mathcal B}_{1}(D_y)$ and $\hat{\mathcal B}_{2}(D_y)$ are
linearly dependent,
$$
\hat{\mathcal B}_{1}(D_y)+\hat{\mathcal B}_{2}(D_y)=0.
$$
Thus, the consistency condition~\eqref{eqConsistencyZGener} for
functions $Z_\sigma\in W^{3/2}(\gamma_\sigma^\varepsilon)$
acquires the form
\begin{equation}\label{eqExConsistencyZ}
\int\limits_{0}^\varepsilon r^{-1}\Bigg|\frac{\partial
Z_1}{\partial y_2}\bigg|_{y=(0,-r)}-\frac{d
Z_2}{dy_2}\bigg|_{y=(0,r)}\Bigg|^2dr<\infty.
\end{equation}

Due to~\eqref{eqExConsistencyZ}, the space $\mathcal
S^{3/2}(\partial G)$ consists of the functions $\{f_i\}\in\mathcal
W^{3/2}(\partial G)$ such that
\begin{equation}\label{eqExConsistencyf}
\int\limits_{0}^\varepsilon r^{-1}\Bigg|\frac{\partial
f_1}{\partial y_2}\bigg|_{y=(0,-r)}-\frac{\partial f_2}{\partial
y_2}\bigg|_{y=(0,r)}\Bigg|^2dr<\infty.
\end{equation}
By Theorem~\ref{thUNonSmFNonConsistGener}, the validity of the
condition $\{f_i\}\in\mathcal S^{3/2}(\partial G)$ is necessary
for any generalized solution of problem~\eqref{eqExPinG},
\eqref{eqExBinG} to belong to $W^2(G)$.

\begin{theorem}\label{thExSmoothfne0}
Let $b_1(0)+b_2(0)=0$ and $b_1(h)+b_2(h)\in I_1$. Then the
following assertions are true.
\begin{enumerate}
\item
If
\begin{gather}
a(0)=0,\quad \frac{\partial
a}{\partial y_2}\bigg|_{y=0}=0,\label{eqExB2vB1CConsistency1}\\
\int\limits_{0}^\varepsilon r^{-1}\Bigg|\frac{\partial
b_1}{\partial y_2}\bigg|_{y=(0,-r)}-\frac{\partial b_2}{\partial
y_2}\bigg|_{y=(0,r)}\Bigg|^2dr<\infty,\label{eqExB2vB1CConsistency2}
\end{gather}
and $u\in W^1(G)$ is a generalized solution of
problem~\eqref{eqExPinG}, \eqref{eqExBinG} with right-hand side
$\{f_0,f_i\}\in L_2(G)\times \mathcal S^{3/2}(\partial G)$, then
$u\in W^2(G)$.
\item If
condition~\eqref{eqExB2vB1CConsistency1}--\eqref{eqExB2vB1CConsistency2}
fails, then there exists a right-hand side $\{f_0,f_i\}\in
L_2(G)\times \mathcal S^{3/2}(\partial G)$ and a generalized
solution $u\in W^1(G)$ of problem~\eqref{eqExPinG},
\eqref{eqExBinG} such that $u\notin W^2(G)$.
\end{enumerate}
\end{theorem}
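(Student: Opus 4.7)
My plan is to derive Theorem~\ref{thExSmoothfne0} as a direct specialization of Theorem~\ref{thSmoothfne0Gener} to problem~\eqref{eqExPinG}, \eqref{eqExBinG}. The hypothesis $b_1(0)+b_2(0)=0$ puts us in Case~2 at the orbit $\{g\}$ (so the band $-1\le\Im\lambda<0$ contains only the proper eigenvalue $\lambda=-i$ of the corresponding $\tilde{\mathcal L}_1(\lambda)$), while $b_1(h)+b_2(h)\in I_1$ puts us in Case~1 at the orbit $\{h\}$. Hence Condition~\ref{condProperEigenGener} holds with $t_1=1$, and the only consistency condition to verify is that for the orbit $\{g\}$. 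What remains is to show that Condition~\ref{condB2vB1CConsistencyGener} in this example is equivalent precisely to \eqref{eqExB2vB1CConsistency1}--\eqref{eqExB2vB1CConsistency2}; then assertion~(1) is part~(1) of Theorem~\ref{thSmoothfne0Gener} and assertion~(2) is part~(2) of that theorem.

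To translate Condition~\ref{condB2vB1CConsistencyGener} into concrete conditions on $a,b_1,b_2$, I first compute the operators from Sec.~\ref{subsectuFixed}. Since $\omega_j=\pi/2$ and $\partial/\partial\tau_\sigma=(-1)^\sigma\partial/\partial y_2$, one finds
$$\hat{\mathcal B}_{\sigma}(D_y)U=(-1)^\sigma U_{y_2}+b_\sigma(0)U_{y_1},\qquad \sigma=1,2,$$
and the assumption $b_1(0)+b_2(0)=0$ gives $\hat{\mathcal B}_1+\hat{\mathcal B}_2=0$. Taking $\{\hat{\mathcal B}_1\}$ as a maximal linearly independent subsystem, the representation constant in \eqref{eqBviaB'Gener} is $\beta_{2}^{1}=-1$, so the general consistency condition~\eqref{eqConsistencyZGener} reduces to~\eqref{eqExConsistencyZ}.

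Now I check part~(2) of Condition~\ref{condB2vB1CConsistencyGener}: for a scalar constant $C$ one has $\mathbf B_\sigma C=C(1+b_\sigma(y))$ on $\gamma_\sigma^\varepsilon$, so~\eqref{eqExConsistencyZ} with $Z_\sigma=\mathbf B_\sigma C$ reads
$$|C|^2\int_0^\varepsilon r^{-1}\Bigl|\tfrac{\partial b_1}{\partial y_2}(0,-r)-\tfrac{\partial b_2}{\partial y_2}(0,r)\Bigr|^2dr<\infty,$$
which holds for every $C$ iff \eqref{eqExB2vB1CConsistency2} holds. Next, for part~(1), given $v\in W^2(G\setminus\overline{\mathcal O_{\varkappa_1}(\mathcal K)})$, the pullbacks are $B_1^v(y)=a(y)v(\Omega(y))$ on $\gamma_1^\varepsilon$ and $B_2^v\equiv 0$. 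Setting $\tilde v(y):=v(\Omega(y))$, which is smooth near $y=0$ (because $\Omega(0)\in G$ lies in the interior, away from $\mathcal K$) and can be prescribed to have arbitrary value and arbitrary $y_2$-derivative at the origin, Leibniz gives
$$\frac{\partial B_1^v}{\partial y_2}(0,-r)=\frac{\partial a}{\partial y_2}(0,-r)\,\tilde v(0,-r)+a(0,-r)\,\tilde v_{y_2}(0,-r).$$
The leading value as $r\to 0$ is $a_{y_2}(0)\tilde v(0)+a(0)\tilde v_{y_2}(0)$; for the integral $\int_0^\varepsilon r^{-1}|\cdot|^2dr$ to converge for every admissible $v$, this leading value must vanish for arbitrary $\tilde v(0),\tilde v_{y_2}(0)$, forcing $a(0)=0$ and $a_{y_2}(0)=0$, i.e.\ \eqref{eqExB2vB1CConsistency1}. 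Conversely, if \eqref{eqExB2vB1CConsistency1} holds, then $a(0,-r)=O(r^2)$ (since $y_1=0$ and both $a(0),a_{y_2}(0)$ vanish) and $a_{y_2}(0,-r)=O(r)$, whence the integrand is $O(r)$ and the integral converges.

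Putting the two equivalences together, Condition~\ref{condB2vB1CConsistencyGener} is equivalent to \eqref{eqExB2vB1CConsistency1}--\eqref{eqExB2vB1CConsistency2} in this example, and Theorem~\ref{thSmoothfne0Gener} applies to yield both assertions. The only non-mechanical step is the verification that the values $\tilde v(0)$ and $\tilde v_{y_2}(0)$ are indeed independently prescribable by choice of $v$; this is where we use that $\Omega(0)\in G$ lies strictly inside the domain, away from the support of nonlocal terms near $\mathcal K$, so $v$ can be taken arbitrary in a neighborhood of $\Omega(0)$, together with the fact that $\Omega$ is a local diffeomorphism at $g=0$. No other step presents a substantive obstacle.
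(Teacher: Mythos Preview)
Your approach matches the paper's exactly: reduce to Theorem~\ref{thSmoothfne0Gener} by proving that Condition~\ref{condB2vB1CConsistencyGener} for this example is equivalent to \eqref{eqExB2vB1CConsistency1}--\eqref{eqExB2vB1CConsistency2}. The computation of $\hat{\mathcal B}_\sigma$, the reduction of \eqref{eqConsistencyZGener} to \eqref{eqExConsistencyZ}, the treatment of part~(2) of Condition~\ref{condB2vB1CConsistencyGener} via $\mathbf B_\sigma C=C(1+b_\sigma)$, and the necessity argument for part~(1) via special test functions (the paper takes $v_\Omega=y_2$ and $v_\Omega=1$) all coincide with the paper's proof.

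The one place your argument is looser is the \emph{sufficiency} direction for part~(1). You assert that $\tilde v=v\circ\Omega$ is ``smooth near $y=0$'' and that ``the integrand is $O(r)$''. But $v$ is an \emph{arbitrary} element of $W^2(G\setminus\overline{\mathcal O_{\varkappa_1}(\mathcal K)})$, so $\tilde v$ is only $W^2$ and $\tilde v_{y_2}$ need not be bounded pointwise; the step $a(0,-r)\tilde v_{y_2}(0,-r)=O(r^2)\cdot O(1)$ is unjustified as written. The paper closes this by working in weighted spaces: from $v_\Omega,\ \partial v_\Omega/\partial y_2\in W^{1/2}(\gamma_1^\varepsilon)\subset H_1^{1/2}(\gamma_1^\varepsilon)$ together with \eqref{eqExB2vB1CConsistency1} and Lemma~\ref{lAppL3.3'Kondr} one obtains $\partial(av_\Omega)/\partial y_2\in H_0^{1/2}(\gamma_1^\varepsilon)$, and then Lemma~\ref{lAppL4.18Kondr} yields the integral bound~\eqref{eqExConsistencyZ}. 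Your Taylor argument can be patched (use Sobolev embedding for continuity of $\tilde v$ and $\tilde v_{y_2}|_{\gamma_1^\varepsilon}\in L_2$ for the second term), but it needs to be said. A minor side remark: the hypotheses only ensure $\overline{\Omega(\Gamma_1)}\cap\{g,h\}=\varnothing$, not $\Omega(0)\in G$; this does not affect your necessity argument, since one can still prescribe $\tilde v(0)$ and $\tilde v_{y_2}(0)$ freely whenever $\Omega(0)\notin\mathcal K$.
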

\begin{proof}
1. By Theorem~\ref{thSmoothfne0Gener}, it suffices to prove that
condition~\eqref{eqExB2vB1CConsistency1}--\eqref{eqExB2vB1CConsistency2}
is equivalent to Condition~\ref{condB2vB1CConsistencyGener}.

For any function $v\in W^{2}(G\setminus\overline{\mathcal
O_{\varkappa_1}(\mathcal K)})$, set
$v_\Omega(y)=v\big(\Omega(y)\big)$, $y\in\Gamma_1$. In this case,
we have
$$
B_1^v(y)=a(y)v_\Omega(y),\quad y\in\gamma_1^{\varepsilon/2},\qquad
B_2^v(y)=0,\quad y\in\gamma_2^{\varepsilon/2}.
$$
Therefore, the functions $B_\sigma^v$ satisfy the consistency
condition~\eqref{eqExConsistencyZ} if and only if
\begin{equation}\label{eqExSmoothfne0-1}
\int\limits_{0}^{\varepsilon/2} r^{-1}\Bigg|\frac{\partial
(av_\Omega)}{\partial y_2}\bigg|_{y=(0,-r)}\Bigg|^2dr=
\int\limits_{0}^{\varepsilon/2} r^{-1}\Bigg|\Big(\frac{\partial
a}{\partial y_2}v_\Omega+a\frac{\partial v_\Omega}{\partial
y_2}\Big)\bigg|_{y=(0,-r)}\Bigg|^2dr<\infty.
\end{equation}
We take $\varepsilon/2$ instead of $\varepsilon$ as the upper
limit of integration because the functions $B_\sigma^v$ look
simpler in this case; clearly, this change does not affect the
convergence of the integral.

Let us prove that condition~\eqref{eqExSmoothfne0-1} is equivalent
to~\eqref{eqExB2vB1CConsistency1}. Suppose that
\eqref{eqExSmoothfne0-1} holds. Take a function $v$ such that
$v_\Omega(y)=y_2$ near the origin; then we have
$$
\frac{\partial (av_\Omega)}{\partial y_2}\bigg|_{y=0}=a(0).
$$
Since the function $\partial (av_\Omega)/{\partial y_2}$ is
continuous near the origin, it follows from the latter relation
and from~\eqref{eqExSmoothfne0-1} that $a(0)=0$. In the similar
way, substituting a function $v$ such that $v_\Omega(y)=1$ near
the origin into~\eqref{eqExSmoothfne0-1}, we obtain $({\partial
a}/{\partial y_2})|_{y=0}=0$.

Conversely, suppose that~\eqref{eqExB2vB1CConsistency1} holds. By
virtue of smoothness of the transformation $\Omega$, we have
$$
v_\Omega,\frac{\partial v_\Omega}{\partial y_2} \in
W^{1/2}(\gamma_1^\varepsilon)\subset
H_1^{1/2}(\gamma_1^\varepsilon)
$$
for any $v\in W^{2}(G\setminus\overline{\mathcal
O_{\varkappa_1}(\mathcal K)})$. It follows from this relation,
from~\eqref{eqExB2vB1CConsistency1}, and from
Lemma~\ref{lAppL3.3'Kondr} that $\partial (av_\Omega)/{\partial
y_2}\in H_0^{1/2}(\gamma_1^\varepsilon)$. Therefore, by
Lemma~\ref{lAppL4.18Kondr}, relation~\eqref{eqExSmoothfne0-1}
follows. Thus, we have proved that part~1 of
Condition~\ref{condB2vB1CConsistencyGener} is equivalent to
condition~\eqref{eqExB2vB1CConsistency1}.

2. Part~2 of Condition~\ref{condB2vB1CConsistencyGener} is
fulfilled if and only if the functions $C+b_1(y)C$ and $C+b_2(y)C$
satisfy the consistency condition~\eqref{eqExConsistencyZ} for any
constant $C$. The latter is equivalent
to~\eqref{eqExB2vB1CConsistency2}.
\end{proof}

Thus, we see that, in Case~2, the smoothness of generalized
solutions depends on the values of the first derivatives of the
coefficients $b_1,b_2$ {\em near} the origin as well as on the
values of the coefficient $a$ and its first derivative {\it at}
the origin.


Consider Case 3.
\begin{theorem}\label{thExNonSmoothfne0}
Let $b_1(0)+b_2(0)\in I_2$ or $b_1(h)+b_2(h)\in I_2$. Then there
exists a right-hand side $\{f_0,0\}$, where $f_0\in L_2(G)$, and a
generalized solution $u\in W^1(G)$ of problem~\eqref{eqExPinG},
\eqref{eqExBinG} such that $u\notin W^2(G)$.
\end{theorem}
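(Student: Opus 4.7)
\medskip

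My plan is to derive Theorem~\ref{thExNonSmoothfne0} as a direct specialization of Theorem~\ref{thNonSmoothuGener}. The set $\mathcal K=\{g,h\}$ decomposes into two singleton orbits $\mathcal K_1=\{g\}$ and $\mathcal K_2=\{h\}$, and the reduction of problem~\eqref{eqExPinG}, \eqref{eqExBinG} to the form~\eqref{eqPinG}, \eqref{eqBinG} carried out earlier in Sec.~\ref{sectExample} already shows that Conditions~\ref{condK1}$'$ and~\ref{condSeparK23} are satisfied. So the abstract machinery applies and the task reduces to verifying Condition~\ref{condImproperEigenGener} for the particular operators $\tilde{\mathcal L}_1(\lambda)$ and $\tilde{\mathcal L}_2(\lambda)$ associated with $g$ and $h$.

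First I would observe that the model eigenvalue problem attached to $g$ is precisely~\eqref{eqExPEigen}, \eqref{eqExBEigen}. The classification recalled in the excerpt shows that, in Case~3, i.e.\ when $b_1(0)+b_2(0)\in I_2$, the band $-1\le\Im\lambda<0$ contains the eigenvalue
\[
\lambda_0=\tfrac{2}{\pi}i\arctan\!\left(\frac{\sqrt{4-(b_1(0)+b_2(0))^2}}{b_1(0)+b_2(0)}\right),
\]
which satisfies $\Re\lambda_0\ne 0$ when $\Im\lambda_0\ne-1$ (and in any event fails the polynomial condition when $\Im\lambda_0=-1$), so that $\lambda_0$ is not a proper eigenvalue in the sense of Definition~\ref{defRegEigVal}. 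Thus $\tilde{\mathcal L}_1(\lambda)$ has an improper eigenvalue in the strip, and Condition~\ref{condImproperEigenGener} holds with $t=1$. The analogous computation for the model problem at $h$ shows that $b_1(h)+b_2(h)\in I_2$ yields an improper eigenvalue of $\tilde{\mathcal L}_2(\lambda)$, so Condition~\ref{condImproperEigenGener} holds with $t=2$.

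In either case, Theorem~\ref{thNonSmoothuGener} produces $f_0\in L_2(G)$ and a generalized solution $u\in W^1(G)$ of problem~\eqref{eqPinG}, \eqref{eqBinG} with right-hand side $\{f_0,0\}$ such that $u\notin W^2(G)$. Since our specific problem~\eqref{eqExPinG}, \eqref{eqExBinG} has been written in the form~\eqref{eqPinG}, \eqref{eqBinG}, the same pair $(f_0,u)$ furnishes the desired counterexample and the theorem follows.

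There is essentially no obstacle beyond this bookkeeping: the only substantive point is confirming that $\lambda_0$ above is indeed improper, and this is automatic because a proper eigenvalue must be purely imaginary with non-polynomial-generating Jordan chain, whereas $\lambda_0$ in Case~3 has nonzero real part whenever $\Im\lambda_0\ne-1$ and, when $\Im\lambda_0=-1$, the corresponding function $r^{i\lambda_0}\varphi(\omega)$ fails to be a polynomial in $y_1,y_2$ (this can be checked directly from~\eqref{eqExPEigen}–\eqref{eqExBEigen}).
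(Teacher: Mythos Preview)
Your approach is exactly the paper's: verify Condition~\ref{condImproperEigenGener} via the Case~3 classification and invoke Theorem~\ref{thNonSmoothuGener}. The paper's proof is one sentence because the classification quoted just above the theorem already asserts that the eigenvalue in Case~3 is improper.

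Your additional justification of improperness contains a slip. For $s=b_1(0)+b_2(0)\in I_2=(-2,0)$ the number $\sqrt{4-s^2}/s$ is a negative \emph{real}, so
\[
\lambda_0=\tfrac{2i}{\pi}\arctan\!\Big(\tfrac{\sqrt{4-s^2}}{s}\Big)
\]
is purely imaginary with $\Im\lambda_0\in(-1,0)$; in particular $\Re\lambda_0=0$, contrary to what you wrote, and the case $\Im\lambda_0=-1$ never occurs. The correct (and immediate) reason $\lambda_0$ is improper is that $i\lambda_0=-\Im\lambda_0\in(0,1)$ is not a non-negative integer, so $r^{i\lambda_0}\varphi(\omega)$ cannot be a polynomial in $y_1,y_2$; hence the polynomial criterion in Definition~\ref{defRegEigVal} fails. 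With this correction your argument goes through and coincides with the paper's.
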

\begin{proof}
The band $-1\le\Im\lambda<0$ contains an improper eigenvalue of
problem~\eqref{eqExPEigen}, \eqref{eqExBEigen} (or an improper
eigenvalue of the analogous problem corresponding to the point
$h$). Therefore, this theorem follows from
Theorem~\ref{thNonSmoothuGener}.
\end{proof}

Thus, in Case~3, the smoothness of generalized solutions can be
violated irrespective of the behavior of the coefficient $a$ and
of the derivatives of the coefficients $b_1,b_2$ near the point
$g$.

\subsection{Problem with Regular and Homogeneous Nonlocal
Conditions}\label{subsecEx2} Consider problem~\eqref{eqExPinG},
\eqref{eqExBinG} with regular and homogeneous nonlocal conditions.
By Theorems~\ref{thExuinW^2NoEigen} and~\ref{thExNonSmoothfne0},
the smoothness of generalized solutions preserves in Case~1 and
can be violated in Case~3. Case~2 (the border case) is of
particular interest.

First, we study the case of regular right-hand sides. To be
definite, we again assume that
$$
b_1(h)+b_2(h)\in I_1.
$$

\begin{theorem}\label{thExSmoothf0}
Let $b_1(0)+b_2(0)=0$ and $b_1(h)+b_2(h)\in I_1$. Then the
following assertions are true.
\begin{enumerate}
\item
If
\begin{equation}\label{eqExBv+BCConsist}
a(0)=0,\quad \frac{\partial a}{\partial y_2}\bigg|_{y=0}=0
\end{equation}
and $u\in W^1(G)$ is a generalized solution of
problem~\eqref{eqExPinG}, \eqref{eqExBinG} with right-hand side
$\{f_0,f_i\}\in L_2(G)\times\mathcal S^{3/2}(\partial G)$, where
$f_i(0)=0$, then $u\in W^2(G)$.
\item If condition~\eqref{eqExBv+BCConsist} fails, then there exists
a right-hand side $\{f_0,f_i\}\in L_2(G)\times\mathcal
H_0^{3/2}(\partial G)$, where $f_i(y)=0$ in a neighborhood of the
origin, and a generalized solution $u\in W^1(G)$ of
problem~\eqref{eqExPinG}, \eqref{eqExBinG}  such that $u\notin
W^2(G)$.
\end{enumerate}
\end{theorem}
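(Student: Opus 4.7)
The plan is to deduce Theorem~\ref{thExSmoothf0} from Theorem~\ref{thSmoothf0Gener} together with the several-orbits analogue of Corollary~\ref{corSmoothf0NeighbK} mentioned in Remark~\ref{remCorGener}, by showing that in the present setup Condition~\ref{condBv+BCConsistGener} reduces exactly to~\eqref{eqExBv+BCConsist}. Since $b_1(h)+b_2(h)\in I_1$, the orbit $\{h\}$ contributes no consistency condition, so only the orbit $\{0\}$ needs to be analyzed; for it $N_1=1$, $B^v_1(y)=a(y)v_\Omega(y)$, $B^v_2\equiv 0$, and $(\mathbf B_{1,1\sigma}C)(y)=(1+b_\sigma(y))C$, where, as in the proof of Theorem~\ref{thExSmoothfne0}, $v_\Omega(y)=v(\Omega(y))$.

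The first step is to classify the admissible pairs $(v,C)$. The relations~\eqref{eqvadmissibleGener} become $(1+b_2(0))C=0$ and $a(0)v_\Omega(0)+(1+b_1(0))C=0$. Using $b_1(0)+b_2(0)=0$ one sees that at most one of $1+b_\sigma(0)$ vanishes, the other then equaling $2$; a short case analysis shows that in every subcase the admissible $C$ is forced to take the value $-a(0)v_\Omega(0)/2$ (interpreted as $0$ whenever the corresponding coefficient is nonzero). In particular, once $a(0)=0$ every admissible $C$ equals $0$, so the $(1+b_\sigma(y))C$ contributions to the consistency difference disappear. This is precisely what kills the $b$-derivative condition~\eqref{eqExB2vB1CConsistency2} of Theorem~\ref{thExSmoothfne0} in the regular homogeneous setting.

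For the sufficiency part~(1), under~\eqref{eqExBv+BCConsist} every admissible $C$ vanishes, so the consistency difference in~\eqref{eqConsistencyZGener} reduces to $\partial(av_\Omega)/\partial y_2|_{(0,-r)}$. Taylor expansion gives $a(0,-r)=O(r^2)$ and $(\partial a/\partial y_2)(0,-r)=O(r)$; combining this with $v_\Omega,\partial v_\Omega/\partial y_2\in W^{1/2}(\gamma_1^\varepsilon)\subset H_1^{1/2}(\gamma_1^\varepsilon)$ (valid because $\Omega(0)\in G$, so $v$ is $W^2$ near $\Omega(0)$) and with Lemma~\ref{lAppL3.3'Kondr}, one places $\partial(av_\Omega)/\partial y_2$ in $H_0^{1/2}(\gamma_1^\varepsilon)$, and then Lemma~\ref{lAppL4.18Kondr} yields the required integrability. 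Hence Condition~\ref{condBv+BCConsistGener} holds, and Theorem~\ref{thSmoothf0Gener}(1) gives $u\in W^2(G)$.

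For the necessity part~(2), I would exhibit admissible $(v,C)$ violating the consistency condition and then quote the several-orbits version of Corollary~\ref{corSmoothf0NeighbK}. If $a(0)\ne 0$, take $v$ with $v_\Omega(0)=0$ and $(\partial v_\Omega/\partial y_2)|_0\ne 0$ (possible since $v$ is essentially free near $\Omega(0)\in G$); then $C=0$ is admissible and the integrand tends to $a(0)(\partial v_\Omega/\partial y_2)|_0\ne 0$, so the integral diverges. If instead $a(0)=0$ but $(\partial a/\partial y_2)|_0\ne 0$, take $v$ with $v_\Omega(0)\ne 0$; admissibility again forces $C=0$ and the integrand tends to $(\partial a/\partial y_2)|_0\,v_\Omega(0)\ne 0$. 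In either case Condition~\ref{condBv+BCConsistGener} fails, and the several-orbit analogue of Corollary~\ref{corSmoothf0NeighbK} supplies $\{f_0,f_i\}\in L_2(G)\times\mathcal H_0^{3/2}(\partial G)$ with $f_i$ supported away from $\mathcal K$ (hence vanishing near the origin) and a generalized solution $u\notin W^2(G)$. The main obstacle is Step~1: the admissibility case analysis must be carried out cleanly enough to see that $a(0)=0$ really forces $C=0$ in every subcase, which is what allows the conclusion to be stated purely in terms of the pointwise behavior of $a$.
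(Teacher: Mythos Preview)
Your approach is essentially identical to the paper's: reduce to Theorem~\ref{thSmoothf0Gener} and Corollary~\ref{corSmoothf0NeighbK} by showing that Condition~\ref{condBv+BCConsistGener} at the orbit $\{0\}$ is equivalent to~\eqref{eqExBv+BCConsist}, using the same test functions ($v_\Omega=y_2$ and $v_\Omega=1$) for necessity and the same $H_0^{1/2}$ argument via Lemmas~\ref{lAppL3.3'Kondr} and~\ref{lAppL4.18Kondr} for sufficiency.

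Two small points to tighten. First, your Step~1 elides the dichotomy the paper makes explicit: when $b_2(0)\ne -1$ the second equation forces $C=0$ and admissibility then imposes the \emph{constraint} $a(0)v_\Omega(0)=0$ on $v$ (not every $v$ is admissible), whereas when $b_2(0)=-1$ every $v$ is admissible with $C=-a(0)v_\Omega(0)/2$; your unified formula ``$C=-a(0)v_\Omega(0)/2$'' only holds literally in the second case. This does not break your necessity argument, since your test functions happen to satisfy $a(0)v_\Omega(0)=0$ in both subcases, but the presentation should reflect the split. Second, in the necessity direction an admissible function must be admissible at \emph{both} orbits, including $\{h\}$; the paper secures this by multiplying $v$ by a cutoff supported in a small neighborhood of $\Omega(0)$ (disjoint from $\Omega(h)$), which forces $v_\Omega(h)=0$ and allows $C_h=0$. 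Your phrase ``$v$ is essentially free near $\Omega(0)$'' should be made into exactly this localization; note also that $\Omega(0)$ need not lie in $G$ (it may sit on $\partial G\setminus\mathcal K$), but this does not affect the trace argument since $v\in W^2$ away from $\mathcal K$.
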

\begin{proof}
1. By virtue of Theorem~\ref{thSmoothf0Gener} and
Corollary~\ref{corSmoothf0NeighbK}, it suffices to prove that
condition~\eqref{eqExBv+BCConsist} is equivalent to
Condition~\ref{condBv+BCConsistGener}.

By Definition~\ref{defAdmitGener}, a function $v\in
W^{2}(G\setminus\overline{\mathcal O_{\varkappa_1}(\mathcal K)})$
is admissible if there exist constants $C$ and $C_h$ such that
\begin{equation}\label{eqExSmoothf0-1'}
\begin{aligned}
 a(0)v_\Omega(0)+C+b_1(0)C=0,\quad
C+b_2(0)C=0,\\
 a(h)v_\Omega(h)+C_h+b_1(h)C_h=0,\quad
C_h+b_2(h)C_h=0,
\end{aligned}
\end{equation}
where $v_\Omega(y)=v\big(\Omega(y)\big)$, $y\in\Gamma_1$.

Let $\xi\in C^\infty(\mathbb R^2)$ be a cut-off function such that
$$
\supp\xi\subset \mathcal O_{\delta}(\Omega(0)),\qquad
\xi(y)=1,\quad y\in\mathcal O_{\delta/2}(\Omega(0)),
$$
where $\delta>0$ is so small that $\Omega(h)\notin\mathcal
O_{\delta}(\Omega(0))$. Since $b_1(h)+b_2(h)\in I_1$, the
consistency condition~\eqref{eqConsistencyZGener} is considered
only near the origin. Therefore, if $v$ is an admissible function,
$C,C_h$ are admissible constants corresponding to $v$, and
Condition~\ref{condBv+BCConsistGener} holds (fails) for $v$ and
$C$, then the function $\xi v$ is admissible, $C,0$ are admissible
constants corresponding to $\xi v$, and
Condition~\ref{condBv+BCConsistGener} holds (respectively, fails)
for $\xi v$ and $C$. Thus, it suffices to consider only functions
$v$ supported in $\mathcal O_{\delta}(\Omega(0))$ (i.e., functions
$v_\Omega$ supported near the origin) and assume that $C_h=0$.

First, we study the situation in which $b_2(0)\ne-1$. In this
case, according to~\eqref{eqExSmoothf0-1'}, a function $v$
supported in $\mathcal O_{\delta}(\Omega(0))$ is admissible if and
only if
\begin{equation}\label{eqExSmoothf0-1}
a(0)v_\Omega(0)=0,
\end{equation}
while the corresponding set of admissible vectors (constants in
our case) consists of the unique constant $C=0$ (recall that $C_h$
is supposed to equal zero). Therefore,
Condition~\ref{condBv+BCConsistGener} holds if and only if the
relation
\begin{equation}\label{eqExSmoothf0-2}
\int\limits_{0}^{\varepsilon/2} r^{-1}\Bigg|\frac{\partial
(av_\Omega)}{\partial y_2}\bigg|_{y=(0,-r)}\Bigg|^2dr=
\int\limits_{0}^{\varepsilon/2} r^{-1}\Bigg|\Big(\frac{\partial
a}{\partial y_2}v_\Omega+a\frac{\partial v_\Omega}{\partial
y_2}\Big)\bigg|_{y=(0,-r)}\Bigg|^2dr<\infty
\end{equation}
holds for any $v_\Omega$ satisfying~\eqref{eqExSmoothf0-1}.
Suppose that~\eqref{eqExBv+BCConsist} is fulfilled. Then any
function $v$ supported in $\mathcal O_{\delta}(\Omega(0))$ is
admissible (because $a(0)=0$), and repeating the arguments of the
proof of Theorem~\ref{thExSmoothfne0}
yields~\eqref{eqExSmoothf0-2}.

Conversely, suppose that~\eqref{eqExSmoothf0-2} holds for any
function $v_\Omega$ satisfying~\eqref{eqExSmoothf0-1}. Clearly, a
function $v$ such that $v_\Omega(y)=y_2$ near the origin
satisfies~\eqref{eqExSmoothf0-1}. Substituting the function
$v_\Omega$ into~\eqref{eqExSmoothf0-2}, we obtain $a(0)=0$ (cf.
the proof of Theorem~\ref{thExSmoothfne0}). Therefore, any
function $v$  supported in $\mathcal O_{\delta}(\Omega(0))$ is
admissible. Substituting $v_\Omega(y)=1$
into~\eqref{eqExSmoothf0-2}, we obtain $({\partial a}/{\partial
y_2 })|_{y=0}=0$.

2. It remains to study the situation in which $b_2(0)=-1$. This
implies $b_1(0)=1$. In this case, according
to~\eqref{eqExSmoothf0-1'}, any function $v$ supported in
$\mathcal O_{\delta}(\Omega(0))$ is admissible, while the
corresponding set of admissible vectors (constants in our case)
consists of the unique constant $C=-a(0)v_\Omega(0)/2$ (while
$C_h$ is supposed to equal zero). Therefore,
Condition~\ref{condBv+BCConsistGener} holds if and only if the
relation
\begin{multline}\label{eqExSmoothf0-3}
\int\limits_{0}^{\varepsilon/2} r^{-1}\Bigg|\frac{\partial
(av_\Omega)}{\partial y_2}\bigg|_{y=(0,-r)}+C\bigg(\frac{\partial
b_1}{\partial y_2}\bigg|_{y=(0,-r)}-\frac{\partial b_2}{\partial
y_2}\bigg|_{y=(0,r)}\bigg)\Bigg|^2dr\\
=\int\limits_{0}^{\varepsilon/2} r^{-1}\Bigg|\Big(\frac{\partial
a}{\partial y_2}v_\Omega+a\frac{\partial v_\Omega}{\partial
y_2}\Big)\bigg|_{y=(0,-r)}\\-\frac{a(0)v_\Omega(0)}{2}\bigg(\frac{\partial
b_1}{\partial y_2}\bigg|_{y=(0,-r)}-\frac{\partial b_2}{\partial
y_2}\bigg|_{y=(0,r)}\bigg)\Bigg|^2dr<\infty
\end{multline}
holds for any  $v$ supported in $\mathcal O_{\delta}(\Omega(0))$.
Suppose that condition~\eqref{eqExBv+BCConsist} is fulfilled.
Then, similarly to the above, we see that~\eqref{eqExSmoothf0-2}
holds for any function $v_\Omega$; hence, \eqref{eqExSmoothf0-3}
also holds for any $v_\Omega$ (because $a(0)=0$).

Conversely, suppose that~\eqref{eqExSmoothf0-3} is fulfilled.
Consider a function $v$ such that $v_\Omega(y)=y_2$ near the
origin and substitute it into~\eqref{eqExSmoothf0-3}. Since
$v_\Omega(0)=0$ and $({\partial v_\Omega}/{\partial y_2
})|_{y=0}=1$, we infer from~\eqref{eqExSmoothf0-3} that $a(0)=0$
similarly to the above. Therefore, relation~\eqref{eqExSmoothf0-3}
coincides with~\eqref{eqExSmoothf0-2}. Now, repeating the above
arguments, we obtain $({\partial a}/{\partial y_2 })|_{y=0}=0$,
which completes the proof.
\end{proof}

Clearly, condition~\eqref{eqExBv+BCConsist} is weaker than
condition~\eqref{eqExB2vB1CConsistency1}--\eqref{eqExB2vB1CConsistency2}:
we impose no restrictions on the behavior of the coefficients
$b_1, b_2$ in condition~\eqref{eqExBv+BCConsist}. The absence of
those restrictions is ``compensated'' by the fact that nonlocal
conditions are {\em regular}, i.e., $\{f_i\}\in\mathcal
S^{3/2}(\partial G)$ and $f_i(0)=0$.

\medskip

Finally, we consider the case of homogeneous nonlocal conditions.
In this case, assertion~1 of Theorem~\ref{thExSmoothf0} implies
that the validity of condition~\eqref{eqExBv+BCConsist} is
sufficient for any generalized solution to be smooth.  We prove
that this condition is also necessary in the following cases (see
Figs.~\ref{figCaseA}, \ref{figCaseB}, and~\ref{figCaseC}):
\begin{description}
\item[Case~A]  $\supp a(\Omega^{-1}(y))|_{\Omega(\Gamma_1)}\subset G$.
\begin{figure}[h]
{ \hfill\epsfbox{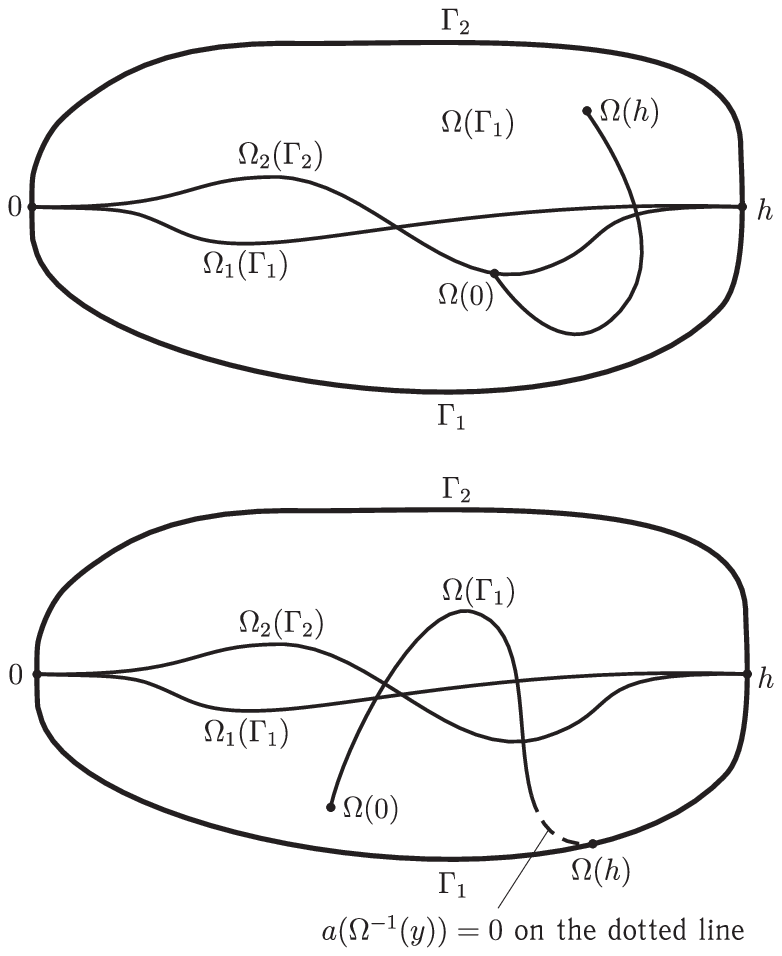}\hfill\ } \caption{Case~A.}
   \label{figCaseA}
\end{figure}
\item[Case~B] $\Omega(0)\in G$ and $\Omega(0)\notin\Omega_1(\Gamma_1)\cup\Omega_2(\Gamma_2)$.
\begin{figure}[h]
{ \hfill\epsfbox{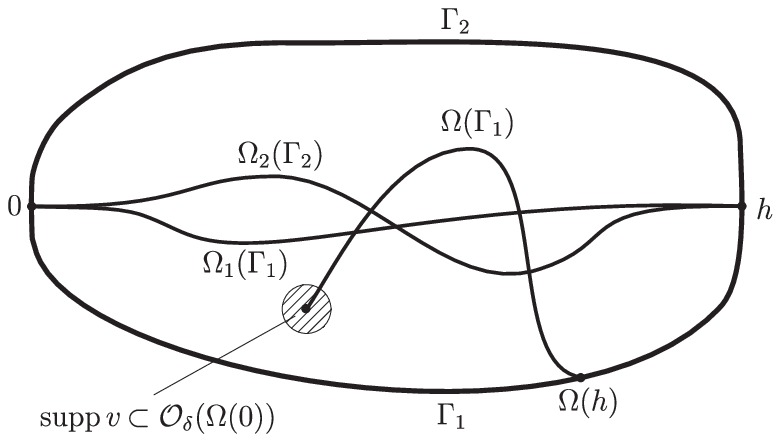}\hfill\ } \caption{Case~B.}
   \label{figCaseB}
\end{figure}
\item[Case~C] We have
\begin{equation}\label{eqExOmegaginGamma1}
\Omega(0)\in\Gamma_1,\qquad
\Omega(\Omega(0))\notin\Omega_1(\Gamma_1)\cup\Omega_2(\Gamma_2).
\end{equation}
\begin{equation}\label{eqExaOmegagne0}
a(\Omega(0))\ne0.
\end{equation}
\end{description}
\begin{figure}[h]
{ \hfill\epsfbox{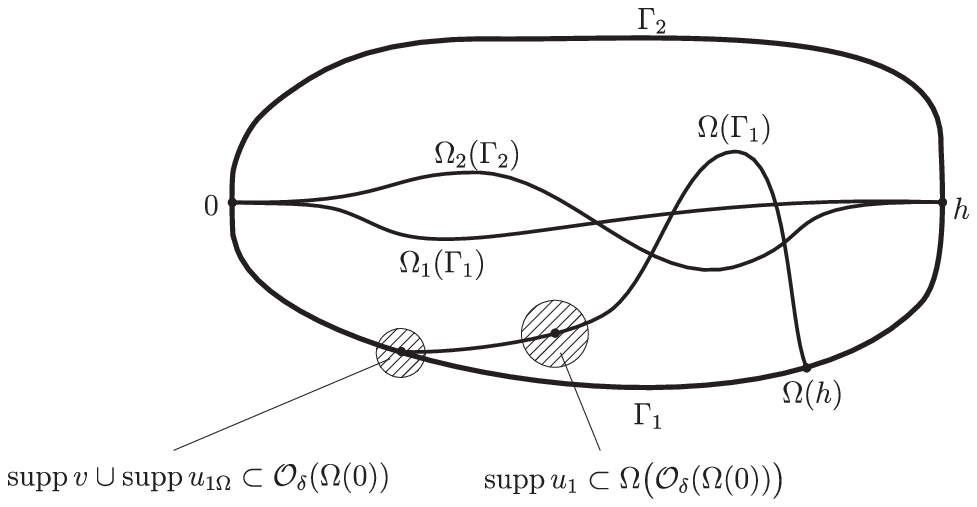}\hfill\ } \caption{Case~C.}
   \label{figCaseC}
\end{figure}

\begin{corollary}\label{corExSmoothf0Homog}
Let $b_1(0)+b_2(0)=0$ and $b_1(h)+b_2(h)\in I_1$. Suppose that
either Case~{\rm A}, or Case~{\rm B}, or Case~{\rm C} takes place.
If condition~\eqref{eqExBv+BCConsist} fails, then there exists a
right-hand side $\{f_0,0\}$, where $f_0\in L_2(G)$, and a
generalized solution $u\in W^1(G)$ of problem~\eqref{eqExPinG},
\eqref{eqExBinG} such that $u\notin W^2(G)$.
\end{corollary}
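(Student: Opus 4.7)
The strategy is to reduce each of the three cases to the multi-orbit generalizations (cf.~Remark~\ref{remCorGener}) of either Corollary~\ref{corSmoothf0B^2vInsideG} or Corollary~\ref{corSmoothf0B^2vComp}. Since the equivalence between condition~\eqref{eqExBv+BCConsist} and Condition~\ref{condBv+BCConsistGener} has already been established in the proof of Theorem~\ref{thExSmoothf0}, the assumed failure of~\eqref{eqExBv+BCConsist} automatically yields failure of Condition~\ref{condBv+BCConsistGener}; only the structural hypothesis of the relevant corollary must be verified in each case.

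\emph{Case A.} The hypothesis $\supp a(\Omega^{-1}(\cdot))|_{\Omega(\Gamma_1)} \subset G$ forces $a(y) = 0$ on $\Gamma_1$ whenever $\Omega(y)$ lies within a sufficiently thin boundary strip. Combined with the fact that the cutoff $(1-\zeta)$ confines the $b_i$-contribution to a precompact subset of $\Gamma_i$ whose image under $\Omega_i$ stays uniformly inside $G$, this shows that $\mathbf B_i^2$ satisfies the stronger condition~\eqref{eqSeparK23'''}. Corollary~\ref{corSmoothf0B^2vInsideG} (extended to two orbits by Remark~\ref{remCorGener}) then delivers the conclusion.

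\emph{Case B.} Pick a small ball $U \subset G$ about $\Omega(0)$ whose closure is disjoint from $\Omega_1(\Gamma_1) \cup \Omega_2(\Gamma_2)$, from $\partial G$, and from the nonlocal images produced near $h$. For any $v \in W^2(G)$ supported in $U$ one has $v|_{\Gamma_i} = 0$, $\mathbf B_i^1 v = 0$, and $(\mathbf B_1^2 v)(y) = a(y) v(\Omega(y))$, which is supported in $\Omega^{-1}(U) \cap \Gamma_1 \subset \mathcal O_{\varkappa_2}(\mathcal K)$ once $U$ is sufficiently small. Admissibility at $g$ and failure of Condition~\ref{condBv+BCConsistGener} are then verified by reusing the two-subcase choice from the proof of Theorem~\ref{thExSmoothf0}: if $a(0) \ne 0$ take $v$ with $v \circ \Omega$ equal to $y_2$ near $0 \in \Gamma_1$, and if $a(0) = 0$ but $\partial a/\partial y_2|_{y=0} \ne 0$ take $v \circ \Omega \equiv 1$ near $0$. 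Corollary~\ref{corSmoothf0B^2vComp} then applies.

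\emph{Case C} is the main difficulty. Choose $v_1 \in W^2(G)$ supported in a small ball about $\Omega(\Omega(0)) \in G$; by~\eqref{eqExOmegaginGamma1} this ball can be taken disjoint from $\Omega_1(\Gamma_1) \cup \Omega_2(\Gamma_2)$. Then $(\mathbf B_1^2 v_1)(y) = a(y) v_1(\Omega(y))$ is supported near $\Omega(0) \in \Gamma_1$, which is \emph{not} near $\mathcal K$. To compensate, introduce $v_2 \in W^2(G)$ whose trace on $\Gamma_1$ equals $-a(y) v_1(\Omega(y))$ in a neighborhood of $\Omega(0)$ and whose support lies in a thin tubular neighborhood $V$ of $\Omega(0)$; since $\overline{\Omega(\Gamma_1)} \cap \mathcal K = \varnothing$ and $\Omega(\Omega(0)) \notin \Omega_1(\Gamma_1) \cup \Omega_2(\Gamma_2)$, one may take $V$ disjoint from $\mathcal K$, from $\Omega_i(\Gamma_i)$, and from $\overline{\Gamma_2}$. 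For $v = v_1 + v_2$ the boundary data cancels near $\Omega(0)$ by construction, and the only remaining contribution $a(y) v_2(\Omega(y))$ to $v|_{\Gamma_i} + \mathbf B_i^1 v + \mathbf B_i^2 v$ is supported in $\Omega^{-1}(V) \cap \Gamma_1 \subset \mathcal O_{\varkappa_2}(\mathcal K)$. The admissibility equations at $g$ reduce to a linear system in the constant $C$ whose solvability uses $a(\Omega(0)) \ne 0$, while the first derivatives of $v_1$ at $\Omega(\Omega(0))$ are chosen (via the chain rule applied to the composite $a(y) a(\Omega(y)) v_1(\Omega(\Omega(y)))$) so that the consistency integral~\eqref{eqExConsistencyZ} diverges. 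Corollary~\ref{corSmoothf0B^2vComp} then finishes the proof. The main technical subtlety is ensuring that the nonlocal cascade terminates after exactly two applications of $\Omega$, which rests on the disjointness built into~\eqref{eqExOmegaginGamma1}.
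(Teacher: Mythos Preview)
Your treatment of Cases~A and~B is correct and coincides with the paper's: Case~A reduces to Corollary~\ref{corSmoothf0B^2vInsideG} via the strengthened estimate~\eqref{eqSeparK23'''}, and Case~B reduces to Corollary~\ref{corSmoothf0B^2vComp} by localizing $v$ in a small ball about $\Omega(0)\in G$.

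Your argument for Case~C, however, has a genuine gap. You build $v=v_1+v_2$ with $v_1$ supported near $\Omega(\Omega(0))\in G$ and $v_2$ supported near $\Omega(0)\in\Gamma_1$, and you specify only the \emph{trace} $v_2|_{\Gamma_1}(y)=-a(y)v_1(\Omega(y))$ near $\Omega(0)$. You then assert that the consistency integral at $g=0$ is governed by the composite $a(y)\,a(\Omega(y))\,v_1(\Omega(\Omega(y)))$. But for $y\in\gamma_1^\varepsilon$ near~$0$ one has $B_1^v(y)=a(y)v_2(\Omega(y))$ (since $v_1(\Omega(y))=0$ there), and the points $\Omega(y)$ for $y\in\Gamma_1\setminus\{0\}$ lie strictly \emph{inside} $G$ (recall $\Omega(\Gamma_1)\subset G$), not on $\Gamma_1$. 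Hence your trace formula for $v_2|_{\Gamma_1}$ says nothing about $v_2(\Omega(y))$, and the composite expression is not $B_1^v$. Whether the consistency integral diverges depends on the interior values of $v_2$ along the curve $\{\Omega(y):y\in\gamma_1^\varepsilon\}$, which you never prescribe. Your remark that admissibility ``uses $a(\Omega(0))\ne0$'' is also misplaced: the admissibility system at $g$ involves $a(0)$ and $v(\Omega(0))$, and is solvable regardless of $a(\Omega(0))$.

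The paper handles Case~C differently. It keeps the violating admissible $v$ supported near $\Omega(0)$ (exactly the $v$ produced in the proof of Theorem~\ref{thExSmoothf0}), so that the boundary data $f_1$ coming out of Theorem~\ref{thSmoothf0Gener} is supported in $\mathcal O_{\varkappa_2}(0)\cup\mathcal O_\delta(\Omega(0))$. Rather than modifying $v$ to kill the piece near $\Omega(0)$, the paper constructs an auxiliary $u_1\in H_0^2(G)$ supported near $\Omega(\Omega(0))\in G$ by setting $u_1=u_{1\Omega}\circ\Omega^{-1}$ with $u_{1\Omega}(y)=f_1(y)/a(y)$ on $\Gamma_1\cap\mathcal O_\delta(\Omega(0))$; \emph{this} division is where $a(\Omega(0))\ne0$ is actually used. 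The choice of $\delta$ (via~\eqref{eqExOmegaginGamma1}) guarantees $u_1|_{\Gamma_i}=0$, $u_1(\Omega_i(\cdot))|_{\Gamma_i}=0$, and $a(\cdot)u_1(\Omega(\cdot))=f_1$ on $\Gamma_1\setminus\mathcal O_{\varkappa_2}(0)$ while vanishing on $\Gamma_1\cap\mathcal O_{\varkappa_2}(0)$. Then Lemma~\ref{lHomogeneous} absorbs the remaining piece of $f_i$ near $\mathcal K$, and Corollary~\ref{corSmoothf0Homog} (with Remark~\ref{remCorGener}) produces the homogeneous-data solution $u\notin W^2(G)$.
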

\begin{proof}
1. First, we assume that Case~A takes place. It follows from the
continuity of the transformations $\Omega_i$ and $\Omega$ that the
operators $\mathbf B_i^2$ satisfy condition~\eqref{eqSeparK23'''}
with any $\rho$ such that $0<\rho<\dist(\supp
a(\Omega^{-1}(y))|_{\Omega(\Gamma_1)},\,\partial G)$. Therefore,
the conclusion of this corollary follows from
Corollary~\ref{corSmoothf0B^2vInsideG} and
Remark~\ref{remCorGener}.

2. Now we assume that Case~B takes place. As before, we can
suppose that Condition~\ref{condBv+BCConsistGener} is violated for
an admissible function $v$ supported in an arbitrarily small
$\delta$-neighborhood $\mathcal O_\delta(\Omega(0))$ of the point
$\Omega(0)$. The number $\delta$ can be chosen so small that
$$
v(y)|_{\Gamma_i}\equiv0,\qquad v(\Omega_i(y))|_{\Gamma_i}=0,\qquad
\supp v(\Omega(y))|_{\Gamma_1}\subset\Gamma_1\cap\mathcal
O_{\varkappa_2}(0).
$$
Therefore, the function $v$ satisfies
relations~\eqref{eqSmoothf0B^2vComp}, and the conclusion of this
corollary follows from Corollary~\ref{corSmoothf0B^2vComp} and
Remark~\ref{remCorGener}.

3. Finally, we assume that Case~C takes place. Again we can
suppose that Condition~\ref{condBv+BCConsistGener} is violated for
an admissible function $v$ supported in $\mathcal
O_\delta(\Omega(0))$. By virtue of
relations~\eqref{eqExOmegaginGamma1}, the number $\delta$ can be
chosen so small that
\begin{equation}\label{eqExSmoothf0Homog1}
v(\Omega_i(y))|_{\Gamma_i}\equiv0,
\end{equation}
\begin{equation}\label{eqExSmoothf0Homog2}
\supp v(\Omega(y))|_{\Gamma_1}\subset\Gamma_1\cap\mathcal
O_{\varkappa_2}(0).
\end{equation}
Let $f_i$ be the functions from assertion~$2$ of
Theorem~$\ref{thSmoothf0}$, constructed accordingly to the scheme
suggested in the proof of Theorem~\ref{thSmoothf0Gener} . It
follows from~\eqref{eqExSmoothf0Homog1}
and~\eqref{eqExSmoothf0Homog2} that
$$
\supp f_1\subset\Gamma_1\cap\big(\mathcal O_{\varkappa_2}(0)\cup
\mathcal O_\delta(\Omega(0))\big),\qquad \supp
f_2\subset\Gamma_2\cap\mathcal O_{\varkappa_2}(0).
$$

If we construct a function $u_1\in H_0^2(G)$ such that
\begin{equation}\label{eqExSmoothf0Homog3}
u_1|_{\Gamma_i}+\mathbf B_{i}^1 u_1+\mathbf B_{i}^2 u_1=
   f_{i}(y),\quad
    y\in \Gamma_i\setminus\mathcal O_{\varkappa_2}(\mathcal K),\ i=1, \dots,
    N,
\end{equation}
\begin{equation}\label{eqExSmoothf0Homog4}
u_1|_{\Gamma_i}+\mathbf B_{i}^1 u_1+\mathbf B_{i}^2 u_1=0,\quad
y\in \Gamma_i\cap\mathcal O_{\varkappa_2}(\mathcal K),\ i=1,
\dots,
    N,
\end{equation}
then the conclusion of this corollary will follow from
Lemma~\ref{lHomogeneous}, Corollary~\ref{corSmoothf0Homog}, and
Remark~\ref{remCorGener}.

Let us construct the function $u_1$. To do this, we consider a
function $u_{1\Omega}\in W^2\big(\mathcal
O_\delta(\Omega(0))\big)$ supported in $\mathcal
O_\delta(\Omega(0))$ (see Fig.~\ref{figCaseC}) such that
$$
u_{1\Omega}(y)=f_1(y)/a(y),\quad y\in \Gamma_1\cap \mathcal
O_\delta(\Omega(0)),
$$
where $\delta$ is so small that $a(y)\ne0$ for
$y\in\overline{\mathcal O_\delta(\Omega(0))}$ (the existence of
such a $\delta$ follows from~\eqref{eqExaOmegagne0} and from the
continuity of $a(y)$).

Now we set $u_1(y)=u_{1\Omega}(\Omega^{-1}(y))$ for
$y\in\Omega\big(\mathcal O_\delta(\Omega(0))\big)$ and $u_1(y)=0$
for $y\notin\Omega\big(\mathcal O_\delta(\Omega(0))\big)$. Suppose
that $\delta$ is so small that
$$
\Gamma_i\cap\Omega\big(\mathcal
O_\delta(\Omega(0))\big)=\varnothing,\quad\!
\Omega_i(\Gamma_i)\cap\Omega\big(\mathcal
O_\delta(\Omega(0))\big)=\varnothing,\quad\! \mathcal
O_\delta(\Omega(0))\cap\mathcal O_{\varkappa_2}(0)=\varnothing
$$
(the existence of such a $\delta$ follows
from~\eqref{eqExOmegaginGamma1} and from the continuity of the
transformation $\Omega$). Then we have
$$
u_1|_{\Gamma_i}=0,\qquad u_1(\Omega_i(y))|_{\Gamma_i}=0,
$$
$$
a(y)u_1(\Omega(y))=f_1(y),\quad y\in \Gamma_1\setminus\mathcal
O_{\varkappa_2}(0),
$$
$$
u_1(\Omega(y))=0,\quad y\in \Gamma_1\cap\mathcal
O_{\varkappa_2}(0).
$$
Therefore, the function $u_1$ satisfies
relations~\eqref{eqExSmoothf0Homog3}
and~\eqref{eqExSmoothf0Homog4}, and the theorem is proved.
\end{proof}

\appendix
\section{}

This appendix is included for the reader's convenience. Here we
have collected some known results on weighted spaces and on
properties of nonlocal operators, which are most frequently
referred to in the main part of the paper.

\subsection{Properties of weighted spaces}
In this subsection, we formulate some results concerning
properties of weighted spaces introduced in
Sec.~\ref{subsectStatement}. Set
$$
K=\{y\in{\mathbb R}^2:\ r>0,\ |\omega|<\omega_0\},
$$
$$
\gamma_{\sigma}=\{y\in\mathbb R^2:\ r>0,\ \omega=(-1)^\sigma
\omega_0\}\qquad (\sigma=1,2).
$$
\begin{lemma}[see Lemma~4.9 in~\cite{KondrTMMO67}]\label{lAppL4.9Kondr_5.2KovSk}
Let a function $u\in W^k(K)$, where $k\ge1$, be compactly
supported. Then $u\in H^k_{b}(K)$ for any $b>k-1$.
\end{lemma}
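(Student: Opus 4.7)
The plan is to reduce the lemma to an iterated radial Hardy inequality. Writing $\rho = r$ and using polar coordinates $dy = r\,dr\,d\omega$, the Kondrat'ev norm decomposes as
\[
\|u\|^2_{H^k_b(K)} = \sum_{|\alpha|\le k} \int_{-\omega_0}^{\omega_0}\!\int_0^\infty r^{2(b-k+|\alpha|)+1} |D^\alpha u|^2 \,dr\,d\omega.
\]
Since $u$ is compactly supported, say in $\{|y| \le R\}$, and since the hypothesis $b > k - 1 \ge 0$ gives $2b \ge 0$, the top-order term $|\alpha| = k$ carries the bounded weight $r^{2b}$ and is controlled directly by $R^{2b}\|u\|^2_{W^k(K)}$. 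For the lower-order terms $|\alpha| < k$, the weight exponent $\mu := b - k + |\alpha|$ satisfies $\mu > -1$, and we must absorb each negative power of $r$ by trading it for an additional derivative.

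The key tool is the one-dimensional Hardy inequality: for compactly supported $v$ with $v, v' \in L^2(r^{2\mu+3}\,dr)$ and $\mu > -1$,
\[
\int_0^\infty r^{2\mu+1} |v|^2\,dr \;\le\; \frac{1}{(\mu+1)^2} \int_0^\infty r^{2\mu+3} |v'|^2\,dr.
\]
This follows by rewriting $r^{2\mu+1} = \bigl(r^{2\mu+2}/(2\mu+2)\bigr)'$, integrating by parts, and applying Cauchy--Schwarz; the condition $\mu > -1$ makes the pointwise factor $r^{2\mu+2}$ vanish at $r=0$. Translating back to the 2D norm on $K$ and using $|u_r| \le |\nabla u|$, I would obtain
\[
\int_K r^{2\mu} |D^\alpha u|^2\,dy \;\le\; C_\mu \sum_{|\beta|=|\alpha|+1} \int_K r^{2(\mu+1)} |D^\beta u|^2\,dy.
\]

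Applied iteratively to $u$, then to its first derivatives, and so on, this inequality raises the derivative order by one and the weight exponent by two at each step. After $k - |\alpha|$ iterations the weight becomes $r^{2b}$ (bounded on the support of $u$) while the derivatives are of order $k$, giving $\int_K r^{2(b-k+|\alpha|)}|D^\alpha u|^2\,dy \le C\|u\|^2_{W^k(K)}$. The condition $\mu + j > -1$ needed at the $j$-th iteration reduces to $b > k-1$, precisely the hypothesis. The main obstacle is to justify the vanishing of the boundary term at $r = 0$ in the integration by parts, since $v = D^\alpha u$ is only of Sobolev class and need not admit a pointwise trace at the cone vertex. This is handled by the standard approximation: introduce a radial cutoff $\chi_\delta \in C^\infty(\overline{K})$ supported in $\{r > \delta/2\}$, apply Hardy to $\chi_\delta u \in C_0^\infty(\overline{K}\setminus\{0\})$ where the boundary term is trivially zero, and pass to the limit $\delta \to 0$ using monotone convergence once the uniform bound just derived is in hand. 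Modulo this density step, the argument is complete.
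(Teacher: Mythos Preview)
The paper does not itself prove this lemma; it is stated in the appendix with a bare reference to Kondrat'ev's 1967 paper, where it appears as Lemma~4.9. Your argument via the iterated radial Hardy inequality is the standard proof and is correct in its main line: the exponent bookkeeping, the form and constant of the Hardy inequality, and the identification of $b>k-1$ as exactly the condition $\mu>-1$ at each step are all right.

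Two remarks on the density step. First, $\chi_\delta u$ is not in $C_0^\infty(\overline K\setminus\{0\})$ since $u$ is merely in $W^k$; this is cosmetic. Second, and more substantively, the cutoff generates a commutator: applying Hardy to $\chi_\delta D^\alpha u$ produces on the right a term $\int r^{2\mu+3}|\chi_\delta'|^2|D^\alpha u|^2\,dr$, which after integrating in $\omega$ is of order $\delta^{2\mu}\int_{\{\delta/2<r<\delta\}}|D^\alpha u|^2\,dy$. When $\mu=b-k+|\alpha|\in(-1,0)$ (which occurs only for $|\alpha|=0$), this does not vanish from $D^\alpha u\in L^2$ alone; you would need to invoke either the Sobolev embedding $W^k\hookrightarrow C^0$ for $k\ge 2$ or the zero $W^1$-capacity of a point for $k=1$. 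A cleaner route avoids the cutoff: approximate $u$ in $W^k(K)$ by $u_n\in C^\infty_0(\overline K)$ (not required to vanish near the origin). Each $D^\alpha u_n$ is then bounded near $0$, so the boundary term $r^{2\mu+2}|D^\alpha u_n|^2\to 0$ follows directly from $\mu>-1$, the iterated Hardy estimate gives $\|u_n\|_{H^k_b}\le C\|u_n\|_{W^k}$ uniformly in $n$, and Fatou's lemma transfers the bound to $u$.
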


\begin{lemma}[see Lemma~2.1 in~\cite{GurRJMP03}]\label{lAppL2.1GurRJMP03}
Let a function $u\in W^2(K)$ be compactly supported, and let
$u(0)=0$. Then $u\in H^2_{b}(K)$ for any $b>0$.
\end{lemma}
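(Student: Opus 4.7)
The $H_b^2(K)$-norm splits as $\|u\|_{H_b^2}^2 = I_0 + I_1 + I_2$, where $I_j = \sum_{|\alpha|=j}\int_K \rho^{2(b-2+j)}|D^\alpha u|^2\,dy$ for $j = 0, 1, 2$. Lemma~\ref{lAppL4.9Kondr_5.2KovSk} already provides the bound for every $b > 1$, so the plan is to reach the range $0 < b \le 1$ by exploiting $u(0)=0$.

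The pieces $I_1$ and $I_2$ I would dispose of in a single step, without using $u(0)=0$. Each component of $\nabla u$ belongs to $W^1(K)$ and is compactly supported, so Lemma~\ref{lAppL4.9Kondr_5.2KovSk} applied at level $k=1$ gives $\nabla u \in H_a^1(K)$ for every $a > 0$. Unpacking the $H_a^1$-norm with $a = b$ is precisely the statement $I_1 + I_2 < \infty$ for every $b > 0$.

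The heart of the matter is bounding $I_0$, and this is where the hypothesis $u(0)=0$ enters. Passing to polar coordinates,
\begin{equation*}
I_0 = \int_{-\omega_0}^{\omega_0}\!\int_0^R r^{2b-3}|u(r,\omega)|^2\,dr\,d\omega,
\end{equation*}
where $R$ is any radius with $\supp u \subset \{\rho < R\}$. The critical two-dimensional Sobolev embedding $W^{2,2} \hookrightarrow C^{0,\gamma}$ (every $\gamma < 1$, on any bounded Lipschitz subdomain of $K$) yields a continuous representative of $u$, so $u(0,\omega) = u(0) = 0$; by Fubini, $u(\cdot,\omega) \in W^{1,2}(0,R)$ for almost every $\omega$. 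I would then apply the one-dimensional weighted Hardy inequality
\begin{equation*}
\int_0^R r^{2b-3}|v(r)|^2\,dr \le \frac{1}{(b-1)^2}\int_0^R r^{2b-1}|v'(r)|^2\,dr,
\end{equation*}
valid for $v \in W^{1,2}(0,R)$ with $v(0)=0$ and any $b \in (0,1)\cup(1,\infty)$, ray-by-ray with $v = u(\cdot,\omega)$. Integrating in $\omega$ and returning to Cartesian variables gives $I_0 \le C_b I_1 < \infty$ for all such $b$. The single residual value $b = 1$ is absorbed by the elementary inclusion $H_{b'}^2(K) \hookrightarrow H_1^2(K)$ for any $b' \in (0,1)$, which holds because $\rho$ is bounded on $\supp u$.

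The one genuinely delicate point will be the pointwise assertion $u(0,\omega)=0$ that Hardy requires: a generic $W^2$-function need not admit pointwise values at a single point in higher dimensions, but here the critical two-dimensional embedding supplies a continuous representative and the argument goes through. Everything else reduces to Lemma~\ref{lAppL4.9Kondr_5.2KovSk} applied to $\nabla u$ together with a one-variable weighted Hardy estimate.
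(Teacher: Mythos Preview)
The paper does not prove this lemma; it is stated in the appendix with a citation to~\cite{GurRJMP03}, so there is no in-paper argument to compare against. Your argument is essentially correct.

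One point deserves a cleaner justification. The claim ``by Fubini, $u(\cdot,\omega)\in W^{1,2}(0,R)$ for almost every $\omega$'' does not follow directly from $u\in W^2(K)$, because the polar Jacobian contributes a factor of $r$: Fubini on $\nabla u\in L_2(K)$ only gives $\int_0^R |\partial_r u(r,\omega)|^2\,r\,dr<\infty$ for a.e.\ $\omega$. However, you do not actually need the unweighted $W^{1,2}$ statement. The right-hand side of your Hardy inequality, integrated over $\omega$, is $\int_K r^{2(b-1)}|\partial_r u|^2\,dy\le I_1$, which you have already shown finite for every $b>0$ via Lemma~\ref{lAppL4.9Kondr_5.2KovSk} applied to $\nabla u$; together with the 2D embedding $W^2\hookrightarrow C^0$ giving $u(r,\omega)\to u(0)=0$ for every $\omega$, this is all the weighted Hardy inequality requires. (Taking $b=1/2$ in the $I_1$ bound also yields the unweighted $\partial_r u(\cdot,\omega)\in L_2(0,R)$ a.e., if you prefer that formulation.)

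A shorter alternative to the Hardy step: the embedding $W^2(K)\hookrightarrow C^{0,\gamma}$ for any $\gamma<1$, combined with $u(0)=0$, gives $|u(y)|\le C|y|^{\gamma}$, hence
\[
I_0\le 2\omega_0\,C^2\int_0^R r^{2b-3+2\gamma}\,dr,
\]
which is finite as soon as $\gamma>1-b$. Since $b>0$, any $\gamma\in(1-b,1)$ works, and $I_0<\infty$ follows without Hardy.
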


\begin{lemma}[see Lemma~3.3$'$ in~\cite{KondrTMMO67}]\label{lAppL3.3'Kondr}
Let a function $u\in H_b^k(K)$, where $k\ge0$ and $b\in\mathbb R$,
be compactly supported. Suppose that $p\in C^k(\overline{K})$ and
$p(0)=0$. Then $pu\in H^k_{b-1}(K)$.
\end{lemma}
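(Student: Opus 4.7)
The plan is to establish the norm inequality $\|pu\|_{H_{b-1}^k(K)}\le C\|u\|_{H_b^k(K)}$ first for all smooth test functions $u\in C_0^\infty(\overline{K}\setminus\{0\})$ whose supports lie in a common compact subset of $\overline{K}$. Since $pu$ is then again compactly supported in $\overline{K}\setminus\{0\}$ and multiplication by $p$ is a bounded operation in the appropriate weights, the stated inclusion for a general compactly supported $u\in H_b^k(K)$ will follow by a straightforward density argument using the definition of $H_b^k$ as the completion of $C_0^\infty(\overline{K}\setminus\{0\})$.

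The heart of the proof is a term-by-term estimate based on Leibniz's rule. I would expand
$$
D^\alpha(pu)=\sum_{\beta\le\alpha}\binom{\alpha}{\beta}D^\beta p\cdot D^{\alpha-\beta}u
$$
and bound each summand separately. Since $p\in C^k(\overline{K})$ and $p(0)=0$, Taylor's formula (applied to the $C^1$ function $p$) yields the pointwise estimate $|p(y)|\le C_0\,\rho(y)$ on any compact subset of $\overline{K}$, where $\rho(y)=|y|$. Hence the $\beta=0$ summand satisfies
$$
\rho^{2(b-1-k+|\alpha|)}|p\,D^\alpha u|^2\le C_0^2\,\rho^{2(b-k+|\alpha|)}|D^\alpha u|^2,
$$
which is exactly a term of the $H_b^k(K)$-norm of $u$. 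For $|\beta|\ge 1$ the derivatives $D^\beta p$ are uniformly bounded on the compact support of $u$, and the algebraic identity $2(b-1-k+|\alpha|)=2(b-k+|\alpha-\beta|)+2(|\beta|-1)$, combined with the boundedness of $\rho^{2(|\beta|-1)}$ on that compact support (note $|\beta|-1\ge 0$), gives
$$
\rho^{2(b-1-k+|\alpha|)}|D^\beta p\,D^{\alpha-\beta}u|^2\le C_\beta\,\rho^{2(b-k+|\alpha-\beta|)}|D^{\alpha-\beta}u|^2,
$$
which is again a lower-order term of the $H_b^k(K)$-norm of $u$. Summing over $|\alpha|\le k$ and $\beta\le\alpha$ yields the desired estimate.

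The main point to keep track of, rather than a genuine obstacle, is that the hypothesis $p(0)=0$ is used in precisely one place, namely the $\beta=0$ term, where without it one would lose a factor of $\rho^{-2}$ in passing from $H_b^k$ to $H_{b-1}^k$; the remaining Leibniz terms are absorbed by the smoothness of $p$ and the compactness of $\mathop{\mathrm{supp}} u$. Once the estimate is in hand for smooth compactly supported $u$, the density of $C_0^\infty(\overline{K}\setminus\{0\})$ in the subspace of compactly supported elements of $H_b^k(K)$ (plus the continuity of pointwise multiplication by $p$ in the $H_{b-1}^k$-norm proved above) delivers the conclusion $pu\in H_{b-1}^k(K)$.
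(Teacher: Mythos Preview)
Your argument is correct and is the standard proof of this fact. The paper itself does not prove this lemma; it is stated in the appendix as a known result cited from Kondrat'ev~\cite{KondrTMMO67}, so there is no ``paper's proof'' to compare against beyond noting that the Leibniz-rule decomposition you give is precisely the expected argument.

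One minor remark: in the density step you rely on being able to approximate a compactly supported $u\in H_b^k(K)$ by test functions whose supports lie in a fixed compact set. This is indeed justified---given any approximating sequence $u_n\in C_0^\infty(\overline{K}\setminus\{0\})$, multiply by a cutoff $\chi\in C_0^\infty(\overline{K})$ equal to $1$ on $\supp u$; the same Leibniz estimate (now with $\chi$ in place of $p$, not requiring $\chi(0)=0$ since no weight is gained) shows $\chi u_n\to\chi u=u$ in $H_b^k(K)$ with supports contained in $\supp\chi$. You might state this explicitly, but the point is routine.
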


\begin{lemma}[see Lemma~4.8 in~\cite{KondrTMMO67}]\label{lAppL4.8Kondr}
Let a function $u\in W^{1}(K)$ be compactly supported. Suppose
that
$$
\int\limits_{\gamma_\sigma} r^{-1}|u|^2dr<\infty,
$$
where $\sigma=1$ or $\sigma=2$. Then $u\in H_0^1(K)$.
\end{lemma}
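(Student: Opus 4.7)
The plan is to reduce the statement to a Hardy-type inequality on the arcs $\{r=\text{const}\}\cap K$, plus a routine approximation argument. Since $u\in W^1(K)$ already controls the derivative part of the $H_0^1(K)$-norm, the only new estimate needed is $\int_K r^{-2}|u|^2\,dy<\infty$. Writing $y$ in polar coordinates $(r,\omega)$ with $-\omega_0<\omega<\omega_0$ and $dy=r\,dr\,d\omega$, this reduces to showing finiteness of $\int_0^\infty r^{-1}\int_{-\omega_0}^{\omega_0}|u(r,\omega)|^2\,d\omega\,dr$.

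First I would fix the side $\omega=(-1)^\sigma\omega_0$ and use the identity
$$
u(r,\omega)=u(r,(-1)^\sigma\omega_0)+\int_{(-1)^\sigma\omega_0}^{\omega}u_\theta(r,\theta)\,d\theta,
$$
which holds for a.e.\ $r$ because $u\in W^1(K)$. Squaring and applying Cauchy--Schwarz give
$$
|u(r,\omega)|^2\le 2|u(r,(-1)^\sigma\omega_0)|^2+4\omega_0\int_{-\omega_0}^{\omega_0}|u_\theta(r,\theta)|^2\,d\theta.
$$
Integrating in $\omega$ over $(-\omega_0,\omega_0)$ and then against $r^{-1}\,dr$, the first contribution is $8\omega_0^2\int_{\gamma_\sigma}r^{-1}|u|^2\,dr$, which is finite by hypothesis. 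The second contribution is a constant multiple of $\int_0^\infty r^{-1}\int_{-\omega_0}^{\omega_0}|u_\theta|^2\,d\theta\,dr$, and this is bounded by $C\int_K|\nabla u|^2\,dy<\infty$, because $|u_\omega|^2\le r^2|\nabla u|^2$ in polar coordinates and $dy=r\,dr\,d\omega$ absorbs the right weight.

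Combining these two bounds yields $\|u\|_{H_0^1(K)}<\infty$. The remaining point is that $H_0^1(K)$ was defined as the completion of $C_0^\infty(\overline K\setminus\{0\})$ in the weighted norm, so I would still need to exhibit $u$ as a limit of such functions. For this, set $u_\delta(y)=u(y)\bigl(1-\xi(r/\delta)\bigr)$ with $\xi\in C_0^\infty[0,\infty)$, $\xi\equiv 1$ near $0$, $\xi\equiv 0$ outside $[0,2]$, so that $u_\delta$ vanishes in a neighborhood of the origin; then mollify $u_\delta$ and use that $u$ is compactly supported to remove the large-$r$ tail. The main technical point is showing $u_\delta\to u$ in $H_0^1(K)$ as $\delta\to 0^+$: the weighted $L_2$ part converges by dominated convergence with dominating function $r^{-1}|u|$, whose finiteness in $L_2(K)$ was exactly established above, and the derivative contribution from the cutoff is controlled because $|\nabla\xi(r/\delta)|\le C/\delta$ on an annulus of width $\sim\delta$ where $|u|^2$ integrates against $dy/r^2$ to something going to $0$. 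This passage to the limit is the only step that is not purely a one-line estimate, but once the Hardy bound is in hand it reduces to a standard dominated-convergence argument.
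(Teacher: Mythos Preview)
Your argument is correct. The paper does not prove this lemma at all---it is listed in the appendix as a known result cited from Kondrat'ev~\cite{KondrTMMO67}---and your Hardy/Poincar\'e-type estimate along the angular variable, followed by the standard radial-cutoff approximation, is precisely the classical proof.
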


\begin{lemma}[see Lemma~4.18 in~\cite{KondrTMMO67}]\label{lAppL4.18Kondr}
Let a function $\varphi\in H_0^{1/2}(\gamma_\sigma)$, where
$\sigma=1$ or $\sigma=2$, be compactly supported. Then
$$
\int\limits_{\gamma_\sigma} r^{-1}|\varphi|^2dr<\infty.
$$
\end{lemma}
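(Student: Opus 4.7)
The plan is to reduce the weighted trace estimate on the angle $K$ to the classical (unweighted) trace embedding on an infinite strip via the logarithmic change of variable $r = e^{\tau}$, $\omega = \omega$, mapping $K$ onto the strip $\Pi=\mathbb R\times(-\omega_0,\omega_0)$.

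First I would unfold the definition. Since $\varphi\in H_0^{1/2}(\gamma_\sigma)$ is compactly supported, by~\eqref{eqTraceNormH} there exists a compactly supported $u\in H_0^1(K)$ with $u|_{\gamma_\sigma}=\varphi$ and
$$
\|u\|_{H_0^1(K)}^2=\int_K\bigl(r^{-2}|u|^2+|\nabla u|^2\bigr)\,dy<\infty.
$$
Writing $dy=r\,dr\,d\omega$ and $|\nabla u|^2=|u_r|^2+r^{-2}|u_\omega|^2$, and setting $\tau=\ln r$, $\tilde u(\tau,\omega)=u(e^\tau\cos\omega,e^\tau\sin\omega)$, a direct computation gives
$$
\int_K r^{-2}|u|^2\,dy=\int_{-\omega_0}^{\omega_0}\!\int_{\mathbb R}|\tilde u|^2\,d\tau\,d\omega,\qquad \int_K|\nabla u|^2\,dy=\int_{-\omega_0}^{\omega_0}\!\int_{\mathbb R}\bigl(|\tilde u_\tau|^2+|\tilde u_\omega|^2\bigr)\,d\tau\,d\omega,
$$
because $u_r=r^{-1}\tilde u_\tau$ and the factor $r$ from the Jacobian cancels the weights. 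Thus $\tilde u\in W^1(\Pi)$ with norm equal to $\|u\|_{H_0^1(K)}$.

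Next I would invoke the classical trace theorem for the strip $\Pi$, which yields $\tilde u|_{\omega=(-1)^\sigma\omega_0}\in L_2(\mathbb R_\tau)$, with
$$
\int_{\mathbb R}|\tilde\varphi(\tau)|^2\,d\tau\le c\,\|\tilde u\|_{W^1(\Pi)}^2<\infty,
$$
where $\tilde\varphi(\tau)=\varphi(e^\tau)$. Converting back via $d\tau=r^{-1}\,dr$ gives exactly $\int_{\gamma_\sigma}r^{-1}|\varphi|^2\,dr<\infty$, which is the desired inequality.

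The only step requiring care is the verification that the logarithmic substitution intertwines $H_0^1(K)$ with $W^1(\Pi)$ isometrically (up to equivalence) and that the weighted boundary integral corresponds to the ordinary $L_2$-norm on $\partial\Pi$; once this computation is in place, the rest is just the standard $W^1\hookrightarrow L_2(\partial\Pi)$ trace embedding. Compact support of $\varphi$ (and hence of an appropriate extension $u$) ensures we stay within the class where the change of variables is harmless, so no additional cut-off arguments near $r=0$ or $r=\infty$ are needed.
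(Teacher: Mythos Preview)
Your argument is correct. The paper itself does not prove this lemma; it merely records it in the appendix with a citation to Kondrat'ev~\cite{KondrTMMO67}, Lemma~4.18. Your approach via the logarithmic substitution $\tau=\ln r$, which turns $H_0^1(K)$ isometrically into $W^1$ on the strip and reduces the weighted boundary integral to the ordinary $L_2$ trace, is precisely the mechanism Kondrat'ev uses throughout that paper to handle weighted spaces, so your proof is the expected one. One minor remark: the compact-support hypothesis is not actually needed for the inequality itself once you observe that the change of variables is a global isometry $H_0^1(K)\to W^1(\Pi)$ and the trace map $W^1(\Pi)\to L_2(\partial\Pi)$ is bounded on the infinite strip; the hypothesis is stated in the lemma only because that is the form in which the paper needs it.
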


\begin{lemma}[see Lemma~4.20 in~\cite{KondrTMMO67}]\label{lAppL4.20Kondr}
The function $r^{i\lambda_0}\Phi(\omega)\ln^s r$, where
$\Im\lambda_0=-(k-1)$, belongs to $W^k(K\cap\{|y|<1\})$ if and
only if it is a homogeneous polynomial in $y_1,y_2$ of order
$k-1$.
\end{lemma}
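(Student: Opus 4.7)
The sufficiency is immediate: a homogeneous polynomial of order $k-1$ in $y_1,y_2$ belongs to $C^\infty(\mathbb R^2)\subset W^k(K\cap\{|y|<1\})$.

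For the necessity I would first dispose of the case $s=0$ by a homogeneity argument. The function $u(y)=r^{i\lambda_0}\Phi(\omega)$ is positively homogeneous of complex degree $i\lambda_0$, so every $k$-th order partial derivative $v=D^\alpha u$ is positively homogeneous of complex degree $i\lambda_0-k$, and in polar coordinates
\begin{equation*}
|v(r,\omega)|^2=r^{2\Re(i\lambda_0-k)}|v(1,\omega)|^2=r^{-2\Im\lambda_0-2k}|v(1,\omega)|^2=r^{-2}|v(1,\omega)|^2,
\end{equation*}
using $\Im\lambda_0=-(k-1)$. Therefore
\begin{equation*}
\int_{K\cap\{|y|<1\}}|v|^2\,dy=\Bigl(\int_0^1 r^{-1}\,dr\Bigr)\int_{-\omega_0}^{\omega_0}|v(1,\omega)|^2\,d\omega,
\end{equation*}
and the divergence of the radial factor forces $v(1,\omega)\equiv 0$; by homogeneity, $v\equiv 0$. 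Since this holds for every $|\alpha|=k$, $u$ is a polynomial in $y_1,y_2$ of degree at most $k-1$. Combined with its homogeneity of complex degree $i\lambda_0$ with real part $k-1$, this forces $i\lambda_0$ to be a nonnegative integer, so $\lambda_0=-i(k-1)$ and $u$ is a homogeneous polynomial of order exactly $k-1$.

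For $s\ge 1$ the analysis is more delicate. Writing a $k$-th Cartesian derivative through Leibniz' rule,
\begin{equation*}
D^\alpha u=r^{-1}\sum_{j=0}^{s}(\ln r)^j A_j^{(\alpha)}(\omega),
\end{equation*}
the top coefficient $A_s^{(\alpha)}$ is precisely the angular part of $D^\alpha(r^{i\lambda_0}\Phi(\omega))$---the unique contribution in which no derivatives hit the log factor. Since $|A_s^{(\alpha)}(\omega)|^2\ge 0$ dominates the asymptotics of $|D^\alpha u|^2$ as $r\to 0$, and $\int_0^1 r^{-1}(\ln r)^{2s}\,dr=\infty$, $L_2$-integrability forces $A_s^{(\alpha)}\equiv 0$ for every $|\alpha|=k$. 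By the $s=0$ analysis, this makes $r^{i\lambda_0}\Phi(\omega)$ equal to a homogeneous polynomial $P(y)$ of order $k-1$, so $u=P(y)(\ln r)^s$. Repeating the expansion, the top log power in $D^\alpha u$ now drops to $(\ln r)^{s-1}$, and a direct Leibniz computation shows its coefficient is not identically zero as a function of $\omega$ whenever $P\not\equiv 0$: for example, taking $\alpha=(k,0)$ yields the coefficient $s\sum_{j=0}^{k-1}\binom{k}{j}\partial_{y_1}^jP\cdot\partial_{y_1}^{k-1-j}(y_1/r^2)$, exemplified by $P=y_1$, $k=2$, $s=1$, for which $\partial_{y_1}^2(y_1\ln r)=r^{-1}(3\cos\omega-2\cos^3\omega)\notin L_2(r\,dr\,d\omega)$. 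The divergence of $\int_0^1 r^{-1}(\ln r)^{2(s-1)}\,dr$ then contradicts $u\in W^k(K\cap\{|y|<1\})$, forcing $s=0$ and reducing to the case already treated.

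The main obstacle is verifying the non-cancellation of the $(\ln r)^{s-1}$ coefficient in the last step; this can be handled uniformly by passing to polar coordinates and using that the operators $r\partial_r$ and $\partial_\omega$ acting on $P(y)(\ln r)^s$ generate a triangular system in the powers of $\ln r$, whose leading strictly positive component cannot vanish when $P\not\equiv 0$, which then propagates to at least one Cartesian derivative of order $k$.
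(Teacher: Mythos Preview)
The paper does not supply its own proof of this lemma; it is quoted from Kondrat'ev's 1967 paper and stated in the appendix with only the citation ``see Lemma~4.20 in~\cite{KondrTMMO67}''. There is therefore no in-paper argument to compare against, and your proposal must be judged on its own merits.

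Your reasoning is essentially correct. The sufficiency and the $s=0$ case are clean and complete. In the $s\ge 1$ case, the reduction to $u=P(y)(\ln r)^s$ with $P$ a nonzero homogeneous polynomial of degree $k-1$ is right, and so is the observation that the $(\ln r)^s$ coefficient of every $k$-th derivative then vanishes. The one point you yourself flag as incomplete---the nonvanishing of the $(\ln r)^{s-1}$ coefficient---can be closed in one line rather than by the triangular-system heuristic you sketch. Using $D^{\delta}(\ln r)^s = s(\ln r)^{s-1}D^{\delta}\ln r + (\text{log powers}\le s-2)$ for $|\delta|\ge 1$ (an easy induction), the Leibniz expansion gives that the $(\ln r)^{s-1}$ coefficient of $D^{\alpha}\bigl(P(\ln r)^s\bigr)$ equals exactly $s\,D^{\alpha}(P\ln r)$, since the $\beta=\alpha$ term contributes nothing. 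If this vanished for every $|\alpha|=k$, then $P\ln r$ would be a polynomial of degree $\le k-1$ on the sector; restricting to any ray on which $P$ does not vanish shows this is impossible when $P\not\equiv 0$. Hence some $D^{\alpha}(P\ln r)=r^{-1}\Psi_\alpha(\omega)$ is nonzero on a set of positive measure, and the divergence of $\int_0^1 r^{-1}(\ln r)^{2(s-1)}\,dr$ forces $D^{\alpha}u\notin L_2$. Your explicit example $P=y_1$, $k=2$, $s=1$ is computed correctly and illustrates exactly this mechanism.
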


Denote by $\mathcal G$ the operator which is the composition of
rotation about the origin and homothety.

\begin{lemma}[see Lemma~2.2 in~\cite{GurRJMP03}]\label{lAppL2.2GurRJMP03}
Let a function $u\in W^{1}(\mathbb R^2)$ be compactly supported.
Then $u(\mathcal Gy)-u(y)\in H_0^1(\mathbb R^2)$.
\end{lemma}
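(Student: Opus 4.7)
The target is to show $v := u(\mathcal{G}y) - u(y) \in H_0^1(\mathbb{R}^2)$. Recalling the definition, it suffices to bound
\[
\int_{\mathbb{R}^2} r^{-2}|v|^2\,dy + \int_{\mathbb{R}^2} |Dv|^2\,dy.
\]
The second term is harmless: $Dv(y) = (D\mathcal{G})^T (\nabla u)(\mathcal{G}y) - (\nabla u)(y)$, so a change of variables $z=\mathcal{G}y$ (with constant Jacobian $k^2$, where $k>0$ is the homothety coefficient) gives $\|Dv\|_{L_2} \le C\|\nabla u\|_{L_2}$. So the real work is the weighted $L_2$-bound near the origin (for $|y|$ large, $v$ is compactly supported and bounded in $L_2$, and $r^{-2}$ is harmless there).

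My plan is a standard Hardy-type trick based on integration along a one-parameter family connecting the identity to $\mathcal{G}$. Writing $\mathcal{G}=e^{A}$ with $A=(\ln k)I+\theta J$ (where $J$ is the rotation generator), set $\mathcal{G}_t=e^{tA}$, so $\mathcal{G}_0=\mathrm{id}$, $\mathcal{G}_1=\mathcal{G}$, and $\frac{d}{dt}\mathcal{G}_t y = A\mathcal{G}_t y$, which satisfies $|A\mathcal{G}_t y|\le C|y|=Cr$ uniformly for $t\in[0,1]$. For $u\in C_0^\infty(\mathbb{R}^2)$ one has
\[
v(y) = \int_0^1 \nabla u(\mathcal{G}_t y)\cdot A\mathcal{G}_t y\,dt,
\]
whence by Cauchy--Schwarz $|v(y)|^2 \le Cr^2 \int_0^1 |\nabla u(\mathcal{G}_t y)|^2\,dt$. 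Therefore
\[
\int_{\mathbb{R}^2} r^{-2}|v|^2\,dy \le C\int_0^1\!\int_{\mathbb{R}^2} |\nabla u(\mathcal{G}_t y)|^2\,dy\,dt,
\]
and for each $t\in[0,1]$ the inner integral equals $|\det \mathcal{G}_t|^{-1}\|\nabla u\|_{L_2}^2 \le C\|\nabla u\|_{L_2}^2$, since $\det\mathcal{G}_t = k^{2t}$ is bounded away from $0$ and $\infty$ on $[0,1]$. Combining the two bounds yields $\|v\|_{H_0^1}\le C\|u\|_{W^1}$ for all $u\in C_0^\infty(\mathbb{R}^2)$.

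To extend to compactly supported $u\in W^1(\mathbb{R}^2)$, I will approximate $u$ by mollification $u_n\in C_0^\infty(\mathbb{R}^2)$ (supported in a fixed neighborhood of $\supp u$) with $u_n\to u$ in $W^1$. The linear map $u\mapsto v$ is then bounded from $W^1$ into $H_0^1$ on the dense subspace $C_0^\infty$, hence extends by continuity; since $u_n(\mathcal{G}y)-u_n(y)\to u(\mathcal{G}y)-u(y)$ in $L_2$ (in particular pointwise a.e.\ along a subsequence), the extension coincides with the original expression $v$. The only step that could cause trouble is the uniform bound on the path Jacobian and on $|A\mathcal{G}_t y|$, but both are immediate from the explicit form $\mathcal{G}_t=k^t R_{t\theta}$; this is the sole place where the structural assumption on $\mathcal{G}$ (rotation composed with homothety, fixing the origin) is used in an essential way.
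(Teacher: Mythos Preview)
The paper does not prove this lemma; it appears in the appendix as a quoted result with a citation to Lemma~2.2 of \cite{GurRJMP03}, so there is no in-text argument to compare against.

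Your proof is correct. The one-parameter interpolation $\mathcal{G}_t = k^t R_{t\theta}$ between the identity and $\mathcal{G}$ yields the pointwise bound $|v(y)| \le Cr\int_0^1 |\nabla u(\mathcal{G}_t y)|\,dt$, and Fubini together with the uniform Jacobian bound $\det\mathcal{G}_t = k^{2t}$ handles the weighted integral cleanly. One small point is worth making explicit: in this paper $H_0^1(\mathbb{R}^2)$ is defined as the \emph{completion} of $C_0^\infty(\mathbb{R}^2\setminus\{0\})$, so for $u\in C_0^\infty(\mathbb{R}^2)$ you should check that $v = u(\mathcal{G}\cdot)-u$ actually lies in this completion, not merely that its weighted norm is finite. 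This is easy, since $v(0)=0$ and $v$ is Lipschitz near the origin; hence for a cutoff $\chi_\epsilon$ vanishing on $|y|<\epsilon$ with $|\nabla\chi_\epsilon|\le C/\epsilon$, one has $\|v-\chi_\epsilon v\|_{H_0^1}\to 0$ as $\epsilon\to 0$. With that detail in place, your density argument goes through exactly as written.
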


\subsection{Nonlocal Problems in Plane Angles in Weighted Spaces}

In this subsection and in the next one, we formulate some
properties of solutions of problem~\eqref{eqPinK}, \eqref{eqBinK}
in the spaces~\eqref{eqSpacesCal1} and~\eqref{eqSpacesCal2}.
First, we consider the case of weighted spaces.

For convenience, we rewrite the problem:
\begin{equation}\label{eqAppPBinK}
\begin{gathered}
\mathbf P_jU_j=F_j(y)\quad (y\in K_j^\varepsilon),\\
 \mathbf
B_{j\sigma}U\equiv\sum\limits_{k,s}
      b_{j\sigma ks}(y)U_k({\mathcal G}_{j\sigma ks}y)=\Phi_{j\sigma}(y)\quad \quad (y\in
\gamma_{j\sigma}^\varepsilon),
\end{gathered}
\end{equation}
where
$$
\mathbf P_jv=\sum\limits_{i,k=1}^2p_{jik}(y)v_{y_iy_k}+
\sum\limits_{k=1}^2p_{jk}(y)v_{y_k}+p_{j0}(y)v
$$
(see Sec.~\ref{subsectStatementNearK}). Along with
problem~\eqref{eqAppPBinK}, we consider the following model
problem in the unbounded angles:
\begin{equation}\label{eqAppCalPBinK}
\begin{gathered}
\mathcal P_jU_j=F_j(y)\quad (y\in K_j),\\
 \mathcal
B_{j\sigma}U\equiv\sum\limits_{k,s}
      b_{j\sigma ks}(0)U_k({\mathcal G}_{j\sigma ks}y)=\Phi_{j\sigma}(y)\quad (y\in
\gamma_{j\sigma}),
\end{gathered}
\end{equation}
where
$$
\mathcal P_jv=\sum\limits_{i,k=1}^2p_{jik}(0)v_{y_iy_k}.
$$

\begin{lemma}[see Lemma~2.3 in~\cite{GurMatZam05}]\label{lAppL2.3GurMatZam05}
Let $U$ be a solution of problem~\eqref{eqAppPBinK}
{\rm(}or~\eqref{eqAppCalPBinK}{\rm)} such that
$$U_j\in W^{2}(K_{j}^{D_\chi
\varepsilon}\cap\{|y|>\delta\})\qquad\forall \delta>0,
$$
$$
U\in \mathcal H_{b-2}^0(K^{D_\chi \varepsilon}),
$$
where $D_\chi$ is given by~\eqref{eqd1d2} and $b\in\mathbb R$.
Suppose that
$$
\{F_j\}\in \mathcal H_{b}^{0}(K^{\varepsilon}),\qquad
\{\Phi_{j\sigma}\}\in \mathcal H_{b}^{3/2}(\gamma^\varepsilon).
$$
Then $ U\in\mathcal  H_{b}^{2}(K^{\varepsilon}). $
\end{lemma}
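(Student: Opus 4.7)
The approach is a Kondrat'ev-type dyadic rescaling argument, adapted to accommodate the nonlocal operators $\mathcal B_{j\sigma}$. Fix $N$ with $2^{-N+1}<\varepsilon$, and for $n\ge N$ cover $K_j^\varepsilon\setminus\{0\}$ with the overlapping shells $K_{j,n}=K_j\cap\{2^{-n-2}<|y|<2^{-n+1}\}$ of bounded multiplicity. The plan is: rescale each $K_{j,n}$ to a fixed unit-scale shell via $y=2^{-n}y'$, apply on it a uniform local a priori estimate for the nonlocal problem, undo the rescaling, and sum with the weights already built into the $H_b^k$-norms.

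Set $V_j^{(n)}(y')=U_j(2^{-n}y')$ on the fixed shell $\tilde K_j=K_j\cap\{1/4<|y'|<2\}$. The crucial geometric fact is that each $\mathcal G_{j\sigma ks}$ is a rotation composed with a homothety centred at the origin, so it commutes with the dyadic scaling $y\mapsto 2^{-n}y'$; hence $V^{(n)}$ satisfies a nonlocal problem of the same form as~\eqref{eqAppPBinK} (respectively \eqref{eqAppCalPBinK}) on the fixed shell $\tilde K$, with coefficients $p_{jik}(2^{-n}y')$ and $b_{j\sigma ks}(2^{-n}y')$ converging to their values at $0$, and scaled right-hand sides $\tilde F_j^{(n)}(y')=2^{-2n}F_j(2^{-n}y')$, $\tilde\Phi_{j\sigma}^{(n)}(y')=\Phi_{j\sigma}(2^{-n}y')$. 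The images $\mathcal G_{j\sigma ks}y'$ lie in $K_k\cap\{|y'|<D_\chi\}$, so after unscaling they populate $K_k\cap\{|y|<2^{-n}D_\chi\}\subset K_k^{D_\chi\varepsilon}$: this is precisely the region on which the hypothesis $U\in W^2(K^{D_\chi\varepsilon}\cap\{|y|>\delta\})$ makes the nonlocal traces meaningful, and no image escapes the domain of control.

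The analytic core is an $n$-uniform local estimate on the fixed shell,
$$\|V^{(n)}\|^2_{\mathcal W^2(\tilde K)}\le c\Bigl(\|\tilde F^{(n)}\|^2_{\mathcal W^0(\tilde K)}+\|\tilde\Phi^{(n)}\|^2_{\mathcal W^{3/2}(\tilde\gamma)}+\|V^{(n)}\|^2_{\mathcal W^0(\hat K)}\Bigr),$$
where $\hat K$ is a slightly enlarged cover of $\tilde K$ whose thickness depends on $D_\chi$ in order to accommodate the nonlocal arguments. This is a standard boundary regularity estimate for nonlocal elliptic problems of the type treated in~\cite{SkJMAA,GurRJMP03}, whose constant $c$ is independent of $n$ because the rescaled problems form a precompact family converging, as $n\to\infty$, to the model problem~\eqref{eqAppCalPBinK} with constant coefficients frozen at $0$.

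Finally, the elementary scaling identities $D^\alpha_y U=2^{n|\alpha|}D^\alpha_{y'}V^{(n)}$, $dy=2^{-2n}dy'$, and $\rho\sim 2^{-n}$ on $K_{j,n}$ give $\|U\|^2_{H_b^k(K_{j,n})}\simeq 2^{-2n(b-1)}\|V_j^{(n)}\|^2_{W^k(\tilde K_j)}$ for $k=0,2$, and matching relations for $F$, $\Phi$, and the lower-order $H_{b-2}^0$ term. Transporting the local estimate back through these identities and summing the bounded-multiplicity cover over $n\ge N$ yields
$$\|U\|^2_{\mathcal H_b^2(K^\varepsilon)}\le c\Bigl(\|F\|^2_{\mathcal H_b^0(K^\varepsilon)}+\|\Phi\|^2_{\mathcal H_b^{3/2}(\gamma^\varepsilon)}+\|U\|^2_{\mathcal H_{b-2}^0(K^{D_\chi\varepsilon})}\Bigr),$$
whose right-hand side is finite by hypothesis, so $U\in\mathcal H_b^2(K^\varepsilon)$. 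The main obstacle I foresee lies in establishing the $n$-uniform local estimate above: the nonlocal terms mix values of $V^{(n)}$ at points $\mathcal G_{j\sigma ks}y'$ sitting in neighbouring dyadic shells (by the homothety coefficients $\chi_{j\sigma ks}\le D_\chi/2$), which is exactly why the cover $\hat K$ must be thickened in a $D_\chi$-dependent way, and why one cannot close the estimate without the a priori information $U\in\mathcal H_{b-2}^0(K^{D_\chi\varepsilon})$ on the larger region.
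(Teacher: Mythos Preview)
The paper does not give its own proof of this lemma: it is stated in the Appendix with the citation ``see Lemma~2.3 in~\cite{GurMatZam05}'' and used as a black box. So there is no proof in the paper to compare against.

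That said, your dyadic-rescaling argument is exactly the standard Kondrat'ev approach (originating in~\cite{KondrTMMO67}) adapted to the nonlocal setting, and it is essentially what one expects the cited reference to contain. The key observation---that the operators $\mathcal G_{j\sigma ks}$, being compositions of rotations and homotheties centred at the origin, commute with the dyadic scaling $y\mapsto 2^{-n}y'$---is precisely what makes the nonlocal problem tractable by this method, and you have identified it correctly. Your handling of the geometry (images landing in shells of radius at most $2^{-n}D_\chi<D_\chi\varepsilon$, hence inside the region where $U$ is controlled) is also right, and explains the appearance of $D_\chi\varepsilon$ in the hypotheses. The one place that genuinely requires work is the $n$-uniform local estimate on the fixed shell; you are right that it follows from the a priori estimates for nonlocal elliptic problems with coefficients lying in a compact family (see~\cite{SkMs86,SkDu90,SkJMAA}), and your remark about enlarging $\hat K$ to absorb the nonlocal arguments is the correct way to close the bookkeeping.
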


Consider the asymptotics of solutions of
problem~\eqref{eqAppCalPBinK}.
\begin{lemma}[see Lemma~2.1 in~\cite{GurPetr03}]\label{lAppL2.1GurPetr03}
The function
\begin{equation}\label{eqAppL2.1GurPetr03}
U=r^{i\lambda_0}\sum\limits_{l=0}^m\frac{\displaystyle
1}{\displaystyle l!}(i\ln r)^l\varphi^{(m-l)}(\omega),
\end{equation}
is a solution of homogeneous problem~\eqref{eqAppCalPBinK} if and
only if~$\lambda_0$ is an eigenvalue of the
operator~$\tilde{\mathcal L}(\lambda)$ and $\varphi^{(0)}, \dots,
\varphi^{(\varkappa-1)}$ is a Jordan chain corresponding to the
eigenvalue~$\lambda_0;$ here $m\le\varkappa-1$.
\end{lemma}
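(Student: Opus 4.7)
The plan is to verify the equivalence by direct substitution, exploiting the fact that both sides of the equivalence are, respectively, identities in a finite-dimensional function space spanned by $\{r^{i\lambda_0}(i\ln r)^n\}_{n=0}^{m}$ (after factoring out $r^{-2}$ on the bulk side). The work splits into an interior computation and a boundary computation, each of which reduces, via Taylor expansion, to the coefficients of an $\tilde{\mathcal L}$-derivative.

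For the interior equation, I first rewrite $\mathcal P_j$ in polar coordinates as $r^{-2}\tilde{\mathcal P}_j(\omega,\partial/\partial\omega,r\partial/\partial r)$ and use the elementary identity
\[
(r\partial_r)^k\!\left[r^{i\lambda_0}\frac{(i\ln r)^l}{l!}\right]
  =\sum_{p=0}^{\min(k,l)}\binom{k}{p}(i\lambda_0)^{k-p}\,i^{p}\,r^{i\lambda_0}\frac{(i\ln r)^{l-p}}{(l-p)!},
\]
which is Leibniz applied to $r^{i\lambda_0}$ and $(\ln r)^l$. Reorganising the sum by the power of $i\ln r$ and recognising a Taylor expansion in the third argument of $\tilde{\mathcal P}_j$ gives
\[
\mathcal P_j\!\left[r^{i\lambda_0}\frac{(i\ln r)^l}{l!}\varphi(\omega)\right]
  =r^{i\lambda_0-2}\sum_{p=0}^{l}\frac{(i\ln r)^{l-p}}{(l-p)!}\,\frac{1}{p!}\,
     \tilde{\mathcal P}_j^{(p)}(\omega,\partial_\omega,i\lambda_0)\varphi,
\]
where $\tilde{\mathcal P}_j^{(p)}$ denotes the $p$-fold derivative with respect to $\lambda$. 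Substituting the ansatz~\eqref{eqAppL2.1GurPetr03} and reindexing $n=l-p$ produces
\[
\mathcal P_jU_j=r^{i\lambda_0-2}\sum_{n=0}^{m}\frac{(i\ln r)^n}{n!}
  \sum_{p=0}^{m-n}\frac{1}{p!}\,\tilde{\mathcal P}_j^{(p)}(\omega,\partial_\omega,i\lambda_0)\varphi_j^{(m-n-p)}.
\]

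For the boundary operator, I note that on $\gamma_{j\sigma}$ the point $\mathcal G_{j\sigma ks}y$ has polar radius $\chi_{j\sigma ks}r$ and polar angle $(-1)^\sigma\omega_j+\omega_{j\sigma ks}$. Expanding $(i\ln(\chi_{j\sigma ks}r))^l=\sum_{p}\binom{l}{p}(i\ln\chi_{j\sigma ks})^p(i\ln r)^{l-p}$ and using the crucial identity
\[
(\chi_{j\sigma ks})^{i\lambda_0}\,(i\ln\chi_{j\sigma ks})^p
  =\left.\frac{d^p}{d\lambda^p}(\chi_{j\sigma ks})^{i\lambda}\right|_{\lambda=\lambda_0}
\]
I obtain, after the same reindexing $n=l-p$,
\[
\mathcal B_{j\sigma}U\big|_{\gamma_{j\sigma}}
 =r^{i\lambda_0}\sum_{n=0}^{m}\frac{(i\ln r)^n}{n!}
   \sum_{p=0}^{m-n}\frac{1}{p!}\bigl[\partial_\lambda^p\bigr(\!\sum_{k,s}(\chi_{j\sigma ks})^{i\lambda}b_{j\sigma ks}(0)\varphi_k^{(m-n-p)}((-1)^\sigma\omega_j+\omega_{j\sigma ks})\bigr)\bigr]_{\lambda=\lambda_0}.
\]
Combining the interior and boundary expressions, the vector $\{\mathcal P_jU_j,\mathcal B_{j\sigma}U|_{\gamma_{j\sigma}}\}$ becomes
\[
r^{i\lambda_0-2}\sum_{n=0}^{m}\frac{(i\ln r)^n}{n!}\,
   \sum_{p=0}^{m-n}\frac{1}{p!}\,\tilde{\mathcal L}^{(p)}(\lambda_0)\varphi^{(m-n-p)}
\]
(interpreting the second component without the $r^{-2}$ factor).

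Finally, since the functions $(i\ln r)^n$, $n=0,\ldots,m$, are linearly independent on any punctured neighbourhood of $0$, the right-hand side vanishes identically if and only if every inner coefficient vanishes, i.e.
\[
\sum_{p=0}^{l}\frac{1}{p!}\tilde{\mathcal L}^{(p)}(\lambda_0)\varphi^{(l-p)}=0,\qquad l=0,\dots,m,
\]
after setting $l=m-n$. These are exactly the defining relations of a Jordan chain of length at least $m+1$ for $\tilde{\mathcal L}(\lambda)$ at $\lambda_0$, proving both directions of the equivalence and the bound $m\le\varkappa-1$. The only real bookkeeping hurdle is keeping the powers of $i$ aligned between the two Taylor expansions (in $i\lambda$ versus $i\ln r$); once the identity $\partial_\mu^p|_{\mu=i\lambda_0}=i^{-p}\partial_\lambda^p|_{\lambda=\lambda_0}$ is used consistently, the two computations fit together without any further subtlety.
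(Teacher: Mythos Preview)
Your computation is correct and is precisely the standard direct-substitution argument one would give for this result. Note that the paper does not actually prove this lemma---it is quoted in the appendix as a known result from~\cite{GurPetr03}---so there is no ``paper's proof'' to compare against; your approach is the natural one and would coincide with what appears in the cited source.
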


Any solution of the kind~\eqref{eqAppL2.1GurPetr03} is called a
\textit{power solution}.

\begin{theorem}[see Theorem~2.2 and Remark~2.2 in~\cite{GurPetr03}]\label{lAppTh2.2GurPetr03}
Let $\{F_j\}\in \mathcal H_b^0(K)\cap \mathcal H_{b'}^0(K)$ and
$\{\Phi_{j\sigma}\}\in \mathcal H_b^{3/2}(\gamma)\cap \mathcal
H_{b'}^{3/2}(\gamma)$, where $b>b'$. Suppose that the line
$\Im\lambda=b'-1$ contains no eigenvalues of the
operator~$\tilde{\mathcal L}(\lambda)$. If $U$ is a solution of
problem~\eqref{eqAppCalPBinK} belonging to the space~$\mathcal
H_b^{2}(K)$, then
$$
U=
\sum\limits_{n=1}^{n_0}\sum\limits_{q=1}^{J_n}\sum\limits_{m=0}^{\varkappa_{qn}-1}
c_n^{(m,q)}W_n^{(m,q)}(\omega, r)+U'.
$$
Here $\lambda_1, \dots, \lambda_{n_0}$ are eigenvalues of
$\tilde{\mathcal L}(\lambda)$ located in the band
$b'-1<\Im\lambda<b-1;$
$$
W_n^{(m,q)}(\omega, r)=r^{i\lambda_n}\sum\limits_{l=0}^m
   \frac{\displaystyle 1}{\displaystyle l!}(i\ln r)^l\varphi_n^{(m-l,q)}(\omega)
$$
are power solutions of homogeneous
problem~\eqref{eqAppCalPBinK}$;$
$$
 \{\varphi_n^{(0,q)}, \dots, \varphi_n^{(\varkappa_{qn}-1,q)}:\ q=1,\dots,J_n\}
$$
is a canonical system of Jordan chains of the
operator~$\tilde{\mathcal L}(\lambda)$ corresponding to the
eigenvalue~$\lambda_n;$ $c_n^{(m,q)}$ are some complex
constants$;$ finally, $U'$ is a solution of
problem~\eqref{eqAppCalPBinK} belonging to the space $\mathcal
H_{b'}^{2}(K)$.
\end{theorem}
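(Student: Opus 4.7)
The plan is to prove this by the Mellin-transform method, following the approach pioneered by Kondrat'ev~\cite{KondrTMMO67} for local problems and adapted to the nonlocal setting in~\cite{SkDu90,GurPetr03}. First, by multiplying $U,F_j,\Phi_{j\sigma}$ by a smooth cutoff $\zeta(r)$ equal to one near $r=0$, I may assume that all data are compactly supported in $r$. The commutator $[\mathbf{P}_j,\zeta]U$ and analogous boundary commutators are supported away from the origin, so they lie in $\mathcal H_{b''}^{0}$ and $\mathcal H_{b''}^{3/2}$ for arbitrarily small $b''$ and can ultimately be absorbed into the remainder $U'$.

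Next, apply the Mellin transform $\tilde v(\omega,\lambda)=\int_0^\infty r^{-i\lambda-1}v(\omega,r)\,dr$ in each sector $K_j$. The space $\mathcal H_b^k(K_j)$ (with compactly supported elements) corresponds under Mellin transform to functions analytic in $\{\Im\lambda>b-k\}$ and square-integrable along horizontal lines there, with a norm controlled by Parseval. The key computation is that a nonlocal term $U_k(\mathcal G_{j\sigma ks}y)$, where $\mathcal G_{j\sigma ks}$ is a rotation by $\omega_{j\sigma ks}$ composed with a homothety by $\chi_{j\sigma ks}$, transforms into $(\chi_{j\sigma ks})^{i\lambda}\tilde U_k((-1)^\sigma\omega_j+\omega_{j\sigma ks},\lambda)$. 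Consequently, on each horizontal line $\Im\lambda=\const$, problem~\eqref{eqAppCalPBinK} becomes precisely
\[
\tilde{\mathcal L}(\lambda)\,\tilde U(\cdot,\lambda)=\bigl\{\tilde F_j(\cdot,\lambda),\ \tilde\Phi_{j\sigma}(\lambda)\bigr\}.
\]

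Then I invoke the meromorphic Fredholm theory for $\tilde{\mathcal L}(\lambda)$. By Lemma~\ref{lSpectrum} its spectrum is discrete, so $\tilde{\mathcal L}(\lambda)^{-1}$ is meromorphic with poles only at eigenvalues; by hypothesis the line $\Im\lambda=b'-1$ carries no poles, and only finitely many eigenvalues $\lambda_1,\dots,\lambda_{n_0}$ lie in the strip $b'-1<\Im\lambda<b-1$. Recovering $U$ by the inverse Mellin transform along $\Im\lambda=b-1$ and then shifting the contour down to $\Im\lambda=b'-1$, I pick up residues at each $\lambda_n$; the remaining integral along $\Im\lambda=b'-1$ represents $U'\in\mathcal H_{b'}^{2}(K)$ by Parseval. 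The residue at an eigenvalue $\lambda_n$ with Jordan chains $\{\varphi_n^{(m,q)}\}$ of lengths $\varkappa_{qn}$ is computed from the Laurent expansion of $\tilde{\mathcal L}(\lambda)^{-1}$ at $\lambda_n$: each order-$(m+1)$ pole contributes, after inverse Mellin transform, exactly the power-logarithmic term $r^{i\lambda_n}\sum_{l=0}^m\tfrac{1}{l!}(i\ln r)^l\varphi_n^{(m-l,q)}(\omega)$, i.e.\ the power solutions $W_n^{(m,q)}$ provided by Lemma~\ref{lAppL2.1GurPetr03}.

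The main obstacle will be the contour-shift step, which requires uniform decay of the form $\|\tilde{\mathcal L}(\lambda)^{-1}\|\le C(1+|\Re\lambda|)^{-2}$ along horizontal segments bounded away from the spectrum, so that the arcs used to close the contour contribute nothing in the limit. For local elliptic boundary value problems this is the classical parameter-dependent (Agmon--Douglis--Nirenberg) estimate; in the nonlocal case one must treat the nonlocal boundary couplings $(\chi_{j\sigma ks})^{i\lambda}\varphi_k((-1)^\sigma\omega_j+\omega_{j\sigma ks})$ as a perturbation whose operator norm does not grow in $|\Re\lambda|$, so that the a~priori estimate for the parameter-elliptic principal part $\tilde{\mathcal P}_j(\omega,\partial/\partial\omega,i\lambda)$ together with the boundary operators absorbs them for $|\Re\lambda|$ large, yielding the needed resolvent bounds on any horizontal strip containing no eigenvalues. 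Once these resolvent estimates are in place, Parseval identifies the shifted contour integral as an element of $\mathcal H_{b'}^{2}(K)$ and the residue sum as the claimed asymptotic expansion.
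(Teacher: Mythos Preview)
The paper does not prove this theorem; it is stated in the appendix as a result quoted from~\cite{GurPetr03} (and, in the case without homothety, from~\cite{SkMs86}). Your Mellin-transform sketch is precisely the method used in those sources and in Kondrat'ev's original work~\cite{KondrTMMO67}: reduce to compactly supported data by a radial cutoff, apply the Mellin transform to convert~\eqref{eqAppCalPBinK} into the parameter-dependent problem $\tilde{\mathcal L}(\lambda)\tilde U=\{\tilde F_j,\tilde\Phi_{j\sigma}\}$, invert $\tilde{\mathcal L}(\lambda)$ meromorphically, and shift the inverse-Mellin contour from $\Im\lambda=b-1$ to $\Im\lambda=b'-1$, collecting residues at the intervening eigenvalues. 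Your identification of the resolvent bound as the key technical step is correct, and the perturbation argument you outline (the factors $(\chi_{j\sigma ks})^{i\lambda}$ have modulus bounded on each horizontal line, so the nonlocal boundary terms are lower order relative to the parameter-elliptic principal part for large $|\Re\lambda|$) is exactly the mechanism behind Lemmas~2.1 and~2.2 of~\cite{SkDu90}, cited here as Lemma~\ref{lSpectrum}.
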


If the right-hand sides of problem~\eqref{eqAppCalPBinK} are of
particular form, then there exist solutions of particular form.
Let
\begin{equation}\label{eqAppL4.3GurPetr03}
  F_j(\omega, r)=r^{i\lambda_0-2}\sum\limits_{l=0}^M\frac{\displaystyle 1}{\displaystyle l!}(i\ln
  r)^l
  f_j^{(l)}(\omega),\qquad
\Phi_{j\sigma}(r)=r^{i\lambda_0}
\sum\limits_{l=0}^M\frac{\displaystyle 1}{\displaystyle l!}(i\ln
r)^l \psi_{j\sigma}^{(l)},
\end{equation}
where
$$
  f_j^{(l)}\in L^2(-\omega_j,\omega_j),\qquad \psi_{j\sigma}^{(l)}\in{\mathbb
  C},\qquad \lambda_0\in\mathbb C.
$$
If~$\lambda_0$ is an eigenvalue of the operator~$\tilde{\mathcal
L}(\lambda)$, then denote by~$\varkappa(\lambda_0)$ the greatest
of the partial multiplicities (see~\cite{GS}) of this eigenvalue;
otherwise, set $\varkappa(\lambda_0)=0$.
\begin{lemma}[see Lemma~4.3 in~\cite{GurPetr03}]\label{lAppL4.3GurPetr03}
For problem~\eqref{eqAppCalPBinK} with right-hand
side~$\{F_j,\Phi_{j\sigma}\}$ given by~\eqref{eqAppL4.3GurPetr03},
there exists a solution
 \begin{equation}\label{eqAppL4.3GurPetr03'}
U= r^{i\lambda_0}\sum\limits_{l=0}^{M+\varkappa(\lambda_0)}
\frac{\displaystyle 1}{\displaystyle l!}(i\ln r)^l
u^{(l)}(\omega),
 \end{equation}
where $u^{(l)}\in \prod\limits_j W^{2}(-\omega_j,\omega_j)$. A
solution of such a form is unique if~$\varkappa(\lambda_0)=0$
{\rm(}i.e., $\lambda_0$ is not an eigenvalue of~$\tilde{\mathcal
L}(\lambda)${\rm)}. If~$\varkappa(\lambda_0)>0$, then the
solution~\eqref{eqAppL4.3GurPetr03'} is defined accurate to an
arbitrary linear combination of power
solutions~\eqref{eqAppL2.1GurPetr03} corresponding to the
eigenvalue~$\lambda_0$.
\end{lemma}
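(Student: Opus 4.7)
The plan is to substitute the ansatz
\[ U_j(\omega,r) = r^{i\lambda_0}\sum_{l=0}^{M'}\frac{(i\ln r)^l}{l!}\,u_j^{(l)}(\omega),\qquad M' := M+\varkappa(\lambda_0), \]
directly into problem~\eqref{eqAppCalPBinK} and to reduce it, by matching powers of $\ln r$, to a finite triangular system for the coefficient vectors $u^{(l)}=(u_1^{(l)},\dots,u_N^{(l)})$. Writing $\mathcal P_j=r^{-2}\tilde{\mathcal P}_j(\omega,\partial_\omega,r\partial_r)$ and using the identity $r\partial_r[r^{i\lambda_0}(i\ln r)^l/l!]=r^{i\lambda_0}[i\lambda_0(i\ln r)^l/l!+i(i\ln r)^{l-1}/(l-1)!]$ together with the Taylor expansion of $\tilde{\mathcal P}_j$ in its third argument shows that $r\partial_r$ acts on the sequence $(u^{(l)})$ as multiplication by $i\lambda_0$ plus an index shift. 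For the nonlocal boundary operators, writing $\mathcal G_{j\sigma ks}y$ in polar coordinates as $(\chi_{j\sigma ks}r,(-1)^\sigma\omega_j+\omega_{j\sigma ks})$ and expanding $(i\ln(\chi r))^l=(i\ln\chi+i\ln r)^l$ via the binomial theorem produces an analogous shift, with coefficients encoding the $\lambda$-derivatives of $\chi_{j\sigma ks}^{i\lambda}$. Collecting like powers of $(i\ln r)^m$, the problem reduces to
\[ \sum_{k\ge 0}\frac{1}{k!}\tilde{\mathcal L}^{(k)}(\lambda_0)\,u^{(l+k)} = \bigl\{f_j^{(l)},\psi_{j\sigma}^{(l)}\bigr\},\qquad 0\le l\le M', \]
with $\{f_j^{(l)},\psi_{j\sigma}^{(l)}\}=0$ for $l>M$ and $u^{(l)}=0$ for $l>M'$, where $\tilde{\mathcal L}^{(k)}$ denotes the $k$-th derivative of the operator pencil in $\lambda$.

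\noindent\textbf{Recursive resolution.} I would solve this triangular system by back-substitution from $l=M'$ downwards. In the regular case $\varkappa(\lambda_0)=0$, the operator $\tilde{\mathcal L}(\lambda_0)$ is boundedly invertible on $\prod_j W^2(-\omega_j,\omega_j)$ (by Lemma~\ref{lSpectrum} and the standard Fredholm theory for the two-point ODE problem defined by $\tilde{\mathcal P}_j(\omega,\partial_\omega,i\lambda_0)$ together with the nonlocal boundary functionals), so $M'=M$ and each $u^{(l)}$ is uniquely recovered from the already-known higher-index coefficients, giving both existence and the claimed uniqueness. When $\varkappa(\lambda_0)=K>0$, the top $K$ equations (for $l=M+K,\ldots,M+1$) have zero right-hand side and are precisely the defining relations of a Jordan chain of $\tilde{\mathcal L}$ at $\lambda_0$ (compare Lemma~\ref{lAppL2.1GurPetr03}): $u^{(M+K)}$ must be an eigenvector and $u^{(M+K-j)}$ a $j$-th associated vector. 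Since the maximal partial multiplicity at $\lambda_0$ equals $K$, a chain of the required length exists, and I would take $(u^{(M+K)},\ldots,u^{(M+1)})$ to be any element of the canonical system of Jordan chains (or zero). Descending to $l\le M$, each equation $\tilde{\mathcal L}(\lambda_0)u^{(l)}=(\text{known})$ is Fredholm and its cokernel obstruction is compensated by a suitable adjustment of the chain data already chosen at higher levels. The residual freedom is exactly an arbitrary linear combination of power solutions~\eqref{eqAppL2.1GurPetr03} corresponding to $\lambda_0$, which yields the non-uniqueness statement.

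\noindent\textbf{Main obstacle.} The hard point is the compatibility analysis in the degenerate case $\varkappa(\lambda_0)>0$: one must verify that the Fredholm obstructions arising sequentially at every level of the triangular system can be annihilated simultaneously within the polynomial ansatz, and that the required polynomial degree increase is precisely $\varkappa(\lambda_0)$ and no more. A clean alternative, which I would fall back on if the direct bookkeeping becomes unwieldy, is to invoke the Keldysh theory of analytic operator-valued functions from~\cite{GS}: the resolvent $\tilde{\mathcal L}(\lambda)^{-1}$ has a pole of order exactly $\varkappa(\lambda_0)$ at $\lambda_0$, whose principal part encodes a canonical system of Jordan chains, while the Mellin transforms of $F_j$ and $\Phi_{j\sigma}$ (as given by~\eqref{eqAppL4.3GurPetr03}) have poles of order $M+1$ there. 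Multiplying these Mellin images by $\tilde{\mathcal L}(\lambda)^{-1}$ and inverting along a contour passing immediately above $\Im\lambda_0$ produces a solution of the stated form with polynomial-in-$\ln r$ degree exactly $M+\varkappa(\lambda_0)$, while the ambiguity in the contour displacement corresponds precisely to the principal part of the Laurent expansion and hence to the arbitrary linear combination of power solutions.
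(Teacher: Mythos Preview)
The paper does not prove this lemma: it is stated in the appendix with a reference to~\cite{GurPetr03} (and to~\cite{SkMs86} for the case without homothety), so there is no in-paper argument to compare against. Your outline is the standard proof of this type of result and is correct in substance.

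Two remarks on execution. In the back-substitution scheme, the phrase ``compensated by a suitable adjustment of the chain data already chosen at higher levels'' hides the real work: one has to check that the cokernel obstructions encountered at each level $l\le M$ can all be annihilated using the freedom in the Jordan-chain entries $u^{(M+1)},\dots,u^{(M+\varkappa)}$. This is exactly what the Keldysh--Gohberg--Sigal theory of~\cite{GS} guarantees, via the biorthogonality between canonical Jordan chains of $\tilde{\mathcal L}$ and of the adjoint pencil that appear in the principal part of $\tilde{\mathcal L}(\lambda)^{-1}$ at $\lambda_0$; without invoking that, the direct recursion is hard to close. Second, your Mellin fallback is indeed the cleanest route, but the right-hand sides~\eqref{eqAppL4.3GurPetr03} are not Mellin-transformable in the usual $L_2$ sense; the argument is really algebraic: expand $\tilde{\mathcal L}(\lambda)^{-1}$ in a Laurent series at $\lambda_0$ (pole order $\varkappa(\lambda_0)$), multiply by the formal symbol with pole order $M+1$ coming from~\eqref{eqAppL4.3GurPetr03}, and read off Laurent coefficients. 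The product has pole order at most $M+1+\varkappa(\lambda_0)$, which yields the claimed $\ln r$-degree, and the non-uniqueness is exactly the contribution of the principal part of $\tilde{\mathcal L}(\lambda)^{-1}$, i.e., a linear combination of the power solutions~\eqref{eqAppL2.1GurPetr03}.
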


Note that Theorem~\ref{lAppTh2.2GurPetr03} and
Lemma~\ref{lAppL4.3GurPetr03} were earlier proved in~\cite{SkMs86}
for the case in which the operators $\mathcal G_{j\sigma ks}$ are
rotations only (but not homothety).

The following result is a modification of
Theorem~\ref{lAppTh2.2GurPetr03} for the case in which the line
$\Im\lambda=-1$ contains the unique eigenvalue $\lambda_0=-i$ of
$\tilde{\mathcal L}(\lambda)$ and this eigenvalue is proper (see
Definition~\ref{defRegEigVal}).

\begin{lemma}[see Lemma~3.4 in~\cite{GurRJMP03}]\label{lAppL3.4GurRJMP03}
Let $U\in \mathcal H_b^{2}(K)$, where $b>0$, be a solution of
problem~\eqref{eqAppCalPBinK} with right-hand side $\{F_j\}\in
\mathcal H_b^0(K)\cap \mathcal H_0^0(K)$,
$\{\Phi_{j\sigma}\}\in\mathcal H_b^{3/2}(\gamma)\cap\mathcal
H_0^{3/2}(\gamma)$. Suppose that the closed band
$-1\le\Im\lambda\le b-1$ contains only the eigenvalue
$\lambda_0=-i$ of $\tilde{\mathcal L}(\lambda)$ and this
eigenvalue is proper. Then $D^\alpha U\in \mathcal H_0^0(K)$ for
$|\alpha|=2$.
\end{lemma}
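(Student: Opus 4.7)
The plan is to combine Theorem~\ref{lAppTh2.2GurPetr03} with the distinctive feature of a \emph{proper} eigenvalue on the critical line: by Definition~\ref{defRegEigVal} and Lemma~\ref{lAppL2.1GurPetr03}, every power solution associated with $\lambda_0=-i$ has the form $W=r\varphi(\omega)$ and is a homogeneous polynomial of degree one in $(y_1,y_2)$, so $\mathcal P_j W_j=0$, $\mathcal B_{j\sigma}W=0$, and $D^\alpha W\equiv 0$ whenever $|\alpha|=2$. Such solutions are invisible to the second-derivative norm but still obstruct a direct improvement of the weight.

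First I would pass from the weight $b$ to a smaller positive weight. Log-convexity of the weighted $L_2$-norm in the parameter $a$ (via H\"older's inequality applied to $\rho^{2b'}=(\rho^{2b})^{b'/b}\cdot 1^{1-b'/b}$) gives $\{F_j\}\in\mathcal H_{b'}^0(K)$ for every $b'\in[0,b]$, and the analogous statement holds for $\{\Phi_{j\sigma}\}\in\mathcal H_{b'}^{3/2}(\gamma)$. Picking $b'\in(0,b)$ so that the line $\Im\lambda=b'-1$ avoids the spectrum of $\tilde{\mathcal L}(\lambda)$ (possible by Lemma~\ref{lSpectrum} since $\lambda_0=-i$ lies on $\Im\lambda=-1<b'-1$), Theorem~\ref{lAppTh2.2GurPetr03} applied in the strip $(b'-1,b-1)$ yields an empty asymptotic sum and hence $U\in\mathcal H_{b'}^2(K)$.

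The remaining task is to upgrade $\int_K \rho^{2b'}|D^\alpha U|^2\,dy<\infty$ to $\int_K |D^\alpha U|^2\,dy<\infty$ for $|\alpha|=2$. My approach would be to subtract from $U$, after localization by a cutoff $\xi$ supported near the origin, the finite linear combination $W$ of polynomial power solutions at $\lambda_0=-i$ matching the leading asymptotic behavior of $U$ at the vertex, and then to invoke the Mellin-transform structure underlying Theorem~\ref{lAppTh2.2GurPetr03}: shifting the Mellin contour from $\Im\lambda=b'-1$ down to $\Im\lambda=b''-1$ for a small $b''<0$ picks up only the residue at $\lambda_0=-i$, which by properness is exactly the polynomial $W$ and carries no logarithmic factors. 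The residue-free remainder $\tilde U=U-\xi W$ then lies in a weighted space with index $b''<0$ near the vertex, so $D^\alpha\tilde U\in\mathcal H_{b''}^0\subset\mathcal H_0^0$ on any bounded subset near the origin; combined with $D^\alpha(\xi W)\in L_2$ (smooth and compactly supported) and the bound $U\in\mathcal H_{b'}^2(K)$ away from the origin, this delivers $D^\alpha U\in\mathcal H_0^0(K)$.

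The main obstacle will be making this second step rigorous. Theorem~\ref{lAppTh2.2GurPetr03} cannot be invoked with $b''<0$ as a black box, because this would require the right-hand sides to vanish at the origin in the sense of belonging to $\mathcal H_{b''}^0$---a hypothesis we are not given. The properness of $\lambda_0=-i$, which guarantees both that its contribution to the expansion is an explicit polynomial of degree one (with zero second derivatives) and that no logarithmic terms appear alongside it, is precisely what allows one to carry the subtraction through and close the gap.
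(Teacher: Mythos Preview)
The paper does not supply its own proof of this lemma; it is quoted from~\cite{GurRJMP03} in the appendix of ``known results.'' So there is no proof in the paper to compare against, and I can only assess your argument on its merits.

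Your outline captures the essential mechanism---properness forces the pole of $\tilde{\mathcal L}(\lambda)^{-1}$ at $\lambda_0=-i$ to be simple, with residue ranging in the eigenspace, and the Mellin symbol of any $D^\alpha$ with $|\alpha|=2$ annihilates that eigenspace because $r\varphi(\omega)$ is a first-degree polynomial. Step~1 (passage to arbitrarily small $b'>0$) is fine. But the execution of Step~2 has a real gap, which you flag without resolving: you aim for a target weight $b''<0$, yet the data lie only in $\mathcal H_0^0\times\mathcal H_0^{3/2}$, so their Mellin transforms are $L_2$ on $\Im\lambda=-1$ and are not defined on any line $\Im\lambda=b''-1<-1$. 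Subtracting $\xi W$ does not help, since the right-hand side of the problem for $\tilde U=U-\xi W$ still contains $F_j\in\mathcal H_0^0$ unchanged; Theorem~\ref{lAppTh2.2GurPetr03} with $b''<0$ is genuinely unavailable.

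The fix is to shift the contour for $D^\alpha U$ \emph{to} the line $\Im\lambda=-1$, not past it. Writing $D^\alpha(r^{i\lambda}\varphi)=r^{i\lambda-2}N_\alpha(\lambda)\varphi$, the integrand in the Mellin representation of $D^\alpha U$ on $\Im\lambda=b'-1$ carries the factor $N_\alpha(\lambda)\tilde{\mathcal L}(\lambda)^{-1}$. Since the pole is simple, $\tilde{\mathcal L}(\lambda)^{-1}=(\lambda+i)^{-1}R+H(\lambda)$ with $H$ holomorphic and $\mathrm{range}\,R$ equal to the eigenspace; the polynomial form of the eigenvectors gives $N_\alpha(-i)R=0$, so $N_\alpha(\lambda)\tilde{\mathcal L}(\lambda)^{-1}$ has a removable singularity at $-i$ and (by the usual elliptic estimates for $\tilde{\mathcal L}$) is uniformly bounded on $\Im\lambda=-1$. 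Deforming the contour to that line therefore picks up no residue, and Plancherel on $\Im\lambda=-1$ yields $D^\alpha U\in\mathcal H_0^0(K)$ directly from the $\mathcal H_0$-regularity of the data. This is where properness does its work: both the simplicity of the pole \emph{and} the polynomial nature of the residue are needed to make the singularity removable for the second-derivative symbol, and this lets you stay on the critical line rather than trying to cross it.
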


Finally, we formulate the result that allows one to reduce
nonlocal problems with nonhomogeneous boundary conditions to those
with homogeneous boundary conditions.

\begin{lemma}[see Lemma~8.1 in~\cite{GurRJMP04}]\label{lAppL8.1GurRJMP04}
For any  function $\{\Phi_{j\sigma}\}\in \mathcal
H_b^{3/2}(\gamma)$, there exists a  function $U\in \mathcal
H_b^{2}(K)$ such that
$$
U_j(y)|_{\gamma_{j\sigma}}=f_{j\sigma}(y),\quad
\sum\limits_{(k,s)\ne(j,\sigma)}b_{j\sigma ks}(y)U_k(\mathcal
G_{j\sigma ks}y)|_{\gamma_{j\sigma}}=0 \quad
(y\in\gamma_{j\sigma}).
$$
\end{lemma}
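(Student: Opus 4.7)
The plan is to build $U=(U_1,\dots,U_N)$ as a sum of individual one-sided extensions $V_{j\sigma}$, one per boundary ray $\gamma_{j\sigma}$, each supported in a thin angular sector around its own ray. Thinness will be arranged so that every ``image ray'' $\mathcal G_{j\sigma ks}\gamma_{j\sigma}$ that sits in the interior of some other angle $K_k$ misses the supports of all the $V_{k,\sigma'}$. This will simultaneously give the correct local trace $\Phi_{j\sigma}$ on each $\gamma_{j\sigma}$ and kill the nonlocal contributions.

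First I would invoke the standard trace extension theorem in weighted Kondrat'ev spaces on a half-plane: there is a continuous operator $E_0 : H_b^{3/2}(\mathbb R_+) \to H_b^2(\mathbb R^2_+)$, supported arbitrarily close to $\{y_2=0\}$, producing an extension with prescribed trace on the positive $y_1$-semiaxis. Transferring by rotation to each side $\gamma_{j\sigma}$ and multiplying by a smooth angular cutoff $\eta_{j\sigma,\delta}(\omega)$ equal to $1$ in a neighborhood of $\omega=(-1)^\sigma\omega_j$ and vanishing for $|\omega-(-1)^\sigma\omega_j|>\delta$, I obtain a continuous extension
$$
E_{j\sigma} : H_b^{3/2}(\gamma_{j\sigma}) \to H_b^2(K_j),\qquad (E_{j\sigma}\Phi)|_{\gamma_{j\sigma}}=\Phi,
$$
whose image is supported in the thin sector $\{|\omega-(-1)^\sigma\omega_j|<\delta\}\cap K_j$. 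Multiplication by $\eta_{j\sigma,\delta}$ is bounded on $H_b^2(K_j)$ because the Cartesian derivatives of an angular cutoff are $O(r^{-|\alpha|})$, and the factor $r^{-2|\alpha|}$ is precisely what the weighted norm structure $\rho^{2(b-2+|\alpha|)}$ absorbs (with $\rho=r$ near the origin).

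Next I would choose $\delta>0$ small. By the hypothesis $|(-1)^\sigma\omega_j+\omega_{j\sigma ks}|<\omega_k$ for every $(k,s)\ne(j,0)$, each image ray $\mathcal G_{j\sigma ks}\gamma_{j\sigma}$ meets $K_k$ at an angular coordinate that is strictly interior to $(-\omega_k,\omega_k)$. Since only finitely many triples $(j\sigma,k,s)$ occur, one can choose $\delta>0$ so small that for every $k$ and every such image ray lying in $K_k$, its angular distance to both sides $\gamma_{k,1}$ and $\gamma_{k,2}$ exceeds $\delta$. Set $V_{j\sigma}=E_{j\sigma}\Phi_{j\sigma}$ and
$$
U_j=V_{j,1}+V_{j,2}\in H_b^2(K_j).
$$
On $\gamma_{j\sigma}$, $V_{j,\sigma'}$ (with $\sigma'\ne\sigma$) is supported in a sector around $\gamma_{j,\sigma'}$ disjoint from $\gamma_{j\sigma}$, so $U_j|_{\gamma_{j\sigma}}=\Phi_{j\sigma}$. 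For the nonlocal part, as $y$ ranges over $\gamma_{j\sigma}$ the point $\mathcal G_{j\sigma ks}y$ traces a ray inside $K_k$ whose angular distance from both $\gamma_{k,1}$ and $\gamma_{k,2}$ exceeds $\delta$; since $U_k=V_{k,1}+V_{k,2}$ is supported in the $\delta$-sectors around these two sides, $U_k(\mathcal G_{j\sigma ks}y)|_{\gamma_{j\sigma}}=0$, and the desired nonlocal sum vanishes.

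The only nontrivial point is producing the weighted half-plane extension $E_0$ with arbitrarily thin support; this is a classical result in Kondrat'ev spaces, available for instance by combining a standard Sobolev extension with dilation-invariant estimates or by a Mellin-transform construction. Once this extension is in hand, the entire argument reduces to elementary bookkeeping on angular supports, and the explicit angular separation provided by the inequality $|(-1)^\sigma\omega_j+\omega_{j\sigma ks}|<\omega_k$ does all the geometric work.
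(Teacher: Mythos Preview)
The paper does not give its own proof of this lemma; it merely cites Lemma~8.1 of~\cite{GurRJMP04} in the appendix of collected auxiliary results. So there is no in-paper argument to compare against directly.

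That said, your argument is correct and is essentially the standard one. The two ingredients---a bounded trace extension $H_b^{3/2}\to H_b^2$ in Kondrat'ev spaces, and the angular-support bookkeeping based on the strict inequality $|(-1)^\sigma\omega_j+\omega_{j\sigma ks}|<\omega_k$---are exactly what one expects. Your observation that multiplication by a purely angular cutoff $\eta(\omega)$ is bounded on $H_b^2(K_j)$ because $|D^\alpha\eta|\le C r^{-|\alpha|}$ matches the weight structure is the right justification. Two small points worth making explicit: (i) you also need $\delta<\min_j\omega_j$ so that the two thin sectors around $\gamma_{j,1}$ and $\gamma_{j,2}$ do not overlap, ensuring $V_{j,\sigma'}|_{\gamma_{j\sigma}}=0$ for $\sigma'\ne\sigma$; this is covered by your ``finitely many constraints'' remark but could be stated. (ii) The homothety part of $\mathcal G_{j\sigma ks}$ is harmless here precisely because your cutoffs depend only on $\omega$; you implicitly use this, and it is worth one sentence. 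Otherwise the proof is complete.
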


\subsection{Nonlocal Problems in Plane Angles in Sobolev Spaces}
In this subsection, we formulate properties of solutions of
problems~\eqref{eqAppPBinK} and~\eqref{eqAppCalPBinK} with
right-hand sides from Sobolev spaces.

The following lemma deals with the case in which the line
$\Im\lambda=-1$ is free of eigenvalues of $\tilde{\mathcal
L}(\lambda)$.

\begin{lemma}[see Lemma~2.4 and Corollary~2.1 in~\cite{GurRJMP03}]\label{lAppL2.4GurRJMP03}
Let the line $\Im\lambda=-1$ contains no eigenvalues of
$\tilde{\mathcal L}(\lambda)$. Suppose that
$$
\{\Phi_{j\sigma}\}\in\mathcal W^{3/2}(\gamma^\varepsilon),\qquad
\Phi_{j\sigma}(0)=0.
$$
Then there exists a compactly supported function
$$
V\in\mathcal  W^2(K)\cap\mathcal H_{b}^2(K),
$$
where $b$ is an arbitrary positive number, such that
$$
\{{\mathbf P}_{j}V_j\}\in\mathcal H_0^0(K^\varepsilon),\qquad
\{{\mathbf
B}_{j\sigma}V|_{\gamma_{j\sigma}^\varepsilon}-\Phi_{j\sigma}\}\in
\mathcal H_0^{3/2}(\gamma^\varepsilon).
$$
\end{lemma}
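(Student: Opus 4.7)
My plan is to first produce a function $V$ with the desired nonlocal boundary trace in the Kondrat'ev scale, and then upgrade it to Sobolev regularity by exploiting the spectral hypothesis via Theorem~\ref{lAppTh2.2GurPetr03}.

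First I extend each $\Phi_{j\sigma}$ from $\gamma_{j\sigma}^\varepsilon$ to a compactly supported element of $W^{3/2}(\gamma_{j\sigma})$ with $\Phi_{j\sigma}(0)=0$ preserved; by the trace analogue of Lemma~\ref{lAppL2.1GurRJMP03} (extend to a $W^2(K)$ function vanishing at the origin, then apply the lemma), the extension lies in $\mathcal H_b^{3/2}(\gamma)$ for every $b>0$. Fix such a $b$; Lemma~\ref{lAppL8.1GurRJMP04} then produces $V_0\in\mathcal H_b^2(K)$ satisfying $\mathbf B_{j\sigma}V_0=\Phi_{j\sigma}$ on $\gamma_{j\sigma}$, and multiplying by a cutoff equal to $1$ on a neighborhood of the origin comparable to $\varepsilon$ makes $V_0$ compactly supported without spoiling the boundary identity on $\gamma_{j\sigma}^\varepsilon$ (since the transformations $\mathcal G_{j\sigma ks}$ are homotheties of controlled dilation).

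The key step is to show that $V_0$ can in fact be taken to lie in $\mathcal H_0^2(K)$; compactly supported functions in $\mathcal H_0^2(K)$ are automatically in $\mathcal W^2(K)\cap\mathcal H_b^2(K)$ for every $b>0$. I view $V_0$ as a solution of the frozen-coefficient model problem
\[
  \mathcal P_j V_{0,j}=\mathbf P_j V_{0,j}-(\mathbf P_j-\mathcal P_j)V_{0,j}, \qquad
  \mathcal B_{j\sigma}V_0=\Phi_{j\sigma}-(\mathbf B_{j\sigma}-\mathcal B_{j\sigma})V_0.
\]
The commutators $\mathbf P_j-\mathcal P_j$ and $\mathbf B_{j\sigma}-\mathcal B_{j\sigma}$ have smooth coefficients vanishing at $y=0$ (namely $p_{jik}(y)-p_{jik}(0)$, $b_{j\sigma ks}(y)-b_{j\sigma ks}(0)$, plus the lower-order terms of $\mathbf P_j$), so Lemma~\ref{lAppL3.3'Kondr} lets me gain one weight step at a time, driving the right-hand sides of the model problem from $\mathcal H_b^0$ and $\mathcal H_b^{3/2}$ down to $\mathcal H_0^0$ and $\mathcal H_0^{3/2}$ after finitely many iterations. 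Using Lemma~\ref{lSpectrum} I choose $b>0$ small enough that the closed band $-1\le\Im\lambda\le b-1$ is entirely free of eigenvalues of $\tilde{\mathcal L}(\lambda)$, which is where the hypothesis that $\Im\lambda=-1$ is eigenvalue-free is used. Theorem~\ref{lAppTh2.2GurPetr03} then yields $V_0\in\mathcal H_0^2(K)$ with no power-solution contributions, so $V=V_0$ has the required regularity, the nonlocal boundary condition holds exactly (making the $\mathcal H_0^{3/2}$ error zero), and $\mathbf P_jV_j\in\mathcal H_0^0(K^\varepsilon)$ follows from the second-derivative bound implicit in $\mathcal H_0^2$ membership.

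The main obstacle is this regularity bootstrap: lifting $V_0$ from $\mathcal H_b^2$ to $\mathcal H_0^2$ requires both a careful commutator analysis of the variable-coefficient corrections and the spectral input from $\tilde{\mathcal L}(\lambda)$. The eigenvalue-free hypothesis on the critical line $\Im\lambda=-1$ translates, through Theorem~\ref{lAppTh2.2GurPetr03} together with Lemma~\ref{lSpectrum}, into the absence of obstructive power-solution terms in the relevant weighted band; this is ultimately what allows the Kondrat'ev-scale solution to be placed in the Sobolev space $\mathcal W^2(K)$.
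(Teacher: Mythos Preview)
The paper does not prove this lemma; it is quoted without proof from~\cite{GurRJMP03}, so there is no in-paper argument to compare against. I can still assess your proposal on its own terms, and there is a genuine gap in the bootstrap step.

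The problem is with the PDE right-hand side. Lemma~\ref{lAppL8.1GurRJMP04} gives you a $V_0\in\mathcal H_b^2(K)$ matching the nonlocal boundary data, but it says nothing whatsoever about $\mathbf P_jV_{0,j}$ or $\mathcal P_jV_{0,j}$ beyond the trivial membership $\mathcal P_jV_{0,j}\in\mathcal H_b^0$ that comes from $V_0\in\mathcal H_b^2$. Your identity
\[
\mathcal P_jV_{0,j}=\mathbf P_jV_{0,j}-(\mathbf P_j-\mathcal P_j)V_{0,j}
\]
does not help: the commutator term is indeed in $\mathcal H_{b-1}^0$ by Lemma~\ref{lAppL3.3'Kondr}, but $\mathbf P_jV_{0,j}$ is no better than $\mathcal H_b^0$, so the sum is still only in $\mathcal H_b^0$. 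Theorem~\ref{lAppTh2.2GurPetr03} then cannot be applied with any $b'<b$, because its hypothesis requires the right-hand side to lie in $\mathcal H_b^0\cap\mathcal H_{b'}^0$. The iteration never starts, and you cannot conclude $V_0\in\mathcal H_0^2$.

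What is missing is a construction in which the interior equation is controlled from the outset, i.e.\ a $V$ with $\mathcal P_jV_j$ already in $\mathcal H_0^0$ (for instance $\mathcal P_jV_j=0$). That is where the spectral hypothesis on the line $\Im\lambda=-1$ genuinely enters: it guarantees that the model boundary problem $\mathcal P_jV_j=0$, $\mathcal B_{j\sigma}V=\Phi_{j\sigma}$ can be solved in the Sobolev scale (via the Mellin transform and invertibility of $\tilde{\mathcal L}(\lambda)$ on that line), producing a $V$ that is simultaneously in $\mathcal W^2(K)$ and in $\mathcal H_b^2(K)$. Your argument invokes the spectral hypothesis only to rule out power-solution corrections after the fact, but that is not enough; you need it constructively to build $V$ with the right interior behavior.
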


Now we consider the situation where the line $\Im\lambda=-1$
contains the unique eigenvalue $\lambda_0=-i$ of $\tilde{\mathcal
L}(\lambda)$ and it is proper (see Definition~\ref{defRegEigVal}).
In this case, we will use the following result instead of
Lemma~\ref{lAppL2.4GurRJMP03}.

\begin{lemma}[see Lemma~3.3 and Corollary~3.1 in~\cite{GurRJMP03}]\label{lAppL3.3GurRJMP03}
Let the line $\Im\lambda=-1$ contains the unique eigenvalue
$\lambda_0=-i$ of $\tilde{\mathcal L}(\lambda)$ and it is proper.
Suppose that
$$
\{\Phi_{j\sigma}\}\in\mathcal W^{3/2}(\gamma^\varepsilon),\qquad
\Phi_{j\sigma}(0)=0,
$$
and the functions $\Phi_{j\sigma}$ satisfy the consistency
condition~\eqref{eqConsistencyZ}. Then there exists a compactly
supported function
$$
V\in\mathcal  W^2(K)\cap\mathcal H_{b}^2(K),
$$
where $b$ is an arbitrary positive number, such that
$$
\{{\mathbf P}_{j}V_j\}\in\mathcal H_0^0(K^\varepsilon),\qquad
\{{\mathbf
B}_{j\sigma}V|_{\gamma_{j\sigma}^\varepsilon}-\Phi_{j\sigma}\}\in
\mathcal H_0^{3/2}(\gamma^\varepsilon).
$$
\end{lemma}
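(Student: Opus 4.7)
The plan is to construct $V$ as a sum of two ingredients: a piece $\tilde Q$ built from a homogeneous linear polynomial that absorbs the ``resonant'' first-order behavior of $\Phi_{j\sigma}$ at the origin, and a remainder piece in the weighted space $H_0^2(K)$ obtained by a standard trace extension. The first piece is available precisely because $\lambda_0=-i$ is a \emph{proper} eigenvalue of $\tilde{\mathcal L}(\lambda)$: the corresponding eigenvectors produce homogeneous linear polynomials $Q_k(y)=q_{k1}y_1+q_{k2}y_2$ (in the local $y$-coordinates of each angle $K_k$) satisfying $\mathcal P_jQ_j=0$ in $K_j$ and $\mathcal B_{j\sigma}Q=0$ on $\gamma_{j\sigma}$.

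For the resonant part, I would set $\ell_{j\sigma}=(d\Phi_{j\sigma}^0/dr)(0)$, which is well-defined since $\Phi_{j\sigma}\in W^{3/2}(\gamma^\varepsilon)$, and seek a polynomial vector $Q=(Q_1,\dots,Q_N)$ of the above form such that $\hat{\mathcal B}_{j\sigma}(D_y)Q=\ell_{j\sigma}$ for every $(j,\sigma)$. This is a linear system in the $2N$ unknowns $q_{ki}$; the equations indexed by a maximal linearly independent subsystem $\{\hat{\mathcal B}_{j'\sigma'}\}$ are automatically solvable, while the dependent equations force the compatibility $\ell_{j\sigma}=\sum_{j',\sigma'}\beta_{j\sigma}^{j'\sigma'}\ell_{j'\sigma'}$. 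This compatibility is exactly the content of the consistency condition \eqref{eqConsistencyZ} applied to $Z_{j\sigma}=\Phi_{j\sigma}$: since $d\Phi_{j\sigma}^0/dr\in W^{1/2}(0,\varepsilon)$ and the finiteness of the weighted integral in \eqref{eqConsistencyZ} combined with continuity of the first derivative forces the pointwise relation at $r=0$. Having solved for $Q$, set $\tilde Q=\xi Q$ with a cut-off $\xi$ supported near the origin; then $\tilde Q\in W^2(K)$ is compactly supported with $\tilde Q(0)=0$, so Lemma~\ref{lAppL2.1GurRJMP03} gives $\tilde Q\in H_b^2(K)$ for every $b>0$.

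For the remainder piece, consider $\Phi^{\rm rem}_{j\sigma}:=\Phi_{j\sigma}-\mathbf B_{j\sigma}\tilde Q|_{\gamma_{j\sigma}^\varepsilon}$. By construction both $\Phi^{\rm rem}_{j\sigma}(0)=0$ and, using the chain rule that produced the operators $\hat{\mathcal B}_{j\sigma}(D_y)$, the tangential derivative $(d(\Phi^{\rm rem}_{j\sigma})^0/dr)(0)=\ell_{j\sigma}-\hat{\mathcal B}_{j\sigma}(D_y)Q=0$. Combining these vanishings at the origin with $W^{3/2}$ regularity and Lemma~\ref{lAppL4.18Kondr} (applied in the reverse direction to $d\Phi^{\rm rem}_{j\sigma}/dr\in W^{1/2}$ vanishing at $0$) yields $\Phi^{\rm rem}_{j\sigma}\in H_0^{3/2}(\gamma^\varepsilon)$. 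Now apply Lemma~\ref{lAppL8.1GurRJMP04} with weight index $b=0$ to produce a compactly supported $V_{\rm rem}\in H_0^2(K)\subset W^2(K)$ realizing these boundary data, set $V=\tilde Q+V_{\rm rem}$, and verify the two conclusions: the boundary residual $\mathbf B_{j\sigma}V-\Phi_{j\sigma}$ lies in $H_0^{3/2}(\gamma^\varepsilon)$ by construction (plus Lemma~\ref{lAppL3.3'Kondr} to handle the difference between variable-coefficient $\mathbf B_{j\sigma}$ and the constant-coefficient model operator $\mathcal B_{j\sigma}$), while $\mathbf P_jV_j\in H_0^0(K^\varepsilon)$ because $\mathcal P_jQ_j=0$ and $(\mathbf P_j-\mathcal P_j)$ has coefficients vanishing at $y=0$, so Lemma~\ref{lAppL3.3'Kondr} trades one power of the weight to absorb the contribution of $\tilde Q\in H_b^2$, while $V_{\rm rem}\in H_0^2$ directly gives $\mathbf P_jV_{{\rm rem},j}\in H_0^0$.

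The main obstacle is the borderline fractional regularity step: showing that $\Phi^{\rm rem}_{j\sigma}\in W^{3/2}$ with both value and tangential derivative vanishing at the origin already belongs to $H_0^{3/2}$. This is a delicate Hardy-type statement in a fractional weighted space on a ray; it is exactly the place where the consistency condition enters quantitatively, and requires care to see that the reverse of Lemma~\ref{lAppL4.18Kondr} can be applied at the level of $d\Phi^{\rm rem}_{j\sigma}/dr$. Once this regularity upgrade is secured, the rest of the argument reduces to carefully tracking lower-order perturbations of the model problem, all of which are absorbed by one application of Lemma~\ref{lAppL3.3'Kondr} per weight level.
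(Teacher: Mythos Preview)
The paper does not prove this lemma; it is quoted in the appendix as a known result from \cite{GurRJMP03}. So there is no proof in the present paper to compare against, and I comment only on the correctness of your outline.

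Your argument has a genuine gap at the very first step. You set $\ell_{j\sigma}=(d\Phi_{j\sigma}^0/dr)(0)$ and claim this ``is well-defined since $\Phi_{j\sigma}\in W^{3/2}(\gamma^\varepsilon)$''. It is not. In one dimension $W^{3/2}$ embeds in $C^0$ but not in $C^1$: the derivative $d\Phi_{j\sigma}^0/dr$ lies only in $W^{1/2}(0,\varepsilon)$, which is exactly the borderline case that fails to embed in continuous functions, so its pointwise value at $r=0$ is meaningless. Consequently the linear system you set up for the coefficients of $Q$ has undefined right-hand side, the polynomial $Q$ cannot be chosen, and the subtraction $\Phi^{\rm rem}_{j\sigma}=\Phi_{j\sigma}-\mathbf B_{j\sigma}\tilde Q$ never gets off the ground. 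You also invoke the ``reverse direction'' of Lemma~\ref{lAppL4.18Kondr}, but that lemma only goes one way (from $H_0^{1/2}$ to finiteness of the weighted integral); the converse is a separate Hardy-type fact that you would have to supply, and even then you would still need the further passage from $d\Phi^{\rm rem}/dr\in H_0^{1/2}$ to $\Phi^{\rm rem}\in H_0^{3/2}$, which you do not address.

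The difficulty you flag at the end is real, but it is not merely a ``delicate Hardy-type statement'' to be checked afterward: the obstruction is that at the borderline regularity $W^{3/2}$ there is no first-order Taylor coefficient at the vertex to subtract. A correct argument must avoid pointwise evaluation of $d\Phi_{j\sigma}^0/dr$ at the origin altogether. One route, in the spirit of the remark surrounding \eqref{eqConsistencyZEquiv}, is to work throughout with $W^2(\mathbb R^2)$ extensions $\mathbf Z_{j\sigma}$ of the boundary data and use the consistency condition in its equivalent $H_0^1(\mathbb R^2)$ form for the tangential derivatives; the construction of $V$ then proceeds from these extensions (with Lemma~\ref{lAppL2.2GurRJMP03} handling rotational discrepancies) rather than by subtracting a single polynomial determined by a nonexistent pointwise derivative.
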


\begin{lemma}[see Lemma~3.1 in~\cite{GurRJMP03}]\label{lAppL3.1GurRJMP03}
Let the line $\Im\lambda=-1$ contains the unique eigenvalue
$\lambda_0=-i$ of $\tilde{\mathcal L}(\lambda)$ and it is proper.
Suppose that $U\in\mathcal W^2(K)$ is a compactly supported
solution of problem~\eqref{eqAppPBinK}
{\rm(}or~\eqref{eqAppCalPBinK}{\rm)} and $U(0)=0$. Then the
functions $\Phi_{j\sigma}$ satisfy the consistency
condition~\eqref{eqConsistencyZ}.
\end{lemma}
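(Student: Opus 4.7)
The plan is to compute the tangential derivative of both sides of the nonlocal boundary condition $\mathbf B_{j\sigma}U=\Phi_{j\sigma}$, form the linear combination appearing in~\eqref{eqConsistencyZ}, and exploit the operator identity $\hat{\mathcal B}_{j\sigma}(D_y)=\sum_{j',\sigma'}\beta_{j\sigma}^{j'\sigma'}\hat{\mathcal B}_{j'\sigma'}(D_y)$ from~\eqref{eqBviaB'} to cancel the principal first-derivative contributions. What will remain are only differences of first derivatives of $U$ at points related by rotation about the origin; these differences lie in $H_0^1(\mathbb R^2)$ by Lemma~\ref{lAppL2.2GurRJMP03}, so their traces on the rays $\gamma_{j\sigma}$ lie in $H_0^{1/2}$, and Lemma~\ref{lAppL4.18Kondr} then immediately gives the required $r^{-1}$-weighted square integrability.

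First I would extend each $U_k$ to a compactly supported $\tilde U_k\in W^2(\mathbb R^2)$ with $\tilde U_k(0)=0$ (by a standard extension theorem for the Lipschitz domain $K_k$, preserving continuity and hence the origin value), and split the nonlocal operator as $\mathbf B_{j\sigma}U=\mathcal B_{j\sigma}\tilde U+\Theta_{j\sigma}$, where
\[
\Theta_{j\sigma}(y)=\sum_{k,s}\bigl[b_{j\sigma ks}(y)-b_{j\sigma ks}(0)\bigr]\tilde U_k(\mathcal G_{j\sigma ks}y)
\]
absorbs the variable part of the coefficients. Lemma~\ref{lAppL2.1GurRJMP03} gives $\tilde U_k\in H_1^2(\mathbb R^2)$, and since $b_{j\sigma ks}(y)-b_{j\sigma ks}(0)$ is smooth and vanishes at the origin, Lemma~\ref{lAppL3.3'Kondr} yields $\Theta_{j\sigma}\in H_0^2(\mathbb R^2)$. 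The trace of $\partial\Theta_{j\sigma}/\partial\tau_{j\sigma}$ on $\gamma_{j\sigma}$ then lies in $H_0^{1/2}$, which integrates against $r^{-1}$ by Lemma~\ref{lAppL4.18Kondr}, so the variable-coefficient correction is harmless and the argument reduces to the constant-coefficient piece $\mathcal B_{j\sigma}\tilde U$.

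For this piece the chain-rule formula~\eqref{eqDiffB} gives
\[
\frac{\partial\mathcal B_{j\sigma}\tilde U}{\partial\tau_{j\sigma}}(y)=\hat{\mathcal B}_{j\sigma}(D_y)\tilde U(y)+R_{j\sigma}(y),\quad R_{j\sigma}(y):=\sum_{k,s}\bigl[(\hat B_{j\sigma ks}(D_y)\tilde U_k)(\mathcal G_{j\sigma ks}y)-(\hat B_{j\sigma ks}(D_y)\tilde U_k)(y)\bigr],
\]
and since each $\hat B_{j\sigma ks}(D_y)\tilde U_k\in W^1(\mathbb R^2)$ is compactly supported, Lemma~\ref{lAppL2.2GurRJMP03} places $R_{j\sigma}\in H_0^1(\mathbb R^2)$. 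Evaluating at $y_{j\sigma}(r)=(r\cos\omega_j,(-1)^\sigma r\sin\omega_j)$ and forming the linear combination from~\eqref{eqConsistencyZ}, the linear dependence~\eqref{eqBviaB'} rewrites the $\hat{\mathcal B}$-pieces as
\[
\sum_{j',\sigma'}\beta_{j\sigma}^{j'\sigma'}\bigl[\hat{\mathcal B}_{j'\sigma'}(D_y)\tilde U(y_{j\sigma}(r))-\hat{\mathcal B}_{j'\sigma'}(D_y)\tilde U(y_{j'\sigma'}(r))\bigr],
\]
a two-point difference $W(\mathcal Gy)-W(y)$ at $y=y_{j'\sigma'}(r)$, with $W=\hat{\mathcal B}_{j'\sigma'}(D_y)\tilde U\in W^1(\mathbb R^2)$ and $\mathcal G$ the pure rotation about the origin sending $y_{j'\sigma'}(r)$ to $y_{j\sigma}(r)$ (this is a rotation, since both points lie on the circle of radius $r$). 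A second application of Lemma~\ref{lAppL2.2GurRJMP03} places each bracket in $H_0^1(\mathbb R^2)$.

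Adding everything up, the full integrand of~\eqref{eqConsistencyZ} is the trace on rays through the origin of a finite sum of $H_0^1(\mathbb R^2)$-functions; the trace theorem places it in $H_0^{1/2}$ on each ray, and Lemma~\ref{lAppL4.18Kondr} gives the required finiteness. The main obstacle I foresee is the algebraic bookkeeping in the middle step: one must keep careful track of the shifted versus unshifted evaluations so that the operator identity~\eqref{eqBviaB'}, which is pointwise in $y$, can be combined with the ``shifted'' arguments $\mathcal G_{j\sigma ks}y$ coming from the nonlocal structure, and one must verify that the map $y_{j'\sigma'}(r)\mapsto y_{j\sigma}(r)$ is indeed a pure rotation (not a rotation composed with homothety) so that Lemma~\ref{lAppL2.2GurRJMP03} applies cleanly to each surviving two-point difference. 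Once this algebraic structure is set up, the weighted-space bounds fall out mechanically from the Appendix lemmas.
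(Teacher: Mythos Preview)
The paper does not prove this lemma: it is quoted in the Appendix as a known result from \cite{GurRJMP03}, so there is no proof in the paper to compare against. Your proposal is a sound and essentially complete argument. The toolkit you invoke---Lemma~\ref{lAppL2.1GurRJMP03} for $\tilde U_k\in H_1^2$, Lemma~\ref{lAppL3.3'Kondr} for the variable-coefficient correction $\Theta_{j\sigma}\in H_0^2$, Lemma~\ref{lAppL2.2GurRJMP03} for the rotation differences in $H_0^1$, and Lemma~\ref{lAppL4.18Kondr} to pass from $H_0^{1/2}$-traces to the weighted integral---is exactly the machinery the paper itself uses in the closely related computation of Remark after Definition~4.1 (equivalence of~\eqref{eqConsistencyZ} and~\eqref{eqConsistencyZEquiv}), so your approach is well aligned with the paper's framework and very likely with the original proof in~\cite{GurRJMP03}.

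One small remark on your stated obstacle: your worry that $y_{j'\sigma'}(r)\mapsto y_{j\sigma}(r)$ must be a \emph{pure} rotation is unnecessary, since Lemma~\ref{lAppL2.2GurRJMP03} is stated for any composition of rotation and homothety; in any case the two points have the same modulus $r$, so the map is indeed a pure rotation. Also note that the traces involved are taken on \emph{different} rays $\gamma_{j\sigma}$ and $\gamma_{j'\sigma'}$, but after parametrising each by arc length $r$ they all sit in $H_0^{1/2}(0,\varepsilon)$, so the linear combination in~\eqref{eqConsistencyZ} is again in $H_0^{1/2}(0,\varepsilon)$ and Lemma~\ref{lAppL4.18Kondr} applies directly.
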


\medskip

{\bf Acknowledgements.}  This research has been supported by
Russian Foundation for Basic Research (grant no~04-01-00256). A
part of the results has been obtained during the author's visit to
Professor H.~Amann at the Institute for Mathematics, Zurich
University, in the framework of INTAS grant~YSF~2002-008. The
author also expresses his gratitude to Professor
A.~L.~Skubachevskii for attention to this work.

\end{document}